\newtheorem{lemma}{Lemma}[section]
\newtheorem{proposition}[lemma]{Proposition}
\newtheorem{theorem}[lemma]{Theorem}
\newtheorem{remark}[lemma]{Remark}
\newtheorem{corollary}[lemma]{Corollary}
\newtheorem{definition}[lemma]{Definition}
\newtheorem{notation}[lemma]{Notation}
\newcommand{\EE}{\mathbb E}
\newcommand{\NN}{\mathbb N}
\newcommand{\PP}{\mathbb P}
\newcommand{\RR}{\mathbb R}
\newcommand{\ZZ}{\mathbb Z}
\newcommand{\SC}{\mathcal C}
\newcommand{\SD}{\mathcal D}
\newcommand{\SF}{\mathcal F}
\newcommand{\SL}{\mathcal L}
\newcommand{\CP}{\mathcal P}
\newcommand{\SU}{\mathcal U}
\newcommand{\1}{\mathbbm{1}}
\newcommand{\normx}[2]{\parallel\! #1 \!\parallel_{ #2 }}
\newcommand{\fatcdot}{\boldsymbol{\cdot}}
\newcommand{\bigmid}{\bigm|}
\newcommand{\Bigmid}{\Bigm|}
\newcommand{\biggmid}{\biggm|}
\newcommand{\Biggmid}{\Biggm|}
\newcommand{\mittl}{\mathcal H}
\newcommand{\deltap}{\delta'}
\newcommand{\deltapp}{\delta''}
\newcommand{\eqn}[1]{\begin{equation} #1 \end{equation}}
\newcommand{\eqan}[1]{\begin{align} #1 \end{align}}
\newcommand{\lbeq}[1]{\label{#1}}
\newcommand{\nn}{\nonumber}
\numberwithin{equation}{section}
\begin{document}

\author{
Sandra Kliem
\thanks{
Institut f\"ur Mathematik, Goethe Universit\"at, Frankfurt am Main, Germany.
E-mail: {\tt kliem@math.uni-frankfurt.de}}}

\title{Right marker speeds of solutions to the KPP equation with noise}

\maketitle

\begin{abstract}
\noindent
We consider the one-dimensional KPP-equation driven by space-time white noise. We show that for all parameters above the critical value for survival, there exist stochastic wavelike solutions which travel with a deterministic positive linear speed. We further give a sufficient condition on the initial condition of a solution to attain this speed. Our approach is in the spirit of corresponding results for the nearest-neighbor contact process respectively oriented percolation. Here, the main difficulty arises from the moderate size of the parameter and the long range interaction. Stopping times and averaging techniques are used to overcome this difficulty.
\end{abstract}

\vspace{0.3in}

\noindent
{\bf Key words:} stochastic PDE; KPP equation; white noise; travelling wave speed; near-critical. 

\noindent
{\bf MSC2000 subject classification.} {Primary
60H15; 
Secondary
35R60. 
} 






%
\section{Introduction}
%
%
The Kolmogorov-Petrovskii-Piskunov-(KPP)-equation (also known as the Kolmogorov- or Fisher-equation) with noise is given as
\eqn{ 
\label{equ:SPDE}
  \partial_t u = \partial_{xx} u + \theta u - u^2 + u^{\frac{1}{2}} dW, \qquad t>0, x \in \RR, \qquad u(0,x) = u_0(x) \geq 0, 
}
where $W=W(t,x)$ is space-time white noise and $\theta > 0$ a parameter. The deterministic part of this one-dimensional stochastic partial differential equation (SPDE) is, after appropriate scaling, a case of the well-studied KPP-equation. Note that by Mueller and Tribe~\cite[Lemma~2.1.2]{MT1994}, constant front-factors in the PDEs/SPDEs to be referred to, can and will be changed without comment to fit into our framework. Including the noise term, one can think of $u(t,x)=u_t(x)=u_t^{(u_0)}(x)$ as the (random) density of a population in time and space. Leaving out the term $\theta u-  u^2$, the above SPDE is the density of a super-Brownian motion (cf. Perkins~\cite[Theorem~III.4.2]{bP2002}), the latter being the high density limit of branching particle systems. The additional term of $\theta u$ models linear mass creation at rate $\theta > 0$, $-u^2$ models death due to competition respectively overcrowding. In \cite{MT1995}, Mueller and Tribe obtain solutions to \eqref{equ:SPDE} as (weak) limits of \textit{approximate} densities of occupied sites in rescaled one-dimensional long range contact processes.

Let $\SC^+$ denote the space of non-negative continuous functions on $\RR$. The existence and uniqueness in law of solutions to \eqref{equ:SPDE} in the \textit{space of non-negative continuous functions with slower than exponential growth} $\SC_{tem}^+$, 
\eqn{
\lbeq{equ:Ctem}
  \SC_{tem}^+ = \left\{ f \in \SC^+: \normx{f}{\lambda} < \infty \mbox{ for all } \lambda>0 \right\} \mbox{ with } \normx{f}{\lambda} = \sup_{x \in \RR} |f(x)| e^{-\lambda |x|},
}
is established in Tribe~\cite[Theorem~2.2]{T1996}. Here, a solution to \eqref{equ:SPDE} is to be understood in the sense of a \textit{weak solution} (see Notation~\ref{NOTA:tribe} below). Denote with $\PP_{u_0}$ the law of such a solution starting in $u_0 \in \SC_{tem}^+$. By \cite[Theorem~2.2]{T1996}, the map $f \mapsto \PP_f$ on $\SC_{tem}^+$ is continuous and the family of laws $\PP_f, f \in \SC_{tem}^+$ forms a strong Markov family. For $\nu \in \CP(\SC_{tem}^+)$, the space of probability measures on $\SC_{tem}^+$, denote $\PP_\nu(A) = \int_{\SC_{tem}^+} \PP_f(A) \nu(df)$. Use $\EE_{u_0}$ respectively $\EE_{\nu}$ to denote respective expectations.

Let $\tau = \inf\{ t \geq 0: u(t,\fatcdot) \equiv 0\}$ be the \textit{extinction-time} of the process. By \cite[Theorem~1]{MT1994}, there exists a critical value $\theta_c>0$ such that for any initial condition $u_0 \in \SC_c^+ \backslash \{0\}$ with \textit{compact} support and $\theta < \theta_c$, the extinction-time of $u$ solving \eqref{equ:SPDE} is finite almost surely. For $\theta>\theta_c$, \textit{survival}, that is $\tau=\infty$, happens with positive probability. 

The investigation of the dynamics of solutions to \eqref{equ:SPDE} is a major challenge, where the main difficulty comes from the competition term $-u^2$. Without competition, the underlying additive property facilitates the use of Laplace functionals. Including competition, only subadditivity in the sense of \cite[Lemma~2.1.7]{MT1994} respectively Kliem \cite[Remark~2.1(i)]{K2017} holds, that is, for $u_0, v_0 \in \SC_{tem}^+$ and $w_0\equiv u_0+v_0$ there exists a coupling of solutions $(u_t)_{t \geq 0}, (v_t)_{t \geq 0}, (w_t)_{t \geq 0}$ to \eqref{equ:SPDE} with respective initial conditions $u_0, v_0, w_0$ such that $w_t(x) \leq u_t(x)+v_t(x)$ for all $t \geq 0, x \in \RR$ almost surely. 

Write $\langle f,g \rangle = \int f(x) g(x) dx$. For the process in \eqref{equ:SPDE} one has a self-duality relationship in the form
\eqn{
\label{equ:SPDE-self-duality}
  \EE_{u_0}\!\left[ e^{-2 <u(t),v_0>} \right] = \EE_{u_0}\otimes\EE_{v_0}\!\left[ e^{-2 \langle u(s),v(t-s) \rangle} \right] = \EE_{v_0}\!\left[ e^{-2 <u_0,v(t)>} \right] 
}
for all $0 \leq s \leq t$ and $u_0, v_0 \in \SC_{tem}^+$, where $u(t), v(t)$ are independent solutions to \eqref{equ:SPDE} with initial condition $u_0$ respectively $v_0$ (cf. \cite[(2.1)]{K2017}). Use $\CP(E)$ to denote the space of probability measures on $E$. In \cite[Remark~2.5]{K2017} this self-duality is used to prove existence of a unique \textit{upper invariant distribution} $\mu \in \CP(\SC_{tem}^+)$ satisfying 
\eqn{
\label{equ:KHT}
  \lim_{t \rightarrow \infty} \inf\!\left\{ \EE_\psi\!\left[ e^{-2 < u(T+t) , \phi > } \right]; \psi \in \SC_{tem}^+ \right\} = \int e^{-2<f,\phi>} \mu(df) = \PP_\phi(\tau<\infty)
}
for all $T>0, \phi \in \SC_c^+$. In \cite[Theorem~1]{HT2004}, Horridge and Tribe give sufficient conditions (``uniformly distributed in space'') for initial conditions to be in the domain of attraction of $\mu$. They characterize $\mu$ by the right hand side of (\ref{equ:KHT}) and show that it is the unique translation invariant stationary distribution satisfying $\mu(\{f: f \not\equiv 0\}) = 1$. The result and method of proof are in the spirit of Harris' convergence theorem for additive particle systems (cf. Durrett \cite[Theorem~3.3]{bD1995}).

Recall the construction of solutions to \eqref{equ:SPDE} from \cite{MT1995} by means of limits of densities of rescaled \textit{long range} contact processes. When investigating solutions to the SPDE \eqref{equ:SPDE}, it is only natural to anticipate and/or investigate behavior similar in spirit to the approximating systems. Indeed, \cite{HT2004} successfully applied the method of proof of Harris' convergence theorem for additive particle systems to prove a corresponding result in the context of SPDEs \eqref{equ:SPDE}. Due to the long range interaction and the lack of a dual process, results for long range contact processes are limited. More is known for the \textit{nearest-neighbor} contact process $(\xi_t)_{t \geq 0}$ on $\ZZ$ (cf. Griffeath~\cite{G1981}), where the neighborhood of a site $x \in \ZZ$ is restricted to $\{x-1,x+1\}$. For the nearest-neighbor contact process a full description of the limiting law of a solution is available. The limiting law is the weighted average of the Dirac-measure on the ``all-unoccupied" configuration and the upper invariant measure of the process, $\nu$, where the weight on the former coincides with the extinction probability (see \cite[Theorem~5]{G1981}). 

In what follows, let $S$ be the space of all subsets of $\ZZ$. By identifying the state of the process $\xi_t$ at time $t$ with the set of occupied sites, we can consider $(\xi_t)_{t \geq 0}$ as an $S$-valued process. Let $\lambda$ be the birth-parameter, the death-parameter is set to one. Set $\lambda_c = \sup\{ \lambda \geq 0: \PP( \tau^{\{0\}}=\infty )=0 \}$, where $\tau^{\{0\}} = \inf\{ t \geq 0: \xi_t^{\{0\}} = \emptyset \}$ is the extinction time of the population starting with zero being the only occupied site at time $0$. 

The proof of complete convergence for the \textit{nearest-neighbor} case relies in essence on the progression of the so-called edge processes $l_t^A \equiv \min\{ x: x \in \xi_t^A \}, r_t^A \equiv \max\{ x: x \in \xi_t^A \}$, $A \in S$ fixed. Due to the nearest-neighbor interaction one can easily show that 
\eqn{
\label{equ:contact-prop1}
  \xi_t^{\{0\}} = \xi_t^A \cap  [l_t^{\{0\}} , r_t^{\{0\}}] = \xi_t^{(-\infty,0] \cap \ZZ} \cap \xi_t^{[0,\infty) \cap \ZZ} \mbox{  for all } 0 \in A \subset \ZZ \mbox{ on } \{\tau^{\{0\}}>t\}
}
(cf. \cite[Theorem~3]{G1981}). Moreover, 
\eqn{
\label{equ:contact-prop2}
  l_t^{\{0\}} = l_t^{[0,\infty) \cap \ZZ} \mbox{ and } r_t^{\{0\}} = r_t^{(-\infty,0] \cap \ZZ} \quad \mbox{ on } \{\tau^{\{0\}}>t\}.
}
In \cite[Theorem~1.4 and Section~4]{D1980} respectively \cite[Section~3, (8)--(9)]{D1984}, Durrett shows for the \textit{nearest-neigbor} contact process respectively for oriented percolation in two dimensions that
\eqn{
\lbeq{equ:asconv-markers}
  -\lim_{t \rightarrow \infty} \frac{l_t^{\{0\}}}{t} = \lim_{t \rightarrow \infty} \frac{r_t^{\{0\}}}{t} = \alpha \mbox{ a.s., } \quad \mbox{ where } 
  \alpha \begin{cases} >0, & \mbox{ if } \tau^{\{0\}}=\infty, \cr <0, & \mbox{ if } \tau^{\{0\}}<\infty. \end{cases}
}
In these models, edge speeds characterize critical values. Similar features were for instance recently observed in Bessonov and Durrett~\cite{BD2017} for planar quadratic contact processes (here, two individuals are needed to produce a new one). Under long range interaction, \eqref{equ:contact-prop1}--\eqref{equ:contact-prop2} do not hold true any longer. 

For these reasons, the study of the speed of the right (and thus by symmetry left) marker
\eqn{
\label{equ:def-R0}
  R_0(u(t)) \equiv R_0(t) \equiv \sup\{ x \in \RR: u(t,x)>0 \} \qquad \mbox{ with } \sup\emptyset = -\infty
}
of a solution to \eqref{equ:SPDE} starting in $u_0 \in \CP(\SC_{tem}^+)$ with $\PP(R_0(u_0)<\infty)=1$ is of independent interest and yields new insights into the dynamics of solutions to \eqref{equ:SPDE}. Note that $R_0(t)=-\infty$ if and only if $\tau \leq t$. Extending arguments of Iscoe~\cite{I1988} one can show that $R_0(u(0))<\infty$ implies $R_0(u(t))<\infty$ for all $t>0$. In \cite[Remark~2.8]{K2017}, $\SC_{tem}^+$-valued left- and right-upper measures were derived as analogues to the law of $\xi_t^{(-\infty,0] \cap \ZZ}, \xi_t^{[0,\infty) \cap \ZZ}, t>0$ and first rough estimates on marker-speeds obtained in Section~4.

Let $R_0(t)$ as in \eqref{equ:def-R0}. Using $R_0$ as a \textit{(right) wavefront marker}, we look for so-called \textit{travelling wave solutions} to \eqref{equ:SPDE}, that is solutions with the properties
\eqan{
  & (i) \qquad R_0(u(t)) \in (-\infty,\infty) \mbox{ for all } t \geq 0, \\
  & (ii) \quad\ \ u(t,\fatcdot + R_0(u(t))) \mbox{ is a stationary process in time}. \nn
}
Travelling wave solutions are of interest in models from physics, chemistry and biology (cf. Aronson and Weinberger~\cite{AW1975}). In \cite{T1996}, the existence of travelling wave solutions for $\theta>\theta_c$ with non-negative wave speed, based on solutions to \eqref{equ:SPDE} with Heavyside initial data of the form $H_0(x) \equiv 1 \wedge (-x \vee 0)$ is established. In \cite[Section~4]{T1996} it is established that for $\theta>\theta_c$ any travelling wave solution has an asymptotic (possibly random) \textit{wave speed}
\eqn{
  R_0(u(t))/t \rightarrow A \in \left[ 0,2\theta^{1/2} \right] \mbox{ for } t \rightarrow \infty \mbox{ almost surely.}
}
It is further shown that for $\theta$ big enough, $A$ is close to $2\theta^{1/2}$ with high probability. Strict positivity of $A$ remains an open problem if $\theta$ is of moderate size. Further open problems that arise are for instance if the wave-speed is deterministic or random, the dependence of the speed on the parameter $\theta$, the uniqueness of the distribution of the travelling waves and the shape of the wavefront. In this article, we make substantial progress to resolve the first two problems.

An alternative construction of travelling wave solutions is given in \cite{K2017} in case $\theta>\theta_c$. The initial Heavyside-condition $H_0$ is replaced by an arbitrary non-negative continuous function $g_0 \in \SC_c^+$ with compact support. As extinction (that is $\tau = \inf\{t \geq 0: u_t \equiv 0 \} = \inf\{t \geq 0: \langle u_t,1 \rangle = 0 \} < \infty$) happens with probability $0<\PP_{g_0}(\tau<\infty)<1$, we condition on non-extinction to obtain well-defined travelling wave solutions $\nu^{(g_0)}$. Note that $\nu^{(g_0)}$ denotes any subsequential limit obtained by this construction. The uniqueness of the limiting distribution remained as an open problem.

Recall the discussion below \eqref{equ:def-R0}. For $T>0$, denote by $\upsilon_T$ the \textit{left-upper measure} on $\SC_{tem}^+$ corresponding to $\SL\big(\xi_T^{(-\infty,0] \cap \ZZ}\big)$ (here, $\SL$ denotes ``law") in the contact process setup. By \cite[Remark~2.8]{K2017}, 
\eqn{
\lbeq{equ:laplace-left}
  \int e^{-2 \langle f,g \rangle} \upsilon_T(df) = \PP\!\left( \big\langle \1_{(-\infty,0)}(\fatcdot) , u_T^{(g)} \big\rangle = 0 \right), \quad \mbox{ for } g \in \SC_{tem}^+.
}
Furthermore, for $u_0 \in \SC_{tem}^+$ with $R_0(u_0) \leq 0$ and $T>0$ arbitrarily fixed one obtains the existence of a coupling with a random continuous process $(u_{T+t}^{*,l})_{t \geq 0}$ with values in $\SC_{tem}^+$ such that 
\eqn{
\lbeq{equ:coupling-left}
  u_{T+t}^{(u_0)}(x) \leq u_{T+t}^{*,l}(x) \quad \mbox{ for all } x \in \RR, t \geq 0 \mbox{ almost surely,}
}
where $\SL((u_{T+t}^{*,l})_{t \geq 0})=\PP_{\upsilon_T}$ holds. Note in particular that such a coupling yields
\eqn{
\lbeq{equ:coupling-R-left}
  R_0(u_{T+t}^{(u_0)}) \leq R_0(u_{T+t}^{*,l}) \mbox{ for all } t \geq 0 \mbox{ almost surely.} 
}
By symmetry, analogous results hold for a \textit{right-upper measure}, say $\kappa_T$, where we make use of the notations $L_0(f) \equiv \inf\{ x \in \RR: f(x)>0 \}$ and $u_{T+t}^{*,r}$ instead. In the appendix (cf. \eqref{equ:def-nu-star-lr}) we indicate how to modify the techniques of \cite{K2017} to construct travelling wave solutions $\nu^{*,l}$ respectively $\nu^{*,r}$ from $u^{*,l}$ respectively $u^{*,r}$.

The first main result of this article is a pre-step to a result in the spirit of the first case of \eqref{equ:asconv-markers}, where the almost sure convergence is replaced by $\SL^1$-convergence. 
%
%
\begin{proposition}
\label{PRO:alpha}
For all $\theta>\theta_c$, the limit $B \equiv B(\theta) \equiv \lim_{t \rightarrow \infty} \EE\!\left[ R_0\big( u_t^{*,l} \big) \right] / t$ exists and is strictly positive. Moreover, for all $\theta_c < \underline{\theta} \leq \theta_1 \leq \theta_2 \leq \overline{\theta}$, there exists a constant $C=C(\underline{\theta}, \overline{\theta})$ such that
\eqn{
\label{equ:alpha-incr}
  B(\theta_2)-B(\theta_1) \geq C(\theta_2-\theta_1).
}
\end{proposition}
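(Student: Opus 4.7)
I would split Proposition~\ref{PRO:alpha} into existence of $B$, strict positivity, and the quantitative monotonicity, using the coupling \eqref{equ:coupling-left}--\eqref{equ:coupling-R-left} throughout and, for the last two, a block/renewal construction in the spirit of Durrett's comparisons to supercritical oriented percolation. For existence, let $F(t)\equiv\EE[R_0(u_t^{*,l})]$. Using the strong Markov property at time $s$ together with spatial translation invariance of space-time white noise, I would shift $u_s^{*,l}$ to the left by $R\equiv R_0(u_s^{*,l})$, so that the shifted configuration has right marker at $0$; applying \eqref{equ:coupling-left}--\eqref{equ:coupling-R-left} to this shifted configuration yields
\eqn{
  R_0\bigl(u_{s+T+t}^{*,l}\bigr)\leq R+R_0\bigl(\tilde u_{T+t}^{*,l}\bigr),
}
where $\tilde u^{*,l}$ is an independent copy of $u^{*,l}$. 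Taking expectations gives the approximate subadditivity $F(s+T+t)\leq F(s)+F(T+t)$. A Fekete-type argument, together with the rough marker-speed upper bounds from \cite[Section~4]{K2017} that keep $F(t)/t$ bounded above, then yields $B=\lim_{t\to\infty}F(t)/t\in\RR$.

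For strict positivity (the crux of the proof), because only $\theta>\theta_c$ is assumed, one cannot dominate by a linearized PDE. I would set up a block/renewal construction inspired by Mueller--Tribe's approximation of \eqref{equ:SPDE} by rescaled long-range contact processes \cite{MT1995}. The goal is constants $T_0,\delta,L>0$ and $p\in(0,1)$ such that, uniformly over $f\in\SC_{tem}^+$ dominating a fixed bump $g_0\in\SC_c^+\setminus\{0\}$ supported in $[-L,0]$, the event $\{u_{T_0}^{(f)}\geq g_0(\fatcdot-\delta)\}$ has probability at least $p$. The existence of such a block rests on survival for $\theta>\theta_c$ from \cite[Theorem~1]{MT1994} and on the stopping-time and averaging techniques advertised in the abstract, which compensate for the lack of a genuine dual process at moderate supercriticality. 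Feeding the block through the coupling \eqref{equ:coupling-left}--\eqref{equ:coupling-R-left} produces a supercritical $1$-dependent oriented-percolation lower bound on the position of the rightmost ``live'' block, giving $F(t)\geq(p\delta/T_0)t-O(1)$ and hence $B>0$.

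For the quantitative monotonicity, couple $u^{(\theta_1)}$ and $u^{(\theta_2)}$ using a common noise. Since the drift $\theta u-u^2$ is nondecreasing in $\theta$, one obtains $u^{(\theta_2)}\geq u^{(\theta_1)}$ pointwise and hence $B(\theta_2)\geq B(\theta_1)$ at once. To upgrade this to a linear lower bound in $\theta_2-\theta_1$, I would repeat the block construction with explicit $\theta$-dependence: on $[\underline\theta,\overline\theta]$ the block success probability $p=p(\theta)$ is strictly increasing, with $p(\theta_2)-p(\theta_1)\geq c_1(\theta_2-\theta_1)$ obtained from a direct SPDE comparison that tracks the extra $(\theta_2-\theta_1)u$ drift inside a single block. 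The same renewal/percolation estimate then upgrades this to $B(\theta_2)-B(\theta_1)\geq C(\theta_2-\theta_1)$ with $C=C(\underline\theta,\overline\theta)>0$. The hardest step is strict positivity: the long range of the interaction and moderate supercriticality rule out softer comparisons and force the full block construction, with the quantitative monotonicity riding on a careful parameter-dependent refinement of that same block.
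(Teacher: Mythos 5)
Your existence argument matches the paper's: superadditivity-type estimate for $\EE[R_0(u_t^{*,l})]$ via the strong Markov property, spatial shift invariance, and the coupling \eqref{equ:coupling-left}--\eqref{equ:coupling-R-left}, combined with a uniform bound on $\EE\big[|R_0(u_T^{*,l})|\big]/T$ to apply Fekete's lemma. That part is fine.

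Your approach to strict positivity has a genuine gap, and it is precisely the gap the paper is written to circumvent. You propose a block/renewal comparison: find $T_0,\delta,L>0$ and $p>0$ such that starting from anything dominating a bump $g_0$ supported in $[-L,0]$, at time $T_0$ the solution dominates $g_0(\fatcdot-\delta)$ with probability $\geq p$, and then feed this through a supercritical oriented-percolation estimate. You justify the block by appealing to ``survival for $\theta>\theta_c$'' plus ``stopping-time and averaging techniques.'' But survival alone does not yield rightward displacement of mass for a long-range model: the paper explicitly frames this (the lack of \eqref{equ:contact-prop1}--\eqref{equ:contact-prop2}, the failure of the nearest-neighbor edge arguments) as the core difficulty, and it remarks after \eqref{equ:asconv-markers} that for moderate $\theta$ strict positivity of the wave speed was an open problem. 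Asserting the block is therefore essentially asserting the conclusion. The paper takes a different, and crucially, \emph{non-circular} route: it first shows $B(\theta)\geq 0$ for all $\theta>\theta_c$ (Lemma~\ref{LEM:alpha_nonneg}) by comparing $u^{*,l}$ with a Tribe travelling wave, whose speed is known from \cite[Proposition~4.1]{T1996} to be nonnegative, together with the tail estimate of Corollary~\ref{COR:exp-ustarl-bd-below} and Fatou. It then proves, in Proposition~\ref{PRO:exp-alpha-increase}, a quantitative lower bound on $\alpha_T(\theta_2)-\alpha_T(\theta_1)$ of order $\theta_2-\theta_1$, and strict positivity \emph{follows} from combining these two: $B(\theta) \geq B\big(\theta_c + (\theta-\theta_c)/2\big) + C(\theta-\theta_c)/2 > 0$. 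The order of deduction is therefore the opposite of what you propose.

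Your treatment of the increment estimate has the same issue: you derive it from a $\theta$-dependent refinement of the block success probability, which again presupposes the block. The paper instead proves Proposition~\ref{PRO:exp-alpha-increase} directly, by a \emph{$\theta$-coupling} that tracks the surplus $\Delta_S^{*,l}(\theta_{m-1},\theta_m)$ created by raising $\theta$ by $\delta/T$, and by the averaging device of $\alpha_T$ together with stopping times (Proposition~\ref{PRO:stopping-time-S}) to guarantee that this surplus pushes the right marker forward with the right probability. That argument is self-contained and does not require any a priori block/percolation input. You should also note that the paper needs the non-negativity Lemma~\ref{LEM:alpha_nonneg} as a standalone ingredient, which your proposal does not address.
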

%
%
We note that the strict positivity of $B(\theta)$ follows from \eqref{equ:alpha-incr}, once $B(\theta) \geq 0$ is established for all $\theta>\theta_c$. Our approach relies on establishing the estimate \eqref{equ:alpha-incr} along the lines of the corresponding result for contact processes in \cite[Lemma~4.2]{D1980}. 

Recall from above that $H_0$ denotes Heavyside initial data of the form $H_0(x) \equiv 1 \wedge (-x \vee 0)$.
%
%
\begin{definition}
Let 
\eqn{
\lbeq{equ:def-mittl}
  \mittl = \{ f \in \SC_{tem}^+: \exists\, x_0 \in \RR, \epsilon>0: f(x) \geq \epsilon H_0(x-x_0) \mbox{ for all } x \in \RR \}
}
and $\mittl^R = \{ f \in \mittl: R_0(f) \in \RR \}$.
\end{definition}
%
%

Our second main result concerns the limiting speeds of several right markers. It establishes in particular the existence of at least one travelling wave with positive deterministic speed.
%
%
\begin{theorem}
\label{THM:speeds}
Let $\theta>\theta_c$. Then 
\eqn{
\label{equ:result2a}
  R_0\big( u_T^{*,l} \big) / T \rightarrow B \mbox{ as } T \rightarrow \infty \mbox{ almost surely and in $\SL^1$.}
}
For any travelling wave solution $\nu^{*,l}$,
\eqn{
\lbeq{equ:result2b}
  R_0\big( u_T^{( \nu^{*,l} )} \big) / T \rightarrow B \mbox{ almost surely as } T \rightarrow \infty
}
and $\big( 0 \vee R_0\big( u_T^{(\nu^{*,l})} \big) \big)/T \rightarrow B$ in $\SL^1$.

For initial conditions $\psi \in \mittl^R$, 
\eqn{
  R_0\big( u_T^{(\psi)} \big)/T \rightarrow B \mbox{ as } T \rightarrow \infty \mbox{ in probability and in } \SL^1. 
}
For any travelling wave solution $\nu^{(\psi)}$,
\eqn{
  R_0\big( u_T^{(\nu^{(\psi)})} \big)/T \rightarrow B \mbox{ almost surely as } T \rightarrow \infty
}
and $\big( 0 \vee R_0\big( u_T^{(\nu^{(\psi)})} \big) \big)/T \rightarrow B$ in $\SL^1$.
\end{theorem}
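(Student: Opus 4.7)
The plan reduces the four convergence statements to two key ingredients: the convergence $\EE[R_0(u_t^{*,l})]/t \to B$ from Proposition~\ref{PRO:alpha}, and an almost sure statement along travelling wave solutions obtained from Birkhoff's ergodic theorem. The couplings (\ref{equ:coupling-R-left}) and its $u^{*,r}$-analogue then transfer these across the four cases.

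For a travelling wave law $\nu^{*,l}$ (or $\nu^{(\psi)}$), property $(ii)$ of its definition says $\tilde{u}_t \equiv u_t(\fatcdot + R_0(u_t))$ is stationary in $t$. The increments $R_0(u_{n+1}) - R_0(u_n)$ are measurable functionals of $\tilde{u}_n$ and of the noise increment on $(n,n+1]$, hence form a stationary sequence. Birkhoff's theorem yields $R_0(u_n^{(\nu^{*,l})})/n \to c$ almost surely for some constant $c$, and a stationarity-plus-Borel--Cantelli argument controlling $\max_{t \in [n,n+1]} |R_0(u_t) - R_0(u_n)|$ upgrades the statement to continuous $t$; the cutoff $0 \vee R_0$ in the $\SL^1$ part of (\ref{equ:result2b}) is there precisely to absorb atypical initial configurations of $\nu^{*,l}$. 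Ergodicity of the shifted law, needed for $c$ to be deterministic, I would verify by realising $\nu^{*,l}$ as a subsequential weak limit of $u_T^{*,l}(\fatcdot + R_0(u_T^{*,l}))$ as sketched in the appendix, and inheriting tail-triviality from the mixing (\ref{equ:KHT}) under the upper invariant law $\mu$.

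To identify $c = B$ and deduce (\ref{equ:result2a}), I would couple $\nu^{*,l}$ to $u^{*,l}$ along the same diagonal sequence defining $\nu^{*,l}$ so that $|R_0(u_t^{(\nu^{*,l})}) - R_0(u_t^{*,l})|$ is tight in $t$; taking expectations and applying Proposition~\ref{PRO:alpha} then pins $c = B$, while the same sandwich delivers the a.s.\ statement (\ref{equ:result2a}) for $u^{*,l}$. $\SL^1$-convergence follows from a.s.\ convergence together with uniform integrability of $R_0(u_t^{*,l})/t$, which I would extract from the quadratic-moment estimates on right markers in \cite[Section~4]{K2017}. For $\psi \in \mittl^R$, the inequality $\psi \geq \epsilon H_0(\fatcdot - x_0)$ combined with the subadditivity coupling in \cite[Remark~2.1(i)]{K2017} bounds $R_0(u_t^{(\psi)})$ from below by $R_0(u_t^{(\epsilon H_0(\fatcdot - x_0))})$; a further coupling with $u^{*,l}$ via (\ref{equ:coupling-R-left}) restricted to the positive-probability survival event then yields $R_0(u_t^{(\psi)})/t \geq B - o_\PP(1)$, while $R_0(\psi) < \infty$ delivers the matching upper bound by domination from the right by a shifted $u^{*,r}$. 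The $\SL^1$-upgrade reuses the uniform integrability, and the a.s.\ statement for $\nu^{(\psi)}$ is the travelling wave argument verbatim.

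The main obstacle is the ergodicity of the shifted travelling wave law in the second paragraph: without a dual particle system for (\ref{equ:SPDE}) one cannot imitate \cite{G1981} directly, and I would fall back on the Laplace-functional self-duality (\ref{equ:SPDE-self-duality}) combined with the mixing (\ref{equ:KHT}) to trivialise the tail $\sigma$-algebra along the lines of \cite[Theorem~1]{HT2004}.
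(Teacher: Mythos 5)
Your plan hinges on two assertions that you do not actually establish, and the paper's proof is designed precisely to sidestep both of them.

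\textbf{Ergodicity of the shifted travelling wave law.} You correctly identify this as the main obstacle, but the fallback you propose -- trivialising the tail $\sigma$-algebra via self-duality \eqref{equ:SPDE-self-duality} and mixing under $\mu$ -- is not a worked-out argument: $\nu^{*,l}$ is not the upper invariant measure $\mu$, and there is no transfer of tail-triviality from $\mu$ to the shifted wave law. The paper never proves ergodicity. It instead uses Tribe's already-established almost sure convergence $R_0(u_t^{(\nu)})/t \to A^{(\nu)}$ (cf.\ \eqref{equ:Tribe-tv-speed}) to a possibly \emph{random} limit, and then forces $A^{(\nu^{*,l})}$ to be deterministic by a sandwich: Lemma~\ref{LEM:EA-B} computes $\EE[A^{(\nu^{*,l})}]=B$, and Proposition~\ref{PRO:cvg-upper-A-B} shows $A^{(\nu^{*,l})} \leq B$ almost surely. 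A random variable that is almost surely bounded above by its own mean is almost surely constant. The almost sure upper bound is obtained not by applying an ergodic theorem to the stationary (but possibly non-ergodic) travelling wave, but by telescoping $R_0(u_{T_0+nT}^{*,l})$ and \emph{stochastically dominating} each increment by an i.i.d.\ copy of $R_0(u_T^{*,l})$; the independence of these copies is genuine because the dominating measure $\upsilon_T$ is built from initial conditions $\zeta_N$ with $\zeta_N(x)\uparrow\infty$ on $x<0$, so restarting under $\upsilon_T$ forgets the preceding configuration. The LLN is then applied to an honestly i.i.d.\ sequence, with no ergodicity assumption needed on the wave.

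\textbf{The lower bound for $\psi\in\mittl^R$.} You propose to restrict the coupling \eqref{equ:coupling-R-left} to the survival event to get $R_0(u_t^{(\psi)})/t \geq B - o_\PP(1)$. But \eqref{equ:coupling-R-left} gives an \emph{upper} bound $R_0(u_t^{(\psi)}) \leq R_0(u_t^{*,l})$ (after shifting); it yields nothing from below, and conditioning on survival does not change this. The paper's Lemma~\ref{LEM:speed-mittl} instead computes $\PP(R_0(u_T^{(\psi)})>cT)$ directly via the duality formula \eqref{equ:laplace-left}, reducing the lower bound to showing that $\langle \1_{(-\infty,0]}, u_T^{*,r}(\fatcdot - cT)\rangle$ is large with high probability for $c<B$, which in turn uses the already-established speed of $u^{*,r}$ together with the extinction estimate from \cite[(2.11)]{K2017}. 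Your route has no substitute for this step.

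Finally, the intermediate claim that $|R_0(u_t^{(\nu^{*,l})}) - R_0(u_t^{*,l})|$ is tight in $t$ along a diagonal coupling is not established anywhere in the paper and is not needed by its argument; asserting it without proof hides another gap. Your Birkhoff-on-increments idea is a reasonable instinct, but without a proof of ergodicity the constant $c$ remains a conditional expectation, and you would still need the paper's sandwich (or equivalent) to close the argument.
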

%

Recall \eqref{equ:asconv-markers}. It remains to prove, for instance, that for $\theta<\theta_c$, $\big( R_0\big( u_T^{*,l} \big) \vee 0 \big) / T$ converges (in some sense) to $0$. In combination with \eqref{equ:result2a} this would then show that the edge speeds of solutions starting in left- or right-upper measures characterize critical values. This is work in progress. 
%

For the remainder, let us recall some notation and Theorem~2.2 from \cite{T1996} that are often used in the present article.

%
\begin{notation}[Notation from \cite{T1996}, also see Subsection~1.2 of \cite{K2017}]
\label{NOTA:tribe}
\begin{enumerate}
\item
Equip $\SC_{tem}^+$ with the topology given by the norms $\normx{f}{\lambda}$ for $\lambda>0$. Note that $d(f,g) \equiv \sum_{n \in \NN} (1 \wedge $ $\normx{f-g}{1/n})$ metrizes this topology and makes $\SC_{tem}^+$ a Polish space. Let $(\SC([0,\infty),\SC_{tem}^+),\SU,\SU_t,U(t))$ be continuous path space, the canonical right continuous filtration and the coordinate variables.
\item
In  \cite[(2.4)--(2.5)]{T1996}, the more general equation
\eqn{
\lbeq{equ:SPDE-Tribe}
  \partial_t u = \partial_{xx} u + \alpha + \theta u - \beta u - \gamma u^2 + u^{\frac{1}{2}} dW, \qquad t>0, x \in \RR, \qquad u(0,x) = u_0(x) \geq 0
}
with $\alpha, \beta, \gamma \in \SC([0,\infty),\SC_{tem}^+)$ is under consideration. We may interpret $\alpha$ as the immigration rate, $\theta-\beta$ as the mass creation-annihilation rate and $\gamma$ as the overcrowding rate.

A solution to \eqref{equ:SPDE-Tribe} consists of a filtered probability space $(\Omega,\SF,\SF_t,\PP)$, an adapted white noise $W$ and an adapted continuous $\SC_{tem}^+$ valued process $u(t)$ such that for all $\phi \in \SC_c^\infty$, the space of infinitely differentiable functions on $\RR$ with compact support,
\eqan{
  \langle u(t),\phi \rangle =& \langle u(0),\phi \rangle + \int_0^t \langle u(s),\phi_{xx}+(\theta-\beta(s)-\gamma(s)u(s)) \phi \rangle ds \\
  & + \int_0^t \langle \alpha(s),\phi \rangle ds + \int_0^t \int |u(s,x)|^{1/2} \phi(x) dW_{x,s}. \nn
}
If in addition $\PP(u(0,x)=f(x))=1$ then we say the solution $u$ starts at $f$.
\end{enumerate}
\end{notation}

\begin{theorem}[Theorem~2.2a)--b) of \cite{T1996}] \hfill
\label{THM:tribe}
\begin{enumerate}
\item[a)] For all $f \in \SC_{tem}^+$ there is a solution to \eqref{equ:SPDE-Tribe} started at $f$.
\item[b)] All solutions to \eqref{equ:SPDE-Tribe} started at $f$ have the same law which we denote by $Q^{f,\alpha,\beta,\gamma}$. The map $(f,\alpha,\beta,\gamma) \rightarrow Q^{f,\alpha,\beta,\gamma}$ is continuous. The laws $Q^{f,\alpha,\beta,\gamma}$ for $f \in \SC_{tem}^+$ form a strong Markov family.
\item[c)] For $R, T>0$ let $\SU_{R,T} = \sigma(U(t,x): t \leq T, |x| \leq R)$. Then the two laws $Q^{f,\alpha,\beta,\gamma}$, $Q^{f,\alpha,0,0}$ are mutually absolutely continuous on $\SU_{R,T}$.
\end{enumerate}
\end{theorem}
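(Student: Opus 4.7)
The plan is to establish the three assertions in the order (a), (c), (b). The overarching strategy is to handle the Dawson--Watanabe-type noise $u^{1/2}\, dW$ by approximation, treat the drift terms $\alpha + (\theta-\beta)u - \gamma u^2$ as perturbations, and use the fact that only the purely immigration-driven equation has a clean probabilistic structure (moment duality), while everything else is reduced to it by a Girsanov-type comparison.

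For (a), I would first solve the simpler immigration SPDE $\partial_t u = \partial_{xx} u + \alpha + u^{1/2} dW$ by standard Dawson--Watanabe theory: construct approximating solutions $u^{(\epsilon)}$, either via Lipschitz smoothing of $u \mapsto u^{1/2}$ or as scaling limits of branching particle systems, and prove tightness in $\SC([0,\infty), \SC_{tem}^+)$ using heat-kernel second-moment bounds in the mild formulation together with Kolmogorov's criterion. To bring in $\theta u - \beta u - \gamma u^2$, I would run a Picard-type approximation: given $v^{(n)}$, solve the SPDE obtained by inserting $v^{(n)}$ into the \emph{coefficients} (so the equation is linear in the unknown with a time-dependent mass-creation/annihilation rate $\theta - \beta - \gamma v^{(n)}$), truncating $v^{(n)}$ against a growing spatial cutoff to keep the tempered moment estimates uniform. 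Extracting a subsequential limit and verifying the martingale problem yields a solution, with the delicate step being that the limit genuinely lives in $\SC_{tem}^+$ for all times.

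For (c), I would apply a localized Girsanov transformation. View the extra drift $-(\beta+\gamma u) u = -(\beta+\gamma u) u^{1/2} \cdot u^{1/2}$. Fix $R'>R$ large enough that the heat-kernel region of influence on $[0,T]\times[-R,R]$ lies (up to a small Gaussian tail error one can absorb) inside $[-R',R']$, and propose the density
\eqn{
  Z_T^{R'} = \exp\!\left( -\int_0^T \!\!\int_{|x|\le R'} (\beta+\gamma u) u^{1/2}\, dW_{x,s} - \tfrac12 \int_0^T \!\!\int_{|x|\le R'} (\beta+\gamma u)^2 u \, dx\, ds \right).
}
The main technical step is verifying Novikov's condition for $Z_T^{R'}$, which reduces to exponential moment estimates of the form $\EE\!\left[\exp\!\left(C\!\int_0^T\!\!\int_{|x|\le R'} u^2 dx\, ds\right)\right] < \infty$; these follow from Iscoe-type exponential bounds on super-Brownian motion with immigration restricted to bounded spatial windows, combined with a localization by stopping times on the $L^2([0,T]\times[-R',R'])$-norm of $u$. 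Once $Z_T^{R'}$ is a true martingale, the law change recovers $Q^{f,\alpha,\beta,\gamma}|_{\SU_{R,T}}$ from $Q^{f,\alpha,0,0}|_{\SU_{R,T}}$ and vice versa.

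For (b), weak uniqueness is the crux. When $\beta=\gamma=0$, the martingale problem for $\partial_t u = \partial_{xx} u + \alpha + u^{1/2} dW$ (up to the linear $\theta u$ which is absorbed by an exponential tilt $e^{\theta t}$) is dual to a branching Brownian particle system; the duality identity pins down $\EE[\exp(-\langle u(t),\phi\rangle)]$ for all $\phi \in \SC_c^+$ and hence all finite-dimensional distributions, giving uniqueness in law. Combined with (c), uniqueness transfers to $Q^{f,\alpha,\beta,\gamma}$ on each $\SU_{R,T}$, and by taking $R, T \to \infty$ on all of $\SC([0,\infty),\SC_{tem}^+)$. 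Continuity of $(f,\alpha,\beta,\gamma) \mapsto Q^{f,\alpha,\beta,\gamma}$ then follows by compactness/tightness of any approximating scheme together with the uniqueness just proved: any two cluster points coincide. The strong Markov property is routine given a well-posed continuous martingale problem and continuity in the initial datum, via the standard regular conditional distribution argument. The principal obstacles I anticipate are (i) the non-Lipschitz noise coefficient $u^{1/2}$, which rules out standard fixed-point pathwise uniqueness and forces the duality/Girsanov route, and (ii) the Novikov verification in (c), which is delicate because $\beta, \gamma \in \SC_{tem}^+$ need not be bounded and $u$ itself is only tempered—a two-scale (spatial and $L^2$-norm) localization is essential.
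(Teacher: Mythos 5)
This statement is quoted verbatim from Tribe~\cite{T1996}; the present paper imports it as a black box and contains no proof, so there is nothing in-paper to compare your attempt against. Evaluating the reconstruction on its own merits, there is a genuine gap in part (c). The Novikov integrand you write, $(\beta+\gamma u)^2 u$, is cubic in $u$ wherever $\gamma>0$ (not $\sim u^2$ as your sketch then asserts), and even the quadratic functional $\int_0^T\!\int_{|x|\le R'} u^2\,dx\,ds$ does not have finite exponential moments under the reference law $Q^{f,\alpha,0,0}$: a super-Brownian density with immigration has at most exponentially decaying upper tails, and by Cauchy--Schwarz $\int\!\int u^2 \gtrsim \big(\int\!\int u\big)^2$, so $\PP\big(\int\!\int u^2>\lambda\big)$ decays no faster than $e^{-c\sqrt{\lambda}}$, which makes $\EE\big[\exp\big(\epsilon\int\!\int u^2\big)\big]=\infty$ for every $\epsilon>0$. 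Iscoe-type exponential bounds control linear functionals $\langle u_t,\phi\rangle$, not $\int u^2$ or $\int u^3$. The stopping-time localization you propose only makes $Z^{R'}$ a strictly positive local martingale; without $\EE[Z_T^{R'}]=1$ (which Novikov would have supplied) you obtain at best one-sided, not mutual, absolute continuity. Tribe's actual argument for (c) does not verify Novikov; it exploits the martingale-problem structure and moment bounds to show the stopped densities converge appropriately, which is a genuinely different route.

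Two smaller remarks on (b): the branching-Brownian duality you invoke identifies Laplace functionals only for the pure immigration equation, and transferring uniqueness via (c) gives agreement only on each $\SU_{R,T}$, so an additional consistency argument is needed to pin down a unique law on all of $\SC([0,\infty),\SC_{tem}^+)$. These are repairable; the Novikov step in (c) as written is not.
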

%

Note that Tribe \cite{T1996} later uses the notation $Q^f \equiv Q^{f,0,0,1}$ where we use $\PP_f$. Also, when the parameter $\theta$ in \eqref{equ:SPDE-Tribe} is not clear from the context, we write $Q^{f,\alpha,\beta,\gamma}(\theta)$.

Finally, let $\stackrel{\SD}{=}$ denote equality in distribution. Constants may change from line to line. We drop $\theta$ if the context is clear. \medskip

\textit{Outline.}
The paper is organized as follows. Sections~\ref{SEC:prelim}--\ref{SEC:increase-at-front} are dedicated to the proof of Proposition~\ref{PRO:alpha}, that is, the positivity of $B(\theta)$ for all $\theta>\theta_c$. In Subsections~\ref{SUBSEC:terms}--\ref{SUBSEC:prelim-est} the groundwork is laid for the proof of Proposition~\ref{PRO:alpha}. In Subsection~\ref{SUBSEC:estimates-right-markers} we already state the estimate that lies at the heart of the proof of Proposition~\ref{PRO:alpha}, see Proposition~\ref{PRO:exp-alpha-increase}. Its proof follows in Subsection~\ref{SUBSEC:Proof-theta-prop}. A substantial part of the proof goes into an estimate on the gain of mass at the front due to an increase in $\theta$, see Proposition~\ref{PRO:stopping-time-S}. We therefore postpone the proof of the latter to Section~\ref{SEC:increase-at-front}.

Section~\ref{SEC:speed-vg} is dedicated to the proof of Theorem~\ref{THM:speeds}, that is, the convergence of the linear speed of right markers to $B(\theta)$. In the appendix, Section~\ref{SEC:appendix}, the construction of travelling wave solutions from \cite{K2017} is extended to include $\nu^{*,l}$ and $\nu^{(\psi)}$ with initial conditions $\psi \in \mittl^R$ (cf. \eqref{equ:def-mittl} and below). Coupling techniques that are often used are summarized for reference.

%
\section{Preliminary results}
\label{SEC:prelim}
%
%
\subsection{The terms under investigation}
\label{SUBSEC:terms}
%

Let $\theta>\theta_c$ be arbitrarily fixed. Recall the sequence of laws $\big( \upsilon_T \big)_{T>0}$ on $\SC_{tem}^+$ and $\big( u_{T+t}^{*,l} \big)_{t \geq 0}$ for $T>0$ fixed satisfying $\SL\big( u_{T+t}^{*,l} \big) = \upsilon_{T+t}$ from \eqref{equ:laplace-left}--\eqref{equ:coupling-left}. Note that $\upsilon_{\fatcdot}=\upsilon_{\fatcdot}(\theta)$ and $u_{T+\fatcdot}^{*,l}=u_{T+\fatcdot}^{*,l}(\theta)$. From \cite[Corollary~4.7 and Notation~1.3-4.]{K2017} we conclude that the double-integrals below are well-defined with values in $[-\infty,\infty)$. Let
\eqn{
  \alpha_T(\theta) = \alpha_T = \frac{2}{T} \int_0^{T/2} \EE\!\left[ R_0(u_{T/2+s}^{*,l}) \right] ds.
}
In fact, $\EE\big[ R_0\big( u_T^{*,l} \big) \big] / T$ and $\alpha_T/T$ are uniformly bounded in $T \geq 1$ as we conclude from \cite[Corollary~4.7]{K2017} and the next lemma.
%
%
%
\subsection{Estimates on right-markers}
\label{SUBSEC:estimates-right-markers}
%
%

\framebox{Note that in this subsection, for all $\theta_c < \underline{\theta} \leq \theta \leq \overline{\theta}$ the constants to follow only depend on $\theta$ through $\underline{\theta}, \overline{\theta}$.}
%
%
\begin{lemma}
\label{LEM:speed-bound-mittl}
For all $u_0 \in \mittl$, there exists a constant $C=C(u_0)>0$ such that 
\eqn{
  \EE_{u_0}[ 0 \vee ( -R_0( u_t ) ) ] \leq C (1+t)
}
holds uniformly in $t \geq 0$. Moreover, there exist $C_i=C_i(u_0)>0, i=1,2$ such that for all $M>0$,
\eqn{
\lbeq{equ:lower-bd-mittl}
  \EE_{u_0}\!\left[ \frac{-R_0( u_t )}{t} \1_{\big\{ R_0( u_t ) < -Mt \big\}} \right] \leq C_1 e^{-C_2 M}
}
holds uniformly in $t \geq 1$.
\end{lemma}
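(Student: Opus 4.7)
The plan is first to observe that the second bound~\eqref{equ:lower-bd-mittl} implies the first. Setting $X \equiv 0 \vee (-R_0(u_t))/t$, Markov's inequality gives $\PP(X > M) \leq \EE[X \1_{\{X>M\}}]/M \leq (C_1/M) e^{-C_2 M}$, which is integrable in $M$ over $[1,\infty)$, yielding $\EE[X]$ uniformly bounded in $t \geq 1$. The short-time case $t \in [0,1]$ is handled by a direct Gaussian heat-kernel displacement estimate for solutions of~\eqref{equ:SPDE} with initial data in $\mittl$, available from Theorem~\ref{THM:tribe} and the moment bounds of \cite[Section~2]{K2017}. So the whole proof reduces to~\eqref{equ:lower-bd-mittl}.

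By the spatial translation invariance of~\eqref{equ:SPDE}, I may assume $x_0 = 0$, so $u_0(x) \geq \epsilon H_0(x)$, and in particular $u_0 \geq \epsilon$ on $(-\infty, -1]$. To estimate $\PP_{u_0}(R_0(u_t) \leq -Mt)$, I translate the initial condition by $Mt$: with $\tilde u_0(x) \equiv u_0(x - Mt)$, one has $\PP_{u_0}(R_0(u_t) \leq -Mt) = \PP_{\tilde u_0}(R_0(u_t) \leq 0)$, and $\tilde u_0 \geq \epsilon$ on $(-\infty, Mt - 1]$. The symmetric analog of~\eqref{equ:laplace-left} for the right-upper measure $\kappa_t$ (see the discussion surrounding~\eqref{equ:coupling-R-left}) then gives
\[
  \PP_{\tilde u_0}(R_0(u_t) \leq 0) = \int e^{-2\langle f, \tilde u_0 \rangle}\, \kappa_t(df),
\]
converting the extinction-on-the-right event into a Laplace transform against a random configuration $f \sim \kappa_t$.

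The core of the argument is then to show that $\langle f, \tilde u_0 \rangle$ is of order $Mt$ with overwhelming $\kappa_t$-probability. Using the rough bounds on the left extent $L_0(f)$ of configurations under $\kappa_t$ from \cite[Corollary~4.7 and Section~4]{K2017} (so that $L_0(f) \geq -ct$ with exponentially-small failure probability), together with positive-density lower bounds on $f$ on its support via the non-degeneracy of the upper invariant measure $\mu$ (cf.~\cite[Remark~2.5]{K2017} and~\eqref{equ:KHT}, together with the translation invariance proved in \cite{HT2004}), one obtains
\[
  \langle f, \tilde u_0 \rangle \;\geq\; \epsilon \int_{L_0(f)}^{Mt - 1} f(x)\, dx \;\geq\; c(M+1)\, t
\]
on a $\kappa_t$-event of probability at least $1 - e^{-c' t}$. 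Integrating the exponential Laplace bound then yields $\PP_{u_0}(R_0(u_t) \leq -Mt) \leq C e^{-C' M}$ uniformly in $t \geq 1$, and the weighted form~\eqref{equ:lower-bd-mittl} follows from $\EE[(-R_0/t)\,\1_{\{R_0 < -Mt\}}] = M\,\PP(R_0<-Mt) + \int_M^\infty \PP(R_0<-st)\, ds$ and integrating the tail.

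The main obstacle is the quantitative mass lower bound on $\int_{L_0(f)}^{0} f(x)\, dx$ under $f \sim \kappa_t$, which must be uniform in $t \geq 1$. Because marker-speed estimates for $\kappa_t$ are developed in parallel in the present Subsection~\ref{SUBSEC:estimates-right-markers}, it is important to use only the crude, already-available estimates from \cite[Corollary~4.7]{K2017} together with direct heat-kernel bounds on $\mu$-configurations, in order to avoid circularity with Proposition~\ref{PRO:alpha} to follow.
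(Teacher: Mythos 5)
Your proposal departs from the paper's argument in an interesting way, replacing the iterative front-escape estimate (adapted from Tribe's \cite[Lemma~3.5]{T1996}) with the duality identity \eqref{equ:laplace-left} for $\kappa_t$ and a single-shot mass lower bound. The reduction of the first claim to \eqref{equ:lower-bd-mittl} for $t\geq 1$ is correct, and the duality translation $\PP_{u_0}(R_0(u_t)\leq -Mt)=\int e^{-2\langle f,\tilde u_0\rangle}\kappa_t(df)$ is also correct.

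However, the core step has a genuine gap. You claim that $\langle f,\tilde u_0\rangle \geq c(M+1)t$ on a $\kappa_t$-event of probability at least $1-e^{-c't}$, and that ``integrating the exponential Laplace bound'' then gives $\PP_{u_0}(R_0(u_t)\leq -Mt)\leq Ce^{-C'M}$. But this does not follow: splitting along your good event gives
\[
\int e^{-2\langle f,\tilde u_0\rangle}\,\kappa_t(df)\;\leq\; e^{-2c(M+1)t}\;+\;e^{-c't},
\]
and the second term does \emph{not} decay in $M$. At $t=1$ this is bounded below by $e^{-c'}$, a constant, so the conclusion $\leq Ce^{-C'M}$ fails. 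For the target estimate the failure event must have probability decaying exponentially in $M$, uniformly over $t\geq 1$; a bound whose smallness comes only from $t$ cannot deliver this. This is precisely what the paper's iteration buys: by reasoning step by step over $m$ unit-time intervals and showing that the front escapes to the right with conditional probability $\geq \delta/4$ at each step, one obtains $\PP_{u_0}(R_0(t)\leq -cmt)\leq(1-\delta/4)^m$, i.e.\ geometric decay in $m\asymp M$. To repair your approach you would need a comparable product/renewal structure on the $\kappa_t$-side --- for instance, showing that the mass of $f$ in each of $\asymp M$ disjoint unit intervals inside $(-\infty,Mt-1]$ exceeds a constant with conditional probability bounded below, so that failure on all of them has probability $\leq (1-c)^M$ --- which is not supplied by \cite[Corollary~4.7]{K2017} or the non-degeneracy of $\mu$ alone. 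Finally, the $t\in[0,1)$ case is only sketched (``Gaussian heat-kernel displacement estimate''); the paper handles this explicitly by taking $r=c$ rather than $r=ct$ in Tribe's argument, and some such concrete step is needed for the uniform-in-$t\geq 0$ claim.
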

%
%
\begin{proof}
Let $u_0 \in \mittl$. Recall $\epsilon, x_0$ from the definition of $\mittl$. By domination, that is using 
\cite[Lemma~3.1b)]{T1996}, we assume without loss of generality that $u_0=\epsilon H_0(\fatcdot-x_0)$. We further assume $x_0=0$ by the shift invariance of the dynamics.

We reason as in the proof of \cite[Lemma~3.5]{T1996}. The author uses the wave-marker $R_1(t) = \ln(\langle e^{\fatcdot} , u_t \rangle)$ and Heavyside initial data $H_0$ instead. It is shown that there exist $c=c(\theta), a=a(\theta), \delta=\delta(\theta)>0$ such that $\PP_{H_0}\big( R_1(t) \leq -a-cmt \big) \leq (1-\delta/4)^m$ for all $t \geq 0, m \in \NN$. 

We claim that this holds for $H_0$ replaced by $u_0 = \epsilon H_0$, $R_1(t)$ replaced by $R_0(t)$ and $a$ replaced by $0$ as well. Moreover, the constants $c,\delta$ only depend on $\epsilon, \underline{\theta}$ and $\overline{\theta}$. Indeed, replace $a>0$ by $a=0$. Reason as in the given proof with $R_1(f)$ replaced by $R_0(f)$ and $\psi_0$ replaced by $\psi_0' \equiv \epsilon \psi_0$ until the last set of equations. Choose $r=ct$ for $t \geq 1$ arbitrarily fixed and $r=c$ for $t \in [0,1)$ (and thus $Q^{\psi_0}(T_0(U) \leq t) \leq Q^{\psi_0}(T_0(U) \leq 1) \leq \delta/4$ in the notation of \cite{T1996}). In the last set of equations, use that for a superprocess with initial symmetric condition $\psi_0'$ and law $\overline{\PP}_{\psi_0'}$, there exists $\delta>0$ small enough such that $\overline{\PP}_{\psi_0'}(R_0(u_t) \geq 0) \geq \overline{\PP}_{\psi_0'}(\tau > t)/2 \geq \PP_{\psi_0'}(\langle u_t , 1 \rangle \geq \delta) \geq \delta/2$ to obtain for all $t \geq 1$,
\eqn{
  \PP_{u_0}\big( R_0(t) \leq -cmt \big) \leq (1-\delta/4)^m \ \mbox{ for all } m \in \NN.
}
As a result,
\eqn{
  \EE_{u_0}[ 0 \vee (-R_0( u_t ))]
  \leq ct + \sum_{m \in \NN} (1-\delta/4)^m c(m+1) t
  \leq C(c,\delta) t.
}
For $t \in [0,1)$, the different choice of $r$ yields
\eqn{
  \PP_{u_0}\big( R_0(t) \leq -cm \big) \leq (1-\delta/4)^m \ \mbox{ for all } m \in \NN
}
and
\eqn{
  \EE_{u_0}[ 0 \vee (-R_0( u_t ))]
  \leq c + \sum_{m \in \NN} (1-\delta/4)^m c(m+1)
  \leq C(c,\delta)
}
instead.

By $\lfloor x \rfloor$ we denote the greatest integer that is less than or equal to $x \in \RR$. To obtain the second claim, for $0<M<c$ choose $C_1$ big enough and $C_2$ small enough such that $C(c,\delta) \leq C_1 e^{-C_2 c}$.  For $M \geq c$ and $t \geq 1$, 
\eqan{
  \EE_{u_0}\!\left[ \frac{-R_0( u_t )}{t} \1_{\big\{ R_0( u_t ) < -Mt \big\}} \right]
  &\leq \sum_{m = \lfloor M/c \rfloor}^\infty (1-\delta/4)^m c(m+1) \\
  &= e^{\ln(1-\delta/4) \lfloor M/c \rfloor} \sum_{m = 0}^\infty (1-\delta/4)^m c \big( m+1+ \lfloor M/c \rfloor \big) 
 \leq C_1 e^{-C_2 M} \nn
}
for $C_1=C_1(c,\delta)$ big enough and $C_2=C_2(c,\delta)$ small enough.
\end{proof}
%
%
\begin{corollary}
\label{COR:exp-ustarl-bd-below}
There exists a constant $C>0$ such that 
\eqn{
\lbeq{equ:neg-speed-bound-star}
  \EE\!\left[ 0 \vee \big( -R_0\big( u_t^{*,l} \big) \big) \right] \leq C (1+t)
}
holds uniformly in $t > 0$. Moreover, there exist $C_i>0, i=1,2$ such that for all $M>0$,
\eqn{
\lbeq{equ:exp-ustarl-bd-below}
  \EE\!\left[ \frac{-R_0\big( u_t^{*,l} \big)}{t} \1_{\big\{ R_0\big( u_t^{*,l} \big) < -Mt \big\}} \right] \leq C_1 e^{-C_2 M}
}
holds uniformly in $t \geq 1$.
\end{corollary}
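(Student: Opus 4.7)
The plan is to deduce both estimates directly from Lemma~\ref{LEM:speed-bound-mittl} by means of the stochastic domination \eqref{equ:coupling-left}, choosing an initial condition in $\mittl$ with $R_0 \leq 0$.

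I fix $u_0 = H_0$: this function lies in $\mittl$ (with $\epsilon = 1$, $x_0 = 0$ in \eqref{equ:def-mittl}) and satisfies $R_0(u_0) = 0 \leq 0$, so the coupling \eqref{equ:coupling-left} is available. For a given time $t > 0$ at which I wish to estimate $R_0\big(u_t^{*,l}\big)$, I apply the coupling with its parameter ``$T$'' taken to equal $t$, and evaluate it at relative time $0$. This produces on a common probability space two random functions $u_t^{(H_0)}$ and $u_t^{*,l}$, the latter of marginal law $\upsilon_t$, such that
\[
  u_t^{(H_0)}(x) \leq u_t^{*,l}(x) \qquad \text{for all } x \in \RR,\text{ almost surely.}
\]
By \eqref{equ:coupling-R-left} this yields $R_0\big(u_t^{(H_0)}\big) \leq R_0\big(u_t^{*,l}\big)$, equivalently $-R_0\big(u_t^{*,l}\big) \leq -R_0\big(u_t^{(H_0)}\big)$.

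Taking positive parts and expectations, the first bound of Lemma~\ref{LEM:speed-bound-mittl} immediately yields
\[
  \EE\!\left[ 0 \vee \big(-R_0\big(u_t^{*,l}\big)\big) \right] \leq \EE\!\left[ 0 \vee \big(-R_0\big(u_t^{(H_0)}\big)\big) \right] \leq C(1+t),
\]
which is \eqref{equ:neg-speed-bound-star}. For \eqref{equ:exp-ustarl-bd-below}, the same sample-path comparison provides both the event inclusion $\{R_0(u_t^{*,l}) < -Mt\} \subseteq \{R_0(u_t^{(H_0)}) < -Mt\}$ and the pointwise bound $-R_0(u_t^{*,l})/t \leq -R_0(u_t^{(H_0)})/t$ on that event, so the left-hand side of \eqref{equ:exp-ustarl-bd-below} is dominated by $\EE\!\left[ \big(-R_0(u_t^{(H_0)})/t\big)\, \1_{\{R_0(u_t^{(H_0)}) < -Mt\}} \right]$, to which \eqref{equ:lower-bd-mittl} applies and yields the desired $C_1 e^{-C_2 M}$.

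Since the constants in Lemma~\ref{LEM:speed-bound-mittl} depend on $u_0$ only through the parameter $\epsilon$ appearing in \eqref{equ:def-mittl}, and on $\theta$ only through $\underline{\theta}$ and $\overline{\theta}$, fixing $u_0 = H_0$ once and for all (so that $\epsilon = 1$) produces constants in agreement with the boxed convention stated at the start of this subsection. All the real work has already been done in Lemma~\ref{LEM:speed-bound-mittl}; the only delicate point is to apply \eqref{equ:coupling-left} at the correct identification of its $T$-parameter with the current time $t$, so the corollary presents no substantive new obstacle.
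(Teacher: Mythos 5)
Your proof is correct and follows the paper's argument essentially verbatim: both derive the estimates by sandwiching $R_0(u_t^{*,l})$ from below by $R_0(u_t^{(u_0)})$ via the coupling \eqref{equ:coupling-left}--\eqref{equ:coupling-R-left} for a fixed $u_0 \in \mittl$ with $R_0(u_0) \leq 0$, and then invoking Lemma~\ref{LEM:speed-bound-mittl}. The only cosmetic difference is that you fix $u_0 = H_0$ explicitly (the paper leaves it as an arbitrary such $u_0$), which is harmless and indeed makes the constant-dependency point cleaner.
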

%
%
\begin{proof}
The result follows again by domination, this time using \eqref{equ:coupling-R-left} and $u_0 \in \mittl^R$ with $R_0(u_0) \leq 0$ arbitrary.
\end{proof}
%
%
\begin{corollary}
\label{COR:e-bd}
$\EE_{u_0}[ | R_0(u_T) | ]/T$, $u_0 \in \mittl^R$ and $\EE\big[ \big| R_0\big( u_T^{*,l} \big) \big| \big] / T$ are uniformly bounded in $T \geq 1$ (constants may depend on $u_0$).
\end{corollary}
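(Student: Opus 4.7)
The plan is to write $|R_0(u_T)| = (R_0(u_T) \vee 0) + (0 \vee (-R_0(u_T)))$ and show both halves are $O(T)$ uniformly in $T \geq 1$, which at once handles both the $u^{*,l}$-case and the $\mittl^R$-case.

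For $\EE[\,|R_0(u_T^{*,l})|\,]$: The negative half $\EE[0 \vee (-R_0(u_T^{*,l}))]$ is at most $C(1+T)$ by \eqref{equ:neg-speed-bound-star} of Corollary~\ref{COR:exp-ustarl-bd-below}. For the positive half I would use the identity $(a \vee 0) = a + ((-a) \vee 0)$ to write
\[
  \EE\!\left[R_0(u_T^{*,l}) \vee 0\right] \;=\; \EE\!\left[R_0(u_T^{*,l})\right] + \EE\!\left[0 \vee \big(-R_0(u_T^{*,l})\big)\right].
\]
The uniform boundedness in $T \geq 1$ of the first term on the right, $\EE[R_0(u_T^{*,l})]/T$, was recorded in Subsection~\ref{SUBSEC:terms} (as a consequence of \cite[Corollary~4.7]{K2017} combined with the lower bound now available from Corollary~\ref{COR:exp-ustarl-bd-below}); the second term is $O(T)$ by what was just shown.

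For $\EE_{u_0}[\,|R_0(u_T)|\,]$ with $u_0 \in \mittl^R$: the negative half is directly Lemma~\ref{LEM:speed-bound-mittl}. For the positive half I would reduce to the $u^{*,l}$-case via the coupling \eqref{equ:coupling-R-left}. Set $R := R_0(u_0) \in \RR$ and $v_0(x) := u_0(x + R + 1)$, so that $R_0(v_0) = -1 \leq 0$ and, by translation invariance of space-time white noise, $R_0(u_T^{(u_0)}) \stackrel{\SD}{=} R_0(u_T^{(v_0)}) + R + 1$. Applying \eqref{equ:coupling-R-left} with $v_0$ in place of $u_0$ and with parameter $t=0$ yields
\[
  R_0\big(u_T^{(v_0)}\big) \leq R_0\big(u_T^{*,l}\big) \quad \text{almost surely},
\]
so that, using $(x+c) \vee 0 \leq (x \vee 0) + c_+$,
\[
  \EE_{u_0}\!\left[R_0(u_T) \vee 0\right] \;\leq\; \EE\!\left[R_0\big(u_T^{*,l}\big) \vee 0\right] + (R+1)_+,
\]
which is $O(T)$ by the first case (with the additive constant absorbed into the $u_0$-dependent constant, as permitted by the statement).

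No serious obstacle is expected: this is a routine bookkeeping argument that combines Lemma~\ref{LEM:speed-bound-mittl}, Corollary~\ref{COR:exp-ustarl-bd-below}, the coupling \eqref{equ:coupling-R-left}, and the one-sided upper bound from \cite[Corollary~4.7]{K2017} already invoked in Subsection~\ref{SUBSEC:terms}. The only mild care point is the translation step, which is needed because \eqref{equ:coupling-R-left} requires $R_0(u_0) \leq 0$ whereas the hypothesis $u_0 \in \mittl^R$ only guarantees $R_0(u_0) < \infty$.
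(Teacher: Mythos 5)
Your proof is correct, and the overall structure — split $|R_0|$ into positive and negative parts, bound each by $O(T)$ using the coupling \eqref{equ:coupling-R-left} and the translation/shift invariance — is essentially the paper's approach. The paper's own one-line proof of this corollary cites Lemma~\ref{LEM:speed-bound-mittl}/Corollary~\ref{COR:exp-ustarl-bd-below} for the negative parts and invokes \cite[Lemma~4.6]{K2017} directly for the positive parts, whereas you avoid that latter reference in two ways: (i) for the $u^{*,l}$-case you extract the positive-part bound from the uniform boundedness of $\EE[R_0(u_T^{*,l})]/T$ already recorded in Subsection~\ref{SUBSEC:terms} via the identity $a \vee 0 = a + (0 \vee(-a))$ (a mild detour, since that recorded bound is itself obtained from \cite[Corollary~4.7]{K2017} plus the negative-part estimate, so the positive-part bound is what's really at work under the hood); and (ii) for the $\mittl^R$-case you reduce to the $u^{*,l}$-case by translating $u_0$ so that its right marker is $\leq 0$, which is exactly the domination-plus-shift-invariance argument the paper itself uses in the neighboring Corollary~\ref{COR:e-bd-small-t}. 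So your route is a valid self-contained reshuffling of the same ingredients, and your attention to the hypothesis $R_0(u_0)\leq 0$ required by \eqref{equ:coupling-R-left} is precisely the point that needs care.
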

%
%
\begin{proof}
Combine Lemma~\ref{LEM:speed-bound-mittl} respectively Corollary~\ref{COR:exp-ustarl-bd-below} with \cite[Lemma~4.6]{K2017}.
\end{proof}
%
%
\begin{corollary}
\label{COR:e-bd-small-t}
There exists a constant $C>0$ such that $\EE\big[ \big| R_0\big( u_s^{*,l} \big) \big| \big] \leq C$ for all $0 < s \leq 1$. Moreover, for every $u_0 \in \mittl^R$ there exists a constant $C(u_0)>0$ such that $\EE[|R_0(u_s)|] \leq C(u_0)$ for all $0 \leq s \leq 1$.
\end{corollary}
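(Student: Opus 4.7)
\emph{Plan.} The plan is to mirror the argument used to prove Corollary~\ref{COR:e-bd} and to observe that the bounds involved are in fact uniform at small times. Decompose
\[
  |R_0(u_s)| = \bigl(0 \vee R_0(u_s)\bigr) + \bigl(0 \vee (-R_0(u_s))\bigr)
\]
and treat each summand separately.

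For the negative part $0 \vee (-R_0(u_s))$, Lemma~\ref{LEM:speed-bound-mittl} applied with $u_0 \in \mittl^R \subset \mittl$ yields $\EE_{u_0}[0 \vee (-R_0(u_s))] \leq C(u_0)(1+s)$ uniformly in $s \geq 0$, and Corollary~\ref{COR:exp-ustarl-bd-below} gives $\EE[0 \vee (-R_0(u_s^{*,l}))] \leq C(1+s)$ uniformly in $s > 0$. Restricting to $s \in [0,1]$ (respectively $s \in (0,1]$), both are bounded by $2C(u_0)$ (resp.\ $2C$).

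For the positive part $0 \vee R_0(u_s)$, we reuse the Iscoe-type estimate from \cite[Lemma~4.6]{K2017} that was already combined with Lemma~\ref{LEM:speed-bound-mittl}/Corollary~\ref{COR:exp-ustarl-bd-below} in the proof of Corollary~\ref{COR:e-bd}. That estimate has the form $\EE_{u_0}[0 \vee R_0(u_s)] \leq C(u_0)(1+s)$ for $u_0 \in \mittl^R$ and $\EE[0 \vee R_0(u_s^{*,l})] \leq C(1+s)$ uniformly in $s > 0$, so once more the restriction to $[0,1]$ (resp.\ $(0,1]$) is bounded by a constant. Adding the two estimates finishes the argument.

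The main obstacle is the $u^{*,l}$ case: since $\upsilon_s$ is only defined through a limiting procedure (see \cite[Remark~2.8]{K2017}), one cannot evaluate $R_0(u_0^{*,l})$ directly. Instead, one must transport the short-time Iscoe-type bound from approximating solutions with uniformly bounded right edge to $u^{*,l}$, for instance via the coupling \eqref{equ:coupling-R-left} together with Fatou's lemma and the continuity of $f \mapsto \PP_f$ guaranteed by Theorem~\ref{THM:tribe}b). For $u_0 \in \mittl^R$ the case $s=0$ is trivial since $R_0(u_0)$ is a deterministic finite constant.
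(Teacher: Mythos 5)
Your decomposition into positive and negative parts is correct, and your treatment of the negative part (Lemma~\ref{LEM:speed-bound-mittl} for $\mittl^R$, Corollary~\ref{COR:exp-ustarl-bd-below} for $u^{*,l}$) matches the paper. The problem is in the positive part.

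You cite \cite[Lemma~4.6]{K2017} for the estimate $\EE[0 \vee R_0(u_s^{*,l})] \leq C(1+s)$ "uniformly in $s > 0$", but this reference does not deliver small-time control: as the proof of Lemma~\ref{LEM:marker-pos-part-second-moment} indicates, Lemma~4.6 of \cite{K2017} is the inductive step that extends a one-step bound to $T \in \NN$, whereas the $T=1$ (and more generally $T \leq 1$) input comes from Proposition~4.5 of \cite{K2017}. Corollary~\ref{COR:e-bd}, which you reuse, is stated only for $T \geq 1$ — if Lemma~4.6 already covered $s \in (0,1]$, the present corollary would be redundant. The correct citation for $\EE\big[0 \vee R_0(u_s^{*,l})\big] \leq C$ on $(0,1]$ is \cite[Proposition~4.5]{K2017}, and this is what the paper invokes.

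A second gap is the $\mittl^R$ case for the positive part. You assert an estimate of the form $\EE_{u_0}[0 \vee R_0(u_s)] \leq C(u_0)(1+s)$ for $u_0 \in \mittl^R$ but give no argument tailored to this case. The paper's route is clean and worth knowing: dominate $u_s^{(u_0)}$ by $u_s^{*,l}(\fatcdot - R_0(u_0))$ via shift invariance and \eqref{equ:coupling-R-left}, obtaining
\eqn{
  \EE\big[0 \vee R_0(u_s^{(u_0)})\big] \leq |R_0(u_0)| + \EE\big[0 \vee R_0(u_s^{*,l})\big] \leq |R_0(u_0)| + C,
}
which is where the $u_0$-dependence of the constant enters. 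Your final paragraph, which locates "the main obstacle" in the $u^{*,l}$ case and sketches a transport argument via Fatou and continuity of $f \mapsto \PP_f$, does not match the actual structure of the proof: the $u^{*,l}$ bound is a direct citation, and the nontrivial step (such as it is) is the $\mittl^R$ reduction, which is handled by the domination above, not by the approximation procedure you describe.
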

%
%
\begin{proof}
Fix $0< s \leq 1$. Then $\EE\big[ 0 \vee R_0\big( u_s^{*,l} \big)\big] \leq C$ follows from \cite[Proposition~4.5]{K2017} and $\EE\big[ 0 \vee \big(-R_0\big( u_s^{*,l} \big) \big) \big] \leq C$ from Corollary~\ref{COR:exp-ustarl-bd-below}. For $u_0 \in \mittl^R$, the bound for the positive part follows by domination and shift invariance, that is, $\EE[0 \vee R_0(u_s)] \leq |R_0(u_0)| + \EE\big[ 0 \vee R_0\big( u_s^{*,l} \big)\big] \leq |R_0(u_0)| + C = C(u_0)$. The bound for the negative part follows from Lemma~\ref{LEM:speed-bound-mittl}.
%
%
%
%
\end{proof}
%
%
\begin{corollary}
\label{COR:e-bd-2}
$\alpha_T/T$ is uniformly bounded in $T \geq 1$.
\end{corollary}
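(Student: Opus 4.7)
The strategy is simply to bound the integrand in the definition
\[
  \alpha_T = \frac{2}{T} \int_0^{T/2} \EE\!\left[ R_0\big(u_{T/2+s}^{*,l}\big) \right] ds
\]
uniformly using the previous two corollaries, and then divide by an additional factor of $T$.

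First, I would combine Corollary~\ref{COR:e-bd} and Corollary~\ref{COR:e-bd-small-t} into a single statement: there exists a constant $C>0$ such that $\EE\big[\big| R_0\big( u_r^{*,l} \big) \big|\big] \leq C(1+r)$ for \emph{all} $r>0$. For $r\in(0,1]$ this is exactly Corollary~\ref{COR:e-bd-small-t}, while for $r\geq 1$ it follows from Corollary~\ref{COR:e-bd}, which gives $\EE\big[\big|R_0(u_r^{*,l})\big|\big]/r \leq C$.

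Second, applying this pointwise bound under the integral, together with $|\EE[X]|\leq \EE[|X|]$, yields
\[
  \big| \alpha_T \big| \leq \frac{2}{T} \int_0^{T/2} C\!\left( 1 + \tfrac{T}{2} + s \right) ds
  = \frac{2C}{T}\!\left[ \tfrac{T}{2} + \tfrac{T^2}{4} + \tfrac{T^2}{8} \right]
  = C + \tfrac{3C}{4}\, T.
\]
Dividing by $T$ and restricting to $T\geq 1$ gives $|\alpha_T|/T \leq C + \tfrac{3C}{4}$, which is the claimed uniform bound. No step here is an obstacle; the only mild point is to make sure the estimate on $\EE\big[\big|R_0(u_r^{*,l})\big|\big]$ is uniform also over small $r$, which is precisely the reason Corollary~\ref{COR:e-bd-small-t} was established immediately before this corollary.
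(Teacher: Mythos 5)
Your proof is correct and follows the same route the paper intends: the paper states (at the end of Subsection~2.1) that $\alpha_T/T$ is bounded as a consequence of the preceding marker estimates, and your argument simply carries out the required integration using the linear growth bound $\EE[|R_0(u_r^{*,l})|] \leq C(1+r)$ assembled from Corollaries~\ref{COR:e-bd} and~\ref{COR:e-bd-small-t}. The computation is correct and the observation that small values of $T/2+s$ (for $T$ close to $1$) require the small-time corollary is a good catch.
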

%
%
\begin{remark}
\label{RMK:ustar-survives}
The definition of the marker $R_0$ together with Corollaries~\ref{COR:e-bd}--\ref{COR:e-bd-small-t} yields $\inf\{ t > 0: u_t^{*,l} \equiv 0 \} = + \infty$ a.s., that is, the process $u^{*,l}$ does not die out in finite time. Thus, if we consider $u^{*,l}$, we do not have to bother with conditioning on non-extinction.
\end{remark}
%

The existence of the following limit will turn out to be crucial in the following chapters. Non-negativity of the limit follows below.
%
%
\begin{lemma}
\label{LEM:alpha_lim}
The limit
\eqn{
  B=B(\theta)
  = \lim_{T \rightarrow \infty} \frac{ \EE\!\left[ R_0\big( u_T^{*,l} \big) \right] }{T}
  = \inf_{T \geq 1} \frac{ \EE\!\left[ R_0\big( u_T^{*,l} \big) \right] }{T}
  \in (-\infty,\infty)
}
exists.
\end{lemma}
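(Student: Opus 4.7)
The plan is to show that $a_T := \EE\bigl[R_0\bigl(u^{*,l}_T\bigr)\bigr]$ is subadditive in $T>0$ and then invoke Fekete's lemma. Corollaries~\ref{COR:e-bd} and~\ref{COR:e-bd-small-t} already ensure $a_T \in \RR$ for every $T > 0$, with $|a_T| \leq C$ for $T \in (0,1]$ and $|a_T|/T$ uniformly bounded for $T \geq 1$; these bounds will yield finiteness of the limit.

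The crucial step is to establish $a_{T+s} \leq a_T + a_s$ for all $T, s > 0$. I would condition on $\SU_T$ and apply the strong Markov property of Theorem~\ref{THM:tribe}b): conditional on $u^{*,l}_T = f$, the law of $u^{*,l}_{T+s}$ is that of $u^{(f)}_s$. By Remark~\ref{RMK:ustar-survives} together with Corollary~\ref{COR:e-bd}, $R := R_0(f) \in \RR$ for $\upsilon_T$-a.e.\ $f$, so $\tilde f(\fatcdot) := f(\fatcdot + R)$ lies in $\SC_{tem}^+$ with $R_0(\tilde f) \leq 0$. Translation invariance of the SPDE~\eqref{equ:SPDE} (inherited from the translation invariance of space-time white noise) gives $\EE\bigl[R_0\bigl(u^{(f)}_s\bigr)\bigr] = R + \EE\bigl[R_0\bigl(u^{(\tilde f)}_s\bigr)\bigr]$, and~\eqref{equ:coupling-R-left} applied to the deterministic initial datum $\tilde f$ yields $\EE\bigl[R_0\bigl(u^{(\tilde f)}_s\bigr)\bigr] \leq a_s$. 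Integrating the pointwise bound $\EE_f[R_0(u_s)] \leq R_0(f) + a_s$ against the law $\upsilon_T$ of $u^{*,l}_T$ produces $a_{T+s} \leq a_T + a_s$.

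Fekete's lemma applied to the integer subsequence then gives $B := \lim_{n} a_n/n = \inf_{n \in \NN} a_n/n \in \RR$. To pass to real $T = n + r$ with $n = \lfloor T \rfloor$ and $r \in [0,1)$, the two subadditivity bounds $a_T \leq a_n + a_r$ and $a_{n+1} \leq a_T + a_{1-r}$, together with $|a_r|, |a_{1-r}| \leq C$ of Corollary~\ref{COR:e-bd-small-t}, imply $|a_T - a_n| \leq C$ uniformly, hence $a_T/T \to B$ as well. Iterating subadditivity at any $T_0 \geq 1$ gives $a_{k T_0} \leq k\, a_{T_0}$ and thus $B \leq a_{T_0}/T_0$; combined with the trivial reverse bound $B \geq \inf_{T \geq 1} a_T/T$, this identifies $B = \inf_{T \geq 1} a_T/T$. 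The only technical care is in extending the deterministic coupling~\eqref{equ:coupling-R-left} to the random initial datum $\tilde f$, which is handled by a Fubini-type integration of the deterministic bound against the conditional law of $u^{*,l}_T$, using continuity of $f \mapsto \PP_f$ from Theorem~\ref{THM:tribe}b) for the required measurability.
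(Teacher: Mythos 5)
Your proof is correct and follows essentially the same route as the paper: condition via the strong Markov property, translate by $R_0(u^{*,l}_t)$, dominate by $u^{*,l}_s$ using the coupling~\eqref{equ:coupling-R-left}, deduce subadditivity of $T \mapsto \EE\bigl[R_0(u^{*,l}_T)\bigr]$, and invoke Fekete/subadditivity together with the uniform bound $\EE\bigl[|R_0(u^{*,l}_T)|\bigr]/T \lesssim 1$ to get a finite limit equal to the infimum. The extra care you take with the real-$T$ extension and the identification of the infimum over $T \geq 1$ is a spelled-out version of the paper's appeal to a continuous-parameter subadditivity theorem (Liggett, Theorem~B22), not a different argument.
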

%
%
\begin{proof}
We work with $\PP_{\upsilon_1}$, that is randomize the initial condition according to the law of $u_1^{*,l} \in \SC_{tem}^+$. By the strong Markov property of the process we have for arbitrary $1 \leq s, t$,
\eqan{
  \EE\!\left[ R_0\big( u_{s+t}^{*,l} \big) \right] 
  &= \EE\!\left[ \EE\!\left[ R_0\big( u_{s+t}^{*,l} \big) \mid \SF_t \right] \right] 
  = \EE\!\left[ R_0\Big( u_s^{(u^{*,l}_t)} \Big) \right] \\
  &= \EE\!\left[ R_0\Big( u_s^{\big( u^{*,l}_t (\fatcdot + R_0( u_t^{*,l})) \big)} \Big) \right] + \EE\!\left[ R_0\big( u_t^{*,l} \big) \right]. \nn
}
Use monotonicity, that is \eqref{equ:coupling-R-left} to further obtain
\eqn{
  \EE\!\left[ R_0\big( u_{s+t}^{*,l} \big) \right] 
  \leq \EE\!\left[ R_0\big( u^{*,l}_s \big) \right] + \EE\!\left[ R_0\big( u^{*,l}_t \big) \right]. 
}
By subadditivity (cf. for instance Liggett \cite[Theorem~B22]{bL1999}) and from the uniform boundedness of $\EE\big[ \big| R_0\big( u_T^{*,l} \big) \big| \big]/T$ in $T \geq 1$, we conclude,
\eqn{
  \lim_{T \rightarrow \infty} \frac{\EE\!\left[ R_0\big( u_T^{*,l} \big) \right] }{T} = \inf_{T>0} \frac{\EE\!\left[ R_0\big( u_T^{*,l} \big) \right] }{T} \mbox{ exists in } (-\infty,\infty).
}
\end{proof}
%
%
\begin{corollary}
\label{COR:alpha_lim}
The limit $\lim_{T \rightarrow \infty} \frac{\alpha_T}{T} = \frac{3}{4} B$ exists.
\end{corollary}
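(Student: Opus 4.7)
The plan is to rewrite $\alpha_T/T$ as an averaged version of the quotient $\EE[R_0(u_s^{*,l})]/s$ over a rescaled window, and then apply dominated convergence using the uniform bound from Corollary~\ref{COR:e-bd}.

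Set $f(s) \equiv \EE[R_0(u_s^{*,l})]$. By the definition of $\alpha_T$ and the substitution $u = T/2+s$,
\[
  \frac{\alpha_T}{T} = \frac{2}{T^2} \int_0^{T/2} f(T/2+s)\, ds = \frac{2}{T^2} \int_{T/2}^{T} f(u)\, du.
\]
Now substitute $u = T r$, so $du = T\, dr$, to land on a fixed interval of integration:
\[
  \frac{\alpha_T}{T} = 2 \int_{1/2}^{1} \frac{f(Tr)}{T}\, dr = 2 \int_{1/2}^{1} r \cdot \frac{f(Tr)}{Tr}\, dr.
\]

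By Lemma~\ref{LEM:alpha_lim}, for each fixed $r \in [1/2,1]$, $f(Tr)/(Tr) \to B$ as $T \to \infty$, so the integrand converges pointwise to $r B$. To interchange the limit with the integral, I would invoke Corollary~\ref{COR:e-bd}, which provides a constant $C>0$ such that $|f(s)|/s \leq C$ for all $s \geq 1$. For $T \geq 2$ and $r \in [1/2,1]$ we have $Tr \geq 1$, hence
\[
  \left| r \cdot \frac{f(Tr)}{Tr} \right| \leq r \cdot C \leq C,
\]
so the integrands are dominated by the constant $C$ on the bounded interval $[1/2,1]$.

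By dominated convergence,
\[
  \lim_{T \to \infty} \frac{\alpha_T}{T} = 2 \int_{1/2}^{1} r B \, dr = 2 B \cdot \left[ \frac{r^2}{2} \right]_{1/2}^{1} = 2 B \cdot \frac{1}{2} \left( 1 - \frac{1}{4} \right) = \frac{3}{4} B,
\]
which is the desired identity. The only conceptual step is the dominated convergence argument, but this is routine given the uniform bound already established in Corollary~\ref{COR:e-bd}; the rest is a direct change of variables and arithmetic.
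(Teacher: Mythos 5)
Your proof is correct and follows essentially the same approach as the paper: both recognize that $\alpha_T/T$ averages $\EE[R_0(u_s^{*,l})]$ over the window $[T/2,T]$ and exploit the convergence from Lemma~\ref{LEM:alpha_lim} together with the uniform bound from Corollary~\ref{COR:e-bd}. The paper substitutes the bound $\EE[R_0(u_t^{*,l})] \leq (\epsilon+B)t$ directly into the integral and evaluates, arguing the lower bound ``analogously,'' while your change of variables to the fixed interval $[1/2,1]$ followed by dominated convergence handles both bounds in one step --- a slightly cleaner packaging of the same idea.
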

%
%
\begin{proof}
For all $\epsilon>0$ there exists $T_0 \geq 1$ such that for all $T \geq T_0$,
\eqn{
  \limsup_{T \rightarrow \infty} \frac{\alpha_T}{T}
  = \limsup_{T \rightarrow \infty} \frac{2}{T^2} \int_0^{T/2} \EE\!\left[ R_0(u_{T/2+s}^{*,l}) \right] ds 
  \leq \limsup_{T \rightarrow \infty} \frac{2}{T^2} \int_0^{T/2} (\epsilon+B)  (T/2+s) ds
  = \frac{3}{4}( \epsilon + B).
}
Analogous reasoning for a lower bound concludes the proof. 
\end{proof}
%

The limit is indeed non-negative.

%
\begin{lemma}
\label{LEM:alpha_nonneg}
The limit $B=\lim_{T \rightarrow \infty} \frac{ \EE\!\left[ R_0\big( u_T^{*,l} \big) \right] }{T}$ from Lemma~\ref{LEM:alpha_lim} is non-negative.
\end{lemma}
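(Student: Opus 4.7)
My plan is to dominate a solution with known non-negative asymptotic speed by $u^{*,l}$ via the coupling \eqref{equ:coupling-R-left}. Since $\theta>\theta_c$, Tribe's results guarantee the existence of a travelling wave solution $\nu^*$ to \eqref{equ:SPDE} for which $R_0(u_T^{(\nu^*)})/T \to A$ almost surely, with $A\in[0,2\theta^{1/2}]$. In order to have the tail bounds of Lemma~\ref{LEM:speed-bound-mittl} available, I would in fact choose $\nu^*=\nu^{(H_0)}$, the travelling wave built from the Heavyside datum $H_0\in\mittl^R$ in the appendix; this choice will be the key enabler for the uniform integrability step below.

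First, I would normalise by translation so that $R_0(u_0)=0$ almost surely under $\nu^*$, ensuring $u_0$ is supported on $(-\infty,0]$. Applying \eqref{equ:coupling-R-left} realisation by realisation for each $u_0\sim\nu^*$ yields $R_0(u_T^{(u_0)})\leq R_0(u_T^{*,l})$ almost surely; integrating over $\nu^*$ and dividing by $T$,
\[
  \EE_{\nu^*}\!\left[R_0(u_T)\right]/T \;\leq\; \EE\!\left[R_0(u_T^{*,l})\right]/T.
\]
The right-hand side tends to $B$ by Lemma~\ref{LEM:alpha_lim}. It then suffices to establish that the liminf of the left-hand side is non-negative.

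To control the left-hand side I would split $R_0(u_T^{(\nu^*)})/T$ into positive and negative parts. Since the almost sure limit $A$ is non-negative, Fatou's lemma gives $\liminf_T \EE_{\nu^*}[(R_0(u_T)/T)^+]\geq \EE[A]\geq 0$, while $(R_0(u_T)/T)^-\to 0$ almost surely. Applying Lemma~\ref{LEM:speed-bound-mittl} for $\nu^*$-a.e. $u_0\in\mittl$ yields the exponential tail estimate needed for uniform integrability of $\{(R_0(u_T)/T)^-\}_{T\geq 1}$, so Vitali's theorem delivers $\EE_{\nu^*}[(R_0(u_T)/T)^-]\to 0$. Combining, $\liminf_T \EE_{\nu^*}[R_0(u_T)]/T\geq \EE[A]\geq 0$, and together with the inequality above this gives $B\geq 0$.

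The main obstacle I foresee is the hidden regularity input: verifying that $\nu^{(H_0)}$ is supported on $\mittl$ and that the constants $C_1(u_0)$ of Lemma~\ref{LEM:speed-bound-mittl} are $\nu^{(H_0)}$-integrable, so that the averaged tail bound survives after integration. This amounts to a uniform lower bound of the form $\epsilon H_0(\fatcdot - x_0)$ on the centered approximants $u_{t_n}^{(H_0)}(\fatcdot + R_0(u_{t_n}^{(H_0)}))$ that pass to the limit $\nu^{(H_0)}$, and it should follow from the coupling/domination machinery imported from \cite{K2017} in the appendix's construction of $\nu^{(H_0)}$; this is where I expect the technical weight of the proof to sit.
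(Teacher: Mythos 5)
Your proposal shares the paper's starting idea --- dominate a travelling wave of known nonnegative a.s.\ speed by $u^{*,l}$ via \eqref{equ:coupling-R-left} --- but then diverges in a way that creates a genuine gap. The paper uses the wave only to conclude $\liminf_{t\to\infty} R_0(u_t^{*,l})/t \geq 0$ almost surely, and then runs Fatou directly on $u^{*,l}$: it truncates below at $-MT$, controls the removed piece by the \emph{uniform} tail bound of Corollary~\ref{COR:exp-ustarl-bd-below}, and uses that the truncated variable is bounded below by $-M$. Since the almost-sure liminf is $\geq 0$, the truncation indicator eventually equals one, and the conclusion $B\geq -\epsilon$ for all $\epsilon>0$ follows.

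You instead take expectations under the wave measure $\nu^*$ and reduce to showing $\liminf_T \EE_{\nu^*}[R_0(u_T)]/T \geq 0$, which forces you to prove uniform integrability of the negative parts $\{(R_0(u_T^{(\nu^*)})/T)^-\}$. The uniform tail bound of Corollary~\ref{COR:exp-ustarl-bd-below} does not transfer: the coupling gives $R_0(u_T^{(\nu^*)}) \leq R_0(u_T^{*,l})$, so the wave's negative part is \emph{larger} than that of $u^{*,l}$, and the bound goes the wrong way. You therefore fall back on Lemma~\ref{LEM:speed-bound-mittl}, whose constants $C_1(u_0), C_2(u_0)$ depend on the parameters $\epsilon, x_0$ in the $\mittl$-representation of $u_0$. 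This requires (i) that $\nu^{(H_0)}$ be supported on $\mittl$, and (ii) that $\epsilon(u_0)$ be bounded away from zero (or the constants $\nu^{(H_0)}$-integrable). Neither is established in the paper, and (i) is in fact doubtful: the centered travelling-wave profiles $u_t^{(H_0)}(\fatcdot + R_0(t))$ need only have $R_0=0$, and their subsequential limits can decay to zero at the front without any $\epsilon H_0(\fatcdot - x_0)$ lower bound. You correctly identify this as the technical weight of the argument, but it is not a routine verification --- it is the step that would sink the proof as written. The fix is to not pass through $\EE_{\nu^*}$ at all and instead apply Fatou to $u^{*,l}$ with the tail control of Corollary~\ref{COR:exp-ustarl-bd-below}, exactly as the paper does.
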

%
%
\begin{proof}
Let $\nu \in \CP(\SC_{tem}^+)$ be such that $\nu(\{ f: R_0(f)=0 \}) = 1$ and $\PP_\nu$ is the law of a travelling wave. For $\theta>\theta_c$, existence follows from \cite[Theorem~3.8 and (3.29)]{T1996} and the shift invariance of the dynamics. By \cite[Proposition~4.1]{T1996}, $R_0\big( u_t^{(\nu)} \big) / t$ converges a.s. to a (possibly random) limit $A^{(\nu)} \geq 0$. By monotonicity, that is by \eqref{equ:coupling-R-left}, we have $R_0\big( u_t^{*,l} \big) / t \geq R_0\big( u_t^{(\nu)} \big) / t$ for all $t \geq 1$ a.s. and thus $\liminf_{t \rightarrow \infty} R_0\big( u_t^{*,l} \big) / t \geq 0$ a.s. 

Let $\epsilon>0$ arbitrary. By Corollary~\ref{COR:exp-ustarl-bd-below} there exist constants $C_1, C_2>0$ such that for $M>0$ satisfying $C_1 e^{-C_2 M} < \epsilon$,
\eqan{
  B
  \geq \limsup_{T \rightarrow \infty} \frac{\EE\big[ R_0\big( u_T^{*,l} \big) \1_{\big\{ R_0\big( u_T^{*,l} \big) \geq -MT \big\}} \big]}{T} - \epsilon 
  \geq \EE\!\left[ \liminf_{T \rightarrow \infty} \frac{ R_0\big( u_T^{*,l} \big) }{T} \1_{\big\{ R_0\big( u_T^{*,l}) \big) \geq -MT \big\}} \right] - \epsilon 
  \geq -\epsilon 
}
where we applied Fatou's lemma.
\end{proof}
%

We now formulate the main result of this section. The proof is deferred to Subsection~\ref{SUBSEC:Proof-theta-prop}.
%
%
\begin{proposition}
\label{PRO:exp-alpha-increase}
Let $\theta_c < \underline{\theta} < \overline{\theta}$. Then there exists $C=C\big( \underline{\theta}, \overline{\theta} \big)>0$ and $T_0=T_0\big( \underline{\theta}, \overline{\theta} \big) \geq 1$ such that for all $T \geq T_0$ and $\underline{\theta} \leq \theta_1< \theta_2 \leq \overline{\theta}$,
\eqn{
  \frac{ \alpha_T(\theta_2) - \alpha_T(\theta_1) }{T} \geq C(\theta_2-\theta_1).
}
\end{proposition}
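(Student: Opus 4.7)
The plan is to reduce Proposition~\ref{PRO:exp-alpha-increase} to the front-mass-gain estimate of Proposition~\ref{PRO:stopping-time-S} via a monotone coupling. Fix $\underline{\theta} \leq \theta_1 < \theta_2 \leq \overline{\theta}$. I would construct a coupling $(u^{(1)}_t, u^{(2)}_t)_{t\geq 0}$ on a single probability space such that each $u^{(i)}$ has the law of $u^{*,l}(\theta_i)$ and $u^{(1)}_t(x) \leq u^{(2)}_t(x)$ for every $t, x$. The idea is to treat the drift gap $(\theta_2 - \theta_1)u^{(1)}_s$ as an immigration term of $\alpha$-type in equation~(\ref{equ:SPDE-Tribe}); Theorem~\ref{THM:tribe} together with the monotonicity of the upper-measure $\upsilon_T(\theta)$ in $\theta$ (which follows from monotonicity of the underlying dynamics in the parameter, applied at the level of the defining coupling~\eqref{equ:coupling-left}) then supplies the required pointwise-dominated coupling, and in particular $R_0(u^{(2)}_t) \geq R_0(u^{(1)}_t)$ pathwise.

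Under this coupling,
\begin{equation*}
\alpha_T(\theta_2) - \alpha_T(\theta_1) = \frac{2}{T} \int_0^{T/2} \EE\!\left[ R_0(u^{(2)}_{T/2+s}) - R_0(u^{(1)}_{T/2+s}) \right] ds,
\end{equation*}
so it suffices to show that the integrand is bounded below by $C(\theta_2-\theta_1)(T/2+s)$ for $T/2+s$ large. This is precisely where Proposition~\ref{PRO:stopping-time-S} enters: I would use it to produce a sequence of stopping times $S_1 < S_2 < \cdots$ with uniformly bounded gaps at which the $\theta_1$-process carries a ``typical'' mass profile near its right front. The proposition then gives that, within each unit window $[S_k, S_k+1]$, the excess drift creates, with conditional probability at least $c(\theta_2-\theta_1)$, a fresh chunk of mass strictly to the right of $R_0(u^{(1)}_{S_k})$ that shifts the $u^{(2)}$-marker by at least an absolute $\eta > 0$. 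Summing these (near-)independent gain events across $[0, T/2+s]$ yields an expected cumulative displacement of order $(\theta_2-\theta_1)(T/2+s)$.

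Plugging this bound back into the integral gives
\begin{equation*}
\alpha_T(\theta_2) - \alpha_T(\theta_1) \geq \frac{2 C (\theta_2-\theta_1)}{T} \int_0^{T/2} (T/2+s)\, ds = \frac{3 C (\theta_2-\theta_1) T}{4},
\end{equation*}
which is the claim after relabeling $C$. Initial-phase errors, that is, small-$s$ contributions where the linear lower bound on the integrand is weak or negative, are absorbed by choosing $T_0$ large, using the uniform bounds $\EE\!\left[\left|R_0(u^{*,l}_t)\right|\right]/t = O(1)$ from Corollaries~\ref{COR:e-bd}--\ref{COR:e-bd-small-t}.

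The main obstacle is Proposition~\ref{PRO:stopping-time-S} itself, deferred to Section~\ref{SEC:increase-at-front}. Unlike the nearest-neighbor contact process, where the edge process is Markov and admits a tractable renewal structure, the shifted-front process $u^{*,l}_t(\fatcdot + R_0(u^{*,l}_t))$ here is neither Markov nor easily renewed, and the long-range diffusive interaction prevents the clean edge-process decompositions \eqref{equ:contact-prop1}--\eqref{equ:contact-prop2}. The proof of Proposition~\ref{PRO:stopping-time-S} must therefore (a) use the upper-measure construction $\upsilon_T$ to identify stopping times at which enough mass sits near the front and (b) quantify how much of the extra $(\theta_2-\theta_1)u^{(1)}$-mass survives and travels rightward of the current front within a bounded time window, uniformly in the past. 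This is presumably where the self-duality~\eqref{equ:SPDE-self-duality} and the averaging techniques mentioned in the abstract take over from the simpler comparison arguments available above.
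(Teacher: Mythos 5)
Your general plan — couple $u^{*,l}(\theta_1) \leq u^{*,l}(\theta_2)$ pointwise and then show the expected front gap grows linearly in $t$ at rate $C(\theta_2-\theta_1)$ — is the right heuristic, and the coupling you describe is essentially the $\theta$-$*$-coupling of Lemma~\ref{LEM:star-coupling}. But the mechanism you propose for producing the linear growth does not match either the statement of Proposition~\ref{PRO:stopping-time-S} or the paper's actual argument, and I don't see how to make it work as described.

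The key missing idea is the \emph{discretization of the parameter gap}. The paper sets $\theta_m = \underline{\theta} + m\delta/T$ with $\delta = (\overline{\theta}-\underline{\theta})/M$ and writes $\alpha_T(\theta_2)-\alpha_T(\theta_1)$ as a telescoping sum of $MT$ increments $\alpha_T(\theta_m)-\alpha_T(\theta_{m-1})$. Each increment is then lower-bounded via \emph{one} stopping time $S=S(m,\varphi)$ in $[\xi,T/2-\xi]$ combined with Lemmas~\ref{LEM:claim2}--\ref{LEM:claim1} (which convert the pathwise comparison into a duality estimate against $u^{*,r}$) and Corollary~\ref{COR:uniform-properties}; Proposition~\ref{PRO:stopping-time-S} supplies the probability $C_0(1-e^{-C_1\delta})$ that the accumulated excess $\Delta_S^{*,l}$ clears a fixed threshold. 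Summing over the $MT$ values of $m$, dividing by $T$, and letting $M\to\infty$ (so $M(1-e^{-C_1\delta}) \to C_1(\overline{\theta}-\underline{\theta})$) gives the bound. By contrast, you read Proposition~\ref{PRO:stopping-time-S} as delivering a sequence $S_1 < S_2 < \cdots$ of stopping times with per-window conditional gain probability $c(\theta_2-\theta_1)$ — but the proposition supplies a single stopping time per $m$, and its probability bound is $C_0(1-e^{-C_1\delta})$ for the fixed parameter $\delta$, not $c(\theta_2-\theta_1)$. The $O(T)$-many tries with probability $O(\delta/T)$ each that you seem to have in mind do appear, but \emph{inside} the proof of Proposition~\ref{PRO:stopping-time-S}, where they yield a bounded, not linearly growing, gain for one $\theta$-step.

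Even setting aside that misreading, the step ``summing these (near-)independent gain events across $[0,T/2+s]$ yields an expected cumulative displacement of order $(\theta_2-\theta_1)(T/2+s)$'' is precisely where an argument without the $\theta$-telescope runs into the wall the paper was designed to avoid: once $u^{(2)}$ pulls ahead, the ``fresh chunk of mass'' to the right of $R_0(u^{(1)})$ is part of the evolving excess process $\Delta_t$, which depends on the whole past, can lose mass, and is not stationary at the front — so one has no control over the profile seen by the \emph{next} window, and no independence or renewal structure to sum against. The paper sidesteps this by making the $\theta$-increment $O(1/T)$ and applying the strong Markov property at a single stopping time per increment, so that the $\Delta$-process only needs to be understood at one time per $m$. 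Your reduction is therefore essentially circular: you assume the very linear growth rate the proposition is asserting, and reduce the verification to an informal accumulation argument that the paper's technical lemmas exist specifically to replace.

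One further point: you claim the $\theta$-$*$-coupling gives the identity $\alpha_T(\theta_2) - \alpha_T(\theta_1) = \frac{2}{T}\int_0^{T/2} \EE[R_0(u^{(2)}) - R_0(u^{(1)})]\,ds$ and that it suffices to bound the integrand. That reduction is harmless, but note the paper does not then estimate the integrand directly: it interposes the stopping time $S$ \emph{inside} the time interval and compares $R_0$ at $T/2+s$ via the strong Markov property, because the expected displacement over $[S, T/2+s]$ caused by the one-time gain in density at $S$ is what Lemmas~\ref{LEM:claim2}--\ref{LEM:claim1} are set up to estimate. Working with the raw integrand $\EE[R_0(u^{(2)}_{T/2+s}) - R_0(u^{(1)}_{T/2+s})]$ without this decomposition doesn't connect to Proposition~\ref{PRO:stopping-time-S}, which speaks only about $\Delta_S^{*,l}$ at a single time, not about the marker gap.
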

%
%
\begin{corollary}
\label{COR:Bg0}
For all $\theta>\theta_c$, 
\eqn{
\lbeq{equ:result}
  B = B(\theta) = \lim_{T \rightarrow \infty} \frac{\EE\!\left[ R_0\big( u_T^{*,l}(\theta) \big) \right] }{T} > 0.
}
\end{corollary}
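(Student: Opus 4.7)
The plan is to combine Proposition~\ref{PRO:exp-alpha-increase}, Corollary~\ref{COR:alpha_lim}, and Lemma~\ref{LEM:alpha_nonneg} in a straightforward way. Fix an arbitrary $\theta>\theta_c$, choose any $\underline{\theta}$ with $\theta_c<\underline{\theta}<\theta$, and set $\overline{\theta}=\theta$. Apply Proposition~\ref{PRO:exp-alpha-increase} with $\theta_1=\underline{\theta}$ and $\theta_2=\theta$: there exist $C=C(\underline{\theta},\theta)>0$ and $T_0\geq 1$ such that
\eqn{
  \frac{\alpha_T(\theta)-\alpha_T(\underline{\theta})}{T} \geq C(\theta-\underline{\theta}) \quad \text{for all } T\geq T_0.
}

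Next I would pass to the limit $T\to\infty$. By Corollary~\ref{COR:alpha_lim}, $\alpha_T(\theta')/T\to \tfrac{3}{4}B(\theta')$ for each fixed parameter $\theta'>\theta_c$. Applying this for both $\theta$ and $\underline{\theta}$ yields
\eqn{
  \tfrac{3}{4}\bigl(B(\theta)-B(\underline{\theta})\bigr) \geq C(\theta-\underline{\theta}),
}
which is exactly the inequality \eqref{equ:alpha-incr} of Proposition~\ref{PRO:alpha} (with an updated constant). Since $B(\underline{\theta})\geq 0$ by Lemma~\ref{LEM:alpha_nonneg}, this immediately gives
\eqn{
  B(\theta) \geq \tfrac{4C}{3}(\theta-\underline{\theta}) > 0,
}
which is \eqref{equ:result}.

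There is essentially no obstacle to this step; all the real work is already packaged in Proposition~\ref{PRO:exp-alpha-increase} (whose proof is deferred) and in Lemma~\ref{LEM:alpha_nonneg}. The only small point worth noting is that the constant $C$ in Proposition~\ref{PRO:exp-alpha-increase} is stated to depend on $(\underline{\theta},\overline{\theta})$, so in order to draw conclusions about a single $\theta$ one must fix such a pair bracketing it; this is harmless because $\theta_c<\underline{\theta}<\theta$ can always be chosen once $\theta>\theta_c$ is given. Finally, the same argument, phrased with $\theta_1$ and $\theta_2$ both running between $\underline{\theta}$ and $\overline{\theta}$, yields the monotonicity estimate \eqref{equ:alpha-incr} of Proposition~\ref{PRO:alpha} uniformly on compact subintervals of $(\theta_c,\infty)$, thereby closing out Proposition~\ref{PRO:alpha} as well.
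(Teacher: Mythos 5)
Your proof is correct and takes essentially the same route as the paper: apply Proposition~\ref{PRO:exp-alpha-increase} with a bracketing pair (the paper uses the specific choice $\underline{\theta}=\theta_c+(\theta-\theta_c)/2$), pass to the limit via Corollary~\ref{COR:alpha_lim}, and invoke Lemma~\ref{LEM:alpha_nonneg} for $B(\underline{\theta})\geq 0$. The only difference is that you spell out the intermediate inequality $\tfrac{3}{4}(B(\theta)-B(\underline{\theta}))\geq C(\theta-\underline{\theta})$ explicitly, while the paper compresses this into a single chain of displayed inequalities.
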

%
%
\begin{proof}
By definition of $\alpha_T$, Corollary~\ref{COR:alpha_lim} and Lemma~\ref{LEM:alpha_nonneg}, Proposition~\ref{PRO:exp-alpha-increase} implies that for all $\theta>\theta_c$, 
\eqn{
  \lim_{T \rightarrow \infty} \frac{\EE\!\left[ R_0\big( u_T^{*,l}(\theta) \big) \right] }{T}
  = B(\theta) = \frac{4}{3} \lim_{T \rightarrow \infty} \frac{\alpha_T(\theta)}{T}
  > \frac{4}{3} \lim_{T \rightarrow \infty} \frac{\alpha_T(\theta_c + (\theta-\theta_c)/2)}{T} 
  \geq 0.
}
\end{proof}
%
%

We conclude this subsection with two more results that we need for later estimates.
%
%
\begin{lemma}
\label{LEM:marker-nonneg}
Let $\theta_c < \underline{\theta}$, then there exists $\tilde{\delta}>0$ such that
\eqn{
  \PP\big( R_0\big( u_T^{*,l}(\theta) \big) \geq 0 \big) \geq \tilde{\delta}
}
for all $T \geq 1$ and $\theta \geq \underline{\theta}$.
\end{lemma}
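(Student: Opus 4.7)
The plan is to reduce the uniform-in-$\theta$ statement to the single value $\theta = \underline{\theta}$ via a monotonicity-in-$\theta$ coupling, and then to convert the already-available lower bound on $\EE[R_0(u_T^{*,l})]$ into a probability bound using an upper-tail estimate on $R_0$.

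First, for $\theta \geq \underline{\theta}$ I would use the standard pathwise comparison for the SPDE~\eqref{equ:SPDE}: larger mass creation yields more mass, so for any initial datum $u_0$ one can couple solutions with parameters $\underline{\theta}$ and $\theta$ such that $u^{(u_0)}(\theta)(t,x) \geq u^{(u_0)}(\underline{\theta})(t,x)$ for all $t \geq 0$, $x \in \RR$ almost surely (this is among the coupling techniques collected in the appendix). Because $u^{*,l}$ is constructed as a dominating limit of this class of solutions (see~\eqref{equ:coupling-left}), the ordering passes to the limit, giving a coupling with $R_0(u_T^{*,l}(\theta)) \geq R_0(u_T^{*,l}(\underline{\theta}))$ a.s. Hence $\PP(R_0(u_T^{*,l}(\theta)) \geq 0) \geq \PP(R_0(u_T^{*,l}(\underline{\theta})) \geq 0)$, and it is enough to establish the estimate at $\theta = \underline{\theta}$.

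Next, set $R := R_0(u_T^{*,l}(\underline{\theta}))$ and $B := B(\underline{\theta})$, which is strictly positive by Corollary~\ref{COR:Bg0}. Crucially, Lemma~\ref{LEM:alpha_lim} represents $B$ as an infimum, so
\[
  \EE[R] \geq B T \quad \mbox{for every } T \geq 1.
\]
Since $R \1_{\{R < 0\}} \leq 0$, this already yields $\EE\!\left[ R \1_{\{R \geq 0\}} \right] \geq B T$. To turn this into a probability bound, I would invoke the positive-side analogue of Corollary~\ref{COR:exp-ustarl-bd-below}, namely a uniform-in-$T$ estimate of the form
\[
  \EE\!\left[ R \1_{\{R > M T\}} \right] \leq C_1 T e^{-C_2 M}, \qquad T \geq 1,\ M > 0,
\]
which follows from comparing $u^{*,l}$ with a superprocess whose right marker travels at at most linear speed with exponentially small deviations (the relevant estimates are those already cited from \cite[Proposition~4.5 and Corollary~4.7]{K2017}). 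Choosing $M$ large enough that $C_1 e^{-C_2 M} \leq B/2$ gives $\EE\!\left[ R \1_{\{0 \leq R \leq M T\}} \right] \geq (B/2) T$, and since $R \leq M T$ on this event,
\[
  M T \cdot \PP(R \geq 0) \geq (B/2) T, \qquad \mbox{so} \qquad \PP(R \geq 0) \geq \tilde{\delta} := \frac{B}{2M}
\]
uniformly in $T \geq 1$, as required.

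The main obstacle is the uniform-in-$T$ upper-tail estimate on $R_0(u_T^{*,l})$; alternatively a uniform bound of the form $\EE[(R^+)^2] \leq C T^2$ would let me replace the tail split by Cauchy--Schwarz, $\PP(R \geq 0) \geq \EE[R \1_{\{R \geq 0\}}]^2 / \EE[(R^+)^2] \geq B^2/C$. Either way, once the requisite positive-side moment control is in place, the monotonicity-in-$\theta$ reduction together with the infimum representation of $B$ makes the rest of the argument entirely routine.
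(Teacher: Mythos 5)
Your first reduction step (the $\theta$-coupling to pass from arbitrary $\theta\geq\underline\theta$ to $\theta=\underline\theta$) is correct and matches the paper's first move (the $\theta$-$*$-coupling). The remainder of your argument, however, has a fatal circularity. You invoke Corollary~\ref{COR:Bg0} to obtain $B(\underline\theta)>0$, but Corollary~\ref{COR:Bg0} is a consequence of Proposition~\ref{PRO:exp-alpha-increase}, whose proof is deferred to Subsection~\ref{SUBSEC:Proof-theta-prop} --- and that proof uses Lemma~\ref{LEM:marker-nonneg} itself (it is where the lower bound $\tfrac{2}{T}\int_0^{T/2}\EE[\mathbbm{1}_{\{L_0(u^{*,r}_{T/2-S+s}(\theta_{m-1}))<0\}}]\,ds\geq\tilde\delta/2$ enters). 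So your proposal would use the conclusion to prove one of its own ingredients. Without the strict positivity $B>0$ your final step only gives $\PP(R\geq 0)\geq 0$, which is vacuous; the nonnegativity $B\geq 0$ from Lemma~\ref{LEM:alpha_nonneg} (which is available at this point) is not enough.

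The paper avoids this by taking a qualitative route that is independent of $B$: after the $\theta$-$*$-coupling reduction, it argues by contradiction. Supposing $\PP(R_0(u^{*,l}_{T_n})\geq 0)\to 0$ along some $T_n\to\infty$, it bounds the solution started from a two-sided Heaviside datum $f_1$ above by a sum of independent left- and right-upper-measure processes (a coupling with two independent processes), and uses the Horridge--Tribe convergence $u_t^{(f_1)}\Rightarrow\mu$ together with the nontriviality of $\mu$ (i.e.\ $\PP_{f_2}(\tau<\infty)<1$ from Mueller--Tribe) to reach a contradiction. The key input is the strict subunity of the Laplace functional of $\mu$, not any quantitative control of $B$. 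If you want to keep the flavour of your argument, you would need to first establish a lower bound on $\EE[R_0(u^{*,l}_T)]$ that does not route through Proposition~\ref{PRO:exp-alpha-increase}; as things stand, the second-moment and tail estimates you propose (which do exist, e.g.\ Lemma~\ref{LEM:marker-pos-part-second-moment}) are not the problem --- it is the use of $B>0$ that breaks the logical order.
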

%
%
\begin{proof}
Use a \textit{$\theta$-$*$-coupling} to see that it suffices to show the claim for $\underline{\theta}$ fixed. Note that for $T \geq 1$ arbitrarily fixed, $\PP\big( R_0\big( u_T^{*,l}(\underline{\theta}) \big) \geq 0 \big) > 0$. Therefore, in the following proof by contradiction we only need to suppose to the contrary that there exists a sequence $(T_n)_{n \in \NN}$ such that $T_n \rightarrow \infty$ for $n \rightarrow \infty$ and $\lim_{n \rightarrow \infty} \PP\big( R_0\big( u_{T_n}^{*,l} \big) \geq 0 \big) = 0$. Let $H_0(x) = 1 \wedge (-x \vee 0)$ be Heavyside initial data and set $f_1(x) = H_0(x+1) + H_0(-x-1)$. Then $f_1 \in \SC_{tem}^+$ and $\mbox{supp}(f_1) = (-\infty,-1] \cup [1,\infty)$. Let $f_2(x) = 0 \vee (1-|x|)$, then $f_2 \in \SC_c^+$ with $\mbox{supp}(f_2)=[-1,1]$. $f_1$ fulfills condition \cite[(6)]{HT2004} and hence \cite[Theorem~1]{HT2004} yields $u_t^{(f_1)} \Rightarrow \mu$ for $t \rightarrow \infty$. Using a \textit{coupling with two independent processes} in combination with the construction of \cite[Remark~2.8(ii)]{K2017}, we construct two independent processes $(u_t^{*,l})_{t \geq 1}$ and $(u_t^{*,r})_{t \geq 1}$ such that $\SL(( u_t^{*,l} )_{t \geq 1}) = \PP_{\upsilon_1}$, $\SL(( u_t^{*,r} )_{t \geq 1}) = \PP_{\kappa_1}$ and
\eqn{
  u_t^{(f_1)} \leq u_t^{*,l}(\fatcdot + 1) + u_t^{*,r}(\fatcdot - 1) \ \mbox{ for all } \ t \geq 1, x \in \RR \ \mbox{ almost surely.}
}
By \cite[Theorem~1]{HT2004} and \cite[Theorem~1]{MT1994},
\eqn{
  \int_{\SC_{tem}^+} e^{-2 \langle g , f_2 \rangle} \mu(dg) = \PP_{f_2}\big( \tau<\infty \big) < 1. 
}
Note that the additional factor of $2$ in the exponent results from the use of a different scaling constant in the original SPDE. We obtain by the weak convergence of $u_t^{(f_1)}$ to $\mu$,
\eqan{
\lbeq{equ:marker-geq-zero}
  1 
  & > \lim_{n \rightarrow \infty} \int_{\SC_{tem}^+} e^{-2 \langle g , f_2 \rangle} u_{T_n}^{(f_1)}(dg)
  = \lim_{n \rightarrow \infty} \EE\!\left[ e^{-2 \langle u_{T_n}^{(f_1)} , f_2 \rangle} \right] \\
  & \geq \lim_{n \rightarrow \infty} \EE\!\left[ e^{-2 \langle u_{T_n}^{*,l}(\fatcdot + 1) + u_{T_n}^{*,r}(\fatcdot - 1) , f_2 \rangle} \right]
  = \lim_{n \rightarrow \infty} \EE\!\left[ e^{-2 \langle u_{T_n}^{*,l}(\fatcdot + 1) , f_2 \rangle} \right] \EE\!\left[ e^{-2 \langle u_{T_n}^{*,r}(\fatcdot - 1) , f_2 \rangle} \right]. \nn
}
The assumption $\lim_{n \rightarrow \infty} \PP\big( R_0\big( u_{T_n}^{*,l} \big) \geq 0 \big) = 0$ yields by symmetry and by the shift invariance of the dynamics, $\lim_{n \rightarrow \infty} \PP\big( R_0\big( u_{T_n}^{*,l}(\fatcdot + 1 \big) \geq -1 \big) = 0 = \lim_{n \rightarrow \infty} \PP\big( L_0\big( u_{T_n}^{*,r}(\fatcdot - 1 \big) \leq 1 \big) = 0$. Use a coupling $u_{T_n}^{*,r} \leq u_{T_n}^*$ with $u_{T_n}^* \Rightarrow \mu \in \CP(\SC_{tem}^+)$ (cf. \cite[(2.34) and Proposition~2.4]{K2017}) to conclude by using dominated convergence that the right hand side in \eqref{equ:marker-geq-zero} is equal to $1$, a contradiction.
\end{proof}
%
%
\begin{lemma}
\label{LEM:marker-pos-part-second-moment}
For all $\theta_c < \theta \leq \overline{\theta}, T \geq 1$, 
\eqn{
  \EE\!\left[ \left( 0 \vee R_0\big( u_T^{*,l} \big) \right)^2 \right] \leq C\big( \overline{\theta} \big) T^2.
}
\end{lemma}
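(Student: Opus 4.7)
The plan is to telescope $R_0(u_T^{*,l})$ across time-steps of length at most one, dominate each increment in distribution by the marker on a short time interval via the strong Markov property and the coupling \eqref{equ:coupling-R-left}, and finish with Cauchy-Schwarz. The heavy lifting is then concentrated in a uniform second-moment bound at small times, which is the true crux of the lemma.

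Write $n=\lfloor T\rfloor$ and, for $k=1,\ldots,n$, set $\Delta_k=R_0(u^{*,l}_{(k+1)\wedge T})-R_0(u^{*,l}_k)$ and $r_k=(k+1)\wedge T-k\in[0,1]$. Also set $\Delta_0=R_0(u_1^{*,l})$ and $r_0=1$. Telescoping gives $0\vee R_0(u_T^{*,l})\leq\sum_{k=0}^{n}(0\vee\Delta_k)$. By the strong Markov property at time $k\geq 1$ applied to $(u_t^{*,l})_{t>0}$, the conditional law of $\Delta_k$ given $\SF_k$ is that of $R_0(u^{(f)}_{r_k})-R_0(f)$ with $f=u_k^{*,l}$. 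The SPDE being shift-invariant, $\Delta_k\mid\SF_k\stackrel{\SD}{=}R_0(u^{(\tilde f)}_{r_k})$ for $\tilde f(\fatcdot)=f(\fatcdot+R_0(f))$ satisfying $R_0(\tilde f)=0$. Applying \eqref{equ:coupling-R-left} (with its $T$-parameter chosen as any element of $(0,r_k)$, so that $r_k$ becomes the observation time) yields $R_0(u^{(\tilde f)}_{r_k})\leq R_0(u_{r_k}^{*,l})$ under a coupling, uniformly over $\tilde f\in\SC_{tem}^+$ with $R_0(\tilde f)\leq 0$. Hence on an enlarged probability space we construct dominating variables $Z_k\stackrel{\SD}{=}0\vee R_0(u^{*,l}_{r_k})$ such that $0\vee\Delta_k\leq Z_k$ a.s., $k=0,1,\ldots,n$.

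Cauchy-Schwarz now gives
\[
  \EE\!\left[\big(0\vee R_0(u_T^{*,l})\big)^2\right]\leq\EE\!\bigg[\Big(\sum_{k=0}^{n}Z_k\Big)^2\bigg]\leq(n+1)\sum_{k=0}^{n}\EE[Z_k^2]\leq(n+1)^2 M_1,
\]
with $M_1:=\sup_{0<s\leq 1}\EE\!\left[(0\vee R_0(u_s^{*,l}))^2\right]$. Since $n+1\leq T+1\leq 2T$ for $T\geq 1$, the claim $\leq C(\overline\theta)T^2$ reduces to establishing $M_1\leq C(\overline\theta)<\infty$. This base case is the main obstacle: the first-moment control provided by Corollary~\ref{COR:e-bd-small-t} is insufficient, and one must instead invoke a uniform-in-$s\in(0,1]$ exponential right-tail estimate for $R_0(u_s^{*,l})$ of the form $\PP(R_0(u_s^{*,l})>K)\leq C_1 e^{-C_2 K}$, as provided by \cite[Proposition~4.5]{K2017}. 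Integrating this tail against $2K\,dK$ yields the required $M_1\leq C(\overline\theta)$ and the lemma follows.
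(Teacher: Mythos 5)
Your argument is correct in structure and takes a route that is organized differently from the paper's, although both ultimately lean on the same small-time input. You telescope $R_0(u_T^{*,l})$ into at most $\lfloor T\rfloor+1$ increments of time-length $\leq 1$, dominate each $0\vee\Delta_k$ conditionally by $0\vee R_0(u^{*,l}_{r_k})$ via the Markov property, shift invariance and \eqref{equ:coupling-R-left}, then apply Cauchy--Schwarz to reduce to the uniform small-time bound $M_1=\sup_{0<s\leq 1}\EE[(0\vee R_0(u_s^{*,l}))^2]$. The paper instead first establishes the integer-time case by induction (in the spirit of \cite[Lemma~4.6]{K2017}) and then treats the fractional part $T-\lfloor T\rfloor$ separately by conditioning at $\lfloor T\rfloor$ and invoking the pointwise conditional bound of \cite[Remark~A.1]{K2017}, which carries a Gaussian kernel in the variable $R$ and hence yields the required decay after integration using \cite[Corollaries~2.6 and~2.9]{K2017}. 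The advantage of your one-shot decomposition is that it treats integer and non-integer $T$ uniformly and makes the quadratic-in-$T$ scaling transparent via the $(n+1)$ Cauchy--Schwarz factor; the paper's approach is tighter for the fractional part but requires two separate mechanisms.

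Two small remarks. First, when $T=\lfloor T\rfloor$ the term $r_n=0$ degenerates; you should simply omit $k=n$ in that case (the $\Delta_n$ term is zero anyway). Second, rather than constructing random variables $Z_k$ on an enlarged space with $0\vee\Delta_k\leq Z_k$ a.s., it is cleaner to apply Cauchy--Schwarz pointwise to $\big(\sum_k(0\vee\Delta_k)\big)^2\leq(n+1)\sum_k(0\vee\Delta_k)^2$, take expectations, and then bound each $\EE[(0\vee\Delta_k)^2]=\EE\big[\EE[(0\vee\Delta_k)^2\mid\SF_k]\big]\leq M_1$ by the conditional stochastic domination, since $x\mapsto x^2$ is increasing on $[0,\infty)$; this sidesteps any joint-coupling construction. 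Finally, you are right that $M_1<\infty$ is the actual content of the lemma and that first-moment control from Corollary~\ref{COR:e-bd-small-t} is not enough; the paper cites the same range of lemmas in \cite{K2017} (``reason as in Lemma~4.2--Proposition~4.5'') to get the $T=1$ second moment, so your appeal to the Gaussian-type tail bound from that source is consistent with the paper's usage, though one should verify that the estimate there is indeed uniform over $s\in(0,1]$ and not only at $s=1$.
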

%
%
\begin{proof}
In what follows, constants $C=C(\overline{\theta})$ may change from line to line. Note that for $a_i \geq 0, i=1,\ldots,n, n \in \NN$, $\big( \sum_{i=1}^n a_i \big)^2 \leq n \sum_{i=1}^n a_i^2$. 

We first show the claim for $T \in \NN$. Reason as in \cite[Lemma~4.2--Proposition~4.5]{K2017} to show that for $T=1$, $\EE\big[ \big( 0 \vee R_0\big( u_1^{*,l} \big) \big)^2 \big] \leq C$. Then reason as in \cite[Lemma~4.6]{K2017} to show the claim for $T \in \NN$ by induction.

Next, we extend this result to $T \geq 1$. As $\SL\big( u_T^{*,l} \big) \in \CP(\SC_{tem}^+ \backslash \{0\})$ for all $T>0$ we use \cite[Remark~2.8]{K2017} to get for $T \geq 1$ arbitrary,
\eqn{
  \EE\!\left[ \left( 0 \vee R_0\big( u_T^{*,l} \big) \right)^2 \right]
  = \EE\!\left[ \left( 0 \vee R_0\Big( u_{T - \lfloor T \rfloor}^{\big( u_{\lfloor T \rfloor}^{*,l} \big)} \Big) \right)^2 \right].
}
By \cite[Remark~A.1]{K2017} and symmetry,
\eqn{
  \EE\!\left[ \left( 0 \vee R_0\Big( u_{T - \lfloor T \rfloor}^{\big( u_{\lfloor T \rfloor}^{*,l} \big)} \Big) \right)^2 \Bigmid u_{\lfloor T \rfloor}^{*,l} \right]
  \leq \left( 0 \vee \big( R_0\big( u_{\lfloor T \rfloor}^{*,l} \big) + 2 \big) \right)^2 + C \int_{0 \vee \big(R_0\big( u_{\lfloor T \rfloor}^{*,l} \big) + 2 \big)}^\infty 2R \big\langle e^{-\frac{(\fatcdot-(R-1))^2}{4 (T - \lfloor T \rfloor)}} , u_{\lfloor T \rfloor}^{*,l} \big\rangle dR.
}
Take expectations and use \cite[Corollaries~2.6 and 2.9]{K2017} to conclude that
\eqn{
  \EE\!\left[ \left( 0 \vee R_0\Big( u_{T - \lfloor T \rfloor}^{\big( u_{\lfloor T \rfloor}^{*,l} \big)} \Big) \right)^2 \right]
  \leq C \lfloor T \rfloor^2 + C \int_0^\infty 2R \big\langle e^{-\frac{(\fatcdot-(R-1))^2}{4}} , 1 \big\rangle dR
  \leq CT^2
}
as claimed. 
\end{proof}
%
%
\subsection{A preliminary estimate}
\label{SUBSEC:prelim-est}
%
%

The following two lemmas yield, in combination, a lower bound on the expected increase of the right front marker at time $T+t, T>0, t \geq 0$ resulting from an increase of $\psi \in \SC_{tem}^+$ in the initial density of a solution to \eqref{equ:SPDE}.

Recall the construction of the left upper invariant measure $\upsilon_T$ and the process $\big( u_{T+t}^{*,l} \big)_{t \geq 0}$ for $T>0$ fixed from \cite{K2017} (cf. the corresponding construction for the upper invariant measure $\mu_T$ from \cite{K2017}, Proposition~2.2 and Corollary~2.6 as well as Remark~2.8). For arbitrarily fixed (to be chosen later) $\psi \in \SC_{tem}^+$, write
\eqn{
\lbeq{equ:def-Phi-Psi}
  \Phi(x) \equiv \begin{cases} \infty, & x < 0, \cr 0, & \mbox{otherwise} \end{cases}
  \quad \mbox{ and } \quad 
  \Psi(x) \equiv \begin{cases} \infty, & x < 0, \cr \psi(x), & x \geq 0, \cr 0, & \mbox{otherwise.} \end{cases}
}
In what follows consider couplings of solutions $\big( u_{T+t}^{(\phi)} \big)_{t \geq 0}$, $\phi \in \SC_{tem}^+$ and $\big( u_{T+t}^{(\phi+\psi)} \big)_{t \geq 0}$, $\psi \in \SC_{tem}^+$ with processes $\big( u_{T+t}^{(\Phi)} \big)_{t \geq 0}$ and $\big( u_{T+t}^{(\Psi)} \big)_{t \geq 0}$ for $T>0$ arbitrarily fixed. Note that by a slight abuse of notation "$\Psi = \Phi + \psi$". The two latter processes are to be understood in the spirit of the construction of $\upsilon_T$, that is as in Corollary~2.6 we choose sequences $( \Psi_N )_{N \in \NN}$ and $( \Phi_N )_{N \in \NN}$ such that $\Psi_N \uparrow \Psi$ and $\Phi_N \uparrow \Phi$ for $N \rightarrow \infty$ to obtain $u_{T+t}^{(\Psi)}(x) \equiv \uparrow \lim_{N \rightarrow \infty} u_{T+t}^{(\Psi_N)}(x)$ and $u_{T+t}^{(\Phi)} \equiv \uparrow \lim_{N \rightarrow \infty} u_{T+t}^{(\Phi_N)}(x)$ on a common probability space.
%
%
\begin{lemma} 
\label{LEM:claim2}
Let $\psi \in \SC_{tem}^+$ arbitrarily fixed and $\Phi, \Psi$ be as above. Let $\phi \in \SC_{tem}^+$ arbitrary with $R_0(\phi) \leq 0$. Then, for arbitrary $T>0, t \geq 0$, there exists a coupling of processes $\big( u^{(\Phi)}_{T+t} \big)_{t \geq 0}, \big( u^{(\Psi)}_{T+t} \big)_{t \geq 0}$ and solutions $\big( u^{(\phi)}_{T+t} \big)_{t \geq 0}, \big( u^{(\phi+\psi)}_{T+t} \big)_{t \geq 0}$ such that
\eqn{
\lbeq{equ:claim2}
  \EE\!\left[ R_0\big( u_{T+t}^{(\Psi)} \big)-R_0\big( u_{T+t}^{(\Phi)} \big) \right]
  \leq \EE\!\left[ R_0\big( u_{T+t}^{(\phi+\psi)} \big)-R_0\big( u_{T+t}^{(\phi)} \big) \right]
}
for all $t \geq 0$ almost surely. On the right hand side we consider a \textit{monotonicity-coupling} and set $R_0\big( u_{T+t}^{(\phi+\psi)} \big)-R_0\big( u_{T+t}^{(\phi)} \big) = 0$ on $\big\{ \tau^{(\phi+\psi)} \leq T+t \big\}$.
\end{lemma}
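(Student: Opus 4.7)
Plan:

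The strategy is a three-stage argument: approximate $\Phi$ and $\Psi$ by $\SC_{tem}^+$-initial conditions, realize all four processes on the same white noise (the standard monotonicity coupling), and establish a pointwise concavity inequality for the SPDE flow.

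First I would choose an increasing sequence $(\Phi_N)_{N \in \NN} \subset \SC_{tem}^+$ with $\Phi_N \uparrow \Phi$ pointwise and $\Phi_N \geq \phi$ for all $N$ large enough (feasible since $\phi$ is bounded on compacts with $R_0(\phi) \leq 0$, while $\Phi_N \to \infty$ on $(-\infty,0)$); set $\Psi_N := \Phi_N + \psi$, so $\Psi_N \uparrow \Psi$ and $\Psi_N \geq \phi + \psi$. On a common filtered probability space I drive the four SPDEs with initial data $\phi,\phi+\psi,\Phi_N,\Psi_N$ by the same white noise $W$ and define $u^{(\Phi)}_{T+t} := \uparrow\!\lim_N u^{(\Phi_N)}_{T+t}$, $u^{(\Psi)}_{T+t} := \uparrow\!\lim_N u^{(\Psi_N)}_{T+t}$ on this same space, consistent with the construction recalled in the paragraph above the statement. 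Monotonicity under the common-noise coupling yields the four a.s.\ pointwise orderings $u^{(\phi)} \leq u^{(\phi+\psi)}$, $u^{(\Phi_N)} \leq u^{(\Psi_N)}$, $u^{(\phi)} \leq u^{(\Phi_N)}$, $u^{(\phi+\psi)} \leq u^{(\Psi_N)}$.

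The crux of the argument is the pointwise concavity inequality
\[
u^{(\Psi_N)}_{T+t}(x) - u^{(\Phi_N)}_{T+t}(x) \;\leq\; u^{(\phi+\psi)}_{T+t}(x) - u^{(\phi)}_{T+t}(x) \qquad \forall x \in \RR \text{ a.s.} \qquad (*)
\]
expressing that the extra mass produced by adding $\psi$ to the initial condition is smaller when the background is denser, owing to stronger competition from the $-u^2$ term. I would approach $(*)$ by analyzing the SPDE satisfied by the (a.s.\ non-negative) double difference $D_N := (u^{(\phi+\psi)} - u^{(\phi)}) - (u^{(\Psi_N)} - u^{(\Phi_N)})$, which vanishes at $t=0$ and whose drift carries a favorable source term under the four orderings above; alternatively, $(*)$ can be obtained by approximating via the long-range contact processes of Mueller-Tribe \cite{MT1995}, where the inequality is a transparent combinatorial consequence of the graphical construction followed by passage to the SPDE limit.

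Given $(*)$, I use the identity $R_0(f+g) = R_0(f) \vee R_0(g)$ for non-negative continuous $f,g$ applied to $u^{(\Psi_N)} = u^{(\Phi_N)} + (u^{(\Psi_N)} - u^{(\Phi_N)})$ and $u^{(\phi+\psi)} = u^{(\phi)} + (u^{(\phi+\psi)} - u^{(\phi)})$, combined with $R_0(u^{(\Phi_N)}) \geq R_0(u^{(\phi)})$ (monotonicity) and $R_0(u^{(\Psi_N)} - u^{(\Phi_N)}) \leq R_0(u^{(\phi+\psi)} - u^{(\phi)})$ (a direct consequence of $(*)$); a short four-case analysis then gives the a.s.\ inequality $R_0(u^{(\Psi_N)}_{T+t}) - R_0(u^{(\Phi_N)}_{T+t}) \leq R_0(u^{(\phi+\psi)}_{T+t}) - R_0(u^{(\phi)}_{T+t})$. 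On the extinction event $\{\tau^{(\phi+\psi)} \leq T+t\}$, $(*)$ forces $u^{(\Psi_N)}_{T+t} = u^{(\Phi_N)}_{T+t}$ pointwise, so both sides vanish (matching the RHS convention). Taking $N \to \infty$ via monotone convergence $R_0(u^{(\Phi_N)}) \uparrow R_0(u^{(\Phi)})$ and $R_0(u^{(\Psi_N)}) \uparrow R_0(u^{(\Psi)})$, and invoking the integrability bounds from Corollary \ref{COR:e-bd} and Lemma \ref{LEM:marker-pos-part-second-moment} to justify dominated convergence, yields \eqref{equ:claim2}. The main obstacle is $(*)$: the square-root noise complicates a direct SPDE comparison for $D_N$, so the particle-approximation route is likely the cleaner path to a rigorous proof.
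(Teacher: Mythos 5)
You correctly identify the crux of the lemma: a pointwise ``concavity'' inequality on the differences,
\[
u^{(\Psi_N)}_{T+t}(x) - u^{(\Phi_N)}_{T+t}(x) \;\leq\; u^{(\phi+\psi)}_{T+t}(x) - u^{(\phi)}_{T+t}(x),
\]
and your deduction of the right-marker inequality from $(*)$ via $R_0(f+g) = R_0(f)\vee R_0(g)$ and the four orderings is sound, as is the approximation/limiting step in $N$. However you leave $(*)$ itself unproven and you explicitly acknowledge this is the main obstacle; both routes you sketch are problematic. The ``double difference'' $D_N$ under a single common white noise has noise coefficient $\big(\sqrt{u^{(\phi+\psi)}} - \sqrt{u^{(\phi)}}\big) - \big(\sqrt{u^{(\Psi_N)}} - \sqrt{u^{(\Phi_N)}}\big)$, which has no definite sign, so no comparison principle applies, and it is not even clear that $(*)$ holds pathwise under the same-noise coupling you propose. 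The particle-approximation route is not carried out.

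What the paper actually does is construct a different coupling in which $(*)$ holds by construction. Starting from $u_1 \stackrel{\SD}{=} u^{(\phi)}$ driven by $W_1$, it builds three additional non-negative processes $v_2, v_3, d_4$ driven by independent noises $W_2, W_3, W_4$: $v_2$ solves the usual monotonicity-coupling SPDE (competition from $2u_1$) started from $\Phi - \phi$ so that $u_1+v_2 \stackrel{\SD}{=} u^{(\Phi)}$; $v_3$ solves the monotonicity-coupling SPDE with competition from $2(u_1+v_2)$ started from $\psi$, so $u_1+v_2+v_3 \stackrel{\SD}{=} u^{(\Psi)}$; and crucially $d_4$ is an auxiliary process with immigration term $2v_2v_3$ and competition from $2(u_1+v_3)$, started from $0$, chosen exactly so that the drift and quadratic variation combine to give $u_1+v_3+d_4 \stackrel{\SD}{=} u^{(\phi+\psi)}$. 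Under this coupling, $u^{(\Psi)}-u^{(\Phi)} = v_3 \leq v_3+d_4 = u^{(\phi+\psi)}-u^{(\phi)}$ is immediate from non-negativity of $d_4$, and the right-marker inequality follows by exactly the kind of comparison you describe. This four-process decomposition with the immigration term $2v_2v_3$ is the genuine idea you are missing; it sidesteps the sign problem entirely by avoiding the same-noise coupling and instead sharing sub-pieces across the four processes.
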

%
%
\begin{remark}
\label{RMK:claim2}
Note that the expectations on the left hand side of \eqref{equ:claim2} are well-defined by Lemma~\ref{LEM:speed-bound-mittl} and Corollaries~\ref{COR:e-bd}--\ref{COR:e-bd-small-t}. Indeed, note that if $f_n \uparrow f$ in $\SC_{tem}^+$, then $R_0(f_n) \uparrow R_0(f)$ for $n \rightarrow \infty$. Now use approximating sequences $\Psi_N, \Phi_N \in \mittl^R$ for $\Psi$ respectively $\Phi$ from below as in \cite[Remark~2.8(i)]{K2017} in combination with dominated convergence.
\end{remark}
%
%
\begin{proof}
\textit{Step 1.} Let $\phi, \psi$ as in the statement above. We first show the claim for $T=0$ and $\Phi, \Psi \in \SC_{tem}^+$ satisfying $R_0(\Phi)=0$, $\Phi \geq \phi$ and $\Psi = \Phi+\psi$. Consider the following coupling. Let $u_1 = u^{(\phi)}$ be a non-negative solution to
\eqn{
  \frac{\partial u_1}{\partial t} = \Delta u_1 + (\theta -u_1) u_1 + \sqrt{u_1} \dot{W_1}, \qquad u_1(0)=\phi,
}
and $v_2$ be a non-negative solution to
\eqn{
  \frac{\partial v_2}{\partial t} = \Delta v_2 + (\theta -v_2-2u_1) v_2 + \sqrt{v_2} \dot{W_2}, \qquad v_2(0)=\Phi-\phi
}
with $W_2$ a white noise independent of $W_1$. For the construction of the latter proceed as in Remark~\ref{RMK:monotonicity-coupling} on \textit{monotonicity-couplings}. Then $u_1+v_2 \stackrel{\SD}{=} u^{(\Phi)}$, that is, $u_1+v_2$ solves \eqref{equ:SPDE} with initial condition $\Phi$. Let $v_3$ be a non-negative solution to
\eqn{
  \frac{\partial v_3}{\partial t} = \Delta v_3 + (\theta -v_3 - 2(u_1+v_2)) v_3 + \sqrt{v_3} \dot{W_3}, \qquad v_3(0)=\psi
}
with $W_3$ a white noise independent of $W_1, W_2$. Then $u_1+v_2+v_3 \stackrel{\SD}{=} u^{(\Phi+\psi)}$ follows as above, and using that $\Psi=\Phi+\psi$,
\eqan{
\lbeq{equ:diff_R0_1}
  R_0\big( u_t^{(\Psi)} \big) - R_0\big( u_t^{(\Phi)} \big) 
  &\stackrel{\SD}{=} R_0\big( (u_1+v_2+v_3)_t \big) - R_0\big( (u_1+v_2)_t \big) \\
  &= \big( R_0\big( (v_3)_t \big) - R_0\big( (u_1+v_2)_t) \big) \vee 0 \nn
}
for all $t \geq 0$ a.s., where we set $R_0\big( u_t^{(\Psi)} \big)-R_0\big( u_t^{(\Phi)} \big) = 0$ on $\big\{ \tau^{(\Psi)} \leq t \big\}$. Finally, let $d_4$ be a non-negative solution to
\eqn{
  \frac{\partial d_4}{\partial t} = \Delta d_4 + 2 v_2 v_3+ (\theta -d_4 - 2(u_1+v_3)) d_4 + \sqrt{d_4} \dot{W_4}, \qquad d_4(0)=0
}
with $W_4$ independent of $W_1, W_2, W_3$ and where the term $2 v_2 v_3$ can be interpreted as an additional immigration term. Then $u_1+v_3+d_4 \stackrel{\SD}{=} u^{(\phi+\psi)}$ and
\eqan{
  R_0\big( u_t^{(\phi+\psi)} \big) - R_0\big( u_t^{(\phi)} \big)
  &= R_0\big( (u_1+v_3+d_4)_t \big) - R_0\big( (u_1)_t \big) \\
  &= \big( R_0\big( (v_3+d_4)_t \big) - R_0\big( (u_1)_t \big) \big) \vee 0 \nn\\
  &\geq \big( R_0\big( (v_3)_t \big) - R_0\big( (u_1+v_2)_t \big) \big) \vee 0, \nn
}
the last by the non-negativity of the solutions $d_4$ and $v_2$. 

The second part of the claim now follows from the above and \eqref{equ:diff_R0_1}. For the first part of the claim, use that $\Psi = \Phi + \psi$ and
\eqn{
  u_1 \stackrel{\SD}{=} u^{(\phi)}, \quad u_1+v_2 \stackrel{\SD}{=} u^{(\Phi)}, \quad u_1+v_2+v_3 \stackrel{\SD}{=} u^{(\Phi+\psi)}, \quad u_1+v_3+d_4 \stackrel{\SD}{=} u^{(\phi+\psi)}
}
to obtain a coupling satisfying
\eqn{
  u_t^{(\Psi)}-u_t^{(\Phi)} = \big( u_1+v_2+v_3\big) - \big( u_1+v_2 \big) = v_3 \leq v_3 + d_4 =  \big( u_1+v_3+d_4 \big) - u_1 = u_t^{(\phi+\psi)}-u_t^{(\phi)}
}
as claimed.

\textit{Step 2.} Fix $T>0$. Let $\Phi_N \uparrow \Phi$, $\Phi$ as in \eqref{equ:def-Phi-Psi}, satisfy $R_0(\Phi_N)=0$ and $\Phi_1 \geq \phi, \Phi_1 \in \mittl^R$. Set $\Psi_N = \Phi_N + \psi$. By Step~1, there exists a coupling of solutions $\big( u^{(\Phi_N)}_{T+t} \big)_{t \geq 0}, \big( u^{(\Psi_N)}_{T+t} \big)_{t \geq 0}$ to \eqref{equ:SPDE} such that \eqref{equ:claim2} holds with $\Phi, \Psi$ replaced by $\Phi_N, \Psi_N$ for $N \in \NN$ arbitrarily fixed. 

Define $u_{T+t}^{(\Phi)}(x) = \uparrow \lim_{N \rightarrow \infty} u_{T+t}^{(\Phi_N)}(x)$ and $u_{T+t}^{(\Psi)}(x) = \uparrow \lim_{N \rightarrow \infty} u_{T+t}^{(\Psi_N)}(x)$ on a common probability space (cf. \cite[Remark~2.8(i)]{K2017}). By taking limits in $N \rightarrow \infty$, the claim now follows for $\Phi, \Psi$ as well by dominated convergence (cf. Remark~\ref{RMK:claim2} above). 
\end{proof}
%
%
\begin{lemma} \label{LEM:claim1}
For $t>0$ fixed and $\psi \in \SC_{tem}^+$,
\eqn{
  \EE\!\left[ R_0\big( u_t^{(\Psi)} \big) \vee 0 - R_0\big( u_t^{(\Phi)} \big) \vee 0 \right] 
  \geq \int_0^\infty \EE\!\left[ \1_{\big\{ -x \leq L_0\big( u_t^{*,r} \big) < -x+1 \big\}} \left( 1 - e^{-2 \langle \1_{[0,1)} \psi , u_t^{*,r}(\fatcdot - x) \rangle } \right) \right] dx
}
holds.
\end{lemma}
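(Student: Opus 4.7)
The plan is to derive an exact identity for the expectation on the left of the lemma via self-duality, and then shrink two pieces of the resulting integrand to obtain the claimed lower bound.

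First, by the monotone coupling from Lemma~\ref{LEM:claim2} (with $\phi = \Phi$) one may assume $u_t^{(\Psi)} \geq u_t^{(\Phi)}$ pointwise, hence $R_0\big( u_t^{(\Psi)} \big) \vee 0 \geq R_0\big( u_t^{(\Phi)} \big) \vee 0 \geq 0$. The layer-cake identity $(a \vee 0) - (b \vee 0) = \int_0^\infty (\1\{a > x\} - \1\{b > x\}) \, dx$ (valid for $a \geq b$), combined with Fubini, gives
\begin{equation*}
\EE\!\left[ R_0\big( u_t^{(\Psi)} \big) \vee 0 - R_0\big( u_t^{(\Phi)} \big) \vee 0 \right] = \int_0^\infty \!\!\left[ \PP\!\left( R_0\big( u_t^{(\Psi)} \big) > x \right) - \PP\!\left( R_0\big( u_t^{(\Phi)} \big) > x \right) \right] dx.
\end{equation*}
By continuity of $u_t$ as an element of $\SC_{tem}^+$, $\big\{ R_0( u_t ) > x \big\} = \big\{ \langle u_t, \1_{(x,\infty)} \rangle > 0 \big\}$, so each probability above equals $1 - \lim_{n \to \infty} \EE\!\left[ e^{-2n \langle u_t, \1_{(x,\infty)} \rangle} \right]$.

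Next, the self-duality \eqref{equ:SPDE-self-duality}, applied along approximating sequences $\Phi_N \uparrow \Phi, \Psi_N \uparrow \Psi$ from $\mittl^R$ (cf. \cite[Remark~2.8(i)]{K2017}) and passed to the limit by monotone/dominated convergence on both sides, yields the Laplace identities
\begin{align*}
\EE\!\left[ e^{-2 \langle u_t^{(\Phi)}, g \rangle} \right] &= \PP\!\left( \langle \1_{(-\infty,0)}, u_t^{(g)} \rangle = 0 \right), \\
\EE\!\left[ e^{-2 \langle u_t^{(\Psi)}, g \rangle} \right] &= \EE\!\left[ \1\!\left\{ \langle \1_{(-\infty,0)}, u_t^{(g)} \rangle = 0 \right\} e^{-2 \langle \1_{[0,\infty)} \psi, u_t^{(g)} \rangle} \right]
\end{align*}
for every $g \in \SC_{tem}^+$. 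A second monotone limit, approximating $n \1_{(x,\infty)}$ from below by continuous test functions and then sending $n \to \infty$, replaces $u_t^{(g)}$ by $u_t^{(\Phi^r_x)}$, where $\Phi^r_x(y) \equiv \infty$ for $y > x$ and $0$ otherwise. By the reflection and shift invariance of \eqref{equ:SPDE} and the construction of the right-upper measure $\kappa_T$ mentioned below \eqref{equ:coupling-R-left}, $u_t^{(\Phi^r_x)}(\fatcdot) \stackrel{\SD}{=} u_t^{*,r}(\fatcdot - x)$; a change of variables moreover gives $\big\{ \langle \1_{(-\infty,0)}, u_t^{*,r}(\fatcdot - x) \rangle = 0 \big\} = \big\{ L_0\big( u_t^{*,r} \big) \geq -x \big\}$. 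Substituting back produces the exact identity
\begin{equation*}
\PP\!\left( R_0\big( u_t^{(\Psi)} \big) > x \right) - \PP\!\left( R_0\big( u_t^{(\Phi)} \big) > x \right) = \EE\!\left[ \1\!\left\{ L_0\big( u_t^{*,r} \big) \geq -x \right\} \!\left( 1 - e^{-2 \langle \1_{[0,\infty)} \psi, u_t^{*,r}(\fatcdot - x) \rangle} \right) \right].
\end{equation*}

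To conclude, bound $\1\{ L_0(u_t^{*,r}) \geq -x \}$ from below by $\1\{ -x \leq L_0(u_t^{*,r}) < -x + 1 \}$ and replace $\1_{[0,\infty)} \psi$ in the exponent by the pointwise smaller $\1_{[0,1)} \psi$; the latter enlarges $e^{-2 \fatcdot}$ and hence decreases the non-negative term $1 - e^{-2 \fatcdot}$. Integrating in $x$ delivers the stated lower bound. The main technical care is in the double passage to the limit when identifying the Laplace functional of $u_t^{(\Psi)}$: both the initial data $\Psi_N$ and the test functions must be approximated by monotone sequences in $\SC_{tem}^+$ and the two limits exchanged, which is handled by monotone/dominated convergence together with the monotone construction of $u_t^{(\Phi)}, u_t^{(\Psi)}$ and $u_t^{*,r}$ from \cite[Remark~2.8(i)]{K2017}.
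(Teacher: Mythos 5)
Your proposal is correct and follows essentially the same strategy as the paper's proof: the layer-cake representation of $R_0 \vee 0$, the Laplace-duality identity \eqref{equ:laplace-left} expressing $\PP(R_0(u_t)\le x)$ in terms of $u_t^{*,r}(\fatcdot-x)$, and the final restriction of the indicator and of $\psi$ to $[0,1)$; reaching the duality by approximating $\1_{(x,\infty)}$ as a test function is a cosmetic detour from the paper's direct use of \eqref{equ:laplace-left}. One small inaccuracy worth noting: you cite Lemma~\ref{LEM:claim2} for the pointwise dominance $u_t^{(\Psi)} \geq u_t^{(\Phi)}$, but that lemma requires $\phi\in\SC_{tem}^+$ with $R_0(\phi)\le 0$ and so does not apply with $\phi=\Phi$; the needed dominance follows directly from the monotone construction of $u_t^{(\Phi)}, u_t^{(\Psi)}$ via the approximating sequences $\Phi_N\uparrow\Phi$, $\Psi_N\uparrow\Psi$.
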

%
%
\begin{proof}
By partial integration, for $\phi \in \SC_{tem}^+, t>0$ arbitrary,
\eqn{
\lbeq{equ:prob-right-marker}
  \EE_\phi\!\left[ R_0\big( u_t \big) \vee 0 \right] 
  = \int_0^\infty \PP_\phi\!\left( R_0\big( u_t \big) > x \right) dx.
}
By \eqref{equ:laplace-left}, symmetry and by the shift invariance of the dynamics,
\eqn{
  \PP_\phi\!\left( R_0\big( u_t \big) \leq x \right) 
  = \EE\!\left[ e^{-2 \langle \phi , u_t^{*,r}(\fatcdot - x) \rangle } \right]
  = \EE\!\left[ e^{-2 \langle \phi(\fatcdot + x) , u_t^{*,r} \rangle } \right].
}
Hence,
\eqn{
\lbeq{equ:exp_right_marker-2}
  \EE_\phi\!\left[ R_0\big( u_t \big) \vee 0 \right] 
  = \int_0^\infty \EE\!\left[ \left( 1 - e^{-2 \langle \phi , u_t^{*,r}(\fatcdot - x) \rangle } \right) \right] dx
  = \int_0^\infty \EE\!\left[ \1_{\big\{ u_t^{*,r} |_{\mbox{\tiny supp}(\phi(\fatcdot + x))} \not\equiv 0 \big\}} \left( 1 - e^{-2 \langle \phi , u_t^{*,r}(\fatcdot - x) \rangle } \right) \right] dx. 
}
In the following we use $\Phi$ and $\Psi=\Phi+\psi$ as initial conditions or test functions to facilitate notation. This notation is understood as an abbreviation for taking limits of non-decreasing approximating sequences of initial conditions as explained above and using monotone convergence to obtain the respective results.
 
Reason as in Remark~\ref{RMK:claim2} to see that the following integrals are well-defined. The Theorem of Fubini-Tonelli yields
\eqan{
  \EE\!\left[ R_0\big( u_t^{(\Psi)} \big) \vee 0 - R_0\big( u_t^{(\Phi)} \big) \vee 0  \right] 
  &= \int_0^\infty \EE\!\left[ e^{-2 \langle \Phi(\fatcdot + x) , u_t^{*,r} \rangle } \right] - \EE\!\left[ e^{-2 \langle \Psi(\fatcdot + x) , u_t^{*,r} \rangle } \right] dx \\
  &= \int_0^\infty \EE\!\left[ e^{-2 \langle \Phi(\fatcdot + x) , u_t^{*,r} \rangle } \left( 1 - e^{-2 \langle \psi(\fatcdot + x) , u_t^{*,r} \rangle } \right) \right] dx \nn\\
  &= \int_0^\infty \EE\!\left[ \1_{\big\{ L_0\big( u_t^{*,r} \big) \geq -x \big\}} \left( 1 - e^{-2 \langle \psi(\fatcdot + x) , u_t^{*,r} \rangle } \right) \right]  dx \nn\\
  &\geq \int_0^\infty \EE\!\left[ \1_{\big\{ -x \leq L_0\big( u_t^{*,r} \big) < -x+1 \big\}} \left( 1 - e^{-2 \langle \1_{[0,1)} \psi , u_t^{*,r}(\fatcdot - x) \rangle } \right) \right] dx. \nn
}
This completes the proof. 
\end{proof}
%
%
%
\subsection{Proof of Proposition~\ref{PRO:exp-alpha-increase}}
\label{SUBSEC:Proof-theta-prop}
%
%
Let $T \geq 1$ be arbitrarily fixed. For $\theta_c < \underline{\theta} < \overline{\theta}$ arbitrary let 
\eqn{
\lbeq{equ:def-delta-epsilon}
  \delta = \frac{ \overline{\theta}-\underline{\theta} }{M}, \mbox{ where } M>0 \mbox{ is arbitrarily large with } MT \in \NN \mbox{ and } \theta_m = \underline{\theta} + m \frac{\delta}{T},\ m \in \{0, 1, \ldots, MT\}.
}
For ease of notation, we only prove the case $\theta_1=\underline{\theta}, \theta_2=\overline{\theta}$. Note that if we let $\delta=(\theta_2-\theta_1)/M$ instead and consider the difference $\alpha_T(\theta_2) - \alpha_T(\theta_1)$ in what follows, the proof remains unchanged.

We proceed to observe that $\theta_0=\underline{\theta}, \theta_{MT}=\overline{\theta}$ and that we therefore rewrite
\eqan{
  \alpha_T\big( \overline{\theta} \big) - \alpha_T\big( \underline{\theta} \big)
  &= \sum_{m=1}^{MT} \left\{ \alpha_T(\theta_m) - \alpha_T(\theta_{m-1}) \right\} \\
  &= \sum_{m=1}^{MT} \frac{2}{T} \int_0^{T/2} \left\{ \EE\!\left[ R_0\big( u_{T/2+s}^{*,l}(\theta_m) \big) \right] - \EE\!\left[ R_0\big( u_{T/2+s}^{*,l}(\theta_{m-1}) \right] \right\} ds. \nn
}
Let $\xi>0$ arbitrary and $S=S(\omega,m)$, $m \in \NN$ with $\xi \leq S \leq T/2-\xi$ be random stopping times to be made more precise later on. Then, by the strong Markov property of the processes involved,
\eqn{
\lbeq{equ:diff-alpha-1}
  \alpha_T\big( \overline{\theta} \big) - \alpha_T\big( \underline{\theta} \big) 
  = \sum_{m=1}^{MT} \frac{2}{T} \int_0^{T/2} \EE\!\left[ \EE_{u_S^{*,l}(\theta_m)}\!\left[ R_0\big( u_{T/2-S+s}(\theta_m) \big) \right] - \EE_{u_S^{*,l}(\theta_{m-1})}\!\left[ R_0\big( u_{T/2-S+s}(\theta_{m-1}) \big) \right] \right] ds. 
}
The expectations are well-defined by Corollaries~\ref{COR:e-bd}--\ref{COR:e-bd-small-t}. Using a \textit{$\theta$-coupling} we bound \eqref{equ:diff-alpha-1} from below by
\eqn{
  \sum_{m=1}^{MT} \frac{2}{T} \int_0^{T/2} \EE\!\left[ \EE_{u_S^{*,l}(\theta_m)}\!\left[ R_0\big(u_{T/2-S+s}(\theta_{m-1}) \big) \right] - \EE_{u_S^{*,l}(\theta_{m-1})}\!\left[ R_0\big(u_{T/2-S+s}(\theta_{m-1}) \big) \right] \right] ds.
}
A shift in space, using the shift invariance of the dynamics, further allows to rewrite this to
\eqan{
  & \sum_{m=1}^{MT} \frac{2}{T} \int_0^{T/2} \EE\!\left[ \EE_{u_S^{*,l}(\theta_m)\big( \fatcdot + R_0\big(u_S^{*,l}(\theta_{m-1}) \big) \big)}\!\left[ R_0\big(u_{T/2-S+s}(\theta_{m-1}) \big) \right] \right. \\
  &\quad\qquad \left. - \EE_{u_S^{*,l}(\theta_{m-1})\big( \fatcdot + R_0\big(u_S^{*,l}(\theta_{m-1}) \big) \big)}\!\left[ R_0\big(u_{T/2-S+s}(\theta_{m-1}) \big) \right] \right] ds. \nn
}

For $S \geq \xi > 0$, use a \textit{$\theta$-$*$-coupling} to obtain 
\eqn{
\lbeq{equ:def-Delta}
  0 \leq
  \Delta_S^{*,l}\big( \theta_{m-1}, \theta_m \big)
  \equiv u_S^{*,l}(\theta_m)\big( \fatcdot + R_0\big(u_S^{*,l}(\theta_{m-1}) \big) \big) - u_S^{*,l}(\theta_{m-1})\big( \fatcdot + R_0\big(u_S^{*,l}(\theta_{m-1}) \big) \big)
  \in \SC_{tem}^+. 
}
Hence, we use the strong Markov property of the family of laws $\PP_f, f \in \SC_{tem}^+$ to apply Lemma~\ref{LEM:claim2}, using that $T/2-S \geq \xi > 0$ and $S \geq \xi>0$, to see that 
\eqan{
  &\alpha_T\big( \overline{\theta} \big) - \alpha_T\big( \underline{\theta} \big) \\
  & \geq \sum_{m=1}^{MT} \frac{2}{T} \int_0^{T/2} \EE\Bigg[ \EE\!\left[ \EE_{u_S^{*,l}(\theta_m)\big( \fatcdot + R_0\big(u_S^{*,l}(\theta_{m-1}) \big) \big)}\!\left[ R_0\big(u_{T/2-S+s}(\theta_{m-1}) \big) \right] \right. \nn\\
  &\quad\qquad \left. - \EE_{u_S^{*,l}(\theta_{m-1})\big( \fatcdot + R_0\big(u_S^{*,l}(\theta_{m-1}) \big) \big)}\!\left[ R_0\big(u_{T/2-S+s}(\theta_{m-1}) \big) \right] \Bigmid \SF_S \right] \Bigg] ds \nn\\
  & \geq \sum_{m=1}^{MT} \frac{2}{T} \int_0^{T/2} \EE\Bigg[ \EE\!\left[ R_0\big( u_{T/2-S+s}^{\big( \Phi + \Delta_S^{*,l}\big( \theta_{m-1}, \theta_m \big) \big)}(\theta_{m-1}) \big) \right] - \EE\!\left[ R_0\big( u_{T/2-S+s}^{(\Phi)}(\theta_{m-1}) \big) \right] \Bigg] ds \nn\\
  & \geq \sum_{m=1}^{MT} \frac{2}{T} \int_0^{T/2} \EE\!\left[ R_0\big( u_{T/2-S+s}^{\big( \Phi +  \Delta_S^{*,l}\big( \theta_{m-1}, \theta_m \big) \big) }(\theta_{m-1}) \big) \vee 0 - R_0\big( u_{T/2-S+s}^{(\Phi)}(\theta_{m-1}) \big) \vee 0 \right] ds. \nn
}
With the help of Lemma~\ref{LEM:claim1} we further bound this from below by
\eqn{
  \sum_{m=1}^{MT} \frac{2}{T} \int_0^{T/2}\!\!\!\int_0^\infty \EE\!\left[ \1_{\big\{ -x \leq L_0\big( u_{T/2-S+s}^{*,r}(\theta_{m-1}) \big) < -x+1 \big\}} \left( 1 - e^{-2 \big\langle \1_{[0,1)} \Delta_S^{*,l}\big( \theta_{m-1}, \theta_m \big) , \big( u_{T/2-S+s}^{*,r}(\theta_{m-1}) \big)(\fatcdot - x) \big\rangle } \right) \right] dx ds.
}

For $d_0, m_0>0$, let
\eqn{
  \tilde{M}(d_0,m_0) = \big\{ f \in \SC_{tem}^+: \mbox{ there exist } 0 \leq l_0 < r_0 \leq 1/2 \mbox{ with } |r_0-l_0|=d_0 \mbox{ such that } f \geq m_0 \1_{[l_0,r_0]} \big\}.
}
Fix $\epsilon>0$ arbitrary and let $d_0=d_0(\epsilon), m_0=m_0(\epsilon)>0$ as in Corollary~\ref{COR:uniform-properties}, where we note that instead of considering right markers we now consider left markers. We obtain as a further lower bound to the above 
\eqan{
\lbeq{equ:before-prop}
  & \sum_{m=1}^{MT} \frac{2}{T} \int_0^{T/2} \int_0^\infty \EE\!\left[ \1_{\big\{ -x \leq L_0\big( u_{T/2-S+s}^{*,r}(\theta_{m-1}) \big) < -x+1 \big\}} \1_{\left\{ u_{T/2-S+s}^{*,r}(\theta_{m-1})\big( \fatcdot + L_0\big( u_{T/2-S+s}^{*,r}(\theta_{m-1}) \big) \big) \in \tilde{M}(d_0,m_0) \right\}} \right. \\
  & \quad \left. \times \left( 1 - e^{-2 \big\langle \1_{[0,1)} \Delta_S^{*,l}\big( \theta_{m-1}, \theta_m \big) , \big( u_{T/2-S+s}^{*,r}(\theta_{m-1}) \big)(\fatcdot - x) \big\rangle } \right) \right] dx ds \nn
}
for all $T \geq 1$. We next make use of the following crucial observation. Recall that $\theta_m-\theta_{m-1}=\delta/T$ for $m \in \{1,\ldots,MT\}$ with $\delta=\big(\overline{\theta}-\underline{\theta}\big)/M$.
%
%
\begin{proposition}
\label{PRO:stopping-time-S}
For all $\xi>0$ and $\varphi \in \SC_{tem}^+$ with $L_0(\varphi) \in (0,1)$ there exist $T_0>0$ big enough and $\rho, C_0, C_1>0$ small enough, all constants only dependent on $\xi,\underline{\theta},\overline{\theta},\varphi$, such that for all $T \geq T_0$ and $m \in \{0, 1, \ldots, MT\}, M \in \NN$ there exist stopping times $\xi \leq S=S(m,\varphi) \leq T/2-\xi$ such that
\eqn{
  \PP\!\left( \big\langle \Delta_{S(m,\varphi)}^{*,l}\big( \theta_{m-1}, \theta_m \big) , \varphi \big\rangle \geq \rho \right) \geq C_0 \big( 1 - \exp( - C_1 \delta ) \big).
} 
\end{proposition}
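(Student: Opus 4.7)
The plan is to realize $\Delta_S^{*,l}(\theta_{m-1},\theta_m)$ as the shifted value of a non-negative auxiliary process $v$ driven by an immigration of size $(\delta/T)u^{*,l}(\theta_{m-1})$, and then to construct $S$ as the first ``success time'' among a family of $\Theta(T)$ disjoint trial sub-intervals of $[\xi, T/2-\xi]$. To set up the coupling, I would proceed as in Step~1 of the proof of Lemma~\ref{LEM:claim2} but with a $\theta$-coupling rather than an initial-condition coupling: construct on a common probability space $u_1 := u^{*,l}(\theta_{m-1})$ and an independent white noise $W_2$, and define $v$ to be the non-negative solution of
\begin{equation*}
  \partial_t v = \partial_{xx}v + (\theta_m - v - 2u_1)v + (\delta/T)u_1 + v^{1/2}\,dW_2, \qquad v(0) = 0,
\end{equation*}
so that $u_1 + v \stackrel{\SD}{=} u^{*,l}(\theta_m)$ by a routine It\^o computation with independent noises. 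Then $\Delta_S^{*,l}(\theta_{m-1},\theta_m) = v_S(\fatcdot + R_0(u_{1,S}))$, and the task reduces to bounding $\langle v_S,\varphi(\fatcdot - R_0(u_{1,S}))\rangle$ from below by $\rho$ with the required probability.

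Next, fix $h = h(\xi,\underline{\theta},\overline{\theta},\varphi) \in (0,\xi)$ small, to be chosen, and partition $[\xi, T/2-\xi]$ into $N = \lfloor (T/2-2\xi)/h \rfloor \sim T$ disjoint intervals $[s_i, s_i+h]$. Call the $i$th trial a \emph{success} if $\langle v_{s_i+h},\varphi(\fatcdot - R_0(u_{1,s_i+h}))\rangle \geq \rho$, and set
\begin{equation*}
  S := \min\{s_i + h : \text{the } i\text{th trial succeeds}\} \wedge (T/2 - \xi);
\end{equation*}
by construction $S$ is a stopping time and $\xi \leq S \leq T/2 - \xi$. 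The single-trial estimate I would aim for is: there exists $c = c(\xi,\underline{\theta},\overline{\theta},\varphi)>0$ such that $\PP(\text{trial } i \text{ succeeds} \mid \SF_{s_i}) \geq c\delta/T$ uniformly in $i$ and $m$. Granting this, the strong Markov property applied inductively at $\SF_{s_i}$ gives
\begin{equation*}
  \PP(\text{no trial succeeds}) \leq \prod_{i=1}^N (1 - c\delta/T) \leq \exp(-cN\delta/T) \leq \exp(-C_1\delta),
\end{equation*}
so $\PP(\langle \Delta_S^{*,l}(\theta_{m-1},\theta_m),\varphi\rangle \geq \rho) \geq 1 - e^{-C_1\delta} \geq C_0(1 - e^{-C_1\delta})$ with $C_0 \in (0,1]$.

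The hard part will be the single-trial lower bound, which I would prove in two steps. First, on the good front-shape event $G_i \in \SF_{s_i}$ coming from (the left-marker form of) Corollary~\ref{COR:uniform-properties} --- namely $u_1(s_i,\fatcdot) \geq m_0 \1_{[l_0,r_0]}(\fatcdot - R_0(u_{1,s_i}))$ for some patch of length $d_0$ within a bounded distance of the front --- the immigration source $(\delta/T)u_1$ injects integrated mass of order $(\delta/T)m_0 d_0 h$ into a fixed neighborhood of the front during $[s_i, s_i+h]$; Lemma~\ref{LEM:marker-nonneg} (in its left-marker form) guarantees $\PP(G_i) \geq \tilde{\delta}$ uniformly. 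Second, on $G_i$ one bounds $v$ from below by dropping the nonlinear $-v^2$ and $-2u_1 v$ terms (using Theorem~\ref{THM:tribe}c) to transfer survival estimates to the modified equation) and invokes a near-critical survival estimate in the spirit of Mueller--Tribe \cite{MT1994} and \cite{T1996}: a seed of integrated mass $\epsilon$ in the supercritical regime survives and deposits at least $\rho$ units of mass strictly to the right of $R_0(u_{1,s_i})$ at time $s_i+h$ with probability at least $c\epsilon$. With $\epsilon = (\delta/T)m_0 d_0 h$ this is exactly the needed $c\delta/T$ bound. The delicate point is the \emph{positioning} of the surviving mass strictly to the right of the moving front $R_0(u_{1,s_i+h})$ (necessary because $L_0(\varphi)>0$); this I would control through the Gaussian heat-kernel lower tails combined with the front-speed control from Lemma~\ref{LEM:speed-bound-mittl} and Corollary~\ref{COR:exp-ustarl-bd-below}, which together bound how far the old front can move in time $h$.
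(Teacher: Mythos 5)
The overall architecture of your proposal — realize the gain as an auxiliary process $v$ driven by an immigration of size $(\delta/T)u^{*,l}(\theta_{m-1})$, partition $[\xi,T/2-\xi]$ into $\Theta(T)$ trial windows, and extract a geometric lower bound of the form $1-\exp(-C_1\delta)$ — is in the right spirit and matches the paper's strategy. However, there is a genuine gap at the step where you claim the single-trial lower bound $\PP(\text{trial } i \text{ succeeds}\mid\SF_{s_i})\geq c\delta/T$ uniformly, and it is exactly this step that forces the paper to do something more elaborate.

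Your bound rests on a good front-shape event $G_i\in\SF_{s_i}$ (roughly, $u^{*,l}(s_i,\fatcdot+R_0(s_i))\in M(d_0,m_0)$) with $\PP(G_i)\geq\tilde\delta$ uniformly in $i$. This is not what the cited results give. Corollary~\ref{COR:uniform-properties} is a statement about the \emph{time-averaged} measures $\nu_T^{*,l}$, i.e.\ about $T^{-1}\int_0^T\PP\bigl(u_s^{*,l}(\fatcdot+R_0(s))\in M(d_0,m_0)\bigr)\,ds$; it does not give a pointwise-in-$s$ lower bound on $\PP(G_i)$ at a fixed deterministic time $s_i$. Lemma~\ref{LEM:marker-nonneg}, which you also invoke, controls only the location of the marker ($R_0\geq 0$), not the shape of the profile near the front, so it cannot substitute. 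Moreover, even if $\PP(G_i)\geq\tilde\delta$ held for each $i$, the telescoping product $\PP(\text{no success})\leq\prod_i(1-c\delta/T)$ requires the conditional estimate $\PP(\text{trial }i\text{ succeeds}\mid\SF_{s_i})\geq c\delta/T$ to hold \emph{almost surely}, not merely on $G_i$; what you actually get is $\EE\bigl[(1-c\delta/T)^{\#\{i:G_i\}}\bigr]$, and without a law-of-large-numbers statement for the possibly strongly correlated events $\{G_i\}$ this does not yield the exponential bound.

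The paper resolves exactly this difficulty by working with the time-averaged bound throughout: Lemma~\ref{LEM:summary-for-ahead} gives $T^{-1}\int_0^T\PP\bigl(\PP_{u_s^{*,l}(\fatcdot+R_0(s))}(\cdots)\geq\eta_6\epsilon\bigr)\,ds\geq 1-\deltapp$, and then a Fubini--Tonelli step extracts an event $\Omega'$ with $\PP(\Omega')\geq 1-\deltapp$ on which, for \emph{every} $\omega\in\Omega'$, a $(1-\deltapp)$-fraction of times $s\in[0,T]$ is ``good'' pathwise. For such $\omega$ one can then adaptively select $\lceil CT\rceil$ disjoint trial windows of length $T_1+T_2$ whose starting times are all good, and the strong Markov property at each start yields the a.s.\ conditional bound $\geq\eta_6\epsilon=\eta_6\delta/T$; the geometric argument then goes through, giving $(1-\deltapp)(1-(1-\eta_6\delta/T)^{\lceil CT\rceil})\to(1-\deltapp)(1-e^{-C\eta_6\delta})$. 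Two further remarks: the paper's trials are fresh independent runs of length $T_1+T_2$ started via the strong Markov property, not evaluations of a single $v$ process carried through $[0,T/2]$ as you propose, which is what makes the conditional estimate clean; and the $\theta$-$*$-coupling you use (a coupling of the two limiting processes $u^{*,l}(\theta_{m-1})$ and $u^{*,l}(\theta_m)$ themselves) is not a routine It\^o construction — it requires the particle-system approximation in Lemma~\ref{LEM:star-coupling} precisely because one must pass the monotonicity through the limiting procedure defining $\upsilon_T$. Your sketch is not wrong in spirit, but as stated it omits the time-averaging mechanism that is essential for the pathwise selection of good trial windows.
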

%

The proof of the proposition follows in Section~\ref{SEC:increase-at-front} below. First, we finish the proof of Proposition~\ref{PRO:exp-alpha-increase}. We obtain as a lower bound to the term in \eqref{equ:before-prop} with $\varphi = m_0 \1_{[1/2,1/2+d_0/2]}$,
\eqan{
  & C_0 \big( 1 - e^{-C_1 \delta} \big) \left( 1 - e^{-2 \rho} \right) \sum_{m=1}^{MT} \frac{2}{T} \int_0^{T/2} \int_0^\infty \EE\!\left[ \1_{\big\{ -x \leq L_0\big( u_{T/2-S+s}^{*,r}(\theta_{m-1}) \big) < -x+1 \big\}}  \right. \\
  & \quad \left. \times \1_{\left\{ u_{T/2-S+s}^{*,r}(\theta_{m-1})\big( \fatcdot + L_0\big( u_{T/2-S+s}^{*,r}(\theta_{m-1}) \big) \big) \in \tilde{M}(d_0,m_0) \right\}} \1_{\left\{ \1_{[0,1)}(\fatcdot) \big( u_{T/2-S+s}^{*,r}(\theta_{m-1}) \big)(\fatcdot - x) \geq m_0 \1_{[1/2,1/2+d_0/2]}(\fatcdot) \right\}} \right] dx ds \nn
}
for all $T \geq T_0$. By definition of $\tilde{M}(d_0,m_0)$, using the Theorem of Fubini-Tonelli, this is bounded from below by
\eqan{
  & C_0 \big( 1 - e^{-C_1 \delta} \big) \left( 1 - e^{-2 \rho} \right) \frac{d_0}{2} \sum_{m=1}^{MT} \frac{2}{T} \int_0^{T/2} \EE\!\left[ \1_{\big\{ L_0\big( u_{T/2-S+s}^{*,r}(\theta_{m-1}) \big) < 0 \big\}} \right. \\
  & \quad \left. \times \1_{\left\{ u_{T/2-S+s}^{*,r}(\theta_{m-1})\big( \fatcdot + L_0\big( u_{T/2-S+s}^{*,r}(\theta_{m-1}) \big) \big) \in \tilde{M}(d_0,m_0) \right\}} \right] ds. \nn
}
By symmetry and Lemma~\ref{LEM:marker-nonneg}, we have for $T$ big enough,
\eqn{
  \frac{2}{T} \int_0^{T/2} \EE\!\left[ \1_{\big\{ L_0\big( u_{T/2-S+s}^{*,r}(\theta_{m-1}) \big) < 0 \big\}} \right] ds 
  \geq \tilde{\delta}/2>0
}
with $\tilde \delta$ as in Lemma~\ref{LEM:marker-nonneg}. Recall the definition of $\big( \nu_T^{*,l}(\theta) \big)$ from \eqref{equ:def-nu-star-lr}. We conclude using Corollary~\ref{COR:uniform-properties} and symmetry that
\eqan{
  \alpha_T\big( \overline{\theta} \big) - \alpha_T\big( \underline{\theta} \big) 
  &\geq C_0 \big( 1 - e^{-C_1 \delta} \big) \left( 1 - e^{-2 \rho} \right) \frac{d_0}{2} \sum_{m=1}^{MT} (\tilde{\delta}/2-\epsilon) \\
  &= C_0 \big( 1 - e^{-C_1 \delta} \big) \left( 1 - e^{-2 \rho} \right) \frac{d_0}{2} (\tilde{\delta}/2-\epsilon) M T \nn
}
for all $T \geq T_0$. Choose $\epsilon$ small enough and recall that $\delta = \big( \overline{\theta}-\underline{\theta} \big) / M$ respectively $M = \big( \overline{\theta}-\underline{\theta} \big) / \delta$ to conclude that
\eqn{
  \frac{\alpha_T\big( \overline{\theta} \big) - \alpha_T\big( \underline{\theta} \big)}{T}
  \geq C_0 \left( 1 - e^{-2 \rho} \right) \frac{d_0}{2} (\tilde{\delta}/2-\epsilon) \big( 1 - e^{-C_1 \big( \overline{\theta} - \underline{\theta} \big) / M} \big) M.
}
Let $M \rightarrow \infty$ to obtain $C_0 \left( 1 - e^{-2 \rho} \right) \frac{d_0}{2} (\tilde{\delta}/2-\epsilon) \big( \overline{\theta} - \underline{\theta} \big)$ as a lower bound to the left hand side. This finishes the proof.
%
%
%
\subsection{Proof of Proposition~\ref{PRO:alpha}}
\label{SUBSEC:Proof-result-1}
%
%
Lemma~\ref{LEM:alpha_lim} yields the existence of the limit $B=B(\theta)$. Its positivity follows from Corollary~\ref{COR:Bg0}. Combine Proposition~\ref{PRO:exp-alpha-increase} and Corollary~\ref{COR:alpha_lim} to obtain \eqref{equ:alpha-incr} by taking $T \rightarrow \infty$. This concludes the proof.
%
%
\section{Proof of Proposition~\ref{PRO:stopping-time-S}}
\label{SEC:increase-at-front}
%
%
In this section we prove Proposition~\ref{PRO:stopping-time-S}. We start out by giving the main idea of the proof.

%
\subsection{Idea of proof}
\label{SUBSEC:strategy}
%
%
Let $T_1, T_2>0$, $m \in \NN$ and $\xi>0$ be arbitrarily fixed. Let $\theta_m$ as in \eqref{equ:def-delta-epsilon} and suppose that $\theta_c < \underline{\theta} \leq \theta_{m-1}<\theta_m \leq \overline{\theta}$. For $t \in [\xi,T/2-\xi-T_1-T_2]$ fixed, on a time-interval of length $T_1+T_2$, we look for a (random) point in time $S=S(m) \in [t,t+T_1+T_2]$ such that 
\eqn{
\lbeq{equ:S-goal}
  \1_{[0,1)} \Delta_S^{*,l}\big( \theta_{m-1}, \theta_m \big) \geq \rho \1_{[0,1)},
}
where
\eqn{
 \Delta_S^{*,l}\big( \theta_{m-1}, \theta_m \big)
  \equiv u_S^{*,l}(\theta_m)\big( \fatcdot + R_0\big(u_S^{*,l}(\theta_{m-1}) \big) \big) - u_S^{*,l}(\theta_{m-1})\big( \fatcdot + R_0\big(u_S^{*,l}(\theta_{m-1}) \big) \big)
}
as in \eqref{equ:def-Delta}.

We investigate the difference between the solutions $u^{*,l}(\theta_m)$ and $u^{*,l}(\theta_{m-1})$ over time with the goal of finding $S$ such that \eqref{equ:S-goal} holds. For $t$ fixed as above, condition on $\SF_t$. Aside from the shift in space, by monotonicity, the difference on the time-interval $[t,t+T_1+T_2]$ is greater or equal to the difference of solutions $u^{*,l}_{t+\fatcdot}(\theta_m)$ and $u^{*,l}_{t+\fatcdot}(\theta_{m-1})$ with common initial condition $u^{*,l}_t(\theta_{m-1})$ at time $t$.

\textit{First step:} Start out with density $u^{*,l}_t(\theta_{m-1})$. Use a time-interval of length $T_1$ to gain additional mass-density $v_{T_1}$ of height of order $O(\epsilon)$, $\epsilon \equiv \theta_m-\theta_{m-1}$ on the support of $u_t^{*,l}(\theta_{m-1})$ with probability of order $O(1)$. This amount is due to an immigration term of order $\epsilon u_{t+s}^{*,l}(\theta_{m-1})$, $s \in [0,T_1]$ in a \textit{$\theta$-coupling}. For $T_1$ not too big, the mass created, $v_s, s \in [0,T_1]$ remains small and immigration dominates the annihilation term of order $v_s$. 

\textit{Second step:} Use a \textit{monotonicity-coupling} to compare the original solution $u_{t+T_1+\fatcdot}^{*,l}(\theta_{m-1})$ with parameter $\theta_{m-1}$ for a time-interval of length $T_2$ with a solution with the same parameter $\theta_{m-1}$ but with mass $v_{T_1}$ (cf. \textit{first step}) added to the initial condition (at time $t+T_1$). With probability of order $O(\epsilon)$ the mass $v_{T_1}$ gets a constant distance and an amount of mass $O(1)$ in front of the original solution $u^{*,l}_{t+T_1+\fatcdot}(\theta_{m-1})$ after a time-period of length $T_2$. To be more precise, we use this time-period of length $T_2$ twofold. Firstly, we show that the mass stays "ahead" with probability of order $O(\epsilon)$ and secondly, that if it stays "ahead", then it has acquired a size of order $O(1)$ at the front. 

We now give the \textit{mathematical framework} for the coupling-techniques mentioned above. Let $T_1>0$ be arbitrarily fixed and $\theta_c < \underline{\theta} \leq \theta_1 < \theta_2 \leq \overline{\theta} < \infty$ with $\epsilon \equiv \theta_2-\theta_1>0$. For $u_0 \in \CP(\SC_{tem}^+)$ fixed, use a \textit{$\theta$-coupling} to construct solutions $u_s^{(1)}(x) = u_s(\theta_1)(x), u_s^{(2)}(x) = u_s(\theta_2)(x), 0 \leq s \leq T_1$ to \eqref{equ:SPDE} such that $u_s^{(1)}(x) \leq u_s^{(2)}(x)$ for all $s \in [0,T_1], x \in \RR$ a.s. solve
\eqn{
\lbeq{equ:increase-step-1}
  u_s(\theta_2)(x) = u_s(\theta_1)(x) + v_s(x) \mbox{ with } v_s(x) \geq 0 \mbox{ for all } s \in [0,T_1], x \in \RR \mbox{ a.s. }
}
and $v$ as in \eqref{equ:SPDE-v-theta-coupling}, that is, \textit{conditional on $\sigma(u_s(\theta_1)): 0 \leq s \leq T_1)$}, $v$ has distribution $Q^{0,(\theta_2-\theta_1)u(\theta_1),2u(\theta_1),1}(\theta_2)$ (cf. \eqref{equ:SPDE-Tribe} and Theorem~\ref{THM:tribe}) on $[0,T_1]$.

Let $T_2>0$ be arbitrarily fixed. Extend the above coupling to include a process $(w_s)_{s \in [0,T_1+T_2]}$ such that
\eqn{
\lbeq{equ:u1wu2}
  u_s(\theta_1)(x) \leq w_s(x) \leq u_s(\theta_2)(x) \mbox{ for all } s \in [0,T_1+T_2], x \in \RR \mbox{ a.s.}
}
as follows. Set
\eqn{
\lbeq{equ:def-v-w}
  w_s(x) \equiv u_s(\theta_1)(x) + v_s(x) \equiv
  \begin{cases}
    u_s^{(u_0)}(\theta_2)(x) = u_s^{(u_0)}(\theta_1)(x) + v_s(x) & \mbox{ for } 0 \leq s \leq T_1, \cr 
    u_{s-T_1}^{(w_{T_1})}(\theta_1)(x) = u_{s-T_1}^{\big( u_{T_1}^{(u_0)}(\theta_1) + v_{T_1} \big)}(\theta_1)(x) & \mbox{ for } s \geq T_1. 
  \end{cases}
}
That is, \textit{conditional on $\SF_{T_1}$}, $w_{T_1+\fatcdot}$ has distribution $Q^{w_{T_1},0,0,1}(\theta_1)$. Indeed, to construct the coupling for the case $s>T_1$, condition on $\SF_{T_1}$ and use a combination of a \textit{monotonicity-coupling} and a \textit{$\theta$-coupling}. To be more precise, use a \textit{monotonicity-coupling} based on two independent white noises $W_1, W_2$ to construct 
\eqn{
\lbeq{equ:v-coupling-part-2}
  u_{T_1+r}^{(u_0)}(\theta_1)(x) = u_r^{(u_{T_1}(\theta_1))}(\theta_1)(x) \leq u_r^{(w_{T_1})}(\theta_1)(x) \equiv u_r^{(u_{T_1}(\theta_1))}(\theta_1)(x) + v_{T_1+r}(x) \mbox{ for all } r \geq 0, x \in\RR
}
almost surely, with $v_{T_1+\fatcdot}$ solving \eqref{equ:SPDE-v-monotonicity-coupling}. Then use a \textit{$\theta$-coupling} to obtain 
\eqn{
  u_r^{(w_{T_1})}(\theta_1)(x) \leq u_r^{(w_{T_1})}(\theta_2)(x) \equiv u_r^{(w_{T_1})}(\theta_1)(x) + \hat{v}_{T_1+r}(x) \mbox{ for all } r \geq 0, x \in \RR
}
almost surely, where the difference process $\hat{v}$ solves \eqref{equ:SPDE-v-theta-coupling} with a white noise $W_3$ independent of $W_1, W_2$ from above. As a result,
\eqan{
  u_{T_1+r}^{(u_0)}(\theta_1)(x)
  &\leq u_{T_1+r}^{(u_0)}(\theta_1)(x) + v_{T_1+r}(x) = w_{T_1+r}(x) \\
  &\leq u_{T_1+r}^{(u_0)}(\theta_1)(x) + v_{T_1+r}(x) + \hat{v}_{T_1+r}(x) 
  = u_r^{(w_{T_1})}(\theta_2)(x)
  = u_r^{(u_{T_1}^{(u_0)}(\theta_2))}(\theta_2)(x) = u_{T_1+r}^{(u_0)}(\theta_2)(x) \nn
}
holds indeed true.
%
%
\subsection{A first estimate}
%
%

The following estimate is fundamental in the \textit{first step} of the construction. Recall \eqref{equ:increase-step-1} and thus compare the following SPDE with \eqref{equ:SPDE-v-theta-coupling} from the \textit{$\theta$-coupling} which quantifies the gain in density due to an increase in $\theta$. 

Let 
\eqn{
  \Upsilon = \big\{ f \in \SC(\RR,\RR): \parallel f \parallel_\lambda = \sup\{ |f(x)| \exp(-\lambda|x|): x \in \RR \} < \infty \mbox{ for some } \lambda<0 \big\}
}
be the set of continuous functions with exponential decay. For existence and uniqueness of solutions to all of the SPDEs mentioned in the proof below, see Theorem~\ref{THM:tribe}.  

Also let
\eqn{
\lbeq{equ:test-functions}
  \tilde{\Upsilon} \equiv \Big\{ \psi \in \SC^{1,2} \mbox{ and } \sup_{t \in [0,T]} |\psi_t(\fatcdot)| \wedge \Big| \frac{\partial \psi_t(\fatcdot)}{\partial t} \Big| \wedge |\Delta\psi_t(\fatcdot)| \in \Upsilon \Big\}.
} 
%
%
\begin{lemma}
\label{LEM:a-first-estimate}
Let $T>0, \zeta \in \SC([0,T],\SC_{tem}^+) \backslash \{0\}$, $W$ a white noise and $\epsilon>0$ be arbitrarily fixed. Let $v=v(\epsilon,\theta,\zeta)$ be a solution to
\eqn{
\lbeq{equ:equ-for-v}
  \frac{\partial v}{\partial t} = \Delta v + \epsilon \zeta + (\theta -v - 2 \zeta) v + \sqrt{v} \dot{W}, \qquad v(0)=0, \quad t \in [0,T].
}
For $g \in \Upsilon, g \geq 0, g \not\equiv 0$ fixed, 
\eqn{
\lbeq{equ:expo-exp}
  0 
  < c_-(\zeta,g,\theta,T) = \liminf_{\epsilon \downarrow 0^+} \frac{ \EE\!\left[ 1 - e^{-2\langle v_T , g \rangle } \right] }{\epsilon}
  \leq \limsup_{\epsilon \downarrow 0^+} \frac{ \EE\!\left[ 1 - e^{-2\langle v_T , g \rangle } \right] }{\epsilon} = c_+(\zeta,g,\theta,T)
  < \infty 
}
holds true.
\end{lemma}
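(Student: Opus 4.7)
My plan is to split the proof into the upper bound $c_+ < \infty$ and the more delicate lower bound $c_- > 0$, both following from a single Itô identity, with the lower bound requiring additional moment control. First I would apply Itô's formula to $F_t := e^{-2\langle v_t, \phi_t\rangle}$, where $\phi_t := U_{T-t} g$ and $U_s$ solves the log-Laplace backward PDE $\partial_s U = \Delta U + (\theta - 2\zeta_{T-s}) U - U^2$, $U_0 = g$. Dissipativity of the $-U^2$ nonlinearity together with the exponential decay of $g \in \Upsilon$ should give global existence, non-negativity, boundedness, and the $\tilde{\Upsilon}$-regularity needed to justify Itô in the weak form of Theorem~\ref{THM:tribe}. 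Using the backward equation to eliminate all drift terms linear in $v$, one is led to
\begin{equation*}
\EE\bigl[1 - e^{-2\langle v_T, g\rangle}\bigr] = 2\epsilon \int_0^T \EE\bigl[F_t \langle \zeta_t, \phi_t\rangle\bigr]\,dt \;-\; 2 \int_0^T \EE\bigl[F_t \langle v_t^2, \phi_t\rangle\bigr]\,dt,
\end{equation*}
with both integrands non-negative.

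The upper bound is then immediate: dropping the subtracted integral and using $F_t \leq 1$ gives $\EE[1 - e^{-2\langle v_T, g\rangle}] \leq 2\epsilon \int_0^T \langle \zeta_t, \phi_t\rangle\,dt$. Since the last integral is a finite constant depending only on $\zeta, g, \theta, T$ by a maximum-principle bound on $U$, this yields $c_+ < \infty$.

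For the lower bound, I would first take expectations in the mild form of \eqref{equ:equ-for-v}, use $-v^2 - 2\zeta v \leq 0$ pointwise, and apply Gronwall to obtain $\EE[\langle v_t, \phi_t\rangle] \leq C\epsilon$ for a constant $C = C(\zeta, g, \theta, T)$. Consequently $\langle v_t, \phi_t\rangle \to 0$ in probability as $\epsilon \downarrow 0$, and bounded convergence forces $\EE[F_t] \to 1$, so that the first integral in the identity, divided by $\epsilon$, tends to $2\int_0^T \langle \zeta_t, \phi_t\rangle\,dt$. This limit is strictly positive because the log-Laplace semigroup preserves positivity and $\zeta, g \not\equiv 0$.

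The hard part will be showing that the subtracted correction $\tfrac{1}{\epsilon}\int_0^T \EE[F_t \langle v_t^2, \phi_t\rangle]\,dt$ stays \emph{strictly} below this positive limit, so that a genuine positive residual survives the $\liminf$. A naive second-moment bound, using Itô applied to $\langle v_t, \psi\rangle^2$ and noting that the branching-variance term contributes only $\EE[\langle v_s, \psi^2\rangle] = O(\epsilon)$, yields only $\EE[\langle v_t^2, \phi_t\rangle] = O(\epsilon)$, so the correction and the leading positive term are \emph{a priori} of the same order after normalisation. My way around this would be to compare $v$ monotonically from above with the pure super-Brownian motion with immigration $v'$, namely \eqref{equ:SPDE-Tribe} with the same $\alpha, \beta$ but $\gamma = 0$, whose Laplace functional is explicit through its own (no-overcrowding) log-Laplace PDE. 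In the monotone coupling $v \leq v'$ one has $v_t^2 \leq v_t v'_t$, which bounds the subtracted term by an explicit correlation; the strict inequality needed then follows from the fact that $\{v'\not\equiv 0\}$ has probability only $O(\epsilon)$ together with the exact Laplace formula for $v'$. Making this dominance quantitative is the technical heart of the argument.
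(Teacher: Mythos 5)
Your Itô identity is correct and your upper bound goes through: dropping the non-negative subtracted term $2\int_0^T\EE[F_t\langle v_t^2,\phi_t\rangle]\,dt$ and using $F_t\le 1$ gives $c_+<\infty$, provided one verifies the $\tilde\Upsilon$-regularity of $\phi_t=U_{T-t}$. This is a minor variant of the paper's argument (the paper first couples $v\le V$ to the pure superprocess with immigration and applies It\^o to $V$ against the $\zeta$-free solution $\Psi$ of \eqref{equ:PDE-Psi}; you work directly with $v$ and the $\zeta$-dependent log-Laplace PDE). Either route is fine.

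The lower bound, however, has a genuine gap that you yourself flag. You correctly observe that $\EE[\langle v_t^2,\phi_t\rangle]=O(\epsilon)$ (because the branching variance, not the mean squared, dominates), so after dividing by $\epsilon$ both terms of your identity are $O(1)$ and could cancel to leading order. The comparison $v^2\le v\,v'$ and the claim $\PP(v'\not\equiv 0)=O(\epsilon)$ do not obviously close the gap: a super-Brownian motion with immigration rate $\epsilon\zeta$ is non-trivial at time $T$ with a probability whose $\epsilon$-scaling is delicate (the extinction log-Laplace involves $u^\infty_{T-s}\sim 1/(T-s)$, so the relevant integral is borderline), and even a genuine $O(\epsilon)$ bound would only show the correction is comparable, not strictly smaller. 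You would need a sharper, quantitative estimate here, and that is precisely the part you leave open. The paper avoids this entirely: a \emph{coupling with two independent processes} gives $v(\epsilon)\le v^{(1)}+v^{(2)}$ with $v^{(i)}\stackrel{\SD}{=}v(\epsilon/2)$, whence $I(\epsilon)\le 2I(\epsilon/2)$, i.e.\ $I(\epsilon)/\epsilon$ is non-decreasing as $\epsilon\downarrow 0$ along dyadics; combined with continuity of $\epsilon\mapsto I(\epsilon)$ (Theorem~\ref{THM:tribe}b)) and monotonicity of $I$ in $\epsilon$, it suffices to show $I(\epsilon_0)>0$ for a single $\epsilon_0$, which follows from the mutual absolute continuity with the immigration-only superprocess (Theorem~\ref{THM:tribe}c)). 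That structural argument yields $c_->0$ without ever confronting the $O(\epsilon)$ cancellation your identity exposes. If you want to pursue your analytic route, you would need a second-order expansion of the correction term and show its limit is strictly less than $2\int_0^T\langle\zeta_t,\phi_t\rangle\,dt$; as it stands, the proposal does not establish $c_->0$.
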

%
%
\begin{proof}
\textit{The lower bound.} 
Fix $\zeta, g, \theta$ and $T$ as above. Let 
\eqn{
  I(\epsilon) 
  = \EE\!\left[ 1 - e^{-2\langle v_T , g \rangle } \right].
}
Subsequently, dominate $v=v(\epsilon)$ by the sum of two independent solutions (cf. the construction of the \textit{coupling with two independent processes} in Remark~\ref{RMK:coupling-independent} below) satisfying
\eqn{
  \frac{\partial v^{(i)}}{\partial t} 
  = \Delta v^{(i)} + \tfrac{\epsilon}{2} \zeta + \big( \theta - v^{(i)} - 2\zeta) v^{(i)} + \sqrt{v^{(i)}} \dot{W}_i, \quad v^{(i)}(0) = 0, \quad i=1,2, \quad t \geq 0
}
such that $v(t,x) \leq v^{(1)}(t,x) + v^{(2)}(t,x)$ for all $t \geq 0, x \in \RR$ a.s. Note that $v^{(i)} = v^{(i)}(\epsilon) \stackrel{\SD}{=} v(\epsilon/2), i=1,2$. We obtain by the independence and the identical distribution of the two non-negative solutions, for all $\epsilon, T>0$,
\eqn{
\lbeq{equ:I-epsilon}
  I(\epsilon)
  \leq \EE\!\left[ 1 - e^{-2\langle v^{(1)}_T + v^{(2)}_T , g \rangle } \right] 
  = \EE\!\left[ 1 - e^{-2\langle v^{(1)}_T , g \rangle } \right] \EE\!\left[ 1 + e^{-2\langle v^{(2)}_T , g \rangle } \right] 
  \leq 2 I(\epsilon/2) 
  \iff \left( \tfrac{I(\epsilon)}{\epsilon} \leq \tfrac{I(\epsilon/2)}{\epsilon/2} \right). 
}

By Theorem~\ref{THM:tribe}b), $I(\epsilon)$ is continuous in $\epsilon$. Hence, to establish the lower bound, it is enough to show that there exists $\epsilon_0>0$ such that $I(\epsilon)>0$ for all $\epsilon \in [\epsilon_0,2\epsilon_0]$. Indeed, by the continuity of $I$ and \eqref{equ:I-epsilon}, it then follows that
\eqn{
\lbeq{equ:I-epsilon-12}
  \inf_{\epsilon \in (0,2\epsilon_0]} \tfrac{I(\epsilon)}{\epsilon}
  \geq \inf_{\epsilon \in [\epsilon_0,2\epsilon_0]} \tfrac{I(\epsilon)}{\epsilon} 
  > 0.
} 
By reasoning as for an \textit{immigration-coupling}, it follows that $I(\epsilon)$ is monotonically increasing in $\epsilon$. It is therefore enough to find $\epsilon_0=\epsilon_0(T)>0$ such that $I(\epsilon_0)>0$. By definition of $v$, this holds true for arbitrary $T>0$. Indeed, use for instance Theorem~\ref{THM:tribe}c) to see that with $\PP(v)$ denoting the distribution of $v$, $\PP(v)=Q^{0,\epsilon \zeta, 2\zeta, 1}$ and $Q^{0, \epsilon \zeta, 0 , 0}$ are mutually absolutely continuous on $\SU_{R,T}$ (recall the notation from Theorem~\ref{THM:tribe}) for $R,T>0$ arbitrarily fixed. Here, the law $Q^{0, \epsilon \zeta, 0 ,0}$ is the law of the solution to 
\eqn{
\lbeq{equ:first-est-superp}
  \frac{\partial w}{\partial t} = \Delta w + \epsilon \zeta + \theta w + \sqrt{w} \dot{W}, \qquad w(0)=0, \quad t \geq 0.
}
The latter is a superprocess with immigration and thus satisfies $\PP(\langle w_T , g \rangle > 0) > 0$. 

\textit{The upper bound.}
We now derive the upper bound in \eqref{equ:expo-exp}. Couple a solution $v$ of \eqref{equ:equ-for-v} with a solution $V$ of
\eqn{
  \frac{\partial V}{\partial t} = \Delta V + \epsilon \zeta + \theta V + \sqrt{V} \dot{W_3}, \qquad V(0)=0, \quad t \geq 0,
}
such that $v(t,x) \leq V(t,x)$ for all $t \geq 0, x \in \RR$ a.s. Here, $W_3$ is an appropriate white noise and we use techniques as in Subsection~\ref{SUBSEC:coupling} of the appendix. See the beginning of \cite[Section~2]{T1996} for the theoretical background of what follows. We obtain for test functions $\psi(t,x)=\psi_t(x), t \geq 0, x \in \RR$ satisfying $\psi \in \tilde{\Upsilon}$ and for $t \geq 0$ arbitrary, by an application of It\^o's formula,
\eqn{
\lbeq{exp-mart}
  e^{-2\langle V_t , \psi_t \rangle} 
  = 1 + M_t -2 \int_0^t e^{-2\langle V_s , \psi_s \rangle} \big\{ \epsilon \big\langle \,\zeta_s , \psi_s \big\rangle 
  + \big\langle V_s , \frac{\partial \psi_s}{\partial s} + \Delta\psi_s + \theta \psi_s - \tfrac{2}{2} \psi_s^2 \big\rangle \big\} ds, 
}
where $(M_t)_{t \geq 0}$ is a local martingale with quadratic variation
\eqn{
  \langle M_{\fatcdot} \rangle_t 
  = 4 \int_0^t \int e^{-4\langle V_s , \psi_s \rangle} V_s(x) \psi_s^2(x) dx ds.
}

For $T>0$ fixed and $0 \leq s \leq T$, choose $\psi(s,z) \equiv \Psi(T-s,z)$, where $\Psi(s,x)=\Psi_s(x)$ is the unique non-negative solution to the partial differential equation (PDE)
\eqn{
\lbeq{equ:PDE-Psi}
  \frac{\partial \Psi_s}{\partial s} = \Delta\Psi_s + \theta \Psi_s - \Psi_s^2, \qquad \Psi_0 = g, \qquad 0 \leq s \leq T
}
(cf. Iscoe \cite[Theorem~A of the appendix]{I1986} with $A\psi=\Delta \psi+\theta \psi, g(x)=x^2$ and $\SD(A)=\{ f \in \SC^2(\RR,\RR): f, \Delta f \in \Upsilon$). Then $\psi \in \tilde{\Upsilon}$ and we obtain for $0 \leq t \leq T$,
\eqn{
  e^{-2\langle V_t , \psi_t \rangle} 
  = 1 + M_t -2 \epsilon \int_0^t e^{-2\langle V_s , \psi_s \rangle} \big\langle \,\zeta_s , \psi_s \big\rangle ds.
}
Note that the integral on the right hand side is finite as $\zeta \in \SC([0,\infty),\SC_{tem}^+)$ and $\sup_{s \in [0,T]} |\psi_s(\fatcdot)| \in \Upsilon$. Let $t=T$. In case $(M_t)_{t \in [0,T]}$ is a martingale, take expectations to conclude
\eqn{
  \EE\!\left[ 1 - e^{-2\langle V_T , g \rangle } \right] 
  = 2 \epsilon \int_0^T \EE\!\left[ e^{-2\langle V_s , \psi_s \rangle} \big\langle \,\zeta_s , \psi_s \big\rangle \right] ds. 
}
In case $(M_t)_{t \in [0,T]}$ is only a local martingale, take a sequence of increasing stopping times $\tau_n \uparrow T$ such that $(M_{t \wedge \tau_n})_{t \in [0,T]}$ is a martingale for each $n \in \NN$ fixed. Take expectations and subsequently use dominated convergence to obtain the same conclusion.

The coupling of $v$ and $V$ yields
\eqn{
\lbeq{equ:1-estimate-ub}
  \EE\!\left[ 1 - e^{-2\langle v_T , g \rangle } \right] 
  \leq \EE\!\left[ 1 - e^{-2\langle V_T , g \rangle } \right] 
  = 2 \epsilon \int_0^T \EE\!\left[ e^{-2\langle V_s , \psi_s \rangle} \big\langle \,\zeta_s , \psi_s \big\rangle \right] ds 
  \leq 2 \epsilon \int_0^T \big\langle \,\zeta_s , \Psi_{T-s} \big\rangle ds. 
}
It thus remains to show that $\int_0^T \big\langle \,\zeta_s , \Psi_{T-s} \big\rangle ds < \infty$. The latter follows from the assumption $\zeta \in \SC([0,\infty),\SC_{tem}^+)$ and as $(\psi_s)_{s \in [0,T]}=(\Psi_{T-s})_{s \in [0,T]}$ satisfies \eqref{equ:test-functions}.  
\end{proof}
%
%
\subsection{Increase of the right marker}
%
%
We now follow the strategy as outlined in Subsection~\ref{SUBSEC:strategy}. We start by investigating the increase of the right marker of a solution due to an increase in $\theta$.

Let $f \in \SC_{tem}^+$ with $R_0(f)<\infty$ and $\PP_f(\tau=\infty)=1$. Recall the notation from Subsection~\ref{SUBSEC:strategy}, in particular the definition of $u, v, w=u^{(f)}(\theta_1)+v$ with $u_0=f$ from \eqref{equ:def-v-w}. In what follows, write $u_{\fatcdot}(x) = u_{\fatcdot}^{(f)}(\theta_1)(x)$ and set $\SF_T^u = \sigma(u_t: 0 \leq t \leq T)$ for $T>0$ arbitrary. Note that in the proofs to follow we will often only write $\EE$ or $\PP$ when the context is clear. In the main statements, the indices are kept however. This will allow us to avoid changes in indexing when using duality relations.
%
%
\begin{lemma}
\label{LEM:increase-of-right-marker-L1}
Let $T_1, T_2>0$ be arbitrarily fixed and $\theta_c < \underline{\theta} \leq \theta_1 < \theta_2 \leq \overline{\theta}$. For all $\deltap>0$ there exists $\eta_1=\eta_1\big( \deltap, T_1, T_2, \underline{\theta}, \overline{\theta} \big)>0$ small enough such that 
\eqn{
\lbeq{equ:bound-on-first-moment}
  \int_{\SC_{tem}^+} \PP_f\bigg( \EE_f\!\left[ 0 \vee \big( R_0\big( w_{T_1+T_2} \big) \wedge 1 \big) - 0 \vee \big( R_0\big( u_{T_1+T_2} \big) \wedge 1 \big) \Bigmid \SF^u_{T_1} \right] \geq \eta_1 (\theta_2-\theta_1) \bigg) \big( \nu_T^{*,l}(\theta_1) \big)(df) \geq 1-2\deltap
}
for all $T>1$.
\end{lemma}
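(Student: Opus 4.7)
My strategy follows the two-stage construction in Subsection~\ref{SUBSEC:strategy}. Since on $[T_1,T_1+T_2]$ both $w$ and $u$ are (conditionally on $\SF^u_{T_1}$) solutions of \eqref{equ:SPDE} with parameter $\theta_1$ starting from $w_{T_1}=u_{T_1}+v_{T_1}$ and $u_{T_1}$ respectively, the right-marker form of the self-duality \eqref{equ:laplace-left} combined with (the symmetric version of) \eqref{equ:prob-right-marker}--\eqref{equ:exp_right_marker-2} yields
\eqan{
\lbeq{equ:plan-identity}
  & \EE_f\!\left[ 0\vee\bigl(R_0(w_{T_1+T_2})\wedge 1\bigr) - 0\vee\bigl(R_0(u_{T_1+T_2})\wedge 1\bigr) \bigmid \SF^u_{T_1} \right] \\
  & \qquad = \int_0^1 \EE\!\left[ e^{-2\langle u_{T_1},\,u_{T_2}^{*,r}(\fatcdot - x)\rangle}\bigl(1 - e^{-2\langle v_{T_1},\,u_{T_2}^{*,r}(\fatcdot - x)\rangle}\bigr) \bigmid \SF^u_{T_1} \right] dx,\nn
}
where $u^{*,r}$ is sampled independently of $v$ (both are conditionally independent given $\SF^u_{T_1}$). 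Recall that, given $\SF^u_{T_1}$, $v|_{[0,T_1]}$ solves \eqref{equ:equ-for-v} with $\zeta=u$, $\epsilon=\theta_2-\theta_1$ and $\theta=\theta_2$.

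Next I would fix a continuous bump $\zeta_0\in\SC_c^+$ and constants $\zeta_{\max},K_0>0$, and introduce an event $G$ measurable with respect to $\sigma(u|_{[0,T_1]},u^{*,r}|_{[0,T_2]})$ such that on $G$: (a) $u(s,\fatcdot)$ for $s\in[T_1/2,T_1]$ is sandwiched between $\zeta_0$ and $\zeta_{\max}$ on a fixed compact spatial interval $J$; (b) $u_{T_2}^{*,r}(\fatcdot-x)$ dominates a fixed bump $h_0(\fatcdot-x)\in\Upsilon$ for all $x$ in a fixed subinterval $I\subset[0,1]$; and (c) $\langle u_{T_1},u_{T_2}^{*,r}(\fatcdot-x)\rangle\leq K_0$ uniformly in $x\in I$. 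On $G$ one dominates $v$ from below by the solution $v^{-}$ of \eqref{equ:equ-for-v} with $\zeta$ replaced by $\zeta_0$ in the immigration term and by $\zeta_{\max}$ in the annihilation term (a monotonicity-coupling in the spirit of Subsection~\ref{SUBSEC:strategy}), and uses the monotonicity $g\geq h_0(\fatcdot-x)$ to reduce the random Lemma~\ref{LEM:a-first-estimate}-constant $c_-(u,u_{T_2}^{*,r}(\fatcdot-x),\theta_2,T_1)$ to a deterministic $c_0=c_0(\zeta_0,\zeta_{\max},h_0,T_1,\underline\theta,\overline\theta)>0$. Combined with the prefactor bound $e^{-2K_0}$, this yields on $G$, uniformly in $x\in I$,
\eqn{
\lbeq{equ:plan-key}
  \EE\!\left[e^{-2\langle u_{T_1},u_{T_2}^{*,r}(\fatcdot-x)\rangle}\bigl(1-e^{-2\langle v_{T_1},u_{T_2}^{*,r}(\fatcdot-x)\rangle}\bigr)\bigmid\SF^u_{T_1},u^{*,r}\right]\geq e^{-2K_0}c_0\,(\theta_2-\theta_1),
}
for $\theta_2-\theta_1$ small enough.

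Setting $\tilde G\equiv\{\EE[\1_G\mid\SF^u_{T_1}]\geq 1/2\}$, Markov's inequality gives $\PP_f(\tilde G)\geq 1-2\PP_f(G^c)$; integrating \eqref{equ:plan-key} over $x\in I$ and in $u^{*,r}$ conditionally on $\SF^u_{T_1}$ shows that on $\tilde G$ the left-hand side of \eqref{equ:plan-identity} is at least $\tfrac{1}{2}|I|e^{-2K_0}c_0(\theta_2-\theta_1)$. Taking $\eta_1\equiv|I|e^{-2K_0}c_0/2$ reduces the claim to $\int\PP_f(G^c)\,\nu_T^{*,l}(\theta_1)(df)\leq\deltap$ uniformly in $T>1$. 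For this, (a) follows from tightness of $\nu_T^{*,l}(\theta_1)$ uniform in $T$ (from its construction in the appendix together with the marker-moment estimates of Subsection~\ref{SUBSEC:estimates-right-markers}) and from Remark~\ref{RMK:ustar-survives}; (b) follows from the symmetric analogue applied to the independent process $u^{*,r}$ and from Corollary~\ref{COR:uniform-properties}; and (c) from the symmetric analogue of Corollary~\ref{COR:exp-ustarl-bd-below}.

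The main obstacle is securing the deterministic positive lower bound $c_0$, since Lemma~\ref{LEM:a-first-estimate} only asserts $c_->0$ with no quantitative control. The remedy is the monotone domination to $v^{-}$ described above, reducing $c_-$ to its value for the fixed data $(\zeta_0,h_0)$; strict positivity then follows from the superprocess-with-immigration comparison \eqref{equ:first-est-superp}, the mutual absolute continuity of Theorem~\ref{THM:tribe}c) applied to the dominating equation, and the lower-bound argument in the proof of Lemma~\ref{LEM:a-first-estimate} applied to these deterministic data. Once this uniformity is in hand, the rest is a Fubini-and-Markov exercise.
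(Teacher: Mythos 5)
Your opening identity \eqref{equ:plan-identity} matches \eqref{equ:front-sth-2} in the paper's proof exactly, so the reduction to controlling the conditional Laplace functional of $v_{T_1}$ against $u_{T_2}^{*,r}(\fatcdot-x)$ is right. From there, however, you diverge from the paper in a way that introduces a genuine gap.

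The gap is in your monotone-domination step. You propose to dominate $v$ from below by $v^-$ solving \eqref{equ:equ-for-v} ``with $\zeta$ replaced by $\zeta_0$ in the immigration term and by $\zeta_{\max}$ in the annihilation term,'' where $\zeta_{\max}$ is a \emph{constant}, and where the sandwiching $\zeta_0 \leq u(s,\fatcdot) \leq \zeta_{\max}$ is only assumed \emph{on a compact interval $J$} and only for $s\in[T_1/2,T_1]$. For an SPDE comparison of the form $\partial_t v = \Delta v + b(v,\zeta) + \sqrt v\,\dot W$ the drift must be ordered at every $(x,t)$ and every value of $v$. Writing the drift as $\epsilon\zeta + (\theta_2 - v - 2\zeta)v$, ordering at equal $v$ requires \emph{both} $u \geq \zeta_0\1_J$ (fine, this only needs to hold on $J$) \emph{and} $\zeta_{\max}\geq u$ on \emph{all of} $\RR$ and for all $s\in[0,T_1]$ (otherwise the term $2(\zeta_{\max}-u)v$ has the wrong sign for large $v$ where $u>\zeta_{\max}$). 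Since $u\in\SC_{tem}^+$ is only required to have slower-than-exponential growth, no constant $\zeta_{\max}$ can dominate $u$ globally. Replacing $\zeta_{\max}$ by a function in $\SC_{tem}^+$ is in principle possible, but then the event $\{\sup_{s\in[0,T_1]}u(s,\fatcdot)\leq\zeta_{\max}(\fatcdot)\}$ needs probability $\geq 1-\deltap$ \emph{uniformly in $T>1$ and $\theta_1\in[\underline\theta,\overline\theta]$}, which is a path-level tightness statement in $\SC([0,T_1],\SC_{tem}^+)$ that is not established in the paper and is not automatic from the tightness of $\nu_T^{*,l}$ (Lemma~\ref{LEM:uniform-tightness}), which controls only the time-$0$ marginal.

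A secondary issue: you obtain \eqref{equ:plan-key} only ``for $\theta_2-\theta_1$ small enough,'' whereas the lemma requires $\eta_1$ to work for all $\theta_1<\theta_2$ in $[\underline\theta,\overline\theta]$. You would need to invoke the monotonicity $I(\epsilon)/\epsilon \leq I(\epsilon')/\epsilon'$ for $\epsilon'\leq\epsilon$ from \eqref{equ:I-epsilon} to transfer a small-$\epsilon$ bound to the full range, but this alone cannot increase the bound for \emph{large} $\epsilon$, so this part also needs care. The paper sidesteps both of these issues simultaneously by a qualitatively different argument: after a $\theta$-$*$-coupling pinning the dual process to $\underline\theta$/$\overline\theta$, it restricts to a compact set $K_{\deltap}\cap M(d_0,m_0)$ of initial conditions via Lemma~\ref{LEM:uniform-tightness} and Corollary~\ref{COR:uniform-properties}, invokes the continuity of $(f,\theta_1)\mapsto Q^{f,\alpha,\beta,\gamma}$ from Theorem~\ref{THM:tribe}b), and uses compactness of $[\underline\theta,\overline\theta]\times(K_{\deltap}\cap M(d_0,m_0))$ to show the infimum of the relevant conditional probability is attained and strictly positive, then chooses $\eta_1$ by dominated convergence. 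This avoids any quantitative uniform control of $c_-$ and avoids path-level tightness entirely, at the cost of an explicit value for $\eta_1$. Your approach buys an explicit $\eta_1=|I|e^{-2K_0}c_0/2$ if it can be made to work, but as written the domination fails and the scheme would require nontrivial additional estimates to repair.
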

%
%
\begin{remark}
\label{RMK:initial-survives}
Note that $\PP_{\nu_T^{*,l}(\theta_1)}(\tau=\infty)=1$ by \eqref{equ:def-nu-star-lr} and Remark~\ref{RMK:ustar-survives}.
\end{remark}
%
%
\begin{proof}
Recall $Q^{f,\alpha,\beta,\gamma} \equiv Q^{f,\alpha,\beta,\gamma}_u(\theta)$ from Notation~\ref{NOTA:tribe} and Theorem~\ref{THM:tribe}. With regards to the following conditional expectation $\EE\!\left[ 0 \vee \big( R_0\big( w_{T_1+T_2} \big) \wedge 1 \big) - 0 \vee \big( R_0\big( u_{T_1+T_2} \big) \wedge 1 \big) \bigmid \SF^u_{T_1} \right]$, recall that for the coupling from \eqref{equ:def-v-w}, the difference $(0 \vee ( R_0(w_t) \wedge 1 )) - (0 \vee ( R_0(u_t) \wedge 1 ))$ is non-negative for all $t \geq 0$ almost surely. Moreover, \textit{conditional on $\SF_{T_1}^u$}, $v$ has law $Q^{0,(\theta_2-\theta_1)u,2u,1}(\theta_2) \equiv Q_v^{0,(\theta_2-\theta_1)u,2u,1}(\theta_2)$ on $[0,T_1]$ and \textit{conditional on $\SF_{T_1}$}, $w$ has law $Q^{u_{T_1}+v_{T_1},0,0,1}(\theta_1) \equiv Q_w^{u_{T_1}+v_{T_1},0,0,1}(\theta_1)$ on $[T_1,T_1+T_2]$. Thus, as the laws $Q^{f,\alpha,\beta,\gamma}$ for $f \in \SC_{tem}^+$ form a strong Markov family by Theorem~\ref{THM:tribe}b), and by \eqref{equ:u1wu2},
\eqan{
  & \EE\!\left[ 0 \vee \big( R_0\big( w_{T_1+T_2} \big) \wedge 1 \big) - 0 \vee \big( R_0\big( u_{T_1+T_2} \big) \wedge 1 \big) \bigmid \SF^u_{T_1} \right] \\
  & = Q_v^{0,(\theta_2-\theta_1)u,2u,1}(\theta_2)\!\left[ Q_w^{u_{T_1}+v_{T_1},0,0,1}(\theta_1)\!\left[ 0 \vee \big( R_0\big( u_{T_2} \big) \wedge 1 \big) \right] \right] - Q_u^{u_{T_1},0,0,1}(\theta_1)\!\left[ 0 \vee \big( R_0\big( u_{T_2} \big) \wedge 1 \big) \right] \mbox{ a.s.} \nn
}
Note that by Remark~\ref{RMK:initial-survives}, $u$ survives almost surely. Recall \eqref{equ:prob-right-marker}--\eqref{equ:exp_right_marker-2} to rewrite 
\eqan{
\lbeq{equ:front-sth-2}
  & \EE\!\left[ 0 \vee \big( R_0\big( w_{T_1+T_2} \big) \wedge 1 \big) - 0 \vee \big( R_0\big( u_{T_1+T_2} \big) \wedge 1 \big) \Bigmid \SF^u_{T_1} \right] \\
  &= \int_0^1 \PP\!\left( R_0\big( w_{T_1+T_2} \big) > x \Bigmid \SF^u_{T_1} \right) dx - \int_0^1 \PP\!\left( R_0\big( u_{T_1+T_2} \big) > x \Bigmid \SF^u_{T_1} \right) dx \nn\\
  &= \int_0^1 \EE\!\left[ 1-e^{-2 \big\langle w_{T_1}(\fatcdot + x) , u_{T_2}^{*,r}(\theta_1) \big\rangle } \bigmid \SF^u_{T_1} \right] dx - \int_0^1 \EE\!\left[ 1-e^{-2 \big\langle u_{T_1}(\fatcdot + x) , u_{T_2}^{*,r}(\theta_1) \big\rangle } \Bigmid \SF^u_{T_1} \right] dx \nn\\
  &= \int_0^1 \EE\!\left[ e^{-2 \big\langle u_{T_1}(\fatcdot + x) , u_{T_2}^{*,r}(\theta_1) \big\rangle } \left( 1 - e^{-2 \big\langle v_{T_1}(\fatcdot + x) , u_{T_2}^{*,r}(\theta_1) \big\rangle } \right) \Bigmid \SF^u_{T_1} \right] dx. \nn
}
Use a \textit{$\theta$-$*$-coupling} to conclude that this is bounded below by
\eqan{
\lbeq{equ:front-sth-3}
  & \int_0^1 \EE\!\left[ e^{-2 \big\langle u_{T_1}(\fatcdot + x) , u_{T_2}^{*,r}( \overline{\theta} ) \big\rangle } \left( 1 - e^{-2 \big\langle v_{T_1}(\fatcdot + x) , u_{T_2}^{*,r}( \underline{\theta} ) \big\rangle } \right) \Bigmid \SF^u_{T_1} \right] dx \\
  & = \int_0^1 \EE\Bigg[ e^{-2 \big\langle u_{T_1}(\fatcdot + x) , u_{T_2}^{*,r}( \overline{\theta} ) \big\rangle } \EE\!\left[ 1 - e^{-2 \big\langle v_{T_1}(\fatcdot + x) , u_{T_2}^{*,r}( \underline{\theta} ) \big\rangle } \Bigmid \sigma\Big( \SF^{u^{*,r}} , \SF^u_{T_1} \Big) \right] \Biggmid \SF^u_{T_1} \Bigg] dx, \nn
}
where we let $\SF^{u^{*,r}} = \sigma\big( u_t^{*,r}( \underline{\theta} ), u_t^{*,r}( \overline{\theta} ): t \geq 0 \big)$. 

Let $\epsilon = \theta_2-\theta_1$. We now randomize the initial condition. Recall $\nu_T^{*,l}(\theta_1)$ as defined in \eqref{equ:def-nu-star-lr} of the appendix. Let $\eta_1,T>0$ be arbitrarily fixed. The quantity we are interested in is
\eqn{
\lbeq{equ:exp-as-int-av}
  I_1
  \equiv \int_{\SC_{tem}^+} \PP_f\bigg( \EE\!\left[ 0 \vee \big( R_0\big( w_{T_1+T_2} \big) \wedge 1 \big) - 0 \vee \big( R_0\big( u_{T_1+T_2} \big) \wedge 1 \big) \Bigmid \SF^u_{T_1} \right] \geq \eta_1 \epsilon \bigg) \big( \nu_T^{*,l}(\theta_1) \big)(df).
}
We get with the help of \eqref{equ:front-sth-2}--\eqref{equ:front-sth-3} as a lower bound to \eqref{equ:exp-as-int-av},
\eqn{
\lbeq{equ:I1-step1}
  \int_{\SC_{tem}^+} \PP_f\Bigg(  \int_0^1 \EE\bigg[ e^{-2 \big\langle u_{T_1}(\fatcdot + x) , u_{T_2}^{*,r}( \overline{\theta} ) \big\rangle } \EE\Big[ 1 - e^{-2 \big\langle v_{T_1}(\fatcdot + x) , u_{T_2}^{*,r}( \underline{\theta} ) \big\rangle } \Bigmid \sigma\Big( \SF^{u^{*,r}} , \SF^u_{T_1} \Big) \Big] \biggmid \SF^u_{T_1} \bigg] dx \geq \eta_1 \epsilon \Bigg) \big( \nu_T^{*,l}(\theta_1) \big)(df).
}
Here we note that $(v_t)_{t \in [0,T_1]}$ solves \eqref{equ:SPDE-v-theta-coupling} respectively \eqref{equ:equ-for-v} with $\theta=\theta_2$ and $(\zeta_t)_{t \in [0,T_1]} = (u_t)_{t \in [0,T_1]} = \big( u_t^{(f)} \big)(\theta_1)_{t \in [0,T_1]}$ and $f$ drawn according to $\nu_T^{*,l}(\theta_1)$. By \eqref{equ:uniform-tightness} and Corollary~\ref{COR:uniform-properties}, for every $\deltap>0$ there exist a compact set $K_{\deltap} \subset \SC_{tem}^+$ and $d_0=d_0(\deltap), m_0=m_0(\deltap)>0$ such that
\eqn{
\lbeq{equ:inf-comp}
  \inf_{\underline{\theta} \leq \theta \leq \overline{\theta}} \big( \nu_T^{*,l}(\theta) \big)(K_{\deltap} \cap M(d_0,m_0)) \geq 1-\deltap \quad \mbox{ for all } T>1
}
with  
\eqn{
\lbeq{equ:def-M}
  M(d_0,m_0) \equiv \big\{ f \in \SC_{tem}^+: \mbox{ there exist } -1/2 \leq l_0 < r_0 \leq 0 \mbox{ with } |r_0-l_0|=d_0 \mbox{ such that } f \geq m_0 \1_{[l_0,r_0]} \big\}
}
for $d_0, m_0>0$. Observe first that $M(d_0,m_0) \cap K_{\deltap}$ is compact in $\SC_{tem}^+$. Indeed, use that if $(f_n)_n \subset M(d_0,m_0) \cap K_{\deltap}$, then there exists a subsequence $(f_{n_k})_k \subset M(d_0,m_0)$ that converges to a limit in $K_{\deltap}$. Let $x_{n_k} = l_{n_k}+d_0/2$ such that $f_{n_k} \geq m_0 \1_{[x_{n_k}-d_0/2, x_{n_k}+d_0/2]}$ and $l_{n_k} \leq x_{n_k} \leq r_{n_k}, |r_{n_k}-l_{n_k}|=d_0, l_{n_k}, r_{n_k} \in [-1/2,0]$. By the compactness of $[-1/2,0]$ there exists a subsequence $x_{n_{k_l}} \rightarrow x_0 \in [-1/2,0]$ for $l \rightarrow \infty$ and as a result, $f_{n_{k_l}}$ converges to a limit in $M(d_0,m_0) \cap K_{\deltap}$. 

Conditional on $\sigma\Big( \SF^{u^{*,r}} , \SF^u_{T_1} \Big)$, we now apply the lower bound of Lemma~\ref{LEM:a-first-estimate} for some $0 \not\equiv g=g_x \in \Upsilon, 0 \leq g \leq u_{T_2}^{*,r}\big( \underline{\theta} \big)(\fatcdot - x)$. Recall that $u_t=u_t(\theta_1)$. From below \eqref{equ:I-epsilon-12} it follows that it is enough to show for $\epsilon_0>0$ arbitrarily fixed that
\eqan{
\lbeq{equ:I1-step2}
  & \inf_{\theta_1 \in \big[ \underline{\theta}, \overline{\theta} \big]} \int_{K_{\deltap} \cap M(d_0,m_0)} \PP_f\Bigg( \int_0^1 \EE\bigg[ e^{-2 \big\langle u_{T_1}(\fatcdot + x) , u_{T_2}^{*,r}( \overline{\theta} ) \big\rangle } \\
  & \qquad\qquad \times \EE\Big[ 1-e^{-2 \big\langle v_{T_1}\big( \epsilon_0, \theta_2, (u_t)_{t \in [0,T_1]} \big) , g_x \big\rangle } \Bigmid \sigma\Big( \SF^{u^{*,r}} , \SF^u_{T_1} \Big) \Big]  \Bigmid \SF^u_{T_1} \bigg]  dx \geq \eta_1 \epsilon_0 \Bigg) \big( \nu_T^{*,l}(\theta_1) \big)(df) \geq 1-2\deltap \nn
}
for $\eta_1$ small enough and $\theta_2=\theta_1+\epsilon_0$. The left hand side in the above can be bounded from below by
\eqan{
\lbeq{equ:I1-step3}
  & (1-\deltap) \inf_{\theta \in \big[ \underline{\theta}, \overline{\theta} \big]} \inf_{f \in K_{\deltap} \cap M(d_0,m_0)} \PP_f\Bigg( \int_0^1 \EE\bigg[ e^{-2 \big\langle u_{T_1}(\fatcdot + x) , u_{T_2}^{*,r}( \overline{\theta} ) \big\rangle } \\
  & \qquad\qquad\qquad \times \EE\Big[ 1-e^{-2 \big\langle v_{T_1}\big( \epsilon_0, \theta_2, (u_t)_{t \in [0,T_1]} \big) , g_x \big\rangle } \Bigmid \sigma\Big( \SF^{u^{*,r}} , \SF^u_{T_1} \Big) \Big] \biggmid \SF^u_{T_1} \bigg]  dx \geq \eta_1 \epsilon_0 \Bigg), \nn
}
where we used \eqref{equ:inf-comp}. 

The map $(f,\alpha,\beta,1) \mapsto Q^{f,\alpha,\beta,1}$ is continuous by Theorem~\ref{THM:tribe}b). Hence, the law $\PP_f(\theta_1)=Q^{f,0,0,1}(\theta_1)=Q^{f,0,(\overline{\theta}-\theta_1),1}(\overline{\theta})$ of $u$ is continuous in $f$ and $\theta_1$. Furthermore, by the continuous mapping theorem, the law of $v(\epsilon_0,\theta_2,u)$, that is $Q^{0,\epsilon_0 u, 2u,1}(\theta_1+\epsilon_0)=Q^{0,\epsilon_0 u, 2u + (\overline{\theta}-\theta_1),1}\big( \overline{\theta}+\epsilon_0\big)$ is also continuous in $f$ and $\theta_1$. As $\big[ \underline{\theta}, \overline{\theta} \big]$ is a compact interval and $M(d_0,m_0) \cap K_{\deltap}$ is compact in $\SC_{tem}^+$, the infimum is attained for some $\theta' \in \big[ \underline{\theta}, \overline{\theta} \big], f' \in M(d_0,m_0) \cap K_{\deltap}$. Let $\theta', f'$ be arbitrarily fixed. The innermost expectation is non-zero almost surely by reasoning as in \eqref{equ:first-est-superp} of the proof of the lower bound in Lemma~\ref{LEM:a-first-estimate}. Let $x \in [0,1]$ be arbitrarily fixed. Then \eqref{equ:laplace-left} and symmetry yield for $u_t=u_t^{(f')}(\theta')$,
\eqn{
  \EE\!\left[ e^{-2 \big\langle u_{T_1}(\fatcdot + x) , u_{T_2}^{*,r}( \overline{\theta} ) \big\rangle } \Bigmid \SF^u_{T_1} \right]
  = \PP\Big( \big\langle \1_{(0,\infty)}(\fatcdot) , u_{T_2}^{( u_{T_1}(\fatcdot + x) )}(\overline{\theta}) \big\rangle = 0 \Bigmid \SF^u_{T_1} \Big)
  = \PP\Big( R_0\big( u_{T_2}^{( u_{T_1}(\fatcdot + x) )}(\overline{\theta}) \big) \leq 0 \Bigmid \SF^u_{T_1} \Big).
}
The latter is non-zero almost surely. Thus, using dominated convergence, we can choose $\eta_1>0$ small enough such that $I_1 \geq (1-\deltap)^2 \geq 1-2\deltap$.
\end{proof}
%
%
\begin{corollary}
\label{COR:in-front-at-least}
Let $T_1, T_2>0$ be arbitrarily fixed and $\theta_c < \underline{\theta} \leq \theta_1 < \theta_2 \leq \overline{\theta}$. For all $\deltap>0$ there exists $\eta_1=\eta_1\big( \deltap, T_1, T_2, \underline{\theta}, \overline{\theta} \big)>0$ small enough such that 
\eqn{
\lbeq{equ:ahead-lower}
  \int_{\SC_{tem}^+} \PP_f\bigg( \PP_f\!\left( R_0\big( w_{T_1+T_2} \big) > R_0\big( u_{T_1+T_2} \big)  \bigmid \SF^u_{T_1} \right) \geq \eta_1 (\theta_2-\theta_1) \bigg) \big( \nu_T^{*,l}(\theta_1) \big)(df) \geq 1-2\deltap
}
for all $T>1$.
\end{corollary}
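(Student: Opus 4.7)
The plan is to derive Corollary~\ref{COR:in-front-at-least} from Lemma~\ref{LEM:increase-of-right-marker-L1} by a simple pointwise comparison: I will show that the truncated expected marker difference appearing in the lemma is itself dominated by the conditional probability appearing in the corollary, so the event in the lemma is contained in the event in the corollary and the bound $1-2\deltap$ transfers automatically.

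Concretely, first I introduce the random variable
\[
  X \equiv \bigl(0 \vee (R_0(w_{T_1+T_2}) \wedge 1)\bigr) - \bigl(0 \vee (R_0(u_{T_1+T_2}) \wedge 1)\bigr).
\]
Recall from \eqref{equ:u1wu2} that the coupling of $u$ and $w$ satisfies $u_{T_1+T_2}(x) \leq w_{T_1+T_2}(x)$ for all $x \in \RR$ almost surely, so in particular $R_0(u_{T_1+T_2}) \leq R_0(w_{T_1+T_2})$. The map $r \mapsto 0 \vee (r \wedge 1)$ is nondecreasing, hence $X \in [0,1]$ a.s.

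Second, I verify the set inclusion
\[
  \{X>0\} \subseteq \bigl\{ R_0(w_{T_1+T_2}) > R_0(u_{T_1+T_2}) \bigr\}
\]
by case analysis: if $X>0$, then either $R_0(u_{T_1+T_2}) \leq 0 < R_0(w_{T_1+T_2})$, or $0 < R_0(u_{T_1+T_2}) < 1$ and $R_0(w_{T_1+T_2}) > R_0(u_{T_1+T_2})$; in every case the right markers are strictly ordered. Combining this with $X \leq 1$ yields the pointwise bound
\[
  \EE_f\bigl[ X \bigm| \SF^u_{T_1} \bigr] \leq \PP_f\bigl( X>0 \bigm| \SF^u_{T_1} \bigr) \leq \PP_f\bigl( R_0(w_{T_1+T_2}) > R_0(u_{T_1+T_2}) \bigm| \SF^u_{T_1} \bigr).
\]

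Third, this pointwise bound gives the inclusion of events
\[
  \Bigl\{ \EE_f\bigl[ X \bigm| \SF^u_{T_1} \bigr] \geq \eta_1(\theta_2-\theta_1) \Bigr\}
  \subseteq
  \Bigl\{ \PP_f\bigl( R_0(w_{T_1+T_2}) > R_0(u_{T_1+T_2}) \bigm| \SF^u_{T_1} \bigr) \geq \eta_1(\theta_2-\theta_1) \Bigr\}.
\]
Applying $\PP_f$ to both sides and then integrating against $\nu_T^{*,l}(\theta_1)(df)$, the lower bound \eqref{equ:bound-on-first-moment} from Lemma~\ref{LEM:increase-of-right-marker-L1} transfers directly to yield \eqref{equ:ahead-lower} with the same constant $\eta_1 = \eta_1(\deltap, T_1, T_2, \underline{\theta}, \overline{\theta})$. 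There is no genuine obstacle here; the only point requiring minor care is the case analysis ensuring $\{X>0\} \subseteq \{R_0(w_{T_1+T_2})>R_0(u_{T_1+T_2})\}$, which relies on the monotonicity built into the coupling of $u$ and $w$ from \eqref{equ:def-v-w}--\eqref{equ:u1wu2}.
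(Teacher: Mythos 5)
Your proof is correct and is essentially the paper's argument, unpacked. The paper's proof is the single line ``Use that $\1_{\{X > Y \}} \geq 0 \vee (X \wedge 1) - 0 \vee (Y \wedge 1)$ for $X \geq Y$,'' which, applied with $X = R_0(w_{T_1+T_2})$, $Y = R_0(u_{T_1+T_2})$ (ordered by the coupling) and then conditioned on $\SF^u_{T_1}$, gives exactly the inequality $\EE_f[X \mid \SF^u_{T_1}] \leq \PP_f(R_0(w_{T_1+T_2}) > R_0(u_{T_1+T_2}) \mid \SF^u_{T_1})$ you derive via the chain $\EE_f[X \mid \SF^u_{T_1}] \leq \PP_f(X>0 \mid \SF^u_{T_1}) \leq \PP_f(R_0(w)>R_0(u) \mid \SF^u_{T_1})$; the rest is the same event inclusion and appeal to Lemma~\ref{LEM:increase-of-right-marker-L1}.
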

%
%
\begin{proof}
Use that $\1_{\{X > Y \}} \geq 0 \vee (X \wedge 1) - 0 \vee (Y \wedge 1)$ for $X \geq Y$.
\end{proof}
%
%
\begin{lemma}
Let $T_1, T_2>0$ be arbitrarily fixed and $\theta_c < \underline{\theta} \leq \theta_1 < \theta_2 \leq \overline{\theta}$. For all $\deltap>0$ there exists $\eta_2=\eta_2\big( \deltap, T_1, T_2, \underline{\theta}, \overline{\theta} \big)>0$ big enough such that 
\eqn{
\lbeq{equ:ahead-upper}
  \int_{\SC_{tem}^+} \PP_f\bigg( \PP_f\!\left( R_0\big( w_{T_1+T_2} \big) > R_0\big( u_{T_1+T_2} \big)  \bigmid \SF^u_{T_1} \right) \leq \eta_2 (\theta_2-\theta_1) \bigg) \big( \nu_T^{*,l}(\theta_1) \big)(df) \geq 1-4\deltap
}
for all $T>1$.
\end{lemma}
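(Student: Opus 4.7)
The plan is to mirror the proof of Lemma~\ref{LEM:increase-of-right-marker-L1} and Corollary~\ref{COR:in-front-at-least}, but now using the \emph{upper} bound half of Lemma~\ref{LEM:a-first-estimate} combined with Markov's inequality in the randomness of $f$. First I would condition on the finer $\sigma$-field $\SF_{T_1+T_2}^u$ rather than $\SF_{T_1}^u$, so that $R:=R_0(u_{T_1+T_2})$ becomes deterministic. Since $u_{T_1+T_2}$ vanishes on $(R,\infty)$ and $w=u+v$ with $v\geq 0$, the event $\{R_0(w_{T_1+T_2})>R_0(u_{T_1+T_2})\}$ coincides with $\{v_{T_1+T_2}|_{(R,\infty)}\not\equiv 0\}$.

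Next, to realize the latter event as a Laplace functional I would take $h(x)=e^{-(x-R)}\1_{(R,\infty)}(x)\in\Upsilon$, strictly positive on $(R,\infty)$ and zero elsewhere, so that
\eqn{
\PP\big(v_{T_1+T_2}|_{(R,\infty)} \not\equiv 0 \bigmid \SF_{T_1+T_2}^u\big) = \lim_{M\to\infty}\EE\!\left[1-e^{-2\langle v_{T_1+T_2},Mh\rangle}\bigmid\SF_{T_1+T_2}^u\right].
}
Applying the upper bound in Lemma~\ref{LEM:a-first-estimate}, extended in the obvious way to allow the immigration term $\epsilon\zeta_s=\epsilon u_s$ to be switched off after time $T_1$ (the derivation \eqref{equ:1-estimate-ub} only needs the time-integral of the immigration contribution to be truncated to $[0,T_1]$), and then passing to the monotone limit $M\to\infty$ yields
\eqn{
\PP\big(R_0(w_{T_1+T_2})>R\bigmid\SF_{T_1+T_2}^u\big) \leq 2(\theta_2-\theta_1)\int_0^{T_1}\langle u_s,\Psi_{T_1+T_2-s}\rangle\,ds,
}
where $\Psi_r$ is the solution of the KPP PDE \eqref{equ:PDE-Psi} (with $\theta=\theta_2$) starting from $\infty\cdot\1_{(R,\infty)}$. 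Standard comparison bounds give $\Psi_r(y)\leq \theta_2/(1-e^{-\theta_2 r})$ globally and $\Psi_r(y)\leq e^{\theta_2 r}\exp(-(R-y)^2/(4r))$ for $y<R$.

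Taking conditional expectation with respect to $\SF_{T_1}^u$ on both sides produces an $\SF_{T_1}^u$-measurable upper bound of the form $2(\theta_2-\theta_1)\,Y_f$ for the conditional probability appearing in \eqref{equ:ahead-upper}. Markov's inequality then yields $\PP_f(Y_f>\eta_2/2)\leq 2\EE_f[Y_f]/\eta_2$, so it is enough to verify
\eqn{
\sup_{\theta_1\in[\underline{\theta},\overline{\theta}]}\,\sup_{f\in K_{\deltap}\cap M(d_0,m_0)} \EE_f\!\left[\int_0^{T_1}\langle u_s,\Psi_{T_1+T_2-s}\rangle\,ds\right] \leq C_0
}
for some $C_0=C_0(\deltap,T_1,T_2,\underline{\theta},\overline{\theta})$. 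Choosing $\eta_2$ large enough that $2C_0/\eta_2\leq 3\deltap$ and combining with $\big(\nu_T^{*,l}(\theta_1)\big)(K_{\deltap}^c)\leq\deltap$ from \eqref{equ:inf-comp} then produces the claimed $1-4\deltap$ bound.

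The main obstacle is this last uniform expectation estimate. The Gaussian tail of $\Psi_r$ to the left of $R$ integrates nicely against the sub-exponential growth of $u_s$, but on $(R,\infty)$ the dual $\Psi_r$ is only bounded and does not decay, so one must control how much mass of $u_s$ sits past the \emph{terminal} marker $R_0(u_{T_1+T_2})$. My intention is to combine the first-moment inequality $\EE_f[u_s(x)]\leq e^{\theta_2 s}(G_s*f)(x)$ (obtained by dropping the negative drift in the SPDE) with the uniform moment bounds $\EE_f[|R_0(u_s)|]\leq C$ from Corollary~\ref{COR:e-bd-small-t} and $\EE_f[(0\vee R_0(u_s))^2]\leq C(T_1+T_2)^2$ from Lemma~\ref{LEM:marker-pos-part-second-moment}, exploiting that on the bounded horizon $[0,T_1+T_2]$ the overshoot $\big(R_0(u_s)-R_0(u_{T_1+T_2})\big)_+$ has uniformly bounded first moment for $f$ ranging over the compact set $K_{\deltap}\cap M(d_0,m_0)\subset\SC_{tem}^+$.
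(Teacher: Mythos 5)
Your overall scaffolding — condition on a finer $\sigma$-field to freeze $R_0(u_{T_1+T_2})$, dominate the conditional probability by a multiple of $\theta_2-\theta_1$, and then apply Markov's inequality over the randomness of $f$ restricted to a compact set of $\nu_T^{*,l}(\theta_1)$-measure $\geq 1-\deltap$ — is a reasonable plan and matches the paper's spirit in its opening moves. But the way you propose to close the argument has a gap that I do not think can be patched without essentially redoing the work the paper does by other means.

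The first problem is the claimed bound $\Psi_r(y)\leq e^{\theta_2 r}\exp(-(R-y)^2/(4r))$ for $y<R$. Comparison with the linear equation $\partial_r w=\Delta w+\theta_2 w$, $w_0=Mh$, gives only $\Psi^{(M)}_r(y)\leq Me^{\theta_2 r}\exp(-(R-y)^2/(4r))$, and this diverges as $M\to\infty$. The ODE comparison $\Psi_r\leq\theta_2/(1-e^{-\theta_2 r})$ does survive the monotone limit, but it gives no spatial decay at all. Some Gaussian-type decay for the limiting KPP solution with data $\infty\cdot\1_{(R,\infty)}$ does hold, but deriving it requires a time-dependent super-solution argument (roughly with an extra $r^{-1/2}$ prefactor) matched to the ODE cap; it is not a one-line comparison, and the specific inequality you quote is false. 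Without a correct quantitative decay, your integral $\int_0^{T_1}\langle u_s,\Psi_{T_1+T_2-s}\rangle\,ds$ is not even manifestly finite.

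Even granting a corrected decay bound, the remaining uniform estimate $\sup_{f\in K_{\deltap}\cap M(d_0,m_0)}\EE_f[Y_f]\leq C_0$ is not covered by the cited moment lemmas. The dominant contribution to $\langle u_s,\Psi_{T_1+T_2-s}\rangle$ comes from the region where $\Psi$ has no decay, i.e., the mass of $u_s$ to the right of the \emph{terminal} marker $R_0(u_{T_1+T_2})$. The marker $R_0(u_s)$ can overshoot $R_0(u_{T_1+T_2})$ and then retreat, so this is a two-time quantity; Corollary~\ref{COR:e-bd-small-t} and Lemma~\ref{LEM:marker-pos-part-second-moment} control single-time moments of $|R_0(u_s)|$, not the integrated overshoot mass $\int_0^{T_1}\langle u_s,\1_{(R_0(u_{T_1+T_2}),\infty)}\rangle\,ds$ uniformly over $f$ in a compact set.

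The paper's proof avoids both issues by not passing to a deterministic PDE dual at all. It first replaces $v$ by $U\equiv v+d$ (a coupling in which $U$ is, conditional on the $u$-trajectory, a plain KPP SPDE started from $v_{T_1}$, freed of the $-2uv$ competition), and then applies the \emph{stochastic} duality relation with the right-upper process $u_{T_2}^{*,r}$. This yields the conditional probability as $\EE\big[1-e^{-2\langle v_{T_1},\,u_{T_2}^{*,r}(\theta_1)(\fatcdot-R_0(u_{T_1+T_2}))\rangle}\,\big|\,\SF^u_{T_1}\big]$. Because $L_0(u_{T_2}^{*,r})>-\infty$ a.s., the random dual can be truncated to a bounded window $[-l,r]$ with high probability, and $R_0(u_{T_1+T_2})$ restricted to $[-A_{\deltap},A_{\deltap}]$ via Lemma~\ref{LEM:analogue-49}. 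After these truncations, the test function in Lemma~\ref{LEM:a-first-estimate} is compactly supported, the upper bound $c_+(\zeta,g,\theta,T)<\infty$ from that lemma suffices qualitatively, and the uniformity over $f$ is obtained by the continuity of $(f,\theta)\mapsto Q^{f,\alpha,\beta,\gamma}$ on the compact set $K_{\deltap}\cap M(d_0,m_0)\times[\underline{\theta},\overline{\theta}]$ — no explicit decay rate for any PDE solution is needed. If you want to make your route work, you would need (i) a rigorously established Gaussian-type decay for the $M=\infty$ KPP solution, and (ii) a new uniform two-time moment bound on the overshoot mass past $R_0(u_{T_1+T_2})$; neither is currently available in the paper.
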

%
%
\begin{proof}
First note that $w=u+v$ as in \eqref{equ:def-v-w} and thus
\eqn{
  \PP\!\left( R_0\big( w_{T_1+T_2} \big) > R_0\big( u_{T_1+T_2} \big) \Bigmid \SF^u_{T_1} \right)
  = \PP\!\left( R_0\big( v_{T_1+T_2} \big) > R_0\big( u_{T_1+T_2} \big) \Bigmid \SF^u_{T_1} \right).
}
Recall the construction of $v_{T_1+r}, r \in [0,T_2]$ by means of a \textit{monotonicity-coupling} from \eqref{equ:v-coupling-part-2}. Extend this coupling as follows.
\eqan{
  \frac{\partial u}{\partial t} &= \Delta u + \big( \theta_1 - u \big) u + \sqrt{u} \dot{W_1}, \quad && u(T_1)=u_{T_1}^{(u_0)}(\theta_1), \\
  \frac{\partial v}{\partial t} &= \Delta v + \big( \theta_1 - v - 2u \big) v + \sqrt{v} \dot{W_2}, \quad && v(T_1)=v_{T_1}=w_{T_1}-u_{T_1}^{(u_0)}(\theta_1), \nn\\
  \frac{\partial d}{\partial t} &= \Delta d + 2uv +\big( \theta_1 - d -2v \big) d + \sqrt{d} \dot{W_3}, \quad && d(T_1)=0, 
}
$t \geq T_1$, with $W_i, i=1,2,3$ independent white noises. Then $U \equiv v+d$ solves, conditional on $\SF^u_{T_1+T_2}$,
\eqn{
  \frac{\partial U}{\partial t} = \Delta U + (\theta_1-U) U + \sqrt{d} \dot{W}_4, \qquad U(T_1)=v_{T_1}, \qquad t \geq T_1
}
for some white noise $W_4$ independent of $W_1$. By construction, $v_{T_1+t}(x) \leq U_{T_1+t}(x)$ for all $x \in \RR, t \in [0,T_2]$ almost surely and the law of $U$ only depends on $\SF_{T_1}^u$ through the initial condition.

Now reason similarly to \eqref{equ:front-sth-2}--\eqref{equ:front-sth-3} to obtain
\eqan{
  &\PP\!\left( R_0\big( v_{T_1+T_2} \big) > R_0\big( u_{T_1+T_2} \big) \Bigmid \SF^u_{T_1} \right) 
  \leq \PP\!\left( R_0\big( U_{T_1+T_2} \big) > R_0\big( u_{T_1+T_2} \big) \Bigmid \SF^u_{T_1} \right) \\
  &= \PP\bigg( \PP\!\left( R_0\big( U_{T_1+T_2} \big) > R_0\big( u_{T_1+T_2} \big) \Bigmid \SF^u_{T_1+T_2} \right) \biggmid \SF^u_{T_1} \bigg) \nn\\
  &= \EE\Bigg[ \EE\!\left[ 1-e^{-2 \big\langle U_{T_1}\big(\fatcdot + R_0\big( u_{T_1+T_2} \big) \big) , u_{T_2}^{*,r}(\theta_1) \big\rangle } \Bigmid \SF^u_{T_1+T_2} \right] \Biggmid \SF^u_{T_1} \Bigg] \nn\\
  &= \EE\!\left[ 1-e^{-2 \big\langle v_{T_1}\big(\fatcdot + R_0\big( u_{T_1+T_2} \big) \big) , u_{T_2}^{*,r}(\theta_1) \big\rangle } \Bigmid \SF^u_{T_1} \right] \nn\\
  &\leq \EE\!\left[ 1 - e^{-2 \big\langle v_{T_1}\big(\fatcdot + R_0\big( u_{T_1+T_2} \big) \big) , u_{T_2}^{*,r}\big( \overline{\theta} \big) \big\rangle } \Bigmid \SF^u_{T_1} \right] \nn\\
  &= \EE\Bigg[ \EE\!\left[ 1 - e^{-2 \big\langle v_{T_1} , \big( u_{T_2}^{*,r}( \overline{\theta} ) \big)\big(\fatcdot - R_0\big( u_{T_1+T_2} \big) \big) \big\rangle } \Bigmid \sigma\Big( \SF^{u^{*,r}} , \SF^u_{T_1} \Big) \right] \Biggmid \SF^u_{T_1} \Bigg]. \nn
}
In the third equality we used that $U(T_1)=v_{T_1}$.

Use Lemma~\ref{LEM:analogue-49} to obtain that for all $\theta \in \big[ \underline{\theta}, \overline{\theta} \big]$, $T_1, T_2>0, A>0, T \geq 1$,
\eqn{
  \PP_{\nu_T^{*,l}(\theta)}\big( \big| R_0\big( u_{T_1+T_2} \big) \big| \geq A \big)
  \leq \frac{C\big( \underline{\theta}, \overline{\theta}, T_1+T_2 \big)}{A}.
}
By \eqref{equ:uniform-tightness} it follows that for every $\deltap>0$ there exist $A_{\deltap}>0$ big enough and a compact set $K_{\deltap} \subset \SC_{tem}^+$ such that 
\eqn{
\lbeq{equ:I2-compact}
  \inf_{\underline{\theta} \leq \theta \leq \overline{\theta}} \big( \nu_T^{*,l}(\theta) \big)\Big( K_{\deltap} \cap \big\{ \big| R_0\big( u_{T_1+T_2} \big) \big| < A_{\deltap} \big\} \Big) \geq 1-\deltap \quad \mbox{ for all } T>1.
}
We obtain for $I_2 \equiv \int_{\SC_{tem}^+} \PP_f\bigg( \PP\!\left( R_0\big( w_{T_1+T_2} \big) > R_0\big( u_{T_1+T_2} \big)  \bigmid \SF^u_{T_1} \right) \leq \eta_2 (\theta_2-\theta_1) \bigg) \big( \nu_T^{*,l}(\theta_1) \big)(df)$ as in \eqref{equ:ahead-upper},
\eqan{
  I_2 
  \geq & \int_{K_{\deltap}} \PP_f\Bigg( \sup_{ a \in [-A_{\deltap},A_{\deltap}] } \EE\bigg[ \EE\Big[ 1 - e^{-2 \big\langle v_{T_1} , \big( u_{T_2}^{*,r}( \overline{\theta} ) \big)(\fatcdot - a) \big\rangle } \Bigmid \sigma\Big( \SF^{u^{*,r}} , \SF^u_{T_1} \Big) \Big] \biggmid \SF^u_{T_1} \bigg] \leq \eta_2 (\theta_2-\theta_1) \Bigg) \\
  & \hspace{13.cm} \big( \nu_T^{*,l}(\theta_1) \big)(df) - \deltap. \nn
}

By symmetry, Corollaries~\ref{COR:e-bd}--\ref{COR:e-bd-small-t} and the Markov inequality, for $T_2>0$ fixed we can choose $l>0$ big enough such that
\eqn{
  \PP\!\left( \big| L_0\big( u_{T_2}^{*,r}( \overline{\theta} ) \big) \big| \geq l \right) \leq \deltap.
}
Recall \eqref{equ:u1wu2}--\eqref{equ:def-v-w} and use monotonicity to conclude that for $T_1>0$ fixed, for all $r>0, T>1$,
\eqn{
  \PP_{\nu_T^{*,l}(\theta_1)}\!\left( 0 \vee R_0\big( v_{T_1} \big) \geq r \right)
  \leq \PP_{\nu_T^{*,l}(\theta_1)}\!\left( 0 \vee R_0\big( u_{T_1}(\theta_2) \big) \geq r \right)
  \leq \frac{C\big( \underline{\theta}, \overline{\theta}, T_1 \big)}{r}.
}
Thus, for $\deltap>0$ fixed, we can pick $l, r>0$ big enough such that
\eqan{
  I_2 
  \geq& \int_{K_{\deltap}} \PP_f\Bigg( \sup_{ a \in [-A_{\deltap},A_{\deltap}] } \EE\bigg[ \EE\Big[ 1 - e^{-2 \big\langle v_{T_1} , \big( \1_{[-l,r]} u_{T_2}^{*,r}( \overline{\theta} ) \big)(\fatcdot - a) \big\rangle } \Bigmid \sigma\Big( \SF^{u^{*,r}} , \SF^u_{T_1} \Big) \Big] \biggmid \SF^u_{T_1} \bigg] \\
  & \qquad \qquad \quad \leq \eta_2 (\theta_2-\theta_1) \Bigg) \big( \nu_T^{*,l}(\theta_1) \big)(df) - 3\deltap. \nn
}
Now reason as from above \eqref{equ:I1-step2} to below \eqref{equ:I1-step3}, this time using \eqref{equ:1-estimate-ub} from the proof of the upper bound from Lemma~\ref{LEM:a-first-estimate}, to obtain the claim.
\end{proof}
%
%

Recall the following observation for the coupling from \eqref{equ:def-v-w} from the beginning of the proof of Lemma~\ref{LEM:increase-of-right-marker-L1}. Conditional on $\SF^u_{T_1}$, $v$ has law $Q_v^{0,(\theta_2-\theta_1)u,2u,1}(\theta_2)$ on $[0,T_1]$ and conditional on $\SF_{T_1}$, $w$ has law $Q_w^{u_{T_1}+v_{T_1},0,0,1}(\theta_1)$ on $[T_1,T_1+T_2]$. Finally, recall that $w=u+v$.
%
%
\begin{lemma}
\label{LEM:ahead-how-much}
Let $T_1, T_2>0$ be arbitrarily fixed, $\theta_c < \underline{\theta} \leq \theta_1 < \theta_2 \leq \overline{\theta}$. For all $\deltap>0$ there exists $\eta_3=\eta_3\big( \deltap, T_1, T_2, \underline{\theta}, \overline{\theta} \big)>0$ small enough such that 
\eqn{
\lbeq{equ:ahead-how-much}
  \int_{\SC_{tem}^+} \EE_f\!\left[ \1_{\bigg\{ \PP_f\Big( \big\{ R_0\!\left( w_{T_1+T_2} \right) -R_0\!\left( u_{T_1+T_2} \right) \geq \eta_3  \big\} \Bigmid \SF_{T_1}^u \Big) \geq \eta_3^2 \PP_f\Big( \big\{ R_0\!\left( w_{T_1+T_2} \right) > R_0\!\left( u_{T_1+T_2} \right) \big\} \Bigmid \SF_{T_1}^u \Big) \bigg\}} \right] \big( \nu_T^{*,l}(\theta_1) \big)(df) \geq 1-6\deltap 
}
for all $T>1$.
\end{lemma}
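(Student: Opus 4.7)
The plan is to combine the lower bound on the expected truncated gain from Lemma~\ref{LEM:increase-of-right-marker-L1} with the upper bound on $\PP_f\big( R_0(w_{T_1+T_2}) > R_0(u_{T_1+T_2}) \bigmid \SF_{T_1}^u \big)$ given by~\eqref{equ:ahead-upper}, and then to decompose the expected gain according to whether the gap exceeds $\eta_3$. Introduce the abbreviations
$$E_f := \EE_f\!\left[ 0 \vee (R_0(w_{T_1+T_2}) \wedge 1) - 0 \vee (R_0(u_{T_1+T_2}) \wedge 1) \bigmid \SF_{T_1}^u \right],$$
$$p_f := \PP_f\!\left( R_0(w_{T_1+T_2}) > R_0(u_{T_1+T_2}) \bigmid \SF_{T_1}^u \right), \quad q_f := \PP_f\!\left( R_0(w_{T_1+T_2}) - R_0(u_{T_1+T_2}) \geq \eta_3 \bigmid \SF_{T_1}^u \right);$$
the lemma reduces to showing $q_f \geq \eta_3^2\, p_f$ on a $\nu_T^{*,l}(\theta_1)\otimes\PP_f$--set of probability at least $1-6\deltap$.

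The first step is an elementary, $\PP_f$--a.s.\ bookkeeping inequality $E_f \leq \eta_3\, p_f + q_f$. Since $w = u+v$ with $v \geq 0$ (cf.~\eqref{equ:def-v-w}), one has $R_0(w_{T_1+T_2}) \geq R_0(u_{T_1+T_2})$ almost surely, and a short case analysis (checking the positions of $y \leq x$ relative to $\{0,1\}$) yields the pointwise comparison $0 \vee (x \wedge 1) - 0 \vee (y \wedge 1) \leq 1 \wedge (x-y)_+$ whenever $x \geq y$. Taking the conditional expectation and splitting according to whether the gap is below or above $\eta_3$ then gives the advertised bookkeeping bound.

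The second step is a union bound: Lemma~\ref{LEM:increase-of-right-marker-L1} and~\eqref{equ:ahead-upper} together guarantee that the event
$$G := \{E_f \geq \eta_1(\theta_2-\theta_1)\} \cap \{p_f \leq \eta_2(\theta_2-\theta_1)\}$$
has joint $\nu_T^{*,l}(\theta_1)\otimes\PP_f$--probability at least $1 - 2\deltap - 4\deltap = 1-6\deltap$. On $G$, chaining the bookkeeping bound with these two estimates yields
$$q_f \geq E_f - \eta_3\, p_f \geq (\eta_1 - \eta_3\eta_2)(\theta_2-\theta_1),$$
while $\eta_3^2\, p_f \leq \eta_3^2\eta_2(\theta_2-\theta_1)$. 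Thus $q_f \geq \eta_3^2\, p_f$ on $G$ as soon as $\eta_1 - \eta_3\eta_2 \geq \eta_3^2 \eta_2$, which is achieved by setting $\eta_3 := \min\{1,\, \eta_1/(2\eta_2)\}$, where $\eta_1$ and $\eta_2$ are the constants furnished by Lemma~\ref{LEM:increase-of-right-marker-L1} and~\eqref{equ:ahead-upper} applied with the parameter $\deltap$.

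I do not expect any substantive obstacle: the argument is essentially one deterministic comparison followed by a union bound. The only care needed is to verify the pointwise domination of the truncated-and-capped gain by $1 \wedge (R_0(w) - R_0(u))_+$, and to ensure that the two high-probability conclusions from the previous lemmas are invoked with the \emph{same} parameter $\deltap$, so that the exceptional probabilities add to $2\deltap + 4\deltap = 6\deltap$ in agreement with the statement.
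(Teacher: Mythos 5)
Your proof is correct, and it reaches the same final choice $\eta_3 = \eta_1/(2\eta_2)$ (up to the harmless $\min\{1,\cdot\}$) by a genuinely different route at the key step. The paper applies a Paley--Zygmund/second-moment inequality
\[
\tilde{\PP}\!\left( X > \tfrac{1}{2}\tilde{\EE}[X] \right) \geq \frac{(\tilde{\EE}[X])^2}{4\tilde{\EE}[X^2]} \geq \frac{(\tilde{\EE}[X])^2}{4},
\]
with $X \in [0,1]$ the truncated gain and $\tilde\PP$ the regular conditional law on $\{R_0(w_{T_1+T_2}) > R_0(u_{T_1+T_2})\}$; it then checks that on the good set $\tilde{\EE}[X] = E_f/p_f \geq \eta_1/\eta_2 = 2\eta_3$. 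You instead observe the pointwise bound $X \leq \eta_3 \1_{\{R_0(w)>R_0(u)\}} + \1_{\{R_0(w)-R_0(u)\geq\eta_3\}}$, take conditional expectations to get $E_f \leq \eta_3 p_f + q_f$, and close the loop directly; this is more elementary (no second moment, no change of measure, and no need to separately discuss the case $p_f = 0$ as the paper's remark does). Both proofs then apply exactly the same union bound over the two exceptional sets of sizes $2\deltap$ and $4\deltap$ from Lemma~\ref{LEM:increase-of-right-marker-L1} and \eqref{equ:ahead-upper}. Your decomposition buys simplicity and avoids the regular-conditional-distribution caveat; the paper's Paley--Zygmund route is the more standard template for ``expected gain implies gain with positive probability'' statements and is the one cited from \cite[Proof of Lemma~3]{HT2004}, which keeps the argument parallel to existing literature.
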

%
%
\begin{remark}
Note that by \eqref{equ:ahead-lower}, $\PP\big( \big\{ R_0\!\left( w_{T_1+T_2} \right) > R_0\!\left( u_{T_1+T_2} \right) \big\} \bigmid \SF_{T_1}^u \big) > 0$ almost surely under $\nu_T^{*,l}(\theta_1)$.
%
\end{remark}
%
%
\begin{proof}
Let $X \geq 0$ be a random variable on some probability space $(\Omega,\SF,\tilde\PP)$. Then (cf. \cite[Proof of Lemma~3]{HT2004})
\eqn{
\lbeq{equ:lower-general-bound}
  \tilde{\PP}\!\left( X > \frac{1}{2} \tilde{\EE}[X] \right) \geq \frac{ \big( \tilde{\EE}[X] \big)^2 }{ 4\tilde{\EE}\big[ X^2 \big] }.
}
In the coupling from above, let
\eqn{
  0 \leq X \equiv 0 \vee \left( R_0\big( w_{T_1+T_2} \big) \wedge 1 \right) - 0 \vee \left( R_0\big( u_{T_1+T_2} \big) \wedge 1 \right) \leq 1.
}
In what follows, we make use of regular conditional distributions. For $\PP\big( \big\{ R_0\big( w_{T_1+T_2} \big) > R_0\big( u_{T_1+T_2} \big) \big\} \bigmid \SF_{T_1}^u \big) > 0$, set
\eqn{
  \tilde{\PP}(\{\fatcdot\}) = \frac{\PP\big( \big\{ \fatcdot \big\} \cap \big\{ R_0\big( w_{T_1+T_2} \big) > R_0\big( u_{T_1+T_2} \big) \big\} \bigmid \SF_{T_1}^u \big)}{\PP\big( \big\{ R_0\big( w_{T_1+T_2} \big) > R_0\big( u_{T_1+T_2} \big) \big\} \bigmid \SF_{T_1}^u \big)}.
}

Now apply \eqref{equ:lower-general-bound} to get for $\eta_3 \leq \frac{1}{2} \tilde{\EE}[X]$,
\eqan{
  & \tilde{\PP}\!\!\left( \left\{ R_0\big( w_{T_1+T_2} \big) - R_0\big( u_{T_1+T_2} \big) \geq \eta_3 \right\} \right) \\
  &\geq \tilde{\PP}(X \geq \eta_3) 
  \geq \tilde{\PP}\!\left(X > \frac{1}{2} \tilde{\EE}[X] \right)
  \geq \frac{\big( \tilde{\EE}[X] \big)^2}{4 \tilde{\EE}\big[ X^2 \big]}
  \geq \frac{ \big( \tilde{\EE}[X] \big)^2 }{4} \geq \eta_3^2. \nn
}
Therefore it suffices to show that there exists $\eta_3>0$ small enough such that
\eqn{
  \int_{\SC_{tem}^+} \EE_f\!\left[ \1_{ \big\{ \eta_3 \leq \tilde{\EE}[X]/2 \big\} } \right] \big( \nu_T^{*,l}(\theta_1) \big)(df)
  \geq 1-6\deltap.
}
By \eqref{equ:bound-on-first-moment} and \eqref{equ:ahead-upper} we have for $\eta_3=\eta_1/(2 \eta_2)$,
\eqan{
  & \int_{\SC_{tem}^+} \EE_f\!\left[ \1_{ \big\{ \eta_3 \leq \tilde{\EE}[X]/2 \big\} } \right] \big( \nu_T^{*,l}(\theta_1) \big)(df) \\
  & = \int_{\SC_{tem}^+} \EE_f\!\left[ \1_{\bigg\{ \EE\Big[ 0 \vee ( R_0( w_{T_1+T_2} ) \wedge 1) - 0 \vee ( R_0( u_{T_1+T_2} ) \wedge 1 ) \Bigmid \SF_{T_1}^u \Big] \geq 2 \eta_3 \PP\Big( \big\{ R_0( w_{T_1+T_2} ) > R_0( u_{T_1+T_2} ) \big\} \Bigmid \SF_{T_1}^u \Big) \bigg\}} \right] \big( \nu_T^{*,l}(\theta_1) \big)(df) \nn\\
  & \geq \int_{\SC_{tem}^+} \EE_f\!\left[ \1_{\Big\{ \EE\Big[ 0 \vee ( R_0( w_{T_1+T_2} ) \wedge 1) - 0 \vee ( R_0( u_{T_1+T_2} ) \wedge 1 ) \Bigmid \SF_{T_1}^u \Big] \geq \eta_1 (\theta_2-\theta_1) \Big\}} \right] \big( \nu_T^{*,l}(\theta_1) \big)(df) \nn\\
  &\quad - \int_{\SC_{tem}^+} \EE_f\!\left[ \1_{\bigg\{ \PP\Big( \big\{ R_0( w_{T_1+T_2} ) > R_0( u_{T_1+T_2} ) \big\} \Bigmid \SF_{T_1}^u \Big) \geq \frac{\eta_1 (\theta_2-\theta_1)}{2 \eta_3} \bigg\}} \right] \big( \nu_T^{*,l}(\theta_1) \big)(df) \nn\\
  &\geq 1-2\deltap-4\deltap. \nn
}
\end{proof}
%
%
\begin{lemma}
\label{LEM:ahead-against-function}
Let $\varphi \in \SC_{tem}^+$ with $L_0(\varphi) \in (0,1)$ be arbitrarily fixed. Further let $T_1, T_2>0$ be arbitrarily fixed and $\theta_c < \underline{\theta} \leq \theta_1 < \theta_2 \leq \overline{\theta}$. For all $\deltap>0$ there exists $\eta_4=\eta_4\big( \varphi, \deltap, T_1, T_2, \underline{\theta}, \overline{\theta} \big)>0$ small enough such that 
\eqan{
\lbeq{equ:ahead-against-function}
  &\int_{\SC_{tem}^+} \EE_f\!\left[ \1_{ \bigg\{ \PP_f\Big( \big\{ \big\langle v_{T_1+T_2}\big( \fatcdot + R_0\!\left( u_{T_1+T_2} \right) \big) , \varphi(\fatcdot) \big\rangle \geq \eta_4 \big\} \Bigmid \SF_{T_1}^u \Big) \geq \big(1-e^{-2\eta_4} \big)^2 \PP_f\Big( \big\{ R_0\!\left( w_{T_1+T_2} \right) > R_0\!\left( u_{T_1+T_2} \right) \big\} \Bigmid \SF_{T_1}^u \Big) \bigg\} } \right] \\
  & \hspace{13cm} \big( \nu_T^{*,l}(\theta_1) \big)(df) \geq 1-\deltap \nn
}
for all $T>1$.
\end{lemma}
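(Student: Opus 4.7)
The plan is a Paley-Zygmund argument in the style of Lemma~\ref{LEM:ahead-how-much}, applied to the bounded random variable $Z \equiv 1 - e^{-2Y}$, where $Y \equiv \big\langle v_{T_1+T_2}(\fatcdot + R_0(u_{T_1+T_2})), \varphi \big\rangle$, conditioned on the event $A \equiv \{R_0(w_{T_1+T_2}) > R_0(u_{T_1+T_2})\}$. The assumption $L_0(\varphi) \in (0,1)$ forces the shifted support of $\varphi(\fatcdot - R_0(u_{T_1+T_2}))$ to lie strictly to the right of $R_0(u_{T_1+T_2})$, so $\{Y > 0\} \subseteq A$ almost surely (using $w = u + v$). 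Writing $p \equiv \PP_f(A \mid \SF_{T_1}^u)$ and, on $\{p > 0\}$, $\tilde\PP(\fatcdot) \equiv \PP_f(\fatcdot \cap A \mid \SF_{T_1}^u)/p$, this inclusion yields $\tilde\EE[Z] = \EE_f[Z \mid \SF_{T_1}^u]/p$; since $Z \in [0,1]$ forces $\tilde\EE[Z^2] \leq \tilde\EE[Z]$, inequality \eqref{equ:lower-general-bound} delivers
\[
\tilde\PP\!\left(Z > \tfrac{1}{2}\tilde\EE[Z]\right) \geq \frac{(\tilde\EE[Z])^2}{4}.
\]

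Whenever $\tilde\EE[Z] \geq 2(1 - e^{-2\eta_4})$ one has $\{Z > \tfrac{1}{2}\tilde\EE[Z]\} \subseteq \{Y \geq \eta_4\}$, hence $\tilde\PP(Y \geq \eta_4) \geq (\tilde\EE[Z])^2/4 \geq (1 - e^{-2\eta_4})^2$; multiplying by $p$ recovers the inequality inside the indicator in \eqref{equ:ahead-against-function} (the case $p = 0$ being trivial). Combining with \eqref{equ:ahead-upper}, which gives $p \leq \eta_2(\theta_2-\theta_1)$ off a small exceptional $\nu_T^{*,l}(\theta_1)$-set, the task reduces to exhibiting a constant $c_\star = c_\star(\varphi, T_1, T_2, \underline\theta, \overline\theta) > 0$ and a $\nu_T^{*,l}(\theta_1)$-set of measure at least $1 - \deltap/2$ (achieved by choosing the tolerance $\deltap$ small in the intermediate estimates) on which
\[
\EE_f\!\left[1 - e^{-2Y} \mid \SF_{T_1}^u\right] \geq c_\star (\theta_2 - \theta_1),
\]
and then choosing $\eta_4 = \eta_4(c_\star, \eta_2) > 0$ so small that $2(1 - e^{-2\eta_4})\eta_2 \leq c_\star$.

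This last lower bound is obtained along the same lines as \eqref{equ:I1-step2}--\eqref{equ:I1-step3} in the proof of Lemma~\ref{LEM:increase-of-right-marker-L1}. Using the pointwise inequality $1 - e^{-2Y} \geq e^{-2\langle u_{T_1+T_2}, \varphi(\fatcdot - R_0(u_{T_1+T_2}))\rangle} - e^{-2\langle w_{T_1+T_2}, \varphi(\fatcdot - R_0(u_{T_1+T_2}))\rangle}$ (which follows from $w = u + v$), the self-duality \eqref{equ:SPDE-self-duality} applied to the standard $w$- and $u$-evolutions on $[T_1, T_1+T_2]$, and $w_{T_1} = u_{T_1} + v_{T_1}$, one rewrites a lower bound for $\EE_f[1 - e^{-2Y} \mid \SF_{T_1}^u]$ as an averaged conditional expectation of
\[
e^{-2\langle u_{T_1}, \tilde u_{T_2}\rangle}\bigl(1 - e^{-2\langle v_{T_1}, \tilde u_{T_2}\rangle}\bigr)
\]
against the joint law of the random shift $R_0(u_{T_1+T_2})$ and of an independent dual process $\tilde u$ solving \eqref{equ:SPDE} with parameter $\theta_1$ and started at $\varphi(\fatcdot - R_0(u_{T_1+T_2}))$. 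Restricting the initial data to the compact set $K_\deltap \cap M(d_0, m_0)$ via \eqref{equ:inf-comp}, and restricting to the tight event $\{|R_0(u_{T_1+T_2})| \leq A_\deltap\}$ via Lemma~\ref{LEM:analogue-49}, the lower bound of Lemma~\ref{LEM:a-first-estimate} applied to the $v_{T_1}$-factor with test function $\tilde u_{T_2} \in \Upsilon$ then produces a contribution of the required order $(\theta_2 - \theta_1)$.

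The main obstacle is extracting a \emph{uniform} positive constant at this last step: Lemma~\ref{LEM:a-first-estimate} only asserts strict positivity of $c_-(\zeta, g, \theta, T)$ without explicit uniformity in $(\zeta, g, \theta)$. This will be overcome by the same compactness-plus-continuity argument as in the passage between \eqref{equ:I1-step2} and \eqref{equ:I1-step3}: the joint continuity of the map $(f, \alpha, \beta, \gamma) \mapsto Q^{f, \alpha, \beta, \gamma}$ from Theorem~\ref{THM:tribe}(b), applied to the law of $u$ started at $f \in K_\deltap \cap M(d_0, m_0)$ with parameter $\theta_1 \in [\underline\theta, \overline\theta]$ and, via the continuous mapping theorem, to the joint law of $(u, v, \tilde u)$, combined with compactness of the conditioning sets, of $[\underline\theta, \overline\theta]$, and of the restricted shift range $[-A_\deltap, A_\deltap]$, allows the infimum over these parameters to be realized and shown strictly positive, producing the sought constant $c_\star$.
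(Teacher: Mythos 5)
Your Paley--Zygmund skeleton is the right one and coincides with the paper's: set $Z = 1-e^{-2Y}$, observe $\{Y > 0\} \subseteq A$ from $L_0(\varphi) > 0$ and $w = u+v$, condition on $A$, and combine a first-moment lower bound of order $\theta_2-\theta_1$ with the upper bound from \eqref{equ:ahead-upper}. The gap is in the first-moment step. You propose to compute $\EE_f[1-e^{-2Y}\mid\SF^u_{T_1}]$ by applying the free self-duality \eqref{equ:SPDE-self-duality} to the $u$- and $w$-evolutions on $[T_1,T_1+T_2]$, with the dual started at $\varphi(\fatcdot - R_0(u_{T_1+T_2}))$ and ``averaged against the joint law of the random shift.'' This does not go through: \eqref{equ:SPDE-self-duality} requires the dual test function to be independent of the process being dualized, whereas here the shift $R_0(u_{T_1+T_2})$ is determined by $u_{T_1+T_2}$, which is built from the same white noise $W_1$ that drives $w$. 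Conditioning on the value of the shift, or averaging over it after the fact, is not compatible with the duality identity --- the conditional law of $w$ given $\SF_{T_1}$ and $\{R_0(u_{T_1+T_2})=a\}$ is no longer the law of a solution to \eqref{equ:SPDE}, so you cannot substitute the random shift into a duality computed for a fixed test function.

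The paper resolves this by conditioning on $\SF^u_{T_1+T_2}$, the full $u$-path up to the terminal time, which freezes the shift. Since $L_0(\varphi)>0$ forces $\langle u_{T_1+T_2},\varphi(\fatcdot - R_0(u_{T_1+T_2}))\rangle = 0$, the quantity reduces to $\EE\big[1-e^{-2\langle v_{T_1+T_2},\varphi(\fatcdot - R_0(u_{T_1+T_2}))\rangle}\mid\SF^u_{T_1+T_2}\big]$. But conditional on the full $u$-path, $v$ on $[T_1,T_1+T_2]$ solves the SPDE with the now-deterministic competition $\beta = 2u$; this is not a solution to \eqref{equ:SPDE}, so the self-duality \eqref{equ:SPDE-self-duality} still does not apply --- one must use Corollary~\ref{COR:duality}, the duality for solutions with deterministic competition, which produces the time-reversed dual $z$ with competition $2u_{2T_1+T_2-\fatcdot}$. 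Your closing compactness-plus-continuity extraction of a uniform constant and the application of Lemma~\ref{LEM:a-first-estimate} are the correct finish once this conditioning and the competition duality are substituted for the free self-duality.
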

%
%
\begin{proof}
Let $\varphi \in \SC_{tem}^+$ with $L_0(\varphi) \in (0,1)$ and $\eta_4>0$ be arbitrarily fixed. Note that in what follows we condition first on $\SF^u_{T_1+T_2}$ rather than on $\SF^u_{T_1}$. Next rewrite
\eqan{
\lbeq{equ:rewrite-B}
  &\PP\bigg( \Big\{ \big\langle v_{T_1+T_2}\big( \fatcdot + R_0\big( u_{T_1+T_2} \big) \big) , \varphi(\fatcdot) \big\rangle \geq \eta_4 \Big\} \Bigmid \SF^u_{T_1+T_2} \bigg) \\
  &= \PP\bigg( \Big\{ 1-e^{-2 \big\langle v_{T_1+T_2}\big( \fatcdot + R_0\big( u_{T_1+T_2} \big) \big) , \varphi(\fatcdot) \big\rangle} \geq 1-e^{-2\eta_4} \Big\} \Bigmid \SF^u_{T_1+T_2} \bigg). \nn
}
Recall from \eqref{equ:v-coupling-part-2} that by means of a \textit{monotonicity-coupling}, $u_{T_1+t}(x) = u_t^{( u_{T_1}(\theta_1) )}(\theta_1)(x) \leq w_{T_1+t}(x) = u_t^{(w_{T_1})}(\theta_1)(x) = u_{T_1+t}(x) + v_{T_1+t}(x)$ for $0 \leq t \leq T_2$ with $v$ solving (cf. \eqref{equ:SPDE-v-monotonicity-coupling})
\eqn{
  \frac{\partial v}{\partial t} = \Delta v + \big( \theta_1 - v - 2u \big) v + \sqrt{v} \dot{W_2}, \qquad v(T_1)=w_{T_1}-u_{T_1}, \qquad T_1 \leq t \leq T_1+T_2.
}
By Corollary~\ref{COR:duality} we have
\eqan{
  &\EE\!\left[ 1-e^{-2 \big\langle v_{T_1+T_2}\big( \fatcdot + R_0\big( u_{T_1+T_2} \big) \big) , \varphi(\fatcdot) \big\rangle} \Bigmid \SF^u_{T_1+T_2} \right] 
  = \EE\!\left[ 1-e^{-2 \big\langle v_{T_1+T_2} , \varphi\big( \fatcdot - R_0\big( u_{T_1+T_2} \big) \big) \big\rangle} \Bigmid \SF^u_{T_1+T_2} \right] \\
  &= \EE\!\left[ 1-e^{-2 \big\langle v_{T_1} , z_{T_1+T_2} \big\rangle} \Bigmid \SF^u_{T_1+T_2} \right] \nn
}
where $z$ solves
\eqn{
  \frac{\partial z}{\partial t} = \Delta z + \big( \theta - z - 2u_{2T_1+T_2-\fatcdot} \big) z + \sqrt{z} \dot{W}_3, \qquad z(T_1)=\varphi\big( \fatcdot - R_0\big( u_{T_1+T_2} \big) \big), \qquad T_1 \leq t \leq T_1+T_2,
}
where $W_2, W_3$ are independent white noises. That is, conditional on $\SF_{T_1+T_2}^u$, $(z_t)_{T_1 \leq t \leq T_1+T_2}$ has law $\PP(z) \equiv Q^{z(T_1),0,2u_{2T_1+T_2-\fatcdot},1}$. By Theorem~\ref{THM:tribe}c), $\PP(z)$ and $Q^{z(T_1),0,0,0}$ are mutually absolutely continuous on $\SU_{R,T}$ for $R, T>0$ arbitrarily fixed. The latter is the law of a superprocess with non-zero initial condition and thus is non-zero with positive probability at time $T_1+T_2$. Similarly, $v_{T_1}$ is non-zero with positive probability. 

Now reason as in \eqref{equ:front-sth-3}--\eqref{equ:I1-step3} with the following modifications. Use a \textit{$\theta$-coupling} for $z$ to obtain \eqref{equ:front-sth-3}. Then investigate 
\eqn{
\lbeq{equ:I1-step1*}
  \EE\!\left[ \EE\!\left[ 1-e^{-2 \big\langle v_{T_1} , z_{T_1+T_2}( \underline{\theta} ) \big\rangle} \Bigmid \sigma\!\left( \SF^z, \SF^u_{T_1+T_2} \right) \right] \SF_{T_1}^u \right]
}
instead of the (outer) conditional expectation in \eqref{equ:I1-step1}. Only apply the lower bound of Lemma~\ref{LEM:a-first-estimate} in case $z_{T_1+T_2} \not\equiv 0$. This way we obtain a result in the spirit of \eqref{equ:bound-on-first-moment}. Here we do not require $z_{T_1+T_2}$ respectively $v_{T_1}$ to be non-zero a.s. as we do not have to multiply the (inner) conditional expectation from \eqref{equ:I1-step1*} with a front factor as in \eqref{equ:I1-step1}. 

Note in particular, that the final statement is phrased in terms of conditioning on $\SF_{T_1}^u$. Analogous reasoning to the proof of Lemma~\ref{LEM:ahead-how-much}, using that $L_0(\varphi) \in (0,1)$ and thus $\big\langle v_{T_1+T_2}\big( \fatcdot + R_0\!\left( u_{T_1+T_2} \right) \big) , \varphi(\fatcdot) \big\rangle = 0$ if $R_0\!\left( w_{T_1+T_2} \right) \leq R_0\!\left( u_{T_1+T_2} \right)$, completes the proof.
\end{proof}
%
%
\begin{lemma}
\label{LEM:summary-for-ahead}
Let $\varphi \in \SC_{tem}^+$ with $L_0(\varphi) \in (0,1)$ and $T_1, T_2>0$ be arbitrarily fixed and $\theta_c < \underline{\theta} \leq \theta_1 < \theta_2 \leq \overline{\theta}$. For all $\deltapp>0$ there exists $\eta_5=\eta_5\big( \varphi, \deltapp, T_1, T_2, \underline{\theta}, \overline{\theta} \big)>0$ small enough such that 
\eqn{
\lbeq{equ:ahead-against-function-summary}
  \int_{\SC_{tem}^+} \PP_f\bigg( \PP_f\Big( \Big\{ \big\langle v_{T_1+T_2}( \fatcdot + R_0( u_{T_1+T_2} ) ) , \varphi(\fatcdot) \big\rangle \geq \eta_5 \Big\} \Bigmid \SF_{T_1}^u \Big) \geq \eta_5 (\theta_2-\theta_1) \bigg) \big( \nu_T^{*,l}(\theta_1) \big)(df) \geq 1-\deltapp 
}
for all $T>1$.
\end{lemma}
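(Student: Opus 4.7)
The plan is to derive this lemma as a direct combination of Corollary~\ref{COR:in-front-at-least} and Lemma~\ref{LEM:ahead-against-function}, matching up the two bounds on appropriate intersections of events and then choosing $\eta_5$ small enough to absorb all constants.

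First, I would set $\deltap = \deltapp/3$ and apply Corollary~\ref{COR:in-front-at-least} with this $\deltap$ to obtain $\eta_1 = \eta_1(\deltap,T_1,T_2,\underline\theta,\overline\theta)>0$. For $f$ drawn from $\nu_T^{*,l}(\theta_1)$, let
\[
  A(f) = \Big\{ \PP_f\!\left(R_0(w_{T_1+T_2})>R_0(u_{T_1+T_2}) \bigmid \SF^u_{T_1}\right) \geq \eta_1(\theta_2-\theta_1) \Big\}.
\]
Then $\int \PP_f(A(f))\,(\nu_T^{*,l}(\theta_1))(df) \geq 1-2\deltap$. Next I would apply Lemma~\ref{LEM:ahead-against-function} with the same $\deltap$ and the given $\varphi$, obtaining $\eta_4>0$ such that, writing
\[
  B(f) = \Big\{ \PP_f\!\left(\langle v_{T_1+T_2}(\fatcdot+R_0(u_{T_1+T_2})),\varphi\rangle \geq \eta_4 \bigmid \SF^u_{T_1}\right) \geq (1-e^{-2\eta_4})^2 \, \PP_f\!\left(R_0(w_{T_1+T_2})>R_0(u_{T_1+T_2}) \bigmid \SF^u_{T_1}\right) \Big\},
\]
we have $\int \PP_f(B(f))\,(\nu_T^{*,l}(\theta_1))(df) \geq 1-\deltap$.

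The key combination step is that on the intersection $A(f)\cap B(f)$ the two conditional estimates can be multiplied to yield
\[
  \PP_f\!\left(\langle v_{T_1+T_2}(\fatcdot+R_0(u_{T_1+T_2})),\varphi\rangle \geq \eta_4 \bigmid \SF^u_{T_1}\right) \geq (1-e^{-2\eta_4})^2\,\eta_1\,(\theta_2-\theta_1).
\]
I would then set $\eta_5 = \min\{\eta_4,\,(1-e^{-2\eta_4})^2\eta_1\}$. Because $\eta_5 \leq \eta_4$, enlarging the event from $\{\geq\eta_4\}$ to $\{\geq\eta_5\}$ only increases the conditional probability, while $\eta_5 \leq (1-e^{-2\eta_4})^2\eta_1$ guarantees the required lower bound $\eta_5(\theta_2-\theta_1)$. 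Consequently $A(f)\cap B(f)$ is contained in the event appearing inside the outer $\PP_f$ of \eqref{equ:ahead-against-function-summary}.

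Finally, by a union-bound,
\[
  \int \PP_f(A(f)\cap B(f))\,(\nu_T^{*,l}(\theta_1))(df) \geq \int \PP_f(A(f))\,(\nu_T^{*,l}(\theta_1))(df) + \int \PP_f(B(f))\,(\nu_T^{*,l}(\theta_1))(df) - 1 \geq 1-3\deltap = 1-\deltapp,
\]
which yields \eqref{equ:ahead-against-function-summary}. There is no genuine obstacle here beyond correctly tracking how the conditional probabilities chain together; the work has already been done in the two preceding lemmas, and the only subtlety is choosing $\eta_5$ so that it is simultaneously $\leq \eta_4$ (to preserve the event) and $\leq (1-e^{-2\eta_4})^2\eta_1$ (to supply the lower bound).
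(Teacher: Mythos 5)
Your proposal is correct and follows essentially the same route as the paper: combine Corollary~\ref{COR:in-front-at-least} and Lemma~\ref{LEM:ahead-against-function} on a common event, chain the conditional bounds, and choose $\eta_5 = \min\{\eta_4,(1-e^{-2\eta_4})^2\eta_1\}$. One small difference: you observe that the integrals $\int \PP_f(\cdot)\,\nu_T^{*,l}(df)$ in the cited statements are exactly the $\nu_T^{*,l}\otimes\PP_f$-measures of the relevant events, so a single union bound gives $1-3\deltap$ directly. The paper instead first passes to ``$\nu$-measure $\geq 1-2\deltap$, then $\PP_f\geq 1-2\deltap$'' and squares, arriving at the weaker $1-6\deltap$; your bookkeeping is cleaner, though both yield the claim after the obvious choice of $\deltap$.
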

%
%
\begin{proof}
By Corollary~\ref{COR:in-front-at-least}, with $\nu_T^{*,l}(\theta_1)(df)$-measure of at least $1-2\delta'$,
\eqn{
  \PP_f\bigg( \PP_f\!\left( R_0\big( w_{T_1+T_2} \big) > R_0\big( u_{T_1+T_2} \big)  \bigmid \SF^u_{T_1} \right) \geq \eta_1 (\theta_2-\theta_1) \bigg) 
  \geq 1-2\deltap.
}
Hence, with $\nu_T^{*,l}(\theta_1)(df) \PP_f(d\omega)$-measure of at least $(1-2\deltap)^2 \geq 1-4\deltap$,
\eqn{
\lbeq{equ:cor-plugin}
  \PP_f\!\left( R_0\big( w_{T_1+T_2} \big) > R_0\big( u_{T_1+T_2} \big)  \bigmid \SF^u_{T_1} \right)\!(\omega) \geq \eta_1 (\theta_2-\theta_1).
}
Also, by Lemma~\ref{LEM:ahead-against-function}, with $\nu_T^{*,l}(\theta_1)(df)$-measure of at least $1-\delta'$,
\eqn{
  \EE_f\Bigg[ \1_{ \bigg\{  \PP_f\Big( \big\{ \big\langle v_{T_1+T_2}\big( \fatcdot + R_0\!\left( u_{T_1+T_2} \right) \big) , \varphi(\fatcdot) \big\rangle \geq \eta_4 \big\} \Bigmid \SF_{T_1}^u \Big) \geq \big(1-e^{-2\eta_4} \big)^2 \PP_f\Big( \big\{ R_0\!\left( w_{T_1+T_2} \right) > R_0\!\left( u_{T_1+T_2} \right) \big\} \Bigmid \SF_{T_1}^u \Big) \bigg\} } \Bigg] 
  \geq 1-\deltap.
}
Hence, with $\nu_T^{*,l}(\theta_1)(df) \PP_f(d\omega)$-measure of at least $(1-\deltap)^2 \geq 1-2\deltap$, 
\eqan{
  & \PP_f\big( \big\{ \langle v_{T_1+T_2}( \fatcdot + R_0( u_{T_1+T_2} ) ) , \varphi(\fatcdot) \rangle \geq \eta_4 \big\} \bigmid \SF_{T_1}^u \big)\!(\omega) \\
  & \geq \big(1-e^{-2\eta_4} \big)^2 \PP_f\big( \big\{ R_0( w_{T_1+T_2} ) > R_0( u_{T_1+T_2} ) \big\} \bigmid \SF_{T_1}^u \big)\!(\omega). \nn
}
Together with \eqref{equ:cor-plugin} this yields that with $\nu_T^{*,l}(\theta_1)(df) \PP_f(d\omega)$-measure of at least $1-6\deltap$,
\eqn{
  \PP_f\big( \big\{ \langle v_{T_1+T_2}( \fatcdot + R_0( u_{T_1+T_2} ) ) , \varphi(\fatcdot) \rangle \geq \eta_4 \big\} \bigmid \SF_{T_1}^u \big)(\omega) 
  \geq \big(1-e^{-2\eta_4} \big)^2 \eta_1 (\theta_2-\theta_1). 
}
The claim now follows.
\end{proof}
%
%

\textit{Proof of Proposition~\ref{PRO:stopping-time-S}.}
Let $\varphi \in \SC_{tem}^+$ with $L_0(\varphi) \in (0,1), T_1, T_2,\xi>0$ and $\theta_{m-1},\theta_m$ be arbitrarily fixed. For ease of notation, write $\theta_1, \theta_2$ instead of $\theta_{m-1}, \theta_m$ and set $\epsilon=\theta_2-\theta_1$. By Lemma~\ref{LEM:summary-for-ahead} and the definition of $\nu_T^{*,l}(\theta_1)$ (cf. \eqref{equ:def-nu-star-lr}), for all $\deltapp>0$ there exists $\eta_6>0$ small enough and $T_0>0$ big enough, all constants only dependent on $\varphi, \deltapp,T_1,T_2,\underline{\theta},\overline{\theta}$, such that 
\eqn{
  \frac{1}{T} \int_0^T \PP\Bigg( \bigg\{ \PP_{u_s^{*,l}(\fatcdot + R_0(s))}\Big( \big\langle v_{T_1+T_2}\big( \fatcdot + R_0\!\left( u_{T_1+T_2} \right) \big) , \varphi(\fatcdot) \big\rangle \geq \eta_6 \Big) \geq \eta_6 (\theta_2-\theta_1) \bigg\} \Bigg) ds
  \geq 1-\deltapp 
}
for all $T \geq T_0$ and $\theta_c < \underline{\theta} \leq \theta_1 < \theta_2 \leq \overline{\theta}$. Hence, using Fubini-Tonelli's theorem, there exists a set $\Omega'$ with $\PP(\Omega') \geq 1-\deltapp$, such that for all $\omega\in\Omega'$,
\eqn{
\lbeq{equ:on-T-average}
  \frac{1}{T} \int_0^T \1_{\Big\{ \PP_{u_s^{*,l}(\fatcdot + R_0(s))}\big( \big\langle v_{T_1+T_2}\big( \fatcdot + R_0\!\left( u_{T_1+T_2} \right) \big) , \varphi(\fatcdot) \big\rangle \geq \eta_6 \big) \geq \eta_6 \epsilon \Big\}} ds
  \geq 1-\deltapp. 
}
For all $\omega\in\Omega'$, there exists 
\eqn{
  s_1=s_1(\omega) \equiv \inf\!\bigg\{ s \geq \xi: \PP_{u_s^{*,l}(\fatcdot + R_0(s))}\Big( \big\langle v_{T_1+T_2}\big( \fatcdot + R_0\!\left( u_{T_1+T_2} \right) \big) , \varphi(\fatcdot) \big\rangle \geq \eta_6 \Big) \geq \eta_6 \epsilon \bigg\}
}
satisfying $s_1 \leq T/2-\xi-T_1-T_2$. In case $\Big\langle v^{\big( u_{s_1}^{*,l}(\fatcdot + R_0(s_1)) \big)}_{T_1+T_2}\Big( \fatcdot + R_0\Big( u^{\big( u_{s_1}^{*,l}(\fatcdot + R_0(s_1)) \big)}_{T_1+T_2} \Big) \Big) , \varphi(\fatcdot) \Big\rangle \geq \eta_6$, set $S = s_1+T_1+T_2$ and call this a success. In case of no success, by \eqref{equ:on-T-average}, there exists
\eqn{
  s_2=s_2(\omega) \equiv \inf\!\bigg\{ s \geq s_1+T_1+T_2: \PP_{u_s^{*,l}(\fatcdot + R_0(s))}\Big( \big\langle v_{T_1+T_2}\big( \fatcdot + R_0\!\left( u_{T_1+T_2} \right) \big) , \varphi(\fatcdot) \big\rangle \geq \eta_6 \Big) \geq \eta_6 \epsilon \bigg\}
}
satisfying $s_2 \leq T/2-\xi-T_1-T_2$. Continue as above with $s_2$ instead of $s_1$. By choosing $C>0$ small enough, we can repeat this procedure $\lceil CT \rceil$ times. If the above procedure fails, which can only happen if $\omega \not\in \Omega'$ or $\omega \in \Omega'$ but there was no success in $\lceil CT \rceil$ trials, set $S = T/2-\xi$.

Note that for $\eta,\deltapp \in (0,1)$ arbitrarily fixed, $\max_{x \in [0,\deltapp]} (x+(1-x)\eta) = \deltapp + (1-\deltapp)\eta$. As a result, using the strong Markov property of the family of laws $\PP_f, f \in \SC_{tem}^+$, we get
\eqn{
  \PP\!\left( \nexists S: \xi \leq S \leq T/2-\xi: \, \big\langle \Delta_S^{*,l}\big( \theta_1, \theta_2 \big) , \varphi \big\rangle \geq \eta_6 \right) 
  \leq \deltapp + (1-\deltapp) \left( 1 - \eta_6 \epsilon \right)^{\lceil CT \rceil}.
}
Recall from \eqref{equ:def-delta-epsilon} that $\epsilon = \delta/T$ to conclude that 
\eqn{
  \PP\!\left( \exists S: \xi \leq S \leq T/2-\xi: \, \big\langle \Delta_S^{*,l}\big( \theta_1, \theta_2 \big) , \varphi \big\rangle \geq \eta_6 \right) 
  \geq (1-\deltapp) \left( 1 - \left( 1 - \eta_6 \delta / T \right)^{\lceil CT \rceil} \right).
}
For $T \rightarrow \infty$ this bound approaches $(1-\deltapp) \big( 1 - \exp\big( - C \eta_6 \delta \big) \big) > 0$. This completes the proof of the claim. \qed
%
%
\section{The speed of the right marker}
\label{SEC:speed-vg}
%
%

Note the construction of travelling wave solutions from Theorem~\ref{THM:analogue-16} respectively Remark~\ref{RMK:analogue-16} of the appendix. Let $\nu_T^{*,l} \in \CP(\SC_{tem}^+)$ be given as in \eqref{equ:def-nu-star-lr} and denote any arbitrary subsequential limit of the tight set $\{ \nu_T^{*,l}: T \geq 1 \}$ by $\nu=\nu^{*,l}$ in what follows. This limit yields a travelling wave solution to \eqref{equ:SPDE}. By Proposition~\ref{PRO:analogue-17}, $\nu^{*,l}(\{ f: R_0(f) = 0 \})=1$ and $\PP_{\nu^{*,l}}(u(t) \not \equiv 0)=1$ for all $t \geq 0$. Denote with $\nu^{(u_0)}$ any subsequential limit that is obtained as in Remark~\ref{RMK:analogue-16} for $u_0 \in \mittl$ with analogous properties.

Recall from \cite[Proposition~4.1]{T1996} that for $\theta>\theta_c$ and $(u_t^{(\nu)})_{t \geq 0}$ a travelling wave solution to \eqref{equ:SPDE},
\eqn{
\lbeq{equ:Tribe-tv-speed}
  R_0\big( u_t^{(\nu)} \big)/t \rightarrow A^{(\nu)}=A^{(\nu)}(\omega) \in [0,2\theta^{1/2}] \mbox{ almost surely as } t \rightarrow \infty
}
holds. This convergence also holds in $\SL^1$ if we replace $R_0(u(t))$ by $0 \vee R_0(u(t))$ as we see below. 

In this section we show that the limiting speed of the dominating right marker $R_0\big( u_t^{*,l} \big)$ and that of any travelling wave solution $\nu^{*,l}$ coincide. Moreover, the speed is deterministic, namely it equals $B=B(\theta)$ from Lemma~\ref{LEM:alpha_lim}. We extend this result to right front markers of solutions to \eqref{equ:SPDE} with initial conditions $\psi$ satisfying $\psi \in \mittl^R$, where the convergence is now in probability and $\SL^1$. For the right front marker of a corresponding travelling wave solution we obtain almost sure convergence to $A^{(\nu^{(\psi)})}=B$.
%
%
\begin{lemma}
\label{LEM:wave-cvg-L1}
Let $\theta>\theta_c$. Then, for any $(u_t^{(\nu)})_{t \geq 0}$ a travelling wave solution to \eqref{equ:SPDE}, 
\eqn{
\lbeq{equ:wave-speed-cvg-L1}
  \big( 0 \vee R_0\big( u_t^{(\nu)} \big) \big)/t \rightarrow A^{(\nu)}=A^{(\nu)}(\omega) \in \big[ 0, 2\theta^{1/2} \big] \quad \mbox{ as } t \rightarrow \infty \mbox{ in } \SL^1. 
}
Moreover, $\EE\big[ A^{(\nu)} \big] \leq B$.
\end{lemma}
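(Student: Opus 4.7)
The plan is to combine the almost sure convergence already known from Tribe's Proposition~4.1 (i.e. \eqref{equ:Tribe-tv-speed}) with uniform integrability of the normalized positive part of the right marker. The bound $\EE[A^{(\nu)}] \leq B$ then follows by comparing $R_0(u_t^{(\nu)})$ with $R_0(u_t^{*,l})$ via the monotonicity coupling and using Lemma~\ref{LEM:alpha_lim} together with the tail estimate on the negative part of $R_0(u_t^{*,l})$.

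First, since $\nu=\nu^{(\nu)}$ is a travelling wave solution with $\nu(\{f: R_0(f)=0\})=1$ (cf.\ Proposition~\ref{PRO:analogue-17}), I can apply \eqref{equ:coupling-R-left} conditionally on the initial condition $f$ drawn from $\nu$: this yields a coupling of $(u_t^{(\nu)})_{t\geq 0}$ with $(u_t^{*,l})_{t\geq 0}$ such that
\[
  0 \vee R_0\big(u_t^{(\nu)}\big) \leq 0 \vee R_0\big(u_t^{*,l}\big) \qquad \text{for all } t \geq 0 \text{ a.s.}
\]
By Lemma~\ref{LEM:marker-pos-part-second-moment}, $\EE[((0 \vee R_0(u_t^{*,l}))/t)^2] \leq C(\overline{\theta})$ uniformly in $t \geq 1$, so $\{(0 \vee R_0(u_t^{*,l}))/t\}_{t \geq 1}$ is bounded in $\SL^2$ and hence uniformly integrable. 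By the pointwise domination above, the family $\{(0 \vee R_0(u_t^{(\nu)}))/t\}_{t \geq 1}$ is also uniformly integrable. Combined with the almost sure convergence from \eqref{equ:Tribe-tv-speed}, this gives the $\SL^1$-convergence claimed in \eqref{equ:wave-speed-cvg-L1}.

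Next, to establish $\EE[A^{(\nu)}] \leq B$, by the $\SL^1$-convergence just proved and the domination,
\[
  \EE\big[A^{(\nu)}\big] = \lim_{t \to \infty} \EE\!\left[ \frac{0 \vee R_0(u_t^{(\nu)})}{t} \right] \leq \liminf_{t \to \infty} \EE\!\left[ \frac{0 \vee R_0(u_t^{*,l})}{t} \right].
\]
It therefore suffices to show that the right-hand side equals $B$. Writing $(0 \vee R_0(u_t^{*,l}))/t = R_0(u_t^{*,l})/t + (0 \vee (-R_0(u_t^{*,l})))/t$ and invoking Lemma~\ref{LEM:alpha_lim}, it remains to prove
\[
  \lim_{t \to \infty} \EE\!\left[ \frac{0 \vee (-R_0(u_t^{*,l}))}{t} \right] = 0.
\]
From the proof of Lemma~\ref{LEM:alpha_nonneg} we have $\liminf_{t \to \infty} R_0(u_t^{*,l})/t \geq 0$ almost surely, so $(0 \vee (-R_0(u_t^{*,l})))/t \to 0$ a.s. The tail estimate \eqref{equ:exp-ustarl-bd-below} of Corollary~\ref{COR:exp-ustarl-bd-below} provides uniform integrability of this family (for any $\varepsilon>0$, choose $M$ with $C_1 e^{-C_2 M}<\varepsilon$ and split at the level $Mt$, bounding the small-marker contribution by $M$ and the tail by $\varepsilon$). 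Vitali's convergence theorem then yields the desired vanishing of the expectation, completing the proof.

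The main obstacle I anticipate is verifying the monotonicity coupling step cleanly under a random initial condition drawn from $\nu$: one must invoke \eqref{equ:coupling-R-left} on a $\nu$-full-measure set where $R_0(f)=0$ and then integrate. Once that domination is in place, however, the remainder is a standard Vitali-type argument combining the $\SL^2$-bound from Lemma~\ref{LEM:marker-pos-part-second-moment} with the exponential tail estimate of Corollary~\ref{COR:exp-ustarl-bd-below}.
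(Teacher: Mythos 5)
Your proof is correct, and the ingredients are the same as in the paper's proof (the a.s.\ convergence from \eqref{equ:Tribe-tv-speed}, the domination coupling of $R_0(u_t^{(\nu)})$ by $R_0(u_t^{*,l})$, the second-moment bound of Lemma~\ref{LEM:marker-pos-part-second-moment}, and the exponential tail estimate of Corollary~\ref{COR:exp-ustarl-bd-below}). The packaging differs in two respects. For the $\SL^1$-convergence, the paper does an explicit truncation at level $Nt$, shows $\SL^1$-convergence of the truncated quantity by bounded convergence, and then bounds the tails $\EE[0 \vee R_0(u_t^{(\nu)})] - \EE[(0 \vee R_0(u_t^{(\nu)})) \wedge (Nt)]$ using $\PP(R_0(u_t^{(\nu)})/t \geq m) \leq Cm^{-2}$; you instead note that the $\SL^2$-bound yields uniform integrability and invoke Vitali directly. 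These are equivalent, though yours is more compact. For $\EE[A^{(\nu)}] \leq B$, the paper uses the truncated $\SL^1$-convergence and appeals to the identity $\lim_{T} \EE[0 \vee R_0(u_T^{*,l})]/T = B$, which is in fact only established in the unnumbered block that follows the lemma in the text (a minor out-of-order dependency; its proof relies on \eqref{equ:Tribe-tv-speed}, Corollary~\ref{COR:exp-ustarl-bd-below} and the coupling, so it is not circular, but it is used before it is proved). You instead decompose $0 \vee x = x + (0 \vee (-x))$ and show the negative-part contribution vanishes by combining the a.s.\ statement $\liminf_t R_0(u_t^{*,l})/t \geq 0$ from the proof of Lemma~\ref{LEM:alpha_nonneg} with the uniform integrability supplied by \eqref{equ:exp-ustarl-bd-below}; this proves that same identity inline and gives a self-contained argument. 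The one point worth stating explicitly is that the domination coupling should be invoked after first shifting so that $R_0(\nu)=0$ (or $\leq 0$) a.s., which costs nothing since shifting by a constant does not affect $R_0/t$ in the limit; this is what \cite[(2.33)]{K2017} encapsulates and what the paper tacitly assumes.
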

%
%
\begin{proof}
By \eqref{equ:Tribe-tv-speed}, for all $N \in \NN$, $\big( \big( 0 \vee R_0\big( u_t^{(\nu)} \big) \big) \wedge (Nt) \big)/t \rightarrow A^{(\nu)} \wedge N$ almost surely for $t \rightarrow \infty$. By dominated convergence, 
\eqn{
\lbeq{equ:wave-speed-cvg-L1-bounded}
  \big( \big( 0 \vee R_0\big( u_t^{(\nu)} \big) \big) \wedge (Nt) \big)/t \rightarrow A^{(\nu)} \wedge N \quad \mbox{ as } t \rightarrow \infty \mbox{ in } \SL^1.
}
By \cite[(2.33)]{K2017} and Lemma~\ref{LEM:marker-pos-part-second-moment}, for $m>0$ arbitrary,
\eqn{
\lbeq{equ:L2-bd-wave}
  \PP\!\left( R_0\big( u_t^{(\nu)} \big) / t \geq m \right)
  \leq C m^{-2}
}
uniformly in $t \geq 1$. In combination with \eqref{equ:Tribe-tv-speed} this gives $\PP(A^{(\nu)} \geq 2m) \leq Cm^{-2}$. Thus $A^{(\nu)} \in \SL^1$. For $N \in \NN$ arbitrary, we get
\eqn{
  \frac{ \EE\!\left[ \big( 0 \vee R_0\big( u_t^{(\nu)} \big) \big) \wedge (Nt) \right] }{t}
  \leq \frac{ \EE\!\left[ 0 \vee R_0\big( u_t^{(\nu)} \big) \right] }{t}
  \leq \frac{ \EE\!\left[ \big( 0 \vee R_0\big( u_t^{(\nu)} \big) \big) \wedge (Nt) \right] }{t} + \int_N^\infty Cm^{-2} dm
}
and similarly, $| \EE\big[ A^{(\nu)} \wedge N \big]-\EE\big[ A^{(\nu)} \big] | \leq 2C/N$. Hence,
\eqn{
  \lim_{t \rightarrow \infty} \EE\!\left[ \left| \frac{ 0 \vee R_0\big( u_t^{(\nu)} \big) }{t} - A^{(\nu)} \right| \right]
  \leq \lim_{t \rightarrow \infty} \EE\!\left[ \left| \frac{\big( 0 \vee R_0\big( u_t^{(\nu)} \big) \big) \wedge (Nt) }{t} - A^{(\nu)} \wedge N \right| \right] + C/N + 2C/N = 3C/N
}
and the first claim follows after taking $N \rightarrow \infty$.

Moreover, for all $N \in \NN$, using once more \cite[(2.33)]{K2017} and the $\SL^1$-convergence of the first claim,
\eqan{
  \EE\big[ A^{(\nu)} \big]
  \leq \EE\big[ A^{(\nu)} \wedge N \big] + 2C/N
  &= \lim_{T \rightarrow \infty} \frac{ \EE\!\left[ \big( 0 \vee R_0\big( u_T^{(\nu)} \big) \big) \wedge (NT) \right] }{T} + 2C/N \\
  &\leq \lim_{T \rightarrow \infty} \frac{ \EE\!\left[ 0 \vee R_0\big( u_T^{*,l} \big) \right] }{T}  + 2C/N
  \leq B + 2C/N. \nn
}
Take $N \rightarrow \infty$ to conclude that $\EE\big[ A^{(\nu)} \big] \leq B$.
\end{proof}
%
%

Recall from Lemma~\ref{LEM:alpha_lim} and Corollary~\ref{COR:Bg0} that 
\eqn{
\lbeq{equ:def-B}
  \lim_{T \rightarrow \infty} \frac{ \EE\!\left[ R_0\big( u_T^{*,l} \big) \right] }{T}
  = \inf_{T \geq 1} \frac{ \EE\!\left[ R_0\big( u_T^{*,l} \big) \right] }{T} 
  \equiv B
  \in (0,\infty).
}
Then
\eqn{
  B = \lim_{T \rightarrow \infty} \frac{ \EE\!\left[ 0 \vee R_0\big( u_T^{*,l} \big) \right] }{T}
}
holds as well. Indeed, let $(u_t^{(\nu)})_{t \geq 0}$ be an arbitrary travelling wave solution with $R_0(\nu)=0$ almost surely. By Corollary~\ref{COR:exp-ustarl-bd-below} and \cite[(2.33)]{K2017}, \eqref{equ:Tribe-tv-speed}, for $M \in \NN$ arbitrary,
\eqan{
  \lim_{T \rightarrow \infty} \EE\!\left[ 0 \vee \big(-R_0\big( u_T^{*,l} \big) \big) \right]/T
  &\leq \lim_{T \rightarrow \infty} \EE\!\left[ \big( 0 \vee \big(-R_0\big( u_T^{*,l} \big) \big) \big) \wedge (MT) \right]/T + C_1e^{-C_2 M} \\
  &\leq \lim_{T \rightarrow \infty} \EE\!\left[ \big( 0 \vee \big(-R_0\big( u_T^{(\nu)} \big) \big) \big) \wedge (MT) \right]/T + C_1e^{-C_2 M} \nn\\
  &= \EE[0 \vee (-A^{(\nu)} \wedge M)] + C_1e^{-C_2 M} 
  = C_1e^{-C_2 M}. \nn
}
Take $M \rightarrow \infty$ and the claim follows.
%
%
\begin{lemma}
\label{LEM:EA-B}
Let $\theta>\theta_c$. With the notations from above, $\EE\big[ A^{(\nu^{*,l})} \big]=B$ holds true. Moreover, if $\psi \in \mittl^R$ satisfies $R_0\big( u_T^{(\psi)} \big)/T \rightarrow B$ for $T \rightarrow \infty$ in $\SL^1$, then $\EE\big[ A^{(\nu^{(\psi)})} \big]=B$ holds as well.
\end{lemma}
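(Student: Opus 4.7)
The inequality $\EE[A^{(\nu^{*,l})}] \leq B$ is already in Lemma~\ref{LEM:wave-cvg-L1}, so the task is the matching lower bound, together with pinning down the constant coming from the travelling-wave structure. My plan has three steps.

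First, I would exploit the travelling-wave property of $\nu^{*,l}$: by definition $\SL(u_t^{(\nu^{*,l})}(\fatcdot + R_0(u_t^{(\nu^{*,l})}))) = \nu^{*,l}$ for every $t \geq 0$. Combined with the strong Markov property (Theorem~\ref{THM:tribe}(b)) and the shift invariance of \eqref{equ:SPDE}, this yields the additivity
\[
  \EE\!\left[ R_0\big( u_{s+t}^{(\nu^{*,l})} \big) \right] = \EE\!\left[ R_0\big( u_s^{(\nu^{*,l})} \big) \right] + \EE\!\left[ R_0\big( u_t^{(\nu^{*,l})} \big) \right], \qquad s,t \geq 0,
\]
once I verify integrability of $R_0(u_1^{(\nu^{*,l})})$. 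The latter follows from the uniform tightness \eqref{equ:uniform-tightness}--\eqref{equ:inf-comp}, which shows $\nu^{*,l}$ concentrates on $M(d_0, m_0) \subset \mittl^R$ up to mass $\deltap$, combined with the bounds of Corollary~\ref{COR:e-bd-small-t}. This forces $\EE[R_0(u_T^{(\nu^{*,l})})] = c T$ with $c := \EE[R_0(u_1^{(\nu^{*,l})})]$.

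Second, I would strengthen the $\SL^1$-convergence of $(0 \vee R_0(u_T^{(\nu^{*,l})}))/T \to A^{(\nu^{*,l})}$ from Lemma~\ref{LEM:wave-cvg-L1} to full $\SL^1$-convergence of $R_0(u_T^{(\nu^{*,l})})/T$. On the event that the initial condition belongs to $M(d_0, m_0) \subset \mittl$, the exponential tail of Lemma~\ref{LEM:speed-bound-mittl} applies uniformly, and on the small complement (mass at most $\deltap$) the bounds from Corollary~\ref{COR:e-bd} handle the contribution. This delivers uniform integrability of $(0 \vee(-R_0(u_T^{(\nu^{*,l})})))/T$; together with the a.s.\ convergence to the nonnegative $A^{(\nu^{*,l})}$, one obtains $\SL^1$-convergence of $R_0/T$, hence $c = \EE[A^{(\nu^{*,l})}]$.

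Third comes the identification $c = B$. Applying the same Markov/shift-invariance to $u^{*,l}$ itself gives
\[
  \EE\!\left[ R_0\big( u_t^{(\nu_s^{*,l})} \big) \right] = \EE\!\left[ R_0\big( u_{s+t}^{*,l} \big) \right] - \EE\!\left[ R_0\big( u_s^{*,l} \big) \right].
\]
Since $\EE[R_0(u_T^{*,l})]/T \downarrow B$ by Lemma~\ref{LEM:alpha_lim}, we have $\EE[R_0(u_T^{*,l})] \geq BT$, so
\[
  \EE\!\left[ R_0\big( u_t^{(\nu_s^{*,l})} \big) \right]/t \geq B + \big( Bs - \EE[R_0(u_s^{*,l})] \big)/t.
\]
Along a subsequence $s_n \to \infty$ with $\nu_{s_n}^{*,l} \to \nu^{*,l}$ weakly, the same uniform-integrability argument from the previous step gives continuity of the functional $\nu \mapsto \EE_\nu[R_0(u_t)]$, whence $\EE[R_0(u_t^{(\nu_{s_n}^{*,l})})] \to ct$ for each fixed $t$. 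Combining the pointwise lower bound above with an averaging argument in $s$ (where the Cesaro mean $(1/S)\int_0^S \EE[R_0(u_t^{(\nu_s^{*,l})})] ds = (1/S)[\int_S^{S+t} - \int_0^t] \EE[R_0(u_r^{*,l})] dr$ tends to $Bt$ by monotonicity of $\eta_r := \EE[R_0(u_r^{*,l})]/r - B \downarrow 0$) and the already-established ceiling $c \leq B$, the only way both bounds can be compatible is $c = B$, yielding $\EE[A^{(\nu^{*,l})}] = B$. The second assertion, for $\psi \in \mittl^R$ with $R_0(u_T^{(\psi)})/T \to B$ in $\SL^1$, runs by the same template, with the hypothesis supplying the role of Lemma~\ref{LEM:alpha_lim}: writing $\EE[R_0(u_t^{(\nu_s^{(\psi)})})] = \EE[R_0(u_{s+t}^{(\psi)})] - \EE[R_0(u_s^{(\psi)})]$ and passing to the subsequential limit $\nu^{(\psi)}$ gives $\EE[A^{(\nu^{(\psi)})}] = B$.

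The principal obstacle is the continuity/uniform-integrability statement that lets one pass the weak limit $\nu_{s_n}^{*,l} \to \nu^{*,l}$ through $\nu \mapsto \EE_\nu[R_0(u_t)]$ while controlling both tails uniformly in $s$; once this is secured, the combination of the additivity-based upper bound $c \leq B$ with the subadditivity-based averaging $\to B$ closes the identification.
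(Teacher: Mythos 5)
There is a genuine gap, concentrated in your first two steps and in the containment claim you lean on repeatedly.

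\textbf{The containment $M(d_0,m_0)\subset\mittl^R$ is false.} A function $f\in M(d_0,m_0)$ only needs to satisfy $f\geq m_0\1_{[l_0,r_0]}$ for some interval of length $d_0$ inside $[-1/2,0]$; in particular compactly supported functions qualify. But $\mittl$ requires $f\geq\epsilon H_0(\fatcdot-x_0)$ for some $\epsilon,x_0$, i.e.\ $f$ must be bounded below by $\epsilon$ on the entire ray $(-\infty,x_0-1]$. So $M(d_0,m_0)\not\subset\mittl$, and the exponential left-tail bound of Lemma~\ref{LEM:speed-bound-mittl} (whose constants depend on the height $\epsilon$ of the dominated Heaviside) is simply not available on the set you use. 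Even setting that aside, Corollaries~\ref{COR:e-bd}--\ref{COR:e-bd-small-t} only give bounds $C(u_0)$ depending on the initial condition, not uniform over a tight set, so ``applies uniformly'' does not follow. This breaks both the integrability of $R_0(u_1^{(\nu^{*,l})})$ needed for your additivity identity $\EE[R_0(u_T^{(\nu^{*,l})})]=cT$, and the uniform integrability needed to upgrade from $(0\vee R_0)/T\to A^{(\nu^{*,l})}$ (which Lemma~\ref{LEM:wave-cvg-L1} provides) to full $\SL^1$ convergence of $R_0/T$. The only tail control available for the law $\nu^{*,l}$ that is uniform is the Markov-type bound $\PP_{\nu_T^{*,l}}(|R_0(s)|\geq a)\leq C/a$ of Lemma~\ref{LEM:analogue-49}, which is not integrable.

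The paper sidesteps precisely this obstruction. Instead of working with $R_0$ itself (and hence needing integrability that is never established), it replaces $R_0$ by the bounded \emph{continuous} truncated marker $R_{m_0}^N$ of \cite[(5.18)--(5.20)]{K2017}, for which $|R_{m_0}^N|\leq N$ and $R_{m_0}^N\leq R_0$. Boundedness and continuity allow passing the weak limit $\nu_{T_n}^{*,l}\Rightarrow\nu^{*,l}$ through $\EE_\nu[0\vee R_{m_0}^N(u_T)]$ without any uniform integrability argument, and the telescoping Cesaro computation then yields the main term $I_1(T)=TB$ (this part matches, in spirit, your step~3). The truncation error $E_1(T)$ is then controlled not by an exponential tail but by the second-moment bound of Lemma~\ref{LEM:marker-pos-part-second-moment} (for the overshoot above $N$) and by the ``small mass at the front'' estimate Lemma~\ref{LEM:A-small-mass-at-front} (for the event $R_{m_0}^N\ll R_0$). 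Your proposal has the right overall shape --- a Cesaro/telescoping identification of $B$ combined with upper and lower matching --- but it runs through the un-truncated marker and therefore needs tail control that the stated lemmas do not supply; the falsity of $M(d_0,m_0)\subset\mittl^R$ is the concrete place where it collapses.
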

%
%
\begin{remark}
Note that we will show at the end of this section (Corollary~\ref{COR:speed-mittl}) that every $\psi \in \mittl$ indeed satisfies the above assumption.
\end{remark}
%
%
\begin{proof}
Let $\nu=\nu^{*,l}$ or $\nu=\nu^{(\psi)}$ with $\psi$ as above. By Lemma~\ref{LEM:wave-cvg-L1}, $\EE\big[ A^{(\nu)} \big] \leq B$. It remains to show that $\EE\big[ A^{(\nu)} \big] \geq B$. In what follows, we provide a proof in case $\nu=\nu^{*,l}$. The proof in case $\nu=\nu^{(\psi)}$ is analogous except for the changes indicated below.

Fix $T \geq 1$ arbitrary. Review the definitions and comments in \cite[(5.18)--(5.20)]{K2017}. Note in particular that for fixed $N \in \NN$ and $m_0 > 0$, $R_{m_0}^N$ is a continuous function on $\SC_{tem}^+$ with $|R_{m_0}^N(f)| \leq N$ and $R^{m_0,N}(f) \leq R_{m_0}^N(f) \leq R_0(f)$ on $\{ R_0(f) \geq -N \}$ and $R_{m_0}^N(f)=-N$ on $\{ R_0(f) < -N \}$. Hence, for $m_0=m_0(T), N=N(T)$, by Lemma~\ref{LEM:wave-cvg-L1},
\eqn{
  \EE\big[ A^{(\nu^{*,l})} \big]
  = \lim_{T \rightarrow \infty} \EE_{\nu^{*,l}}\!\left[ 0 \vee R_0(u_T) \right] / T
  \geq \lim_{T \rightarrow \infty} \EE_{\nu^{*,l}}\!\left[ 0 \vee R_{m_0}^N(u_T) \right] / T.
}
By the definition of tightness respectively weak convergence and the continuity of $f \mapsto \PP_f$, \eqref{equ:def-nu-star-lr} yields for $\nu_{T_n}^{*,l} \Rightarrow \nu^{*,l}$ ($n \rightarrow \infty$),
\eqan{
\lbeq{equ:speed-orig-versus-wave}
  & \EE_{\nu^{*,l}}\!\left[ 0 \vee R_{m_0}^N(u_T) \right] 
  = \lim_{n \rightarrow \infty} \frac{1}{T_n} \int_0^{T_n} \EE\!\left[ 0 \vee R_{m_0}^N\big(u_{s+T}^{*,l}\big(\fatcdot + R_0\big(u_s^{*,l}\big) \big) \big) \right] ds \\
  &\geq \liminf_{n \rightarrow \infty} \left\{ \frac{1}{T_n} \int_0^{T_n} \EE\!\left[ R_0\big(u_{s+T}^{*,l}\big(\fatcdot + R_0\big(u_s^{*,l}\big) \big) \big) \right] ds \right. \nn\\
  &\qquad \left. - \frac{1}{T_n} \int_0^{T_n} \EE\!\left[ 0 \vee \big\{ R_0\big(u_{s+T}^{*,l}\big(\fatcdot + R_0\big(u_s^{*,l}\big) \big) \big) - R_{m_0}^N\big(u_{s+T}^{*,l}\big(\fatcdot + R_0\big(u_s^{*,l}\big) \big) \big) \big\} \right] ds \right\} \nn\\
  &\geq \liminf_{n \rightarrow \infty} \left\{ \frac{1}{T_n} \int_0^{T_n} \EE\!\left[ R_0\big(u_{s+T}^{*,l}\big(\fatcdot + R_0\big(u_s^{*,l}\big) \big) \big) \right] ds \right\} \nn\\
  &\qquad - \limsup_{n \rightarrow \infty} \left\{ \frac{1}{T_n} \int_0^{T_n} \EE\!\left[ 0 \vee \big\{ R_0\big(u_{s+T}^{*,l}\big(\fatcdot + R_0\big(u_s^{*,l}\big) \big) \big) - R_{m_0}^N\big(u_{s+T}^{*,l}\big(\fatcdot + R_0\big(u_s^{*,l}\big) \big) \big) \big\} \right] ds \right\} \nn\\
  &\equiv I_1(T)-E_1(T,m_0(T),N(T)) 
  = I_1(T)-E_1(T). \nn
}
We obtain for $I_1$, using Corollaries~\ref{COR:e-bd}--\ref{COR:e-bd-small-t}, that
\eqan{
\lbeq{equ:I1}
  I_1(T)
  &= \liminf_{n \rightarrow \infty} \frac{1}{T_n} \int_0^{T_n} \EE\!\left[ R_0\big(u_{s+T}^{*,l}\big) - R_0\big( u_s^{*,l} \big) \right] ds \\
  &= \liminf_{n \rightarrow \infty} \frac{1}{T_n} \left( \int_{T_n}^{T_n+T} \EE\!\left[ R_0\big(u_s^{*,l}\big) \right] ds - \int_0^T \EE\!\left[ R_0\big(u_s^{*,l}\big) \right] ds \right)
  = T B \nn
}
by \eqref{equ:def-B} respectively the assumption $R_0\big( u_T^{(\psi)} \big)/T \rightarrow B$ for $T \rightarrow \infty$ in $\SL^1$ for $\psi \in \mittl^R$. It therefore remains to show that $\limsup_{T \rightarrow \infty} E_1(T)/T = 0$.

For $\epsilon>0$ arbitrarily fixed, recall the definition of $R^{m_0,N}(f)$ from \cite[(5.18)]{K2017}. Also recall from above that $|R_{m_0}^N(f)| \leq N$ and $R^{m_0,N}(f) \leq R_{m_0}^N(f) \leq R_0(f)$ on $\{ R_0(f) \geq -N \}$ and $R_{m_0}^N(f)=-N$ on $\{ R_0(f) < -N \}$ to obtain that
\eqan{
  E_1(T)
  &\leq \limsup_{n \rightarrow \infty} \frac{1}{T_n} \int_0^{T_n} \EE\!\left[ 2 R_0\big(u_{s+T}^{*,l}\big(\fatcdot + R_0\big(u_s^{*,l}\big) \big) \big) \1_{\left\{ R_0\big(u_{s+T}^{*,l}\big(\fatcdot + R_0\big(u_s^{*,l}\big) \big) \big) > N \right\}} \right] ds \\
  & \quad + \limsup_{n \rightarrow \infty} \frac{1}{T_n} \int_0^{T_n} \EE\!\left[ 2N \1_{\left\{ \big\langle u_{s+T}^{*,l}\big(\fatcdot + R_0\big(u_{s+T}^{*,l}\big) \big) , \1_{[-\epsilon T,\infty)} \big\rangle < m_0 \right\}} \right] ds + \epsilon T \nn\\
  &= 2 \limsup_{n \rightarrow \infty} \EE_{\nu_{T_n}^{*,l}}\!\left[ R_0(u_T) \1_{\big\{ R_0(u_T) > N \big\}} \right] + 2N \limsup_{n \rightarrow \infty} \PP_{\nu_{T_n}^{*,l}}\!\left( \big\langle u_T(\fatcdot + R_0(T)) , \1_{[-\epsilon T,\infty)} \big\rangle < m_0 \right) + \epsilon T. \nn
}
By \cite[(2.33)]{K2017} and Lemma~\ref{LEM:marker-pos-part-second-moment} we choose $N>CT/\epsilon$ such that 
\eqn{
  \EE_{\nu_{T_n}^{*,l}}\!\left[ R_0(u_T) \1_{\big\{ R_0(u_T) > N \big\}} \right] 
  \leq \EE\!\left[ R_0\big(u_T^{*,l} \big) \1_{\big\{ R_0\big(u_T^{*,l}\big) > N \big\}} \right]
  \leq N^{-1} \EE\!\left[ \left( 0 \vee R_0\big(u_T^{*,l}\big) \right)^2 \right] 
  \leq N^{-1} C T^2
  < \epsilon T 
}
for all $n \in \NN$. Finally, by Lemma~\ref{LEM:A-small-mass-at-front} with $a=\epsilon T/2$, we choose $b=b(T)$ and $\tilde{m}=m_0(T)$ small enough such that 
\eqn{
  \PP_{\nu_{T_n}^{*,l}}\!\left( \big\langle u_T(\fatcdot + R_0(T)) , \1_{[-\epsilon T,\infty)} \big\rangle < m_0 \right)
  \leq \frac{\epsilon T}{2N(T)}
}
for all $n \in \NN$. Thus, $E_1(T)/T \leq 4\epsilon$ for all $T \geq 1$ and the claim follows after taking $\epsilon \downarrow 0^+$.
\end{proof}
%
%
\begin{proposition}
\label{PRO:cvg-upper-A-B}
Let $\theta>\theta_c$. Then $A^{(\nu^{*,l})} \equiv \EE\big[ A^{(\nu^{*,l})} \big] = B$ almost surely and
\eqn{
  R_0\big( u_T^{*,l} \big) / T \rightarrow B \mbox{ almost surely as } T \rightarrow \infty.
}
In particular, $A^{(\nu)} \leq B$ almost surely for all $A^{(\nu)}$ as in \eqref{equ:Tribe-tv-speed}.
\end{proposition}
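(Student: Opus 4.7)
The plan is to sandwich the wave speed $A^{(\nu^{*,l})}$ between a lower bound coming from the $*$-coupling of \eqref{equ:coupling-R-left} and an almost-sure upper bound equal to $B$, and then invoke Lemma~\ref{LEM:EA-B} to promote the expectational identity $\EE[A^{(\nu^{*,l})}] = B$ to equality almost surely. By Proposition~\ref{PRO:analogue-17}, $\nu^{*,l}$ is concentrated on $\{f : R_0(f) = 0\}$, so applying \eqref{equ:coupling-R-left} with $u_0$ drawn from $\nu^{*,l}$ gives $R_0(u_t^{(\nu^{*,l})}) \leq R_0(u_t^{*,l})$ almost surely for all $t \geq 0$. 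Dividing by $t$ and using \eqref{equ:Tribe-tv-speed} for the left-hand side yields
\[
A^{(\nu^{*,l})} \leq \liminf_{T \to \infty} R_0\big(u_T^{*,l}\big)/T \quad \mbox{almost surely.}
\]

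For the matching almost-sure upper bound, I would fix $T_0 \geq 1$ and decompose
\[
R_0\big(u_{nT_0}^{*,l}\big) - R_0\big(u_{T_0}^{*,l}\big) = \sum_{k=1}^{n-1} \Big( R_0\big(u_{(k+1)T_0}^{*,l}\big) - R_0\big(u_{kT_0}^{*,l}\big) \Big).
\]
By the strong Markov property and the shift invariance of the dynamics, the $k$-th increment equals $R_0\big(u_{T_0}^{(F_k)}\big)$ with $F_k := u_{kT_0}^{*,l}(\fatcdot + R_0(u_{kT_0}^{*,l}))$ satisfying $R_0(F_k) = 0$. Enlarging the probability space to attach an independent white noise on each interval $[kT_0, (k+1)T_0]$, the $*$-coupling \eqref{equ:coupling-left} produces processes $\hat u^{*,l,k}$ that are iid across $k$, each distributed as $u^{*,l}$, and satisfy $R_0\big(u_{T_0}^{(F_k)}\big) \leq R_0\big(\hat u_{T_0}^{*,l,k}\big)$ almost surely. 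The $L^2$-integrability of $\big(R_0(\hat u_{T_0}^{*,l,k})\big)^+$ afforded by Lemma~\ref{LEM:marker-pos-part-second-moment}, together with the strong law of large numbers, yields
\[
\limsup_{n \to \infty} R_0\big(u_{nT_0}^{*,l}\big)/(nT_0) \leq \EE\!\left[ R_0\big(u_{T_0}^{*,l}\big) \right]/T_0 \quad \mbox{almost surely.}
\]
Taking $T_0 \to \infty$ along a countable sequence pushes the right-hand side to $B$ by \eqref{equ:def-B}. The extension from integer multiples of $T_0$ to arbitrary $T \in [nT_0, (n+1)T_0]$ is handled by bounding the oscillation $\sup_{s \in [0,T_0]} \big( R_0(u_{nT_0+s}^{*,l}) - R_0(u_{nT_0}^{*,l}) \big)$ by $\sup_{s \in [0,T_0]} R_0\big(\hat u_s^{*,l,n}\big)$ via the same coupling, and showing through Borel--Cantelli and the polynomial-in-$T_0$ moment bounds of Lemma~\ref{LEM:marker-pos-part-second-moment} that this supremum divided by $n$ tends to zero almost surely.

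Combining the two bounds, $A^{(\nu^{*,l})} \leq \liminf_T R_0(u_T^{*,l})/T \leq \limsup_T R_0(u_T^{*,l})/T \leq B$ almost surely. Since Lemma~\ref{LEM:EA-B} gives $\EE[A^{(\nu^{*,l})}] = B$ while $A^{(\nu^{*,l})} \leq B$ a.s., the nonnegative random variable $B - A^{(\nu^{*,l})}$ has zero mean, so $A^{(\nu^{*,l})} = B$ almost surely; then all intermediate inequalities become equalities and $R_0(u_T^{*,l})/T \to B$ almost surely. For an arbitrary travelling wave $\nu$, the coupling $R_0(u_t^{(\nu)}) \leq R_0(u_t^{*,l})$ a.s.\ (after a spatial shift ensuring $R_0(\nu) \leq 0$) combined with the just-established almost-sure convergence of the upper marker gives $A^{(\nu)} \leq B$ almost surely.

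The main obstacle is the iid-dominator construction in the upper bound: the $*$-coupling \eqref{equ:coupling-left} in \cite{K2017} is formulated for a single pair of processes, so to obtain iid dominators $\hat u^{*,l,k}$ one must carefully augment the filtered probability space with a sequence of independent white noises and re-instantiate the construction on each interval $[kT_0, (k+1)T_0]$, so that the dominator simultaneously controls the genuine Markov increment built from $F_k$ for every $s \in [0,T_0]$ and remains independent of $\SF_{kT_0}$ and of the earlier dominators. Transferring the iid bound from integer multiples of $T_0$ to all real $T$ via the oscillation estimate is a secondary but still nontrivial technical point.
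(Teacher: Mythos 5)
Your proposal is correct and follows essentially the same route as the paper: the lower bound comes from the $*$-coupling together with $\nu^{*,l}$ being concentrated on $\{R_0 = 0\}$, the upper bound on the $\limsup$ is obtained by telescoping $R_0(u_{nT_0}^{*,l})$ over length-$T_0$ blocks, dominating each increment by an independent copy of $R_0(u_{T_0}^{*,l})$ (the paper justifies the iid structure via the $\zeta_N \uparrow \infty$ insensitivity to the initial condition, but this matches your independent-noise reinstantiation), and applying the strong law (the paper invokes the ergodic theorem for iid sequences), and the final sandwich with $\EE[A^{(\nu^{*,l})}] = B$ from Lemma~\ref{LEM:EA-B} forces a.s.\ equality. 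The only cosmetic difference is in the integer-to-real extension, where the paper simply cites the argument in \cite[Proposition~4.1a)]{T1996} rather than spelling out the oscillation/Borel--Cantelli estimate as you do.
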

%
%
\begin{proof}
The last claim follows immediately from \cite[(2.33)]{K2017}.

Fix $T_0>0$. By \cite[(2.33)]{K2017} and as $\nu^{*,l}(\{ f: R_0(f) = 0 \})=1$ by Proposition~\ref{PRO:analogue-17}, there exists a coupling such that
\eqn{
  R_0\big( u_{T_0+t}^{( \nu^{*,l} )} \big) \leq R_0\!\left( u_{T_0+t}^{*,l} \right) \quad \mbox{ for all } t \geq 0 \mbox{ almost surely}.
}
By Corollaries~\ref{COR:e-bd}--\ref{COR:e-bd-small-t}, $\EE\big[ \big| R_0\big(u_{T_0}^{*,l} \big) \big| \big] \leq C(T_0)$ and thus, using Lemma~\ref{LEM:wave-cvg-L1} and \eqref{equ:Tribe-tv-speed},
\eqn{
  A^{(\nu^{*,l})} 
  = \lim_{T \rightarrow \infty} \frac{0 \vee R_0\big( u_T^{( \nu^{*,l} )} \big)}{T}
  = \lim_{T \rightarrow \infty} \frac{R_0\big( u_T^{( \nu^{*,l} )} \big)}{T}
  \leq \liminf_{T \rightarrow \infty} \frac{R_0\big( u_T^{*,l} \big)}{T} \quad \mbox{ a.s.,}
}
where the left equality also holds in $\SL^1$.

Note that by reasoning as in the proof of \cite[Proposition~4.1a)]{T1996}, for $T>0$ fixed, once we bound $\limsup_{n \rightarrow \infty} R_0\big(u_{T_0+nT}^{*,l}\big)/nT$, the same bound holds for $\limsup_{t \rightarrow \infty} R_0(u_t^{*,l})/t$ almost surely. We therefore fix $T>0$ and rewrite
\eqan{
  & \frac{1}{nT} R_0\big( u_{T_0+nT}^{*,l} \big) - \frac{1}{nT} R_0\big( u_{T_0}^{*,l} \big) \\
  &= \frac{1}{nT} \sum_{i=1}^n \left( R_0\big( u_{T_0+iT}^{*,l} \big) - R_0\big( u_{T_0+(i-1)T}^{*,l} \big) \right)
  = \frac{1}{nT} \sum_{i=1}^n R_0\Big( u_T^{\big( u_{T_0+(i-1)T}^{*,l}\big( \fatcdot + R_0\big( u_{T_0+(i-1)T}^{*,l} \big) \big) \big)} \Big). \nn
}
Fix $i \in \NN$. By \eqref{equ:coupling-R-left}, there exists a coupling such that
\eqn{
  R_0\Big( u_T^{\big( u_{T_0+(i-1)T}^{*,l}\big( \fatcdot + R_0\big( u_{T_0+(i-1)T}^{*,l} \big) \big) \big)} \Big)
  \leq R_0\big( u_T^{*,l}(i) \big) \ \mbox{ almost surely,}
}
where $\SL(u_T^{*,l}(i))=\SL(u_T^{*,l})$ for all $i \in \NN$. By construction, the $\SL(u_T^{*,l}(i)), i \in \NN$ are independent. Indeed, we show this by induction. Let $u_{T_0+(i-1)T}^{*,l}$ be given. Then $\zeta_1 \geq u_{T_0+(i-1)T}^{*,l}\big( \fatcdot + R_0\big( u_{T_0+(i-1)T}^{*,l} \big) \big)$ is chosen in the construction. Nevertheless, as $\zeta_N(x) \uparrow \infty$ for $x<0$ and $\zeta_N(x)=0$ for $x \geq 0$, the law of $u_T^{*,l}(i)$ conditional on $u_{T_0+(i-1)T}^{*,l}$ remains $\SL(u_T^{*,l})$. Thus
\eqn{
  \frac{1}{nT} R_0\big( u_{T_0+nT}^{*,l} \big) - \frac{1}{nT} R_0\big( u_{T_0}^{*,l} \big)
  \leq \frac{1}{nT} \sum_{i=1}^n R_0\big( u_T^{*,l}(i) \big),
}
where $\big( R_0\big( u_T^{*,l}(i) \big) \big)_{i \in \NN}$ is an i.i.d. sequence of real valued random variables with $R_0\big( u_T^{*,l}(1) \big) \stackrel{\SD}{=} R_0(u_T^{*,l})$. By Corollaries~\ref{COR:e-bd}--\ref{COR:e-bd-small-t}, $R_0(u_T^{*,l}) \in \SL^1$.

By the ergodic theorem (cf. for instance Klenke \cite[Theorems~20.14, 20.16 and Example~20.12]{bK2014}),
\eqn{
  \frac{1}{nT} \sum_{i=1}^n R_0\big( u_T^{*,l}(i) \big) \rightarrow \EE\big[ R_0\big( u_T^{*,l} \big) \big]/T \quad \mbox{ almost surely and in $\SL^1$ for } n \rightarrow \infty.
}
As a result, 
\eqn{
  \limsup_{n \rightarrow \infty} \frac{ R_0\big( u_{T_0+nT}^{*,l} \big) }{ nT } 
  \leq \limsup_{n \rightarrow \infty} \frac{ R_0\big( u_{T_0}^{*,l} \big) }{ nT } + \frac{ \EE\big[ R_0\big( u_T^{*,l} \big) \big] }{ T } = \frac{ \EE\big[ R_0\big( u_T^{*,l} \big) \big] }{ T }
}
for all $T > 0$. Take $T \rightarrow \infty$ to conclude that
\eqn{
  A^{(\nu^{*,l})} 
  \leq \liminf_{T \rightarrow \infty} \frac{R_0\big( u_T^{*,l} \big)}{T}
  \leq \limsup_{T \rightarrow \infty} \frac{R_0\big( u_T^{*,l} \big)}{T}
  \leq \limsup_{T \rightarrow \infty} \frac{\EE\big[ R_0\big( u_T^{*,l} \big) \big]}{T}
  = B = \EE\big[ A^{(\nu^{*,l})} \big]
}
and therefore $\lim_{T \rightarrow \infty} R_0\big( u_T^{*,l} \big) / T = B$ almost surely.
\end{proof}
%
%
\begin{corollary}
\label{COR:as-limit-upper-wave}
Let $\theta>\theta_c$. Then 
\eqn{
  R_0\big( u_T^{( \nu^{*,l} )} \big) / T \rightarrow B \mbox{ almost surely as } T \rightarrow \infty
}
and
\eqn{
  R_0\big( u_T^{*,l} \big) / T \rightarrow B \mbox{ in $\SL^1$ as } T \rightarrow \infty.
}
\end{corollary}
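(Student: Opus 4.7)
The plan is to deduce both assertions directly from Proposition~\ref{PRO:cvg-upper-A-B} together with moment and tail bounds already established in Subsection~\ref{SUBSEC:estimates-right-markers}, without introducing any genuinely new ingredient.

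For the first assertion, I would start from the observation, recalled at the top of Section~\ref{SEC:speed-vg}, that $\nu^{*,l}$ is a travelling wave solution to \eqref{equ:SPDE}. Hence \eqref{equ:Tribe-tv-speed} applies and yields the existence of an almost sure limit $A^{(\nu^{*,l})} \in [0, 2\theta^{1/2}]$ of $R_0\big(u_T^{(\nu^{*,l})}\big)/T$ as $T \to \infty$. Proposition~\ref{PRO:cvg-upper-A-B} has already identified this random limit as the deterministic constant $B$, so combining the two facts gives $R_0\big(u_T^{(\nu^{*,l})}\big)/T \to B$ almost surely.

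For the $\SL^1$-convergence of $R_0\big(u_T^{*,l}\big)/T$, almost sure convergence to $B$ is in hand from Proposition~\ref{PRO:cvg-upper-A-B}, so the remaining task is to establish uniform integrability of the family $\big\{ R_0\big(u_T^{*,l}\big)/T \big\}_{T \geq 1}$. I would split $R_0\big(u_T^{*,l}\big) = \big(0 \vee R_0\big(u_T^{*,l}\big)\big) - \big(0 \vee (-R_0\big(u_T^{*,l}\big))\big)$ and handle the two pieces separately. For the positive part, Lemma~\ref{LEM:marker-pos-part-second-moment} gives a uniform-in-$T$ bound on the second moment of $\big(0 \vee R_0\big(u_T^{*,l}\big)\big)/T$, which is more than enough for uniform integrability. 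For the negative part, the tail estimate \eqref{equ:exp-ustarl-bd-below} of Corollary~\ref{COR:exp-ustarl-bd-below} provides exactly the exponential-in-$M$ control required. Vitali's convergence theorem then upgrades almost sure convergence to $\SL^1$ convergence on each piece, and hence on their difference.

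I do not expect any substantial obstacle here: all of the analytic work has been carried out previously, in Proposition~\ref{PRO:cvg-upper-A-B} (for the almost sure speed of $u^{*,l}$) and in the preliminary moment and tail estimates. The corollary is essentially a packaging step which combines these ingredients through standard uniform-integrability arguments.
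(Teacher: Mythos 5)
Your argument is correct and essentially matches the paper's: the first claim is obtained from \eqref{equ:Tribe-tv-speed} together with Proposition~\ref{PRO:cvg-upper-A-B}, and the second from the almost sure convergence in Proposition~\ref{PRO:cvg-upper-A-B} upgraded to $\SL^1$ via the moment/tail bounds in Lemma~\ref{LEM:marker-pos-part-second-moment} and Corollary~\ref{COR:exp-ustarl-bd-below}. The paper phrases the upgrade as truncation plus dominated convergence rather than Vitali/uniform integrability, but these are the same argument in different packaging.
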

%
%
\begin{proof}
The first claim follows from \eqref{equ:Tribe-tv-speed} and Proposition~\ref{PRO:cvg-upper-A-B}.

By Proposition~\ref{PRO:cvg-upper-A-B} we have $(R_0(u_T^{*,l})/T \wedge N) \vee (-N) \rightarrow B$ for $T \rightarrow \infty$ almost surely for all $N \in \NN, N>B$ fixed. As these random variables are bounded, dominated convergence implies convergence in $\SL^1$. We conclude that for all $N \in \NN, N>B$,
\eqn{
  \limsup_{T \rightarrow \infty} \EE\!\left[ \left| \frac{ R_0\big( u_T^{*,l} \big) }{T} - B \right| \right]
  \leq 2 \limsup_{T \rightarrow \infty} \EE\!\left[ \left| \frac{ R_0\big( u_T^{*,l} \big) }{T} \right| \1_{\left\{ \left| R_0\big( u_T^{*,l} \big) \right| > NT \right\}} \right].
}
The second claim now follows from the bounds on the positive part from Lemma~\ref{LEM:marker-pos-part-second-moment} and on the negative part from \eqref{equ:exp-ustarl-bd-below} for $N \rightarrow \infty$.
\end{proof}
%

Finally, we consider initial conditions $\psi \in \mittl$ with $\mittl$ as in \eqref{equ:def-mittl}. 
%
%
\begin{lemma}
\label{LEM:speed-mittl}
For initial conditions $\psi \in \mittl^R$, $R_0\big( u_T^{(\psi)} \big)/T \stackrel{\SD}{\rightarrow} B$ as $T \rightarrow \infty$. 
\end{lemma}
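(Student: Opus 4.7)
My plan is to prove convergence in distribution by separately controlling the probability of deviations above and below $B$; since $B$ is constant, convergence in distribution here is equivalent to convergence in probability. By shift invariance of the dynamics I may and shall assume $R_0(\psi)\leq 0$. Fix $a>0$.

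For the upper bound, since $\psi\leq\Phi$ in the sense of \eqref{equ:def-Phi-Psi}, the monotone coupling used in Lemma~\ref{LEM:claim2} and Remark~\ref{RMK:claim2} gives $R_0(u_T^{(\psi)})\leq R_0(u_T^{*,l})$ almost surely. Proposition~\ref{PRO:cvg-upper-A-B} then yields $\PP(R_0(u_T^{(\psi)})/T>B+a)\to 0$.

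For the lower bound, I would apply the Laplace identity
\begin{equation*}
\PP_\psi\!\left(R_0(u_T)\leq x\right) = \EE\!\left[\exp\!\left(-2\langle\psi,u_T^{*,r}(\fatcdot-x)\rangle\right)\right]
\end{equation*}
derived in the discussion preceding \eqref{equ:exp_right_marker-2}. With $x=(B-a)T$ the task reduces to showing $\langle\psi,u_T^{*,r}(\fatcdot-(B-a)T)\rangle\to\infty$ in probability, as bounded convergence then forces the above expectation to tend to $0$. Using $\psi\geq\epsilon H_0(\fatcdot-x_0)\geq\epsilon\1_{(-\infty,x_0-1]}$ and a change of variables, this inner product is bounded below by $\epsilon\int_{L_0(u_T^{*,r})}^{x_0-1-(B-a)T} u_T^{*,r}(z)\,dz$. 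By the symmetric analogue of Proposition~\ref{PRO:cvg-upper-A-B}, $L_0(u_T^{*,r})/T\to -B$ almost surely, so with probability tending to one this integration window has length of order $aT$ and begins at the left front of $u_T^{*,r}$.

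The hard part is the resulting bulk-mass estimate: the integral above must tend to $+\infty$ in probability. I would handle it via the travelling wave shape convergence for $u^{*,r}$, namely $u_T^{*,r}(\fatcdot+L_0(u_T^{*,r}))\Rightarrow\nu^{*,r}$ along subsequences (the symmetric analogue of the construction of $\nu^{*,l}$ described after \eqref{equ:coupling-R-left} and in the appendix). After the change of variables $y=z-L_0(u_T^{*,r})$ the integrand becomes $u_T^{*,r}(y+L_0(u_T^{*,r}))$; truncating the range to a fixed window $[0,K]$ gives a weak subsequential lower bound of $\int_0^K\nu^{*,r}(y)\,dy$. The far-from-front profile of $\nu^{*,r}$ inherits the strictly positive mean density of $\mu$ via the translation invariant characterisation of $\mu$ in \eqref{equ:KHT} and \cite[Theorem~1]{HT2004}, so $\int_0^K\nu^{*,r}(y)\,dy\to\infty$ almost surely as $K\to\infty$. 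A standard sub-subsequence argument then gives the required in-probability divergence uniformly along the original sequence, completing the proof.
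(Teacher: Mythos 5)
Your opening moves --- reduction to a one-sided estimate, the duality identity $\PP_\psi(R_0(u_T)\le x)=\EE[\exp(-2\langle\psi,u_T^{*,r}(\fatcdot-x)\rangle)]$, and the reduction to showing that $\langle\psi,u_T^{*,r}(\fatcdot-(B-a)T)\rangle\to\infty$ in probability --- coincide with the paper's proof. The difference lies entirely in how you handle the bulk-mass estimate, and that is where your argument has genuine gaps.

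The first gap is that the weak convergence $u_T^{*,r}(\fatcdot+L_0(u_T^{*,r}))\Rightarrow\nu^{*,r}$ that you invoke is not what the paper proves. The travelling wave $\nu^{*,r}$ is constructed in \eqref{equ:def-nu-star-lr} as a subsequential limit of the \emph{time-averaged} laws $\nu_T^{*,r}$, and the tightness result (Lemma~\ref{LEM:uniform-tightness}) also concerns the averaged family. Convergence of the un-averaged one-time marginal $\SL(u_T^{*,r}(\fatcdot+L_0(T)))$ to $\nu^{*,r}$ is not established; without it, your truncation $\int_0^K u_T^{*,r}(y+L_0(u_T^{*,r}))\,dy$ has no identified subsequential limit. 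The second gap is your justification that $\int_0^K\nu^{*,r}(y)\,dy\to\infty$ a.s.\ as $K\to\infty$: the assertion that ``the far-from-front profile of $\nu^{*,r}$ inherits the strictly positive mean density of $\mu$'' is not a theorem in the paper or in \cite{HT2004}, and even a positive mean density would not by itself give $\int_0^K\to\infty$ a.s.\ without an ergodic-type argument. (The fact that a configuration $\tilde u\sim\nu^{*,r}$ has a.s.\ infinite total mass can in principle be deduced from $\PP_{\nu^{*,r}}(\tau=\infty)=1$ (Proposition~\ref{PRO:analogue-17}), because finite total mass gives positive extinction probability; but you do not take this route, and it would still leave the first gap open.)

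The paper sidesteps the wave-profile entirely. Writing $c=B-a$ and $\Delta=(B-c)/2$, it observes that on the event $\{\langle\1_{(-\infty,-cT]},u_T^{*,r}\rangle<N\}$ a \emph{coupling with two independent processes} lets one kill the left piece $\1_{(-\infty,-cT]}u_T^{*,r}$ within unit time with a probability bounded below in terms of $N$ only (using \cite[(2.11)]{K2017}), while the right piece $\1_{[-cT,\infty)}u_T^{*,r}$ keeps its left marker above $(-B+\Delta)T$ with probability $\to1$. This would force $\PP(L_0(u_{T+1}^{*,r})\geq(-B+\Delta)T)$ to stay bounded away from $0$, contradicting $L_0(u_T^{*,r})/T\to-B$ a.s. Hence $\PP(\langle\1_{(-\infty,-cT]},u_T^{*,r}\rangle<N)\to0$, which is exactly the bulk-mass estimate. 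This argument works directly at a fixed time $T$, needs no weak limit of the un-averaged wave profile, and needs no knowledge of the profile's tail behaviour; as written, your approach would require establishing both of these facts separately, and the second is nontrivial.
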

%
%
\begin{proof}
Without loss of generality we assume that $\psi(x) = \epsilon H_0(x-x_0)$ for some $x_0 \in \RR, \epsilon>0$. Indeed, by definition of $\mittl^R$, for every $\psi \in \mittl$ there exist $x_0 \in \RR, \epsilon>0$ such that $\psi \geq \epsilon H_0(\fatcdot-x_0)$ and $R_0(\psi) \in \RR$. Reason as in \cite[Remark~2.8(ii)]{K2017} to construct a coupling such that for $T_0>0$ arbitrarily fixed, $u^{(\epsilon H_0(\fatcdot-x_0))}(t,x) \leq u^{(\psi)}(t,x) \leq u^{*,l}(t,x-R_0(\psi))$ for all $t \geq T_0, x \in \RR$ almost surely. Then 
\eqn{
\lbeq{equ:R0-comp-1}
  R_0\big(u_t^{(\epsilon H_0(\fatcdot-x_0))} \big)/t \leq R_0\big( u_t^{(\psi)} \big)/t \leq \big( R_0\big( u_t^{*,l} \big) + R_0(\psi) \big) /t \ \mbox{ for all } t \geq T_0 \mbox{ almost surely }
}
and by Proposition~\ref{PRO:cvg-upper-A-B}, 
\eqn{
\lbeq{equ:R0-comp-2}
  \lim_{t \rightarrow \infty} \big( R_0\big( u_t^{*,l} \big) + R_0(\psi) \big) /t = B \ \mbox{ almost surely.}
}
Hence, $R_0\big( u_t^{(\epsilon H_0(\fatcdot-x_0))} \big)/t \stackrel{\SD}{\rightarrow} B$ implies $R_0\big( u_t^{(\psi)} \big)/t \stackrel{\SD}{\rightarrow} B$. By the shift invariance of the dynamics, assume further that $x_0=1$.

Let $c \in \RR$ be arbitrary. For $c>B$, $\lim_{T \rightarrow \infty} \PP\big( R_0\big( u_T^{(\psi)} \big) \geq cT \big) = 0$ follows from \eqref{equ:R0-comp-1}--\eqref{equ:R0-comp-2}. Note that $\psi(x)=\epsilon H_0(x-1) \geq \epsilon \1_{(-\infty,0]}(x)$ for all $x \in \RR$. By \eqref{equ:laplace-left}, symmetry and by the shift invariance of the dynamics, for all $T > 0$, 
\eqan{
  \PP\big( R_0\big( u_T^{(\psi)} \big) > cT \big) 
  &= 1 - \EE\!\left[ e^{-2\langle \psi , u_T^{*,r}(\fatcdot-cT) \rangle} \right] 
  \geq 1 - \EE\!\left[ e^{-2\epsilon \langle \1_{(-\infty,0]} , u_T^{*,r}(\fatcdot-cT) \rangle} \right] \\
  &\geq 1 - \PP\!\left( \big\langle \1_{(-\infty,0]} , u_T^{*,r}(\fatcdot-cT) \big\rangle < N \right) - e^{-2 \epsilon N} \nn
}
for all $N \in \NN$. Suppose $c<B$. Let $\delta>0$ and choose $N$ big enough such that $e^{-2 \epsilon N} < \delta$. As we will show below, for $N$ fixed, 
\eqn{
\lbeq{equ:front-ustar}
  \lim_{T \rightarrow \infty} \PP\!\left( \big\langle \1_{(-\infty,0]} , u_T^{*,r}(\fatcdot-cT) \big\rangle < N \right)
  =0
}
and therefore $\PP\big( R_0\big( u_T^{(\psi)} \big) > cT \big) \geq 1-2\delta$ for all $T$ big enough. Then $\lim_{T \rightarrow \infty} \PP\big( R_0\big (u_T^{(\psi)} \big) \geq cT \big) = \1_{(-\infty,B)}(c)$ for $c \neq B$ arbitrary follows.

It thus remains to show \eqref{equ:front-ustar} for $0<c<B$ and $N$ arbitrarily fixed. Let $\Delta = (B-c)/2, 0 < \Delta < B$. By symmetry, $L_0\big( u_T^{*,r} \big)/T \rightarrow -B$ almost surely. A \textit{coupling with two independent processes} at time $T$ yields 
\eqan{
  & \PP\!\left( L_0\big( u_{T+1}^{*,r} \big) \geq (-B+\Delta) T \right) \\
  &\geq \EE\!\left[ \1_{\big\{ \big\langle \1_{(-\infty,-cT]} , u_T^{*,r}(\fatcdot) \big\rangle < N \big\}} 
  \PP_{\1_{(-\infty,-cT]} u_T^{*,r}}( \tau \leq 1 ) \PP_{\1_{[-cT,\infty)} u_T^{*,r}}(L_0(u_1) \geq (-B+\Delta) T) \right]. \nn
}
By \cite[(2.11)]{K2017}, on $\big\{ \big\langle \1_{(-\infty,-cT]} , u_T^{*,r}(\fatcdot) \big\rangle < N \big\}$, $\PP_{\1_{(-\infty,-cT]} u_T^{*,r}}( \tau \leq 1 ) \geq \exp\!\left( \frac{-2\theta N}{1-e^{-\theta}} \right)$. Note that $-B+\Delta=-c-\Delta$ to further get by symmetry and domination, $\PP_{\1_{[-cT,\infty)} u_T^{*,r}}(L_0(u_1) \geq (-B+\Delta) T) \geq \PP(R_0(u_1^{*,l}) \leq \Delta T)$. Hence,
\eqn{
  \PP\big( L_0\big( u_{T+1}^{*,r} \big) \geq (-B+\Delta) T \big) 
  \geq \exp\!\left( \frac{-2\theta N}{1-e^{-\theta}} \right) \PP\big( \big\langle \1_{(-\infty,-cT]} , u_T^{*,r}(\fatcdot) \big\rangle < N \big) \PP\big(R_0\big(u_1^{*,r}\big) \leq \Delta T\big). 
}
As $\lim_{T \rightarrow \infty} \PP\big( L_0\big( u_{T+1}^{*,r} \big) \geq (-B+\Delta) T \big) = 0$ and $\lim_{T \rightarrow \infty} \PP\big(R_0\big(u_1^{*,r}\big) \leq \Delta T\big) = 1$ by \cite[Lemma~4.6]{K2017} and the Markov inequality, \eqref{equ:front-ustar} follows.
\end{proof}
%
%
\begin{corollary}
\label{COR:speed-mittl}
For initial conditions $\psi \in \mittl^R$, $R_0\big( u_T^{(\psi)} \big)/T \rightarrow B$ for $T \rightarrow \infty$ in probability and in $\SL^1$. 
\end{corollary}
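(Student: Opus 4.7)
The plan is to reduce the corollary to Lemma~\ref{LEM:speed-mittl} plus a uniform integrability argument. Convergence in probability is immediate: since Lemma~\ref{LEM:speed-mittl} gives $R_0(u_T^{(\psi)})/T \stackrel{\SD}{\rightarrow} B$ with $B$ a deterministic constant, convergence in distribution upgrades automatically to convergence in probability. So the only real content is $\SL^1$-convergence, for which it suffices to check that the family $\{R_0(u_T^{(\psi)})/T : T \geq 1\}$ is uniformly integrable and then invoke the standard fact that convergence in probability together with uniform integrability implies $\SL^1$-convergence.

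For the uniform integrability, I would handle the positive and negative parts separately. For the positive part, I would use the domination already established in \eqref{equ:R0-comp-1}--\eqref{equ:R0-comp-2}: for $T$ large enough,
\eqn{
  0 \vee R_0(u_T^{(\psi)})/T \leq \big( 0 \vee R_0(u_T^{*,l}) + |R_0(\psi)| \big)/T \quad \text{a.s.,}
}
and Lemma~\ref{LEM:marker-pos-part-second-moment} yields $\EE\big[ (0 \vee R_0(u_T^{*,l}))^2 \big] \leq C T^2$, so the positive parts $0 \vee R_0(u_T^{(\psi)})/T$ are bounded in $\SL^2$ uniformly in $T \geq 1$, hence uniformly integrable.

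For the negative part, the tail bound \eqref{equ:lower-bd-mittl} from Lemma~\ref{LEM:speed-bound-mittl} directly gives
\eqn{
  \EE_{\psi}\!\left[ \frac{0 \vee (-R_0(u_T))}{T} \1_{\{ R_0(u_T) < -MT \}} \right] \leq C_1 e^{-C_2 M}
}
uniformly in $T \geq 1$, which vanishes as $M \to \infty$ and hence gives uniform integrability of the negative parts. Combining the two parts yields uniform integrability of $R_0(u_T^{(\psi)})/T$, and together with convergence in probability to $B$ this finishes the $\SL^1$-convergence. There is no significant obstacle here; the lemmas in Subsection~\ref{SUBSEC:estimates-right-markers} were essentially built to supply precisely the tail control on both sides that uniform integrability requires.
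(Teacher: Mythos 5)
Your proof is correct and follows essentially the same route as the paper: convergence in distribution to the deterministic constant $B$ gives convergence in probability, and uniform integrability (positive part controlled via domination by $u^{*,l}$ and Lemma~\ref{LEM:marker-pos-part-second-moment}, negative part via Lemma~\ref{LEM:speed-bound-mittl}) then upgrades this to $\SL^1$-convergence. The only cosmetic difference is that you read off uniform integrability of the negative part directly from the tail estimate \eqref{equ:lower-bd-mittl}, whereas the paper packages both parts as an $\SL^2$-bound; both are valid.
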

%
%
\begin{proof}
As the limit is deterministic, the convergence in distribution (cf. Lemma~\ref{LEM:speed-mittl}) implies convergence in probability (cf. Grimmett and Stirzaker \cite[Theorem~7.2.(4)(a)]{bGS2001}). Use \eqref{equ:lower-bd-mittl} for the negative part and Lemma~\ref{LEM:marker-pos-part-second-moment} and domination for the positive part of $R_0( u_T^{(\psi)} )$ to see that the family $\big\{ R_0\big( u_T^{(\psi)} \big)/T, T \geq 1 \big\}$ is bounded in $\SL^2$ and thus uniformly integrable (cf. \cite[Corollary~6.21]{bK2014}). By \cite[Theorem~6.25 (and Definition~6.2)]{bK2014}, the convergence in $\SL^1$ now follows from the convergence in probability of $R_0\big( u_T^{(\psi)} \big)/T$ in combination with the uniform integrability of this sequence.
\end{proof}
%
%
\begin{corollary}
\label{COR:speed-mittl-wave}
For initial conditions $\psi \in \mittl^R$, 
\eqn{
  R_0\big( u_T^{(\nu^{(\psi)})} \big)/T \rightarrow B \mbox{ almost surely as } T \rightarrow \infty
}
and $\big( 0 \vee R_0\big( u_T^{(\nu^{(\psi)})} \big) \big)/T \rightarrow B$ in $\SL^1$.
\end{corollary}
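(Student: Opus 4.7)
The plan is to pin down the a.s.\ speed $A^{(\nu^{(\psi)})}$ of the travelling wave by combining an expectation identity with a pathwise upper bound, both of which are supplied by results proved earlier in this section. Once we show $A^{(\nu^{(\psi)})} = B$ almost surely, the two displayed claims follow directly from \eqref{equ:Tribe-tv-speed} and Lemma~\ref{LEM:wave-cvg-L1}.

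First, I would verify that the hypothesis of Lemma~\ref{LEM:EA-B} is met for every $\psi \in \mittl^R$. This is precisely Corollary~\ref{COR:speed-mittl}, which guarantees that $R_0(u_T^{(\psi)})/T \to B$ in $\SL^1$ as $T \to \infty$. Plugging this into Lemma~\ref{LEM:EA-B} then yields
\eqn{
  \EE\big[ A^{(\nu^{(\psi)})} \big] = B.
}
Next, I would invoke the final sentence of Proposition~\ref{PRO:cvg-upper-A-B}, which states that $A^{(\nu)} \leq B$ almost surely for every travelling wave solution $\nu$; in particular $A^{(\nu^{(\psi)})} \leq B$ a.s. Since $A^{(\nu^{(\psi)})}$ is a non-negative random variable dominated by the constant $B$ yet has expectation equal to $B$, we conclude $A^{(\nu^{(\psi)})} = B$ almost surely.

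Finally, \eqref{equ:Tribe-tv-speed} applied to the travelling wave $\nu^{(\psi)}$ gives
\eqn{
  R_0\big( u_T^{(\nu^{(\psi)})} \big) / T \rightarrow A^{(\nu^{(\psi)})} = B \mbox{ almost surely as } T \rightarrow \infty,
}
which is the first claim. The second claim follows from Lemma~\ref{LEM:wave-cvg-L1}, since that lemma delivers $\big( 0 \vee R_0\big( u_T^{(\nu^{(\psi)})} \big) \big)/T \to A^{(\nu^{(\psi)})}$ in $\SL^1$, and we have just identified the limit as $B$.

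There is no real obstacle here: the statement is the payoff corollary of the work done earlier in the section, and the only nontrivial inputs are the $\SL^1$-convergence for $\mittl^R$-initial data (Corollary~\ref{COR:speed-mittl}), the matching of expectations (Lemma~\ref{LEM:EA-B}), and the universal pathwise upper bound on the wave speed (Proposition~\ref{PRO:cvg-upper-A-B}). The only point that requires any care is the logical dependency — one must apply Corollary~\ref{COR:speed-mittl} \emph{before} Lemma~\ref{LEM:EA-B}, so that the hypothesis of the latter is fulfilled; the remark following Lemma~\ref{LEM:EA-B} anticipates exactly this order of reasoning.
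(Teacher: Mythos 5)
Your proof is correct and follows essentially the same route as the paper: both establish $\EE\big[A^{(\nu^{(\psi)})}\big]=B$ via Corollary~\ref{COR:speed-mittl} and Lemma~\ref{LEM:EA-B}, combine this with the almost sure bound $A^{(\nu^{(\psi)})}\leq B$ from Proposition~\ref{PRO:cvg-upper-A-B} to pin down $A^{(\nu^{(\psi)})}=B$ a.s., and then read off the two claims from \eqref{equ:Tribe-tv-speed} and Lemma~\ref{LEM:wave-cvg-L1}. No gaps.
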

%
%
\begin{proof}
By \eqref{equ:Tribe-tv-speed}, Lemma~\ref{LEM:EA-B} and Corollary~\ref{COR:speed-mittl},
\eqn{
  R_0\big( u_T^{(\nu^{(\psi)})} \big)/T \rightarrow A^{( \nu^{(\psi)} )} \mbox{ almost surely as } T \rightarrow \infty
}
with $\EE\big[ A^{( \nu^{(\psi)} )} \big]=B$. By Proposition~\ref{PRO:cvg-upper-A-B}, $A^{( \nu^{(\psi)} )} \leq B$ almost surely. Hence, $A^{( \nu^{(\psi)} )} = B$ almost surely and the first claim follows. The second claim follows by Lemma~\ref{LEM:wave-cvg-L1}.
\end{proof}
%
%
%
\subsection{Proof of Theorem~\ref{THM:speeds}}
\label{SUBSEC:Proof-result-2}
%
%
The first claim and \eqref{equ:result2b} follow from Proposition~\ref{PRO:cvg-upper-A-B} and Corollary~\ref{COR:as-limit-upper-wave}. Lemma~\ref{LEM:wave-cvg-L1} yields the $\SL^1$-convergence of the positive part of the right hand side of \eqref{equ:result2b}. The third claim follows from Corollary~\ref{COR:speed-mittl}. The forth and last claim follow from Corollary~\ref{COR:speed-mittl-wave}. This concludes the proof.
%
%
\section{Appendix}
\label{SEC:appendix}
%
%
%
\subsection{Duality}
%

A \textit{self duality relation} in the form of \cite[Subsection~1.2]{HT2004} respectively \cite[(2.1)]{K2017} holds for solutions to \eqref{equ:SPDE}. For solutions with additional annihilation due to competition with a deterministic process $\beta$, see \eqref{equ:SPDE-and-competition} below, a \textit{duality relation} is obtained analogously. Such solutions appear for instance in the context of \textit{monotonicity-couplings}, see \eqref{equ:SPDE-v-monotonicity-coupling}. For existence and uniqueness of solutions to \eqref{equ:SPDE-and-competition} respectively \eqref{equ:SPDE-and-competition-2}, see Theorem~\ref{THM:tribe}.  
%
%
\begin{corollary}
\label{COR:duality}
Let $\theta>0, T>0, \beta \in \SC([0,T],\SC_{tem}^+)$ arbitrarily fixed. Let $v, z$ be independent solutions to 
\eqn{
\lbeq{equ:SPDE-and-competition}
  \frac{\partial v}{\partial t} = \Delta v + ( \theta - v - \beta ) v + \sqrt{v} \dot{W}_1, \qquad v(0)=v_0
}
respectively
\eqn{
\lbeq{equ:SPDE-and-competition-2}
  \frac{\partial z}{\partial t} = \Delta z + \big( \theta - z - \beta_{T-\fatcdot} \big) z + \sqrt{z} \dot{W}_2, \qquad z(0)=z_0
}
for $0 \leq t \leq T$ with $v_0, z_0 \in \SC_{tem}^+$ and $W_1, W_2$ independent white noises. Then we have for $0 \leq s \leq T$,
\eqn{
  \EE\!\left[ e^{-2 \langle v(T) , z(0) \rangle } \right]
  = \EE\!\left[ e^{-2 \langle v(s) , z(T-s) \rangle } \right]
  = \EE\!\left[ e^{-2 \langle v(0) , z(T) \rangle } \right].
}
\end{corollary}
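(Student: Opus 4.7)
The plan is to show that the function
\[
  F(s) \;:=\; \EE\!\left[ e^{-2 \langle v(s), z(T-s) \rangle} \right], \qquad s \in [0,T],
\]
is constant; the two displayed identities in the corollary then follow by comparing $F(T)$, $F(s)$, and $F(0)$. The arrangement that makes this work is the built-in symmetry $\beta_{T-(T-s)} = \beta_s$: when the $v$-clock runs forward and the $z$-clock runs backward via $u = T-s$, both processes see the same external coefficient at ``synchronized time''.

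First I would use independence of $W_1, W_2$ to write $F(s) = \EE[H(v(s), T-s)]$, where
\[
  H(\phi,u) \;:=\; \EE\!\left[ e^{-2 \langle \phi, z(u)\rangle} \right], \qquad \phi \in \SC_{tem}^+,\ u \in [0,T].
\]
Applying It\^o's formula to $e^{-2\langle \phi, z(u)\rangle}$ using \eqref{equ:SPDE-and-competition-2} and taking expectations yields the forward-Kolmogorov identity
\[
  \partial_u H(\phi,u) = \EE\!\left[ e^{-2\langle \phi, z(u)\rangle} \big\{ -2\langle \phi, \Delta z(u) + (\theta - z(u) - \beta_{T-u})\,z(u)\rangle + 2\langle \phi^2 z(u), 1\rangle \big\} \right].
\]
Next I would apply It\^o to $s \mapsto H(v(s), T-s)$ via \eqref{equ:SPDE-and-competition}, using the Fr\'echet derivatives $\delta H/\delta\phi(x) = -2\EE[z(u,x) e^{-2\langle \phi, z(u)\rangle}]$ and $\delta^2 H/\delta\phi(x)^2 = 4\EE[z(u,x)^2 e^{-2\langle \phi, z(u)\rangle}]$. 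The drift of $H(v(s), T-s)$ is then $(L^v_s H)(v(s),T-s) - (\partial_u H)(v(s),T-s)$, so it suffices to show this difference vanishes.

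The verification that $L^v_s H = \partial_u H$ at $u = T-s$ is a term-by-term check after pulling out the common factor $e^{-2\langle v, z\rangle}$ and using $\beta_{T-(T-s)} = \beta_s$: (i) the Laplacian contributions cancel because $\langle \Delta v, z\rangle = \langle v, \Delta z\rangle$ by integration by parts (boundary terms at infinity vanish since $v,z \in \SC_{tem}^+$ and the pairings are ultimately made rigorous against compactly supported test functions); (ii) the $\theta\langle v, z\rangle$ contributions cancel trivially; (iii) on the $v$-side the drift term $-v^2$ produces $+2\langle v^2, z\rangle$ and the It\^o correction from $\sqrt{v}\,\dot W_1$ produces $+2\langle v z^2, 1\rangle$, while on the $z$-side $-z^2$ produces $+2\langle v, z^2\rangle$ and $\sqrt{z}\,\dot W_2$ produces $+2\langle v^2 z, 1\rangle$, with both sides totalling $2\langle v^2 z, 1\rangle + 2\langle v z^2, 1\rangle$; (iv) the $\beta_s$ terms $\langle \beta_s v, z\rangle$ and $\langle v, \beta_s z\rangle$ coincide by symmetry of the pairing. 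Hence $F' \equiv 0$ on $(0,T)$.

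The main obstacle is justifying the It\^o manipulations rigorously in the $\SC_{tem}^+$-valued setting, since the functional $e^{-2\langle \phi, z\rangle}$ is not directly of the form covered by the weak formulation in Notation~\ref{NOTA:tribe}(ii). To handle this I would approximate the exponential via cylindrical functionals $\exp(-2\sum_i a_i \langle \phi, h_i\rangle)$ with $h_i \in \SC_c^\infty$, for which the It\^o identity reduces to the weak SPDE formulation applied to each $h_i$, and then pass to the limit using the bound $0 \le e^{-2\langle \phi, z\rangle} \le 1$ together with dominated convergence; integrability of the stochastic integrals and moment bounds on $\langle v(s), h\rangle$, $\langle z(u), h\rangle$ in the $\SC_{tem}^+$-framework (cf.\ \cite[Section~2]{T1996}) control all remaining terms. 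The required smoothness of $H$ in its $\phi$-argument comes from the continuity of $(f,\alpha,\beta,\gamma) \mapsto Q^{f,\alpha,\beta,\gamma}$ in Theorem~\ref{THM:tribe}(b) combined with boundedness of the exponential.
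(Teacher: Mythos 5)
Your proposal is correct and follows essentially the same strategy as the paper's proof: differentiate $s \mapsto \EE\!\left[ e^{-2\langle v(s), z(T-s)\rangle}\right]$ and show the drift vanishes via term-by-term cancellation, with the crucial ingredients being the self-adjointness of $\Delta$, the symmetry of the pairing, and the time-reversal $\beta_{T-(T-s)} = \beta_s$; the paper packages the common part of the drift into a symmetric functional $H(f,g)$ and uses the local-martingale decomposition of each of $e^{-2\langle v_t, g\rangle}$ and $e^{-2\langle z_t, f\rangle}$, but this is the same calculation.
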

%
%
\begin{proof}
Reason as in \cite[Subsection~1.2]{HT2004}. Let 
\eqn{
  H(f,g) = 2 e^{-2 \langle f,g \rangle} \big( \langle f^2 , g \rangle + \langle f , g^2 \rangle - \langle f , \Delta g \rangle - \theta \langle f , g \rangle \big). 
}
Integration by parts yields $H(f,g)=H(g,f)$. The additional factor of $2$ in the exponent results from the use of different scaling constants in the original SPDEs. Then 
\eqn{
  e^{-2 \langle v_t , g \rangle} - \int_0^t H(v_s,g) ds - 2 \int_0^t e^{-2 \langle v_s , g \rangle } \langle \beta_s v_s , g \rangle ds
}
is a local martingale as well as 
\eqn{
  e^{-2 \langle z_t , f \rangle} - \int_0^t H(z_s,f) ds - 2 \int_0^t e^{-2 \langle z_s , f \rangle } \langle \beta_{T-s} z_s , f \rangle ds.
}
As
\eqan{
  & \frac{d}{ds} \EE\!\left[ e^{-2 \langle v_s , z_{T-s} \rangle } \right] \\
  &= \EE\!\left[ \left( H(v_s,z_{T-s}) + 2 e^{-2 \langle v_s , z_{T-s} \rangle } \langle \beta_s v_s , z_{T-s} \rangle \right) - \left( H(z_{T-s},v_s) + 2 e^{-2 \langle z_{T-s} , v_s \rangle } \langle \beta_s z_{T-s} , v_s \rangle \right) \right] 
  =0, \nn
}
the duality relation follows.
\end{proof}
%
%
\subsection{Travelling waves for the right upper invariant measure}
\label{SUBSEC:trav-waves-for-upper-inv-measure}
%

We extend the construction of travelling wave solutions from solutions with compactly supported initial conditions (cf. \cite{K2017}) respectively Heavyside initial data (cf. \cite{T1996}) to the right upper invariant measure case. As in \cite{K2017}, the right marker is used to center the waves. Recall the set $\mittl^R$ from \eqref{equ:def-mittl}. The constructions extend to initial conditions $u_0 \in \mittl^R$.

Let $\theta_c < \underline{\theta} \leq \theta \leq \overline{\theta}$ and $\nu_T^{*,l}=\nu_T^{*,l}(\theta) \in \CP(\SC_{tem}^+)$ be given by
\eqn{
\lbeq{equ:def-nu-star-lr}
  \nu_T^{*,l}(A) \equiv T^{-1}\!\int_0^T \PP\big( u_s^{*,l}(\fatcdot+R_0(s)) \in A \big) ds,
}
with $\SL\big( u_s^{*,l} \big) = \upsilon_s, s>0$ as in \eqref{equ:laplace-left}--\eqref{equ:coupling-left}. Note that by Corollaries~\ref{COR:e-bd}--\ref{COR:e-bd-small-t}, $R_0\big( u_s^{*,l} \big)$ is almost surely finite and thus $\nu_T^{*,l}$ is well-defined. Then the analogues of the following results of \cite{K2017} hold, where constants only depend on $\theta$ through $\underline{\theta}$ and $\overline{\theta}$. Here it is important to note that the tightness-result of Lemma~\ref{LEM:uniform-tightness} from below is uniform in $\underline{\theta} \leq \theta \leq \overline{\theta}$ as well. Note that in the proof of Lemma~\ref{LEM:analogue-49} we use Corollaries~\ref{COR:e-bd}--\ref{COR:e-bd-small-t} in place of \cite[Lemma~4.8]{K2017}. Also note that the constants in \cite[Lemmas~3.2--3.4]{T1996} hold uniform in $\underline{\theta} \leq \theta \leq \overline{\theta}$, which is easily be deduced using that $p_t^\theta(x) = e^{\theta t} p_t(x)$. Finally, note that the restriction to $N \in \NN$ in \cite[Lemma~3.7]{T1996} and \cite[Chapter~5]{K2017} was only due to the fact that the sequence $\{ \nu_T: T \in \NN \}$ was under consideration rather than the set $\{ \nu_T: T \geq 1 \}$. As we integrate from $1$ to $T$ in the proof of \cite[Lemma~3.7]{T1996} respectively \cite[(5.2)]{K2017}, part of the statements below are only valid for $T>1$. 
%
%
\begin{lemma}(Analogue to \cite[Lemma~4.9]{K2017})
\label{LEM:analogue-49}
If $t>0$, then there exists $C\big( \underline{\theta}, \overline{\theta}, t \big)$ such that for all $a>0, 0< s \leq t$ and $T \geq 1$,
\eqn{
  \PP_{\nu_T^{*,l}}( |R_0(s)| \geq a)
  \leq \frac{C\big( \underline{\theta}, \overline{\theta}, t \big)}{a}.
}
In particular, for $0<t \leq 1$,
\eqn{
  \PP_{\nu_T^{*,l}}( |R_0(s)| \geq a)
  \leq \frac{C\big( \underline{\theta}, \overline{\theta} \big) t^{1/4}}{a}
}
holds.
\end{lemma}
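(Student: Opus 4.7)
The plan is to follow the proof of \cite[Lemma~4.9]{K2017}, which establishes the analogous bound for $\nu_T$ defined via the upper invariant measure; the only modification is that wherever that proof invokes \cite[Lemma~4.8]{K2017} for short-time and general-time moment bounds on $R_0$ of $u^{*,l}$, I substitute Corollaries~\ref{COR:e-bd} and~\ref{COR:e-bd-small-t} above, which supply the same information for the $\upsilon_T$-based construction $\nu_T^{*,l}$. The uniformity in $\theta \in [\underline{\theta}, \overline{\theta}]$ of all constants is inherited from the uniformity of Corollaries~\ref{COR:e-bd}--\ref{COR:e-bd-small-t} and from the $\theta$-uniform Gaussian heat-kernel bounds $p_t^\theta(x) = e^{\theta t} p_t(x)$ on the compact interval $\big[\underline\theta,\overline\theta\big]$.

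First I would apply Markov's inequality to reduce the claim to an expectation bound,
\begin{equation*}
\PP_{\nu_T^{*,l}}(|R_0(s)| \geq a) \leq \frac{1}{a}\,\EE_{\nu_T^{*,l}}[|R_0(s)|].
\end{equation*}
Unfolding the definition \eqref{equ:def-nu-star-lr} of $\nu_T^{*,l}$ and using the strong Markov property of the family $(\PP_f)_{f \in \SC_{tem}^+}$ from Theorem~\ref{THM:tribe}b) together with the shift invariance of the SPDE, I would rewrite
\begin{equation*}
\EE_{\nu_T^{*,l}}[|R_0(s)|] = \frac{1}{T}\int_0^T \EE\bigl[\bigl|R_0\bigl(u_{r+s}^{*,l}\bigr) - R_0\bigl(u_r^{*,l}\bigr)\bigr|\bigr]\, dr.
\end{equation*}

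Next, for each $r$ I would apply the strong Markov property at time $r$ and shift invariance once more to view the displacement of $R_0$ over the time-interval $[r, r+s]$ as $R_0\big(u_s^{(\phi_r)}\big)$, where $\phi_r = u_r^{*,l}(\fatcdot + R_0(u_r^{*,l}))$ has $R_0(\phi_r) = 0$ almost surely. The quantity $\EE_{\phi_r}\!\left[|R_0(u_s)|\right]$ is then controlled by combining the Gaussian heat-kernel estimate of \cite[Remark~A.1]{K2017} (as already used in the proof of Lemma~\ref{LEM:marker-pos-part-second-moment}) for the positive part with a domination argument and Lemma~\ref{LEM:speed-bound-mittl} for the negative part. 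The general-$t$ regime yields a constant $C\big(\underline\theta,\overline\theta,t\big)$; the sharper $t^{1/4}$ scaling for $t \le 1$ arises by bounding the second moment of the displacement by $C\sqrt{t}$ via the Gaussian kernel and then applying Jensen's inequality to pass from the second-moment bound to a first-moment bound of order $t^{1/4}$, exactly as in \cite[Lemma~4.9]{K2017}.

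The main obstacle is the $r$-uniform control in the previous step, and especially the $t^{1/4}$ refinement for small $t$. This requires a careful combination of the Gaussian spatial decay of the heat kernel with the at-most-linear-in-$r$ growth of $\EE\big[\big|R_0(u_r^{*,l})\big|\big]$ from Corollary~\ref{COR:e-bd}, in order to avoid the crude triangle-inequality bound $\big|R_0(u_{r+s}^{*,l})\big| + \big|R_0(u_r^{*,l})\big|$; this crude bound would make the average over $r \in [0,T]$ grow with $T$ and destroy the $T \geq 1$ uniformity that the lemma demands.
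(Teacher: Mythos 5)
Your plan---follow the proof of \cite[Lemma~4.9]{K2017} with Corollaries~\ref{COR:e-bd}--\ref{COR:e-bd-small-t} substituted for \cite[Lemma~4.8]{K2017}, and obtain $\theta$-uniformity from those corollaries together with the identity $p_t^\theta(x)=e^{\theta t}p_t(x)$---is exactly the paper's proof, which is stated only as a pointer in the paragraph preceding the lemma. The additional detail you sketch (Markov's inequality, $r$-averaging, Gaussian heat-kernel bounds, and the second-moment-plus-Jensen mechanism for the $t^{1/4}$) is your reconstruction of the internals of \cite[Lemma~4.9]{K2017}, which the paper does not reproduce; the two modifications you identify are, however, precisely the ones the paper flags.
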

%
%
\begin{lemma}(Extension of \cite[Lemma~5.1]{K2017})
\label{LEM:uniform-tightness}
Let $\theta_c<\underline{\theta} \leq \theta \leq \overline{\theta}$ be arbitrarily fixed. Then the set $\{ \nu_T^{*,l}(\theta): T \geq 1 \}$ is tight. In particular, for every $\epsilon>0$ there exists a compact set $K_\epsilon=K_\epsilon\big( \underline{\theta}, \overline{\theta} \big) \subset \SC_{tem}^+$ such that
\eqn{
\lbeq{equ:uniform-tightness}
  \inf_{\underline{\theta} \leq \theta \leq \overline{\theta}} \big( \nu_T^{*,l}(\theta) \big)(K_\epsilon) \geq 1-\epsilon \quad \mbox{ for all } T>1.
}
\end{lemma}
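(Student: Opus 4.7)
The argument will mirror the proof of Lemma~5.1 in \cite{K2017}, with the main modification that the compactly supported initial condition is replaced by the left upper invariant measure. The first step is to reduce the uniformity in $\theta \in [\underline{\theta},\overline{\theta}]$ to a single dominant bound: by a $\theta$-$\ast$-coupling one obtains $u_s^{*,l}(\theta)(x) \leq u_s^{*,l}(\overline{\theta})(x)$ pointwise a.s., so any spatial tail bound proven for $\overline{\theta}$ transfers to all $\theta$ in the interval; the modulus-of-continuity estimates of \cite[Lemmas~3.2--3.4]{T1996} depend on $\theta$ only through $\overline{\theta}$ since $p_t^\theta(x)=e^{\theta t}p_t(x)$. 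Hence it suffices to work at fixed $\theta$, keeping track that constants are controlled by $\underline{\theta}, \overline{\theta}$.

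Second, I would invoke the standard compactness criterion in $\SC_{tem}^+$: a set is relatively compact iff, for every $\lambda>0$, the family has bounded $\|\cdot\|_\lambda$-norm and is equicontinuous on every compact interval. By Markov's inequality on $\nu_T^{*,l}$ it thus suffices to produce, for every $\epsilon>0$, a constant $M_\lambda(\epsilon)$ and a modulus $\omega_n(\epsilon,\delta) \downarrow 0$ as $\delta \downarrow 0$, such that
\eqn{
\lbeq{equ:plan-split}
  \tfrac{1}{T}\int_0^T \PP\bigl(\|u_s^{*,l}(\fatcdot+R_0(s))\|_\lambda > M_\lambda\bigr) \, ds + \tfrac{1}{T}\int_0^T \PP\bigl(\mathrm{osc}_{[-n,n],\delta}\,u_s^{*,l}(\fatcdot+R_0(s)) > \omega_n\bigr)\, ds \leq \epsilon
}
uniformly in $T>1$ and $\theta\in[\underline{\theta},\overline{\theta}]$.

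Third, for the $\lambda$-norm term in \eqref{equ:plan-split}, I would split on $\{|R_0(s)| \leq R\}$: Lemma~\ref{LEM:analogue-49} and Corollaries~\ref{COR:e-bd}--\ref{COR:e-bd-small-t} control the complement; on the good event, $\|u_s^{*,l}(\fatcdot+R_0(s))\|_\lambda \leq e^{\lambda R}\|u_s^{*,l}\|_\lambda$, and $\EE\|u_s^{*,l}\|_\lambda^p$ is controlled uniformly in $s \geq 0$ via the stochastic domination $u_s^{*,l} \leq u_s^{*}$ (the full upper invariant process of \cite[Remark~2.8]{K2017}) together with the moment bounds of \cite[Corollaries~2.6, 2.9]{K2017}. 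For the oscillation term, the shift $R_0(s)$ is harmless by translation invariance of the noise, and Tribe's Hölder estimates (as adapted in \cite[Chapter~5]{K2017}) give the required equicontinuity uniformly in $s \geq 1$, with the small-$s$ contribution ($s \in [0,1]$) contributing at most $1/T$ to the time average, absorbable into $\epsilon$ once $T$ is large (with $T \in (1,T_0]$ treated by a separate tightness argument on a finite family).

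\textbf{Main obstacle.} The delicate point is obtaining moment bounds on $\|u_s^{*,l}\|_\lambda$ that are uniform in $s \geq 1$ and $\theta \in [\underline{\theta},\overline{\theta}]$: since the left upper invariant law is not translation invariant and its support extends to $-\infty$, one cannot directly quote a stationary bound. The cleanest route is to dominate $u_s^{*,l}$ by the translation invariant process $u_s^*$ from \cite[(2.34), Proposition~2.4]{K2017} and use that the upper invariant measure $\mu$ has finite exponential moments, which yields $\sup_s \EE[\|u_s^{*,l}\|_\lambda^p] < \infty$ for every $\lambda, p > 0$. Pairing this with the tail bound on $R_0(s)$ from Corollary~\ref{COR:exp-ustarl-bd-below} closes the estimate in \eqref{equ:plan-split}, and a careful bookkeeping shows that all constants depend on $\theta$ only through $\underline{\theta}, \overline{\theta}$, yielding the claimed uniform tightness \eqref{equ:uniform-tightness}.
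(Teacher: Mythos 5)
There is a genuine gap in your Step~3 (and hence in the closing bookkeeping of your "main obstacle" paragraph). The split on the event $\{|R_0(s)| \leq R\}$, with the bound
$\|u_s^{*,l}(\fatcdot + R_0(s))\|_\lambda \leq e^{\lambda R}\,\|u_s^{*,l}\|_\lambda$
on the good event, cannot close the estimate for $\nu_T^{*,l}$: by Proposition~\ref{PRO:cvg-upper-A-B}, $R_0(u_s^{*,l})/s \to B>0$ almost surely, so $\PP\big(|R_0(u_s^{*,l})|>R\big) \to 1$ as $s \to \infty$ for every fixed $R$, and therefore
$\tfrac{1}{T}\int_0^T \PP\big(|R_0(u_s^{*,l})|>R\big)\,ds \to 1$ as $T \to\infty$. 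In other words, the "bad" part of your decomposition does not become small uniformly in $T$, no matter how large $R$ is chosen, so the contribution of the complement cannot be absorbed into $\epsilon$. Undoing the recentering by the crude factor $e^{\lambda|R_0(s)|}$ is exactly the step that cannot be afforded here. Relatedly, the appeal to Lemma~\ref{LEM:analogue-49} is misapplied: that lemma bounds $\PP_{\nu_T^{*,l}}(|R_0(u_s)|\geq a)$ for \emph{small} $s>0$ when the initial law is already the recentered $\nu_T^{*,l}$ (so $R_0=0$ at time $0$); it does not control $R_0(u_s^{*,l})$ for $s$ of order $T$ in the Cesàro average that defines $\nu_T^{*,l}$.

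The paper avoids this entirely by never undoing the shift: the compactness criterion used there is phrased directly in terms of the recentered field, via the sets $K(C,\delta,\gamma,\mu) = \{f:\ |f(x)-f(x')|\leq C|x-x'|^\gamma e^{\mu|x|} \text{ for } |x-x'|\leq\delta\}$ together with $\{f:\ \langle f,\phi_1\rangle\leq A\}$, $\phi_1(x)=e^{-|x|}$, and the relevant bounds in \cite[Lemma~5.1]{K2017} are derived from duality/Laplace-functional estimates that already live on the recentered object. The only new work is verifying that those constants depend on $\theta$ only through $\underline\theta,\overline\theta$ (for which the $\theta$-$*$-coupling and the observation $p_t^\theta=e^{\theta t}p_t$ that you mention are indeed the right tools), followed by an explicit diagonalization $K_\epsilon = \overline{\bigcap_n K(\epsilon 2^{-n},1/n)}$ to pass from compactness in a single $\SC_\lambda^+$ to compactness in $\SC_{tem}^+$. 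To repair your argument you would need to replace the moment/shift split with a direct tail and modulus bound on the recentered field $u_s^{*,l}(\fatcdot+R_0(s))$ itself, as in \cite[Lemma~5.1]{K2017}, rather than going through $\|u_s^{*,l}\|_\lambda$ and a bound on $|R_0(s)|$.
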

%
%
\begin{proof}
Let $\lambda>0$ arbitrary. Then there exist $C<\infty, \gamma, \delta >0, \mu<\lambda$ and $A>0$, such that
\eqn{
\lbeq{equ:uniform-tightness-properties}
    \inf_{\underline{\theta} \leq \theta \leq \overline{\theta}} \big( \nu_T^{*,l}(\theta) \big)\big( K(C,\delta,\gamma,\mu) \cap \{ f \in \SC_{tem}^+: \langle f , \phi_1 \rangle \leq A \} \big) \geq 1-\epsilon \quad \mbox{ for all } T>1,
}
where $\phi_1(x) \equiv \exp(-|x|)$ and
\eqn{
\lbeq{equ:def-K}
  K(C,\delta,\gamma,\mu) \equiv \{ f \in \SC_{tem}^+: |f(x)-f(x')| \leq C |x-x'|^\gamma e^{\mu |x|} \mbox{ for all } |x-x'| \leq \delta \}.
}
Indeed, a look at the proof of \cite[Lemma~5.1]{K2017} shows that the sets under consideration, namely $K(C,\delta,\gamma,\overline{\mu})$ and $\{ f: \langle f , \phi_1 \rangle \leq N \}$ are independent of $\theta$. The bounds are derived from previous statements, where constants only depend on $\theta$ through $\underline{\theta}$ and $\overline{\theta}$.

Recall that $K \subset \SC_{tem}^+$ is (relatively) compact if and only if it is (relatively) compact in $\SC_\lambda^+$ for all $\lambda>0$ and that $K(C,\delta,\gamma,\mu) \cap \{ f \in \SC_{tem}^+: \langle f , \phi_1 \rangle \leq A \} \equiv K(\epsilon,\lambda)$ satisfying \eqref{equ:uniform-tightness-properties} is compact in $\SC_\lambda^+$ (cf. \cite[above (1.2)]{T1996}). Now set
\eqn{
  K_\epsilon = \overline{ \bigcap_{n \in \NN} K(\epsilon 2^{-n},1/n)}
}
to conclude the proof of the claim.
\end{proof}
%
%
\begin{lemma}(Analogue to \cite[Lemma~5.2]{K2017})
\label{LEM:A-small-mass-at-front}
Let $t \geq 0$ and $a, \tilde{m}>0$, $0<b \leq 1$ be arbitrarily fixed. Then 
\eqan{
  &\PP_{\nu_T^{*,l}}\!\left( \big\langle u_t(\fatcdot + R_0(u_t)) , \1_{(-2a,\infty)}(\fatcdot) \big\rangle < \tilde{m} \right) \\
   &\leq \left( \left( 1 - \frac{C_1\big( \underline{\theta}, \overline{\theta} \big) b^{1/4}}{a} \right) \vee 0 \right)^{-1} \left\{
  \frac{T+t}{T} \frac{C_2\big( \underline{\theta}, \overline{\theta} \big) b^{1/4}}{a} + \left( 1-e^{-2\overline{\theta} \frac{\tilde{m}}{1-e^{-\underline{\theta} b}}} \right) \right\} \nn
}
for all $T \geq 1$.
\end{lemma}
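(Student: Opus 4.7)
The plan is to adapt the proof of Lemma~5.2 of \cite{K2017} to the left-upper setting, exploiting the time-average structure of $\nu_T^{*,l}$. First I would unwind the definition of $\nu_T^{*,l}$ and combine the Markov property of the family $\PP_f$ with the observation that the quantity $\langle u_t(\fatcdot + R_0(u_t)), \1_{(-2a,\infty)} \rangle$ is invariant under shifts of the initial data, rewriting the left-hand side as
\eqn{
\lbeq{equ:plan-timeavg}
  p := \PP_{\nu_T^{*,l}}\!\left( \langle u_t(\fatcdot + R_0(u_t)) , \1_{(-2a,\infty)} \rangle < \tilde{m} \right) = \frac{1}{T} \int_t^{T+t} \PP(B_{s'})\, ds',
}
where $B_{s'} := \big\{ \langle u_{s'}^{*,l}, \1_{(R_0(u_{s'}^{*,l})-2a,\infty)} \rangle < \tilde{m} \big\}$.

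Next, for $s' \geq b$, I would introduce the look-back event $C_{s'} := \{|R_0(u_{s'}^{*,l}) - R_0(u_{s'-b}^{*,l})| \leq a\}$ and split $\PP(B_{s'}) \leq \PP(B_{s'} \cap C_{s'}) + \PP(C_{s'}^c)$. On $C_{s'}$, the estimate $R_0(u_{s'-b}^{*,l}) \geq R_0(u_{s'}^{*,l}) - a$ gives the inclusion of support intervals $(R_0(u_{s'-b}^{*,l})-a,\infty) \subset (R_0(u_{s'}^{*,l})-2a,\infty)$, and hence $B_{s'} \cap C_{s'} \subset D_{s'} := \big\{ \langle u_{s'}^{*,l} , \1_{(R_0(u_{s'-b}^{*,l})-a,\infty)} \rangle < \tilde{m} \big\}$. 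The change of variables $s' \mapsto s' - b$, the Markov property of $u^{*,l}$, and shift invariance identify the time-average of $\PP(C_{s'}^c)$ with $\PP_{\nu_{T+t}^{*,l}}(|R_0(u_b)| > a)$ up to the prefactor $(T+t)/T$; Lemma~\ref{LEM:analogue-49}, applied with $T+t \geq 1$ and $0 < b \leq 1$, then yields the first term $\tfrac{T+t}{T} \cdot \tfrac{C_2 b^{1/4}}{a}$.

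For the $D_{s'}$-contribution, I would condition on $\SF_{s'-b}$ and apply the strong Markov property, reducing the inner conditional probability to $\PP_{u_{s'-b}^{*,l}}( \langle u_b , \1_{(r_1-a,\infty)} \rangle < \tilde{m} )$ with $r_1 := R_0(u_{s'-b}^{*,l})$. Decomposing $u_{s'-b}^{*,l} = f_1 + f_2$ with $f_1 := \1_{(r_1-a, r_1]} u_{s'-b}^{*,l}$ supported in the front window and coupling by monotonicity with the $f_1$-started evolution, the regime $\langle f_1, 1\rangle \leq \tilde{m}$ is controlled by the extinction estimate \cite[(2.11)]{K2017}, bounding the survival probability of the $f_1$-started component by $1 - \exp\!\bigl( -2\overline{\theta} \tilde{m} / (1-e^{-\underline{\theta} b}) \bigr)$. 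The remaining scenarios---in which $f_1$ has substantial initial mass but its evolution leaks leftwards across $r_1 - a$ in time $b$, or $f_2$-mass drifts rightwards into the front window---are marker-fluctuation events for the monotonicity-coupled auxiliary processes. A version of Lemma~\ref{LEM:analogue-49} for those auxiliary laws (using the same short-time heat-kernel estimates from \cite[Lemma~3.7]{T1996}, which are insensitive to the initial profile) bounds the corresponding time-averaged contribution by $\tfrac{C_1 b^{1/4}}{a} \cdot p$. Collecting the three pieces produces the self-bounding inequality
\eqn{
  p \leq \frac{T+t}{T} \cdot \frac{C_2 b^{1/4}}{a} + \left( 1 - e^{-2\overline{\theta}\tilde{m}/(1-e^{-\underline{\theta} b})} \right) + \frac{C_1 b^{1/4}}{a} \cdot p,
}
which solves to the claim when $1 - C_1 b^{1/4}/a > 0$ (and becomes trivial otherwise, as encoded by the outer $(\fatcdot \vee 0)^{-1}$).

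The hard part will be cleanly isolating the leakage contribution in the third paragraph and tracing it back to $p$ itself after time-averaging, so that the self-bounding really closes in the form $p \leq \cdots + (C_1 b^{1/4}/a)\, p$ rather than involving some uncontrolled auxiliary probability. This requires verifying that Lemma~\ref{LEM:analogue-49}'s marker-fluctuation bound persists (with a constant depending only on $\underline{\theta},\overline{\theta}$) for the auxiliary laws of the $f_1$- and $f_2$-started restrictions of $u_{s'-b}^{*,l}$, and then aligning the resulting averaging with the integral defining $p$. A minor secondary point is the treatment of $s' \in [t, b)$ in the case $t < b$, where the look-back is unavailable; there one uses the trivial bound $\PP(B_{s'}) \leq 1$ and absorbs the resulting $(b-t)/T$ contribution into the first term by means of $b \leq 1$.
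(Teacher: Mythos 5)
Your first two steps (rewriting $p$ as a time average over $[t,T+t]$ and splitting off the look-back marker-fluctuation event $C_{s'}^c$ to produce the $\tfrac{T+t}{T}\tfrac{C_2 b^{1/4}}{a}$ term) are sound. The gap is in the third paragraph, and it is structural: you work in the look-\emph{back} direction ($\SF_{s'-b} \to s'$), but the extinction estimate \cite[(2.11)]{K2017} is a \emph{forward} statement: starting from mass $\leq \tilde m$, the process dies by time $b$ with probability at least $e^{-2\theta\tilde m/(1-e^{-\theta b})}$. The event $D_{s'}$ (small mass at time $s'$ in $(r_1-a,\infty)$) is not implied by, nor does it imply, survival of the $f_1$-started sub-process over $[s'-b,s']$; if anything, small mass at time $s'$ is \emph{consistent} with $f_1$ having died, which is the likely outcome when $\langle f_1,1\rangle \leq \tilde m$. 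So $\PP_f(D_{s'} \cap \{\langle f_1,1\rangle\leq\tilde m\}\mid\SF^u_{s'-b})$ cannot be upper bounded by the survival probability $1-e^{-\cdots}$ as claimed; the logical direction is reversed, and neither case in your case split produces a valid bound on $\PP(D_{s'})$.

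The argument that actually yields the stated inequality is the look-\emph{forward} version: on $A_{s'}$ itself (the event of interest), decompose $u_{s'}^{*,l}=g_1+g_2$ with $g_1=\1_{(R_0(s')-2a,\infty)}u_{s'}^{*,l}$ (total mass $<\tilde m$ by $A_{s'}$) and $g_2$ supported on $(-\infty,R_0(s')-2a]$, and use the coupling with two independent processes so that $u^{*,l}_{s'+b}\le u_b^{(g_1)}+u_b^{(g_2)}$. Conditionally on $\SF_{s'}$, $\{u_b^{(g_1)}\not\equiv 0\}$ has probability $\leq 1-e^{-2\overline\theta\tilde m/(1-e^{-\underline\theta b})}$ (the standalone term), while on $\{u_b^{(g_1)}\equiv 0\}$ the marker at $s'+b$ is controlled by $R_0(u_b^{(g_2)})$, whose excess over $R_0(g_2)$ is bounded via a fresh $\theta$-$*$-coupling by $C_1 b^{1/4}/a$; since $g_2$ lags by $\geq 2a$, the remaining alternative is $R_0(u_{s'+b}^{*,l})\leq R_0(u_{s'}^{*,l})-a$, whose time-average gives the $\tfrac{T+t}{T}\tfrac{C_2 b^{1/4}}{a}$ term. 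This produces exactly $\PP(A_{s'})\leq\PP(\text{marker drop})+(1-e^{-\cdots})+\tfrac{C_1 b^{1/4}}{a}\PP(A_{s'})$, which, averaged over $s'$, closes the self-bounding inequality in the stated form. Your own ``hard part'' comment correctly identifies the unresolved step; the fix is to flip the time direction rather than to persist with the look-back and decompose $u^{*,l}_{s'-b}$.
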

%
%

Recall from \eqref{equ:def-M} for $d_0, m_0>0$ the definition of
\eqn{
  M(d_0,m_0) = \big\{ f \in \SC_{tem}^+: \mbox{ there exist } -1/2 \leq l_0 < r_0 \leq 0 \mbox{ with } |r_0-l_0|=d_0 \mbox{ such that } f \geq m_0 \1_{[l_0,r_0]} \big\}.
}
%
%
\begin{corollary}
\label{COR:uniform-properties}
Let $\epsilon>0$ arbitrary. Then there exist $d_0=d_0(\epsilon), m_0=m_0(\epsilon)>0$ such that 
\eqn{
  \inf_{\underline{\theta} \leq \theta \leq \overline{\theta}} \big( \nu_T^{*,l}(\theta) \big)\big( M(d_0,m_0) \big) \geq 1-\epsilon 
}
for all $T>1$.
\end{corollary}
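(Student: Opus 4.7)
The plan is to combine the two uniform estimates just established: Lemma~\ref{LEM:A-small-mass-at-front} controls the mass of $f$ immediately behind the front, while Lemma~\ref{LEM:uniform-tightness} provides a uniform Hölder-type modulus of continuity for $f$. Because $\nu_T^{*,l}(\theta)$ is supported on $\{f : R_0(f) = 0\}$ (so that $f$ vanishes on $(0, \infty)$), these two ingredients together will force $f$ to sit above a bump $m_0 \1_{[l_0, r_0]}$ on some subinterval $[l_0, r_0] \subset [-1/2, 0]$.

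First I would apply Lemma~\ref{LEM:A-small-mass-at-front} at $t = 0$ and $a = 1/4$. Using $R_0(f) = 0$ $\nu_T^{*,l}(\theta)$-a.s., this bounds
\begin{equation*}
\nu_T^{*,l}(\theta)\big(\{ f : \langle f, \1_{(-1/2, 0]} \rangle < \tilde m \}\big)
\leq \big( (1 - 4 C_1 b^{1/4}) \vee 0 \big)^{-1} \big\{ 4 C_2 b^{1/4} + \big(1 - e^{-2 \overline{\theta} \tilde m / (1 - e^{-\underline{\theta} b})}\big) \big\}
\end{equation*}
uniformly in $T > 1$ and $\theta \in [\underline{\theta}, \overline{\theta}]$. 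Picking $b \in (0, 1]$ small first, and then $\tilde m > 0$ small accordingly, the right-hand side can be made smaller than $\epsilon/2$, so with $\nu_T^{*,l}(\theta)$-probability at least $1 - \epsilon/2$ one has $\int_{-1/2}^0 f(x)\,dx \geq \tilde m$.

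Next I would invoke Lemma~\ref{LEM:uniform-tightness} to obtain a compact $K_{\ast} \subset \SC_{tem}^+$, contained in some $K(C, \delta, \gamma, \mu)$ from \eqref{equ:def-K}, with $\nu_T^{*,l}(\theta)(K_{\ast}) \geq 1 - \epsilon/2$ uniformly. On the intersection of the two events (probability $\geq 1 - \epsilon$), $f$ satisfies $|f(x) - f(y)| \leq C |x - y|^\gamma e^{\mu |x|}$ whenever $|x - y| \leq \delta$, and a mean-value observation produces some $x_0 \in [-1/2, 0]$ with $f(x_0) \geq 2 \tilde m$. Selecting $d_0 \in (0, \delta \wedge 1/4]$ small enough that $C d_0^\gamma e^{\mu/2} \leq \tilde m$, the modulus forces $f \geq \tilde m$ throughout $[x_0 - d_0, x_0 + d_0]$, and an easy case analysis on the location of $x_0$ shows that this window always contains a length-$d_0$ subinterval $[l_0, r_0] \subset [-1/2, 0]$. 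Setting $m_0 = \tilde m$ then places $f$ in $M(d_0, m_0)$.

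The only point requiring care will be keeping every constant uniform in $\theta \in [\underline{\theta}, \overline{\theta}]$ and $T > 1$; fortunately this uniformity is already baked into the constants of Lemmas~\ref{LEM:A-small-mass-at-front} and~\ref{LEM:uniform-tightness}, so the combination becomes essentially bookkeeping, the only mild subtlety being the order of quantifiers (fix $b$ first, then $\tilde m$) in the first step.
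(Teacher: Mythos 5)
Your proposal is correct and follows essentially the same route as the paper: apply Lemma~\ref{LEM:A-small-mass-at-front} with $t=0$, $a=1/4$, choosing $b$ then $\tilde m$ to pin down mass on $(-1/2,0]$, combine with the uniform tightness/H\"older modulus from Lemma~\ref{LEM:uniform-tightness}, and use the mean-value observation plus the modulus of continuity to extract the bump. The only differences from the paper's version are cosmetic (you carry out the localization of the H\"older bound and the case analysis on $x_0$ slightly more explicitly).
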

%
%
\begin{proof}
In Lemma~\ref{LEM:A-small-mass-at-front} choose $t=0$ and $a=1/4$. Then choose $b$ small enough and then $\tilde{m}$ small enough to obtain 
\eqn{
  \inf_{\underline{\theta} \leq \theta \leq \overline{\theta}} \big( \nu_T^{*,l}(\theta) \big)\!\left( \{ f: \langle f, \1_{(-1/2,0]}(\fatcdot) \rangle \geq \tilde{m} \} \right) \geq 1-\epsilon/2 \quad \mbox{ for all } T \geq 1.
}
By \eqref{equ:uniform-tightness-properties}--\eqref{equ:def-K} for $\lambda=1$ there exist $C<\infty, \gamma, \delta >0, \mu<1$ and $A>0$, such that
\eqn{
    \inf_{\underline{\theta} \leq \theta \leq \overline{\theta}} \big( \nu_T^{*,l}(\theta) \big)\big( K(C,\delta,\gamma,\mu) \cap \{ f \in \SC_{tem}^+: \langle f , \phi_1 \rangle \leq A \} \big) \geq 1-\epsilon/2 \quad \mbox{ for all } T>1.
  }
For deterministic $f_0 \in \SC_{tem}^+$, note that if $f_0 \in \{ \SC_{tem}^+: \langle f, \1_{[-1/2,0]} \rangle \geq \tilde{m} \} \cap \{ f \in \SC_{tem}^+: |f(x)-f(x')| \leq C |x-x'|^\gamma \mbox{ for all } x, x' \in [-1,0], |x-x'| \leq \delta \}$, then there exists $x_0 \in [-1/2,0]$ such that $f(x_0) \geq 2\tilde{m}$. Now use the H\"older-$\gamma$-continuity of $f_0$ around $x_0$ to obtain the existence of $d_0>0$ such that there exist $l_0 \leq x_0 \leq r_0, l_0, r_0 \in [-1/2,0], |r_0-l_0| = d_0$ and $0<m_0<2\tilde{m}$ such that $f(x) \geq m_0$ for all $l_0 \leq x \leq r_0$. 
\end{proof}
%
%
\begin{proposition}(Analogue to \cite[Proposition~1.7]{K2017}) 
\label{PRO:analogue-17}
Let $\nu_{T_n}^{*,l}$ be a subsequence that converges to $\nu^{*,l}$. Then $\nu^{*,l}(\{ f: R_0(f) = 0 \})=1$ and $\PP_{\nu^{*,l}}(u(t) \not \equiv 0)=1$ for all $t \geq 0$.
\end{proposition}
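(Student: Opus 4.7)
\textbf{Proof plan for Proposition~\ref{PRO:analogue-17}.} The statement contains two claims: (a) $\nu^{*,l}(\{f:R_0(f)=0\})=1$, and (b) $\PP_{\nu^{*,l}}(u_t\not\equiv 0)=1$ for every $t\ge 0$. The plan is to extract quantitative uniform-in-$T$ bounds for $\nu_T^{*,l}$ from Lemmas~\ref{LEM:analogue-49} and~\ref{LEM:A-small-mass-at-front} and transfer them to $\nu^{*,l}$ via portmanteau applied to suitable closed sets, both in $\SC_{tem}^+$ and in path space; here the continuity of $f\mapsto\PP_f$ from Theorem~\ref{THM:tribe} ensures $\PP_{\nu_{T_n}^{*,l}}\Rightarrow\PP_{\nu^{*,l}}$.

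For (a) I split into $R_0(f)\le 0$ and $R_0(f)\ge 0$. Point evaluation $f\mapsto f(x_0)$ is continuous on $\SC_{tem}^+$, so $\{f:f(x_0)=0\}$ is closed, and by construction $\nu_T^{*,l}(\{f(x_0)=0\})=1$ for every $x_0>0$ and every $T>1$. Portmanteau gives $\nu^{*,l}(\{f(x_0)=0\})=1$, and intersecting over $x_0\in\QQ\cap(0,\infty)$ with continuity of $f$ yields $R_0(f)\le 0$ $\nu^{*,l}$-almost surely. For $R_0(f)\ge 0$, apply Lemma~\ref{LEM:A-small-mass-at-front} at $t=0$: since $\nu_T^{*,l}$ is concentrated on $\{R_0=0\}$, $\langle f,\1_{(-2a,\infty)}\rangle=\int_{-2a}^0 f$ $\nu_T^{*,l}$-almost surely. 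Given $\epsilon,\eta>0$, set $a=\epsilon/2$, then pick $b>0$ small so that the prefactor $(1-C_1 b^{1/4}/a)^{-1}\le 2$ and $C_2 b^{1/4}/a<\eta/4$, and finally $\tilde m>0$ small so that $1-\exp(-2\overline\theta\tilde m/(1-e^{-\underline\theta b}))<\eta/4$; this yields $\nu_T^{*,l}(\{f:\int_{-\epsilon}^0 f\ge\tilde m\})\ge 1-\eta$ uniformly in $T>1$. The functional $f\mapsto\int_{-\epsilon}^0 f$ is continuous on $\SC_{tem}^+$, so the level set is closed, and portmanteau gives $\nu^{*,l}(\{\int_{-\epsilon}^0 f\ge\tilde m\})\ge 1-\eta$. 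Letting $\eta\downarrow 0$ (and consequently $\tilde m=\tilde m(\eta)\downarrow 0$) gives $\nu^{*,l}(\int_{-\epsilon}^0 f>0)=1$. Combined with $f\equiv 0$ on $(0,\infty)$ $\nu^{*,l}$-a.s., this produces some $x\in[-\epsilon,0]$ with $f(x)>0$, i.e., $R_0(f)\ge-\epsilon$ $\nu^{*,l}$-a.s. Intersecting over $\epsilon\in\QQ\cap(0,\infty)$ completes~(a).

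For (b), the case $t=0$ is immediate from~(a). For $t>0$, take $\phi(x)=e^{-|x|}$ and fix $\eta>0$. Lemma~\ref{LEM:analogue-49} supplies $A=A(\eta,t)$ with $\PP_{\nu_T^{*,l}}(|R_0(u_t)|\ge A)\le\eta/2$ uniformly in $T>1$, and Lemma~\ref{LEM:A-small-mass-at-front} supplies $a,\tilde m>0$ with $\PP_{\nu_T^{*,l}}\bigl(\int_{R_0(u_t)-2a}^{R_0(u_t)} u_t(y)\,dy\ge\tilde m\bigr)\ge 1-\eta/2$ uniformly in $T>1$ (using $(T+t)/T\le 1+t$ to keep the bound uniform). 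On the intersection, $\langle u_t,\phi\rangle\ge\tilde m\,e^{-(A+2a)}=:c>0$, so $\PP_{\nu_T^{*,l}}(\langle u_t,\phi\rangle\ge c)\ge 1-\eta$. The functional $u\mapsto\langle u_t,\phi\rangle$ is continuous on $\SC([0,\infty),\SC_{tem}^+)$ (estimating $|\langle f_n,\phi\rangle-\langle f,\phi\rangle|$ by $\|f_n-f\|_{1/2}\int e^{-|x|/2}dx$), so $\{\langle u_t,\phi\rangle\ge c\}$ is closed in path space; portmanteau gives $\PP_{\nu^{*,l}}(\langle u_t,\phi\rangle\ge c)\ge 1-\eta$, hence $\PP_{\nu^{*,l}}(u_t\not\equiv 0)\ge 1-\eta$. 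Sending $\eta\downarrow 0$ concludes~(b).

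The main obstacle is the lower bound in~(a): the event $\{R_0(f)\ge 0\}$ is neither open nor closed in $\SC_{tem}^+$, so weak convergence cannot be applied to it directly. The plan circumvents this by extracting from Lemma~\ref{LEM:A-small-mass-at-front} a quantitative uniform-in-$T$ lower bound on the mass of $f$ in every left neighborhood of $0$ and working with the closed level sets of the continuous functional $f\mapsto\int_{-\epsilon}^0 f$, only letting $\tilde m\downarrow 0$ after the limit $T_n\to\infty$ has been taken.
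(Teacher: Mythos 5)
Your proof is correct and, as far as the paper reveals, follows the same strategy the author has in mind (the paper only cites the analogue to \cite[Proposition~1.7]{K2017} and the surrounding Lemmas~\ref{LEM:analogue-49} and~\ref{LEM:A-small-mass-at-front} are precisely the tools needed for it). The key steps all check out: point evaluation and $f\mapsto\int_{-\epsilon}^0 f$ are indeed continuous on $\SC_{tem}^+$ under the $\normx{\fatcdot}{\lambda}$-topology; the factor $(T+t)/T\le 1+t$ in Lemma~\ref{LEM:A-small-mass-at-front} is correctly absorbed; the continuity of $f\mapsto\PP_f$ from Theorem~\ref{THM:tribe}b) does give $\PP_{\nu_{T_n}^{*,l}}\Rightarrow\PP_{\nu^{*,l}}$ on path space; and your observation that one must pass to closed level sets of continuous functionals (rather than to $\{R_0\ge 0\}$ directly) and only afterwards send $\tilde m\downarrow 0$ is exactly the point that makes the argument work. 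One minor quantitative slip: with your choices of $b,\tilde m$ the bound from Lemma~\ref{LEM:A-small-mass-at-front} at $t=0$ is $2(\eta/4+\eta/4)=\eta$, not $\eta/2$, but since you only need the final bound to be some quantity tending to $0$ with $\eta$, this is harmless and does not affect the conclusion.
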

%
%
\begin{theorem}(Analogue to \cite[Theorem~1.6]{K2017})
\label{THM:analogue-16}
Every subsequential limit of the tight set $\{ \nu_T^{*,l}: T \geq 1 \}$ yields a travelling wave solution to equation \eqref{equ:SPDE}. 
\end{theorem}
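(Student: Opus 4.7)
The plan is to verify the two defining properties of a travelling wave for the process $u^{(\nu^{*,l})}$: (i) $R_0\big(u_t^{(\nu^{*,l})}\big) \in (-\infty,\infty)$ almost surely for every $t \geq 0$, and (ii) time-stationarity of the centered process $\tilde u(t) \equiv u_t^{(\nu^{*,l})}\big(\fatcdot + R_0(u_t^{(\nu^{*,l})})\big)$. Property (i) follows immediately from Proposition~\ref{PRO:analogue-17} together with the remarks below \eqref{equ:def-R0}: $R_0(u(0))=0$ a.s.\ implies $R_0(u(t))<\infty$ for all $t \geq 0$ by the extension of Iscoe's argument, while $R_0(u(t)) > -\infty$ holds because $u(t) \not\equiv 0$ almost surely.

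For (ii), I will first show that $\SL(\tilde u(t)) = \nu^{*,l}$ for every $t \geq 0$. Fix $t>0$ and a bounded continuous $F \colon \SC_{tem}^+ \to \RR$. Starting from $f$ drawn according to $\nu_{T_n}^{*,l}$, the strong Markov property combined with the shift invariance of the dynamics (the shift by $R_0(s)$ in the initial condition is cancelled by the subsequent centering at $R_0$) yields
\eqn{
  \EE_{\nu_{T_n}^{*,l}}\!\left[ F\!\left( u_t(\fatcdot + R_0(u_t)) \right) \right]
  = T_n^{-1}\!\int_0^{T_n}\!\EE\!\left[ F\!\left( u_{s+t}^{*,l}(\fatcdot + R_0(u_{s+t}^{*,l})) \right) \right] ds.
}
A change of variables shows the right-hand side differs from $\int F\, d\nu_{T_n}^{*,l}$ by at most $2t\|F\|_\infty/T_n \to 0$, so the two quantities share the same limit along $(T_n)$.

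The main obstacle is to pass the left-hand side to the limit. While the right-hand side converges to $\int F\, d\nu^{*,l}$ directly by weak convergence, the left-hand side requires continuity in $f$ of $\EE_f[F(u_t(\fatcdot + R_0(u_t)))]$, which fails in general since $R_0$ is not continuous on $\SC_{tem}^+$. To overcome this I would use the approximation scheme of \cite{K2017}: replace $R_0$ inside $F$ by the continuous truncated marker $R_{m_0}^N$, quantify the approximation error through the mass-at-the-front bound of Lemma~\ref{LEM:A-small-mass-at-front} and the second-moment estimate of Lemma~\ref{LEM:marker-pos-part-second-moment}, and invoke the uniform tightness of Lemma~\ref{LEM:uniform-tightness} to keep all estimates uniform along the subsequence. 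Combined with the continuity of $f \mapsto \PP_f$ from Theorem~\ref{THM:tribe}, this delivers the identity $\int F\, d\nu^{*,l} = \EE_{\nu^{*,l}}[F(u_t(\fatcdot + R_0(u_t)))]$ and hence one-dimensional stationarity.

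Having established invariance of the one-dimensional marginal, full stationarity of the process $\{\tilde u(t)\}_{t \geq 0}$ follows because $\tilde u$ is itself Markov: by translation invariance of the SPDE, the conditional law of $\tilde u(t+s)$ given the past through time $t$ depends only on $\tilde u(t)$. An invariant one-dimensional marginal under a Markov evolution upgrades to stationarity of the whole path, completing the proof.
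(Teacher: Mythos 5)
Your outline correctly mirrors the argument behind \cite[Theorem~1.6]{K2017} that the theorem cites: finiteness of $R_0(u(t))$ comes from Proposition~\ref{PRO:analogue-17} together with the Iscoe-type argument noted below~\eqref{equ:def-R0}, invariance of the one-dimensional marginal comes from the Ces\`aro averaging identity combined with replacing $R_0$ by the continuous truncated marker $R_{m_0}^N$ of \cite{K2017} (with the approximation error controlled uniformly by Lemmas~\ref{LEM:uniform-tightness}, \ref{LEM:A-small-mass-at-front} and~\ref{LEM:marker-pos-part-second-moment}, and the limit passage enabled by the Feller property in Theorem~\ref{THM:tribe}), and path stationarity then follows since the centred process is itself Markov by translation invariance of the SPDE dynamics. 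This is the same route as the reference, and the final Markov upgrade from one-dimensional invariance to stationarity of the whole path is exactly the standard mechanism; no gaps.
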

%
%
\begin{remark}
\label{RMK:analogue-16}
Recall the set $\mittl^R$ from \eqref{equ:def-mittl}. The constructions and statements from above extend to initial conditions $u_0 \in \mittl^R$. Here we use Lemma~\ref{LEM:speed-bound-mittl} instead of \eqref{equ:neg-speed-bound-star}.
\end{remark}
%
%
\subsection{Coupling techniques}
\label{SUBSEC:coupling}
%
%

In what follows we shortly introduce the main coupling techniques and ideas that are used in this article. We start with the \textit{monotonicity-coupling} from \cite[Remark~2.1(i)]{K2017}.  
%
%
\begin{remark}[monotonicity-coupling]
\label{RMK:monotonicity-coupling}
Let $0 < \theta$ and $u_i \in \SC_{tem}^+, i=1,2$ with $u_1(x) \leq u_2(x)$ for all $x \in \RR$. Then there exists a coupling of solutions $u^{(i)}, i=1,2$ to \eqref{equ:SPDE} with initial conditions $u_i, i=1,2$ such that $u^{(1)}(t,x) \leq u^{(2)}(t,x)$ for all $t \geq 0, x \in \RR$ almost surely. For intuition purposes, compare the construction of \cite[Lemma~2.1.7]{MT1994}. The main idea is to write
\eqan{
  \frac{\partial u^{(1)}}{\partial t} &= \Delta u^{(1)} + \big( \theta - u^{(1)} \big) u^{(1)} + \sqrt{u^{(1)}} \dot{W_1}, \qquad u^{(1)}(0)=u_1, \\
  \frac{\partial v}{\partial t} &= \Delta v + \big( \theta - v - 2u^{(1)} \big) v + \sqrt{v} \dot{W_2}, \qquad v(0)=u_2-u_1, \lbeq{equ:SPDE-v-monotonicity-coupling}
}
where $W_1, W_2$ are independent white noises and $u^{(2)} \equiv u^{(1)} + v$ with $v(t,x) \geq 0$ for all $t \geq 0, x \in \RR$ almost surely. $v$ is constructed (conditional on $u^{(1)}$) as a process with annihilation due to competition with $u^{(1)}$. Now recall \cite[(1.8)]{K2017} to note that 
\eqan{
  & \Big\langle \int_0^{\fatcdot} \int \big| u^{(1)}(s,x) \big|^{1/2} \phi(x) dW_1(x,s) + \big| v(s,x) \big|^{1/2}  \phi(x) dW_2(x,s) \Big\rangle_t 
  = \int_0^t \int \big( u^{(1)}(s,x) + v(s,x) \big) \phi^2(x) dx ds \\
  &= \Big\langle \int_0^{\fatcdot} \int \big| u^{(1)}(s,x) + v(s,x) \big|^{1/2}  \phi(x) dW(x,s) \Big\rangle_t 
  = \Big\langle \int_0^{\fatcdot} \int \big| u^{(2)}(s,x) \big|^{1/2}  \phi(x) dW(x,s) \Big\rangle_t \nn
}
for $W$ a white noise appropriately chosen.
\end{remark}
%

In this article we call a \textit{$\theta$-coupling} a coupling in the spirit of \cite[Lemma~2.1.6]{MT1994}. To be more precise, use the techniques of \cite[(2.2)--(2.4)]{K2017} to show the following.
%
%
\begin{remark}[$\theta$-coupling]
Let $0 < \theta_1 < \theta_2$. Let $u_0 \in \SC_{tem}^+$. Then there exists a coupling of solutions $u^{(i)}, i=1,2$ to \eqref{equ:SPDE} with common initial condition $u_0$ but different parameters $\theta_1$ respectively $\theta_2$ such that $u^{(1)}(t,x) \leq u^{(2)}(t,x)$ for all $t \geq 0, x \in \RR$ almost surely. The main idea is to write
\eqan{
  \frac{\partial u^{(1)}}{\partial t} &= \Delta u^{(1)} + \big( \theta_1 - u^{(1)} \big) u^{(1)} + \sqrt{u^{(1)}} \dot{W_1}, \qquad u^{(1)}(0)=u_0, \\
  \frac{\partial v}{\partial t} &= \Delta v + (\theta_2-\theta_1) u^{(1)} + \big( \theta_2 - v - 2u^{(1)} \big) v + \sqrt{v} \dot{W_2}, \qquad v(0)=0, \lbeq{equ:SPDE-v-theta-coupling}
}
where $W_1, W_2$ are independent white noises and $u^{(2)} \equiv u^{(1)} + v$ with $v(t,x) \geq 0$ for all $t \geq 0, x \in \RR$ almost surely. $v$ is constructed (conditional on $u^{(1)}$) as a process with annihilation due to competition with $u^{(1)}$ and an immigration-term $(\theta_2-\theta_1) u^{(1)}$.
\end{remark}
%

In what follows we call a \textit{coupling with two independent processes} a coupling in the spirit of \cite[Lemma~2.1.7]{MT1994}. To be more precise, use the techniques of \cite[(2.2)--(2.4)]{K2017} to show the following.
%
%
\begin{remark}[coupling with two independent processes]
\label{RMK:coupling-independent}
Let $0<\theta$. Let $u_1, u_2 \in \SC_{tem}^+$ and $u_0 \equiv u_1+u_2$. Then there exists a coupling of solutions $u^{(i)}, i=0,1,2$ to \eqref{equ:SPDE} with initial conditions $u_i, i=0,1,2$ such that $u^{(1)}$ and $u^{(2)}$ are independent and $u^{(0)}(t,x) \leq u^{(1)}(t,x) + u^{(2)}(t,x)$ for all $t \geq 0, x \in \RR$ almost surely. The main idea is to write
\eqan{
  \frac{\partial u^{(1)}}{\partial t} &= \Delta u^{(1)} + \big( \theta - u^{(1)} \big) u^{(1)} + \sqrt{u^{(1)}} \dot{W_1}, \qquad u^{(1)}(0)=u_1, \\
  \frac{\partial v}{\partial t} &= \Delta v + \big( \theta - v - 2u^{(1)} \big) v + \sqrt{v} \dot{W_2}, \qquad v(0)=u_2, \nn\\
  \frac{\partial u^{(2)}}{\partial t} &= \Delta u^{(2)} + \big( \theta - u^{(2)} \big) u^{(2)} + \sqrt{u^{(2)}} \dot{W_2}, \qquad u^{(2)}(0)=u_2, \nn
}
where $W_1, W_2$ are independent white noises and $u^{(0)} \equiv u^{(1)} + v$ with $v(t,x) \leq u^{(2)}(t,x)$ for all $t \geq 0, x \in \RR$ almost surely. $v$ is constructed (conditional on $u^{(1)}$) as a process with annihilation due to competition with $u^{(1)}$ contrary to $u^{(2)}$, where no annihilation takes place. The independence of $u^{(1)}$ and $u^{(2)}$ follows from the independence of the white noises $W_1, W_2$.
\end{remark}
%

An \textit{immigration-coupling} is constructed similarly to a \textit{$\theta$-coupling}, where the immigration-term only depends on an outside source.
%
%
\begin{remark}[immigration-coupling]
Let $\alpha_1, \alpha_2-\alpha_1 \in \SC([0,\infty),\SC_{tem}^+)$. Let $u_0 \in \SC_{tem}^+$. Then there exists a coupling of solutions $u^{(i)}, i=1,2$ solving
\eqn{
  \frac{\partial u^{(i)}}{\partial t} = \Delta u^{(i)} + \alpha_i + \big( \theta - u^{(i)} \big) u^{(i)} + \sqrt{u^{(i)}} \dot{W_i}, \qquad u^{(i)}(0)=u_0, \qquad i=1,2
}
with $W_1, W_2$ two independent white noises, such that $u^{(1)}(t,x) \leq u^{(2)}(t,x)$ for all $t \geq 0, x \in \RR$ almost surely. The main idea is to write $u^{(2)} \equiv u^{(1)} + v$ with $v \geq 0$ satisfying
\eqn{
  \frac{\partial v}{\partial t} = \Delta v + (\alpha_2-\alpha_1) + \big( \theta - v - 2u^{(1)} \big) v + \sqrt{v} \dot{W_2}, \qquad v(0)=0.
}
$v$ is constructed (conditional on $u^{(1)}$) as a process with annihilation due to competition with $u^{(1)}$ and an immigration-term $\alpha_2-\alpha_1$.
\end{remark}
%
%
Note that conditional on $u^{(1)} \in \SC([0,\infty),\SC_{tem}^+)$ all the processes $(v(t))_{t \geq 0}$ fit into the framework of \cite[(2.4)]{T1996} and are as such non-negative.
%
%
The final coupling we present is of a different flavor. It is based on the approximation of solutions to \eqref{equ:SPDE} with initial conditions $u_0 \in \SC_{tem}^+$ by means of densities of rescaled long-range contact processes, see \cite[Theorem~1]{MT1995} for the convergence result. Note that the parameter $\theta_c$ in \cite{MT1995} denotes an arbitrary $\theta>0$ and does not have any relation to the critical parameter $\theta_c$ of the present article. Changes in constants are 

We use the construction of an approximating particle system $(\xi_t^n(f_0))_{t \geq 0}$ for $n \in \NN$ resulting in a solution to \eqref{equ:SPDE} with initial condition $f_0 \in \SC_{tem}^+$ from \cite{MT1995}. The dynamics are modeled by means of i.i.d. Poisson processes given at the beginning of Section~2. Their rates depend in a monotone way on the parameter $\theta$. Initial conditions $f_0$ get approximated by approximate densities $A_c(\xi_0^n(f_0)), n \in \NN$, compare the definition preceding Theorem~1. The approximate densities $(A_c(\xi_t^n(f_0))_{t \geq 0}$ converge to a solution to \eqref{equ:SPDE} with initial condition $f_0$. 
%
%

For the next lemma, recall the definition of $\upsilon_T = \upsilon_T(\theta)$ from \cite[Remark~2.8]{K2017}. Note in particular the use of the non-decreasing sequence $\zeta_N \in \SC_{tem}^+, N \in \NN$.
%
%
\begin{lemma}[$\theta$-$*$-coupling]
\label{LEM:star-coupling}
Let $0<\theta_1<\theta_2$ and $T>0$ be arbitrarily fixed. There exists a coupling of two processes $\big( u_{T+t}^{*,l}(\theta_i) \big)_{t \geq 0}$, $i=1,2$  such that $\SL\big( \big( u_{T+t}^{*,l}(\theta_i) \big)_{t \geq 0} \big) = \PP_{\upsilon_T(\theta_i)}$, $i=1,2$. Moreover,
\eqn{
\lbeq{equ:star-coupling}
  \big( u_{T+t}^{*,l}(\theta_1) \big)(x) \leq \big( u_{T+t}^{*,l}(\theta_2) \big)(x) \mbox{ for all } x \in \RR, t \geq 0 \mbox{ a.s.}
}
This result also holds for a finite number of $0<\theta_1<\cdots<\theta_m, m \in \NN$.
\end{lemma}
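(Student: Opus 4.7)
The plan is to realize the two limiting processes on a single probability space in three stages: a pathwise $\theta$-coupling at the level of the approximating long-range contact processes of \cite{MT1995}, passage to the SPDE through \cite[Theorem~1]{MT1995}, and the monotone $N\to\infty$ approximation from \cite[Remark~2.8]{K2017}. Fix a non-decreasing sequence $\zeta_N \in \SC_{tem}^+$ with $\zeta_N \uparrow \Phi$ as used to construct $u^{*,l}$. For each $n,N\in\NN$ and each $i\in\{1,2\}$, let $\xi^{n,N,\theta_i}$ denote the rescaled long-range contact process from \cite[Theorem~1]{MT1995} built from initial configuration encoding $\zeta_N$ and birth parameter $\theta_i$.

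At the particle level all these processes can be realized on a common graphical representation driven by a single collection of i.i.d.\ Poisson clocks. The death clock is independent of $\theta$, and the birth clocks can be thinned so that each $\theta_1$-birth arrow is also a $\theta_2$-birth arrow (for instance, attach an independent uniform mark to each birth attempt and accept it for parameter $\theta_i$ if the mark is below $\theta_i/\overline\theta$). The initial configurations are nested so that $\xi^{n,N,\theta_i}_0 \subset \xi^{n,N+1,\theta_i}_0$. The standard contact-process coupling then yields, pathwise, $\xi^{n,N,\theta_1}_t \subset \xi^{n,N,\theta_2}_t$ and $\xi^{n,N,\theta_i}_t \subset \xi^{n,N+1,\theta_i}_t$ for all $t\geq 0$ a.s., and these inclusions are preserved by the approximate-density map $A_c$. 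Taking $n\to\infty$ along a common subsequence (joint tightness follows from tightness of each marginal, and the marginals are identified via \cite[Theorem~1]{MT1995} together with the uniqueness-in-law statement of Theorem~\ref{THM:tribe}), one obtains a realization of the family $\{u^{(\zeta_N)}_{T+\cdot}(\theta_i): N\in\NN,\, i\in\{1,2\}\}$ on a single probability space, monotone both in $N$ and in $i$.

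The limit $N\to\infty$ then exists pointwise a.s.\ by \cite[Remark~2.8]{K2017}, yielding $u^{*,l}_{T+\cdot}(\theta_i) = \uparrow\lim_N u^{(\zeta_N)}_{T+\cdot}(\theta_i)$ with the correct marginal law $\PP_{\upsilon_T(\theta_i)}$ and with \eqref{equ:star-coupling} inherited from the approximations. The extension to $0<\theta_1<\cdots<\theta_m$ requires only an $m$-fold nested thinning of the birth arrows in the graphical representation, producing the chain $\xi^{n,N,\theta_1}_t \subset \cdots \subset \xi^{n,N,\theta_m}_t$, and repeating the argument verbatim. The main obstacle is the joint convergence step: while each marginal $A_c(\xi^{n,N,\theta_i})$ has a well-identified SPDE limit, one must verify that a subsequential joint limit exists whose marginals are the correct $u^{(\zeta_N)}(\theta_i)$; the pointwise-monotonicity structure makes joint tightness essentially automatic, and marginal identification relies on the uniqueness-in-law and continuity statements in Theorem~\ref{THM:tribe}(b).
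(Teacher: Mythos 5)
Your proposal follows essentially the same route as the paper's proof: couple the approximating long-range contact processes of \cite{MT1995} at the particle level via a common graphical representation (the paper realizes the $\theta$-monotonicity by splitting the birth clock into a rate-$(n+\theta_1)$ process $P$ and an independent rate-$(\theta_2-\theta_1)$ process $Q$ so that $P+Q$ drives the $\theta_2$-system, whereas you use thinning marks --- a cosmetic difference), nest the initial configurations so monotonicity in $N$ holds pathwise, pass to the $n\to\infty$ SPDE limit jointly, and finally take $N\to\infty$ as in \cite[Remark~2.8]{K2017}. The one place where you are somewhat more hand-wavy than the paper is the joint convergence step: the paper tracks the nonnegative difference processes $A_n(N,\Delta\theta)$, $A_n(\Delta N,\theta_i)$ explicitly as coordinates of a product space $X$ and invokes Jacod--Shiryaev \cite[Corollary~VI.3.33]{bJS2003} (marginal Skorokhod convergence to continuous limits implies joint convergence), then applies Skorokhod representation; your remark that ``joint tightness is essentially automatic from the monotonicity structure'' glosses over this, though the conclusion you need is indeed reachable by the same mechanism once you note the limits are continuous. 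In substance the two arguments coincide.
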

%
%
This coupling relies on two properties of the processes involved. Firstly, we use the monotonicity of the respective solutions resulting from $\theta_1<\theta_2$ for each initial condition $\zeta_N, N \in \NN$, secondly, for $\theta_i$ fixed, we use the construction of $\big( u_{T+t}^{*,l}(\theta_i) \big)_{t \geq 0}$ by means of a non-decreasing sequence $\big( u_{T+t}^{(\zeta_N)}(\theta_i) \big)_{t \geq 0}, N \in \NN$ as in \cite[Remark~2.8]{K2017}. Unfortunately, we could not make the constructions from above work to integrate these two steps into one. Thus we had to make use of the approximation by discrete particle systems, where at least the motivation for the veracity of the above result should be easily accessible to the reader.
%
%
\begin{proof}
The \textit{dynamics} of the $n^{th}$ approximation for $\theta_i, i=1,2$ use the same set of i.i.d. Poisson processes for death-events. For birth-events, consider i.i.d. Poisson processes 
\eqan{
  & \big( P_t(x,y): x, y \in n^{-2} \ZZ, x \mbox{ neigbor of } y \big) \ \mbox{ with rate } (2c_1n^{3/2})^{-1}(n+\theta_1), \\
  & \big( Q_t(x,y): x, y \in n^{-2} \ZZ, x \mbox{ neighbor of } y \big) \ \mbox{ with rate } (2c_1n^{3/2})^{-1}(\theta_2-\theta_1), \nn
}
where $c_1(n) \rightarrow 1$ as $n \rightarrow \infty$. For the $\theta_1$-system, at a jump of $P_t(x,y)$, if the site $x$ is occupied, there is a birth and the site $y$, if vacant, becomes occupied (cf. beginning of \cite[Section~2]{MT1995}). In our coupling, for the $\theta_2$-system, at a jump of $P_t(x,y)$ or $Q_t(x,y)$ the same holds. Note that $\big( P_t(x,y)+Q_t(x,y): x,y \in n^{-2} \ZZ, x \mbox{ neighbor of } y \big)$ is a family of i.i.d. Poisson processes with rate $(2c_1n^{3/2})^{-1}(n+\theta_2)$. As a result, given the same initial configurations, the $\theta_2$-system dominates the $\theta_1$-system. 

Additionally, we construct a set of \textit{initial conditions} $\big( \xi_0^n(\zeta_N): N \in \NN \big)$ of the $n^{th}$ approximating particle systems as follows below. They are the same for the $\theta_1$- and $\theta_2$-system. After linear interpolation in space, $A_c(\xi_0^n(\zeta_N))$ converges in $\SC_{tem}^+$ to $\zeta_N$ for $n \rightarrow \infty$ for all $N \in \NN$ and (use that the sequence $(\zeta_N)_{N \in \NN}$ is non-decreasing), $\xi_0^n(\zeta_{N_1}) \leq \xi_0^n(\zeta_{N_2})$ for $N_1 \leq N_2$. By \cite[Theorem~1]{MT1995}, the approximating densities $\big( A_c(\xi_t^n(\zeta_N))(\theta_i) \big)_{t \geq 0}, i=1,2$ converge in distribution for $n \rightarrow \infty$ to \textit{continuous} solutions $\big( u_t^{(\zeta_N)}(\theta_i) \big)_{t \geq 0}, i=1,2$ of \eqref{equ:SPDE} with initial conditions $\zeta_N \in \SC_{tem}^+$. By construction, 
\eqn{
\lbeq{equ:mon-coup-init-n}
  A_c(\xi_t^n(\zeta_{N_1}))(\theta_i) \leq A_c(\xi_t^n(\zeta_{N_2}))(\theta_j)
}
for all $t \geq 0, N_1 \leq N_2, \theta_i \leq \theta_j, i,j \in \{1,2\}$. 

For $n \in \NN$ fixed, use the following coupling to obtain $n^{th}$-approximations $\big( \xi_0^n(\zeta_N): N \in \NN \big)$ for a family of initial conditions $(\zeta_N)_{N \in \NN}$ as in \cite[Remark~2.8]{K2017}. Assume without loss of generality that for all $N \in \NN$, $\zeta_N \in \SC_{tem}^+$ is a bounded continuously differentiable function with bounded first derivatives. To construct the initial conditions of the $n^{th}$ approximating particle system recall that each site $z \in n^{-2} \ZZ$ has $2c_1n^{3/2}$ neighbors (including $z$) and for $f_0 \in \SC_{tem}^+$, $A_c(\xi_0^n(f_0))(z) = (2c_1 n^{1/2})^{-1} \sum_{y \; neighbor\; of\; z} \big( \xi_0^n(f_0) \big)(y)$. For $z \in n^{-2} \ZZ$, let
\eqn{
\lbeq{equ:approx-f0}
  \big( \xi_0^n(f_0) \big)(z) 
  = \begin{cases}
       1, & \exists k \in \ZZ: z \in \Big\{ \frac{k \cdot 2 c_1 n^{3/2}}{n^2}, \frac{k \cdot 2 c_1 n^{3/2}+1}{n^2}, \ldots, \frac{k \cdot 2 c_1 n^{3/2} + \lfloor 2 c_1 n^{1/2} f_0(k c_1 n^{-1/2}) \rfloor}{n^2} \Big\}, \cr
       0, & \mbox{ otherwise.}
     \end{cases}
}
Then $A_c(\xi_0^n(\zeta_N))$, after linear interpolation in space, converges in $\SC_{tem}^+$ to $\zeta_N$ for $n \rightarrow \infty$ for all $N \in \NN$ and by construction, \eqref{equ:mon-coup-init-n} is fulfilled. 

Assume without loss of generality that $\tilde{d}$ is such that $\big( \SD([0,\infty),\SC_{tem}^+),\tilde{d} \big)$ is a Polish space (recall $d$ from below \cite[(1.6)]{K2017} and cf. Ethier and Kurtz \cite[Theorem~III.5.6]{bEK2005}). We have
\eqan{
  &\big( A_c(\xi_{\fatcdot}^n(\zeta_N))(\theta_1), A_c(\xi_{\fatcdot}^n(\zeta_N))(\theta_2), A_c(\xi_{\fatcdot}^n(\zeta_N))(\theta_2) - A_c(\xi_{\fatcdot}^n(\zeta_N))(\theta_1), \\
  &\quad A_c(\xi_{\fatcdot}^n(\zeta_{N+1}))(\theta_2) - A_c(\xi_{\fatcdot}^n(\zeta_N))(\theta_2), A_c(\xi_{\fatcdot}^n(\zeta_{N+1}))(\theta_1) - A_c(\xi_{\fatcdot}^n(\zeta_N))(\theta_1) \big)_{N \in \NN} \nn\\
  & \equiv \big( A_n(N,\theta_1), A_n(N,\theta_2), A_n(N,\Delta\theta), A_n(\Delta N,\theta_1), A_n(\Delta N,\theta_2) \big)_{N \in \NN} \nn\\
  & \equiv A_n \in X \equiv \Big( \big( \SD([0,\infty),\SC_{tem}^+) \big)^5 \Big)^\NN. \nn
}
Then $X$ is a Polish space as well if we equip it with the metric $\rho(\bar{f},\bar{g}) \equiv \sum_{i \in \NN} 2^{-i} \big( \sum_{j=1,2,3,4,5} \tilde{d}(f_{i1},g_{i1}) \big) \wedge 1$ where $\bar{f} = (f_{ij})_{i \in \NN, j=1,2,3,4,5}, \bar{g} = (g_{ij})_{i \in \NN, j =1,2,3,4,5} \in X$, $f_{ij}, g_{ij} \in \SD([0,\infty),\SC_{tem}^+)$. Reason as in Jacod and Shiryaev \cite[Corollary~VI.3.33]{bJS2003}, to see that the convergence in distribution of the sequences $\big( A_c(\xi_{\fatcdot}^n(\zeta_N))(\theta_j) \big)_{n \in \NN}$ for $j \in \{1,2\}, N \in \NN$ fixed and $n \rightarrow \infty$ to a \textit{continuous} (in $t$) limit implies the convergence of 
\eqn{
  \Big( A_n(N,\theta_1), A_n(N,\theta_2), A_n(N,\Delta\theta_1), A_n(\Delta N,\theta_1), A_n(\Delta N,\theta_2) \Big)_{n \in \NN}
}
to 
\eqn{
  \Big( u_{\fatcdot}^{(\zeta_N)}(\theta_1), u_{\fatcdot}^{(\zeta_N)}(\theta_2), u_{\fatcdot}^{(\zeta_N)}(\theta_2)-u_{\fatcdot}^{(\zeta_N)}(\theta_1), u_{\fatcdot}^{(\zeta_{N+1})}(\theta_1)-u_{\fatcdot}^{(\zeta_N)}(\theta_1), u_{\fatcdot}^{(\zeta_{N+1})}(\theta_2)-u_{\fatcdot}^{(\zeta_N)}(\theta_2) \Big).
  } 
By the definition of $\rho(\bar{f},\bar{g})$, we can choose a subsequence such that $(A_{n_k})_{k \in \NN}$ converges in $X$. Note in particular that the marginal distributions of every subsequential limit are given by their respective one-dimensional limits. 

Fix this convergent subsequence. Now apply Skorokhod's theorem (cf. \cite[Theorem~III.1.8]{bEK2005}) to obtain that after possibly changing to another probability space, this convergence becomes almost sure convergence, that is 
\eqan{
\lbeq{equ:Xleq}
  & \big( A_{n_k}(N,\theta_1), A_{n_k}(N,\theta_2), A_{n_k}(N,\Delta\theta_1), A_{n_k}(\Delta N,\theta_1), A_{n_k}(\Delta N,\theta_2) \big)_{N \in \NN} \\
  & \ \rightarrow \big( (u_{\fatcdot}^{(\zeta_N)}(\theta_1), u_{\fatcdot}^{(\zeta_N)}(\theta_2), u_{\fatcdot}^{(\zeta_N)}(\theta_2)-u_{\fatcdot}^{(\zeta_N)}(\theta_1), \nn\\
  & \qquad\qquad u_{\fatcdot}^{(\zeta_{N+1})}(\theta_1)-u_{\fatcdot}^{(\zeta_N)}(\theta_1), u_{\fatcdot}^{(\zeta_{N+1})}(\theta_2)-u_{\fatcdot}^{(\zeta_N)}(\theta_2) \big)_{N \in \NN} \in X \  \mbox{ a.s. for } k \rightarrow \infty, \nn
}
where $u^{(\zeta_N)}(\theta_i)$ solves \eqref{equ:SPDE} with initial condition $\zeta_N$ and parameter $\theta_i$ and
\eqn{
\lbeq{equ:mon-coup-init}
  u^{(\zeta_{N_1})}(\theta_{j_1}) \leq u^{(\zeta_{N_2})}(\theta_{j_2}) \mbox{ for all } N_1 \leq N_2, j_1 \leq j_2 \mbox{ a.s.}
}
by \eqref{equ:Xleq} in combination with the definition of $X$.

Fix $T>0$. Let $u^{*,l}_{T+t}(\theta_i) = \uparrow \lim_{N \rightarrow \infty} u^{(\zeta_N)}_{T+t}(\theta_i), i=1,2, t \geq 0$. Reason as in \cite[Corollary~2.6]{K2017} to conclude that $\SL((u^{*,l}_{T+t}(\theta_i))_{t \geq 0} = \PP_{\upsilon_T}$. From \eqref{equ:mon-coup-init}, \eqref{equ:star-coupling} follows.
\end{proof}
%

%
{\bf Acknowledgements.}
This work was funded by the Deutsche Forschungsgemeinschaft (DFG, German Research Foundation) - Projectnumber 393092071.

\bibliography{bib}
\bibliographystyle{plain}


\end{document}